\documentclass[11pt,a4paper]{article}

\usepackage[utf8]{inputenc}
\usepackage[T1]{fontenc}
\usepackage[english]{babel}

\usepackage{amsmath}
\usepackage{amsfonts}
\usepackage{amsthm}

\IfFileExists{libertinus.sty}{%
  \usepackage{libertinus}%
}{%
  \IfFileExists{newtxtext.sty}{%
    \usepackage{newtxtext}%
    \usepackage{newtxmath}%
  }{%
    \usepackage{mathpazo}%
  }%
}

\usepackage[margin=3cm]{geometry}
\usepackage[letterspace=180]{microtype}

\usepackage{xcolor}
\definecolor{LinkViolet}{HTML}{6D28D9}   
\definecolor{CiteViolet}{HTML}{9333EA}   
\definecolor{RuleGrey}{HTML}{4B5563} 

\definecolor{AvailRed}{HTML}{E41A1C}
\definecolor{AvailOrange}{HTML}{FF7F00}
\definecolor{AvailYellow}{HTML}{A3A300}
\definecolor{AvailGreen}{HTML}{4DAF4A}

\usepackage{pifont}
\usepackage{enumitem}
\setlist{leftmargin=*}
\setlist[itemize,1]{label=\ding{96}}

\usepackage{graphicx}
\usepackage{pdflscape}  
\usepackage{afterpage}  
\usepackage{tikz}
\usetikzlibrary{calc}
\usepackage{booktabs}
\usepackage{subcaption}

\usepackage{hyperref}
\hypersetup{
  colorlinks = true,
  breaklinks = true,
  linkcolor  = LinkViolet,
  citecolor  = CiteViolet,
  urlcolor   = LinkViolet,
  linktoc    = all
}

\usepackage{titling}
\usepackage{authblk}

\newif\ifShowAffiliations
\ShowAffiliationstrue

\makeatletter
\newcommand{\Author}[2][]{%
  \ifShowAffiliations\else
    \renewcommand{\AB@authnote}[1]{}%
    \renewcommand{\@author}{\AB@authlist}%
  \fi
  \author[#1]{#2}%
}
\makeatother

\newcommand{\Affiliation}[2][]{\ifShowAffiliations\affil[#1]{#2}\fi}

\newenvironment{aidisclosure}{%
  \small
  \begin{center}%
    {\bfseries AI Disclosure\vspace{-.5em}\vspace{0pt}}%
  \end{center}%
  \quotation}
  {\endquotation}

\pretitle{\begin{center}\LARGE\bfseries}
\posttitle{\par\end{center}\vspace{0.5ex}}
\preauthor{\begin{center}}
\postauthor{\end{center}}
\predate{}
\postdate{}

\usepackage{titlesec}

\newcommand{\PartRule}{{\color{RuleGrey}\titlerule[0.8pt]}}

\titleformat{\part}[display]
  {\centering\normalfont}
  {\PartRule\vspace{0.35ex}\normalsize\scshape\partname~\thepart}
  {0.4ex}
  {\large\bfseries}
  [\vspace{0.4ex}\PartRule]

\titlespacing*{\part}{0pt}{4ex plus 1ex minus .5ex}{4ex plus .5ex}

\usepackage{tocloft}

\newcommand{\TocFont}{\footnotesize}

\theoremstyle{plain}
\newtheorem{theorem}{Theorem}[section]
\newtheorem{proposition}[theorem]{Proposition}
\newtheorem{lemma}[theorem]{Lemma}
\newtheorem{corollary}[theorem]{Corollary}

\theoremstyle{definition}

\newtheorem{remark}[theorem]{Remark}

\newtheorem{condition}{Condition}

\newcommand{\ProofDeferred}[2][]{%
  \begin{proof}
    #1The proof of this result has been deferred to Appendix~\ref{#2}.
  \end{proof}%
}

\newcommand{\AutoDelims}[3]{%
  {\mathchoice{\left#1#3\right#2}{#1#3#2}{#1#3#2}{#1#3#2}}}
\newcommand{\AutoArg}[1]{%
  {\mathchoice{\!\left(#1\right)}{(#1)}{(#1)}{(#1)}}}

\newcommand{\Naturals}{\mathbb{N}}                 
\newcommand{\Integers}{\mathbb{Z}}                 
\newcommand{\Reals}{\mathbb{R}}                    
\newcommand{\StateSpace}{\mathcal{X}}              
\newcommand{\ParamSpace}{\Theta}                   
\newcommand{\Lspace}[2]{L^{#1}\AutoArg{#2}}
\newcommand{\Ltwopi}{\Lspace{2}{\pi}}

\newcommand{\IntegerRange}[2]{\AutoDelims{\{}{\}}{#1,\dots,#2}}
\newcommand{\IndexBetween}[3]{#2 \leq #1 \leq #3}

\newcommand{\Prob}{\mathbf{P}}                     
\newcommand{\Expect}{\mathbf{E}}                   
\newcommand{\Variance}{\mathbf{Var}}               
\newcommand{\Covariance}{\mathbf{Cov}}             
\newcommand{\Bias}{\mathbf{Bias}}                  
\newcommand{\Trace}{\mathbf{Tr}}                   
\newcommand{\MSE}{\mathbf{MSE}}                    
\newcommand{\AsymptoticMSE}{\MSE_{\infty}}         
\newcommand{\Gaussian}[2]{\mathcal{N}\AutoArg{#1,#2}}
\newcommand{\Dirac}[1]{\delta_{#1}}

\newcommand{\ConvergesAlmostSurely}{\xrightarrow{\;\mathrm{a.s.}\;}}
\newcommand{\ConvergesInDistribution}{\xrightarrow{\;\mathrm{d}\;}}

\newcommand{\BigO}{O}
\newcommand{\LittleO}{o}
\newcommand{\EquivalentTo}{\sim}

\newcommand{\SumOver}[1]{\sum_{#1 \in \StateSpace}}
\newcommand{\SumOverDistinctXY}{%
  \sum_{\substack{x,y,z \in \StateSpace \\ x \neq y}}}
\newcommand{\SumFromTo}[3]{\sum_{#1 = #2}^{#3}}

\newcommand{\NormTV}[1]{\AutoDelims{\|}{\|}{#1}_{\mathrm{TV}}}      
\newcommand{\NormTwo}[1]{\AutoDelims{\|}{\|}{#1}_{2}}               
\newcommand{\NormOperator}[1]{%
  {\mathchoice{\left|\!\left|\!\left|#1\right|\!\right|\!\right|}%
              {|\!|\!|#1|\!|\!|}{|\!|\!|#1|\!|\!|}{|\!|\!|#1|\!|\!|}}}
\newcommand{\NormPi}[1]{\AutoDelims{\|}{\|}{#1}_{\pi}}
\newcommand{\ScalarProduct}[2]{\AutoDelims{\langle}{\rangle}{#1, #2}_{\pi}}

\newcommand{\Kernel}[3][]{P^{#1}\AutoArg{#2\vert#3}}
\newcommand{\KernelParam}[4][]{P_{#2}^{#1}\AutoArg{#3\vert#4}}
\newcommand{\KernelMatrix}[1][]{P^{#1}}
\newcommand{\KernelMatrixParam}[2][]{P_{#2}^{#1}}
\newcommand{\KernelMatrixTrue}[1][]{\KernelMatrixParam[#1]{\ParamTrue}}
\newcommand{\KernelMatrixPlugIn}[1][]{\KernelMatrixParam[#1]{\ParamEstimator}}

\newcommand{\PiHat}[2][n]{\widehat{\pi}_{#1}^{\,#2}}
\newcommand{\PiHatAt}[3][n]{\PiHat[#1]{#2}\AutoArg{#3}}
\newcommand{\PiHatOf}[3][n]{\PiHat[#1]{#2}\AutoArg{#3}}
\newcommand{\Empirical}[1][n]{\PiHat[#1]{0}}
\newcommand{\EmpiricalAt}[2][n]{\PiHatAt[#1]{0}{#2}}
\newcommand{\EmpiricalOf}[2][n]{\PiHatOf[#1]{0}{#2}}

\newcommand{\PiHatParam}[3][n]{\widehat{\pi}_{#2,\,#1}^{\,#3}}
\newcommand{\PiHatParamAt}[4][n]{\PiHatParam[#1]{#2}{#3}\AutoArg{#4}}

\newcommand{\PiHatAggregated}[2]{\widehat{\pi}_{n}^{\,#1}\AutoArg{#2}}

\newcommand{\Contribution}[3]{Y_{#2}^{#1}\AutoArg{#3}}
\newcommand{\ContributionParam}[4]{Y_{#1,#3}^{#2}\AutoArg{#4}}

\newcommand{\KernelAverage}[3]{S_{#2}^{#1}\AutoArg{#3}}

\newcommand{\FrameworkIid}[1]{\ensuremath{\mathcal{I}(#1)}}
\newcommand{\FrameworkMarkov}[2]{\ensuremath{\mathcal{M}(#1,#2)}}
\newcommand{\FrameworkIidStationary}{\FrameworkIid{\pi}}
\newcommand{\FrameworkIidGeneral}{\FrameworkIid{\mu}}
\newcommand{\FrameworkMarkovStationary}{\FrameworkMarkov{\pi}{P}}
\newcommand{\FrameworkMarkovGeneral}{\FrameworkMarkov{\mu}{P}}

\newcommand{\AsymptoticVariance}[2][]{\sigma_{#1}^{2}\AutoArg{#2}} 
\newcommand{\TotalVariance}[1]{V_{#1}} 
\newcommand{\GreenFunction}[2]{G\AutoArg{#1\vert#2}}
\newcommand{\HittingTime}[2]{m_{#1 \to #2}}  
\newcommand{\CommuteTime}[2]{T_{#1,#2}}
\newcommand{\FundamentalMatrix}{Z}  
\newcommand{\StationaryProjector}{\Pi} 
\newcommand{\DiagonalStationary}{D_{\pi}} 
\newcommand{\Identity}{I}
\newcommand{\Ones}{\mathbf{1}}
\newcommand{\HilbertBasis}[1]{e_{#1}}

\newcommand{\ParamTrue}{\theta^{*}}                       
\newcommand{\ParamEstimator}[1][m]{\widehat{\theta}_{#1}} 
\newcommand{\EstimationError}{\ParamEstimator-\ParamTrue} 
\newcommand{\Sensitivity}[2]{A^{#1}\AutoArg{#2}} 
\newcommand{\Gradient}{\nabla}
\newcommand{\Hessian}{\nabla^{2}}
\newcommand{\SplitFraction}{\alpha}                       
\newcommand{\SplitFractionOptimal}{\SplitFraction^{*}}

\newcommand{\CovParam}{\mathbf{\Sigma}}

\newcommand{\RatioLookAhead}{r_{1}}
\newcommand{\RatioParameter}{r_{2}}

\newcommand{\PlotWidth}{\textwidth}
\newcommand{\QuadPlotWidth}{0.49\linewidth}
\newcommand{\PlotGap}{1ex}

\newcommand{\MapWidth}{0,83\textwidth}

\newcommand{\Transpose}{^{\mathsf{T}}}
\newcommand{\Indicator}[1]{\mathbf{1}_{\{#1\}}}
\newcommand{\TextAnd}{\quad \text{and} \quad}

\newcommand{\BDKernel}{P_{\theta, \varepsilon}}
\newcommand{\BDpi}{\pi_{\theta, \varepsilon}}

\newcommand{\Span}{\mathbf{span}}

\ShowAffiliationstrue

\title{Mean squared error reduction of (plug-in)\\
look-ahead estimators for Markov chains}

\Author[1]{Romain Aza\"{\i}s}
\Author[2]{Beno\^{i}t Henry}
\Author[1]{Henri P\'echoux}

\Affiliation[1]{Inria Lyon, France.}
\Affiliation[2]{Univ. Lille, CNRS, IMT Nord Europe, Inria, UMR 8524 -- Laboratoire Paul Painlev\'e, F-59000 Lille, France.}

\date{}

\begin{document}

\maketitle

\begin{abstract}
We consider the estimation of the invariant distribution $\pi$ of a Markov kernel on a discrete state space (or of the mean value $\pi(f)$ of a functional $f$ under $\pi$) when solving the invariance equation is not feasible.
Look-ahead estimators exploit the knowledge of the kernel by propagating the empirical occupation measure through $k$ steps of the chain.
When the kernel is known, as in Markov chain Monte Carlo, we compare their mean squared error with that of the empirical estimator, for independent or Markovian data, stationary or not.
Our main result establishes that, for Markovian data, the one-step estimator always improves on the empirical one asymptotically, and that the improvement grows with $k$ for reversible kernels.
A non-reversible counterexample shows that reversibility cannot simply be dropped.
In the statistical setting where the kernel belongs to a parametric family with unknown parameter, we introduce plug-in look-ahead estimators, which perform the look-ahead step with an estimated kernel.
When the parameter is estimated at the parametric rate, we give a necessary and sufficient condition under which the plug-in look-ahead estimator outperforms the empirical one.
The theoretical results are illustrated by extensive simulation studies, which also shed further light on the behaviour of the estimators at hand.
Both settings are finally applied to real data arising from geographical mobility problems in the Lyon metropolitan area, France.
\end{abstract}

\begin{aidisclosure}
The present manuscript made use of AI tools for \LaTeX{} formatting, coding, literature review, language enhancement and proof verification.
All mathematical results and arguments are human-made, with only marginal use of AI for mathematical content.
This use was limited primarily to answering standard mathematical questions whose answers could, in principle, be found in the existing literature.
One notable exception is the counterexample presented in Appendix~\ref{sssec:counterexample}, which was produced with the assistance of AI.

In particular, we are aware that a version of Theorem~\ref{thm:mse-expansion} can be established under substantially weaker hypotheses, and that more striking examples supporting our framework can be constructed.
However, we deliberately chose to keep the mathematics as human as possible and therefore did not include these generalisations or additional examples.
\end{aidisclosure}
\vspace*{\fill}

\newpage
\vspace*{\fill}
\begin{list}{}{\setlength{\leftmargin}{\leftmargini}%
               \setlength{\rightmargin}{\leftmargini}%
               \setlength{\topsep}{0pt}\setlength{\partopsep}{0pt}}
  \item\relax
  {\TocFont\tableofcontents}
\end{list}
\vspace*{\fill}
\vspace*{1.5em}

\newpage

\section{Introduction}
\label{sec:introduction}

\subsection{Problem formulation}

This paper is concerned with the estimation of a probability distribution $\pi$ on a discrete set $\StateSpace$, or of the mean value $\pi(f) = \SumOver{x} f(x)\pi(x)$ of a functional $f : \StateSpace \to \Reals^{d}$ under $\pi$, when $\pi$ is the unique invariant distribution of a transition kernel $\KernelMatrix$ on $\StateSpace$, i.e.\ when
\begin{equation}\label{eq:invariance}
  \pi = \pi \KernelMatrix,
\end{equation}
an identity we refer to as the invariance equation.

When $\KernelMatrix$ is known, this is precisely the setting of Markov chain Monte Carlo (MCMC) methods, whose purpose is to estimate quantities such as $\pi(f)$ by stochastic simulation.
The kernel is then designed to admit $\pi$ as its (unique) invariant distribution, for instance by the Metropolis--Hastings algorithm \cite{Hastings1970,Metropolis1953} (see also~\cite{AsmussenGlynn2007} and references therein for a general account of the method).
In this context, one simulates the first $n$ steps $(X_i)_{\IndexBetween{i}{1}{n}}$ of a Markov chain with kernel $\KernelMatrix$ and estimates $\pi$ by its empirical counterpart,
\begin{equation}\label{eq:empirical-estimator}
  \Empirical = \frac{1}{n} \SumFromTo{i}{1}{n} \Dirac{X_i},
\end{equation}
and $\pi(f)$ by $\EmpiricalOf{f} = n^{-1} \SumFromTo{i}{1}{n} f(X_i)$.
Irreducibility, aperiodicity and positive recurrence of $\KernelMatrix$ make $\Empirical$ a consistent estimator of $\pi$, whatever the initial distribution.
Under the additional condition of uniform ergodicity, a central limit theorem for $\EmpiricalAt{x}$ holds \cite{DMPS2018}.
Furthermore, $\EmpiricalAt{x}$ is efficient: the empirical occupation measure is the nonparametric maximum likelihood estimator of the invariant distribution, which attains the semiparametric efficiency bound without further structural assumptions~\cite{GreenwoodWefelmeyer1995,Penev1991}.

In a Markovian statistical setting, the exact transition kernel of the data is in general unknown, but it can often be assumed to belong to a parametric family, i.e.\ $\KernelMatrix = \KernelMatrixTrue$ where $\{\KernelMatrixParam{\theta}\}_{\theta \in \ParamSpace}$ is a family of transition kernels on $\StateSpace$.
The quantities $\pi$ and $\pi(f)$ remain of interest in that context, and the estimator $\Empirical$ above retains its consistency and asymptotic normality.
However, it is no longer efficient: once the kernel is known to lie in some parametric model, the empirical estimator ignores that information, and estimators exploiting the structure of the family can achieve a strictly smaller asymptotic variance~\cite{SchickWefelmeyer1999}.

Whether or not $\KernelMatrix$ is known (if it is not, it can be replaced by $\KernelMatrixParam{\ParamEstimator[n]}$, where $\ParamEstimator[n]$ is an estimator of $\ParamTrue$, provided the quantities of interest depend continuously on $\theta$), it is legitimate to think of reaching $\pi$ or $\pi(f)$ by a deterministic approach, exact or approximate, through the invariance equation~\eqref{eq:invariance}.
For a finite state space of moderate size, solving the invariance equation is exact and fast, and therefore preferable \cite[p.~375]{HendersonGlynn2001}.
Nevertheless, three main obstructions rule out that route for certain state spaces, while the stochastic approach described above remains valid.
\begin{itemize}
  \item The first obstruction is one of scale.
    In statistical physics, for instance, the cardinality of the state space is often exponential in the size of the problem (e.g.\ for the Ising model), so that the invariance equation, though finite, cannot even be written down.
    This is the founding argument for MCMC~\cite{Diaconis2009}.
    When the target is a functional, the obstruction is even stronger: (i) solving for $\pi$ in order to obtain $\pi(f)$ determines an entire distribution so as to deliver a single number; (ii) the final summation $\SumOver{x} \pi(x) f(x)$ ranges over the very state space that is out of reach.
    The ergodic average $\EmpiricalOf{f}$ short-circuits both steps, since it estimates the scalar directly and never represents $\pi$.

  \item The second obstruction is not one of degree but of kind: when $\StateSpace$ is countably infinite, there is no finite system to solve.
    The standard remedy is to truncate $\KernelMatrix$ to a finite north-west corner, to redistribute the mass lost by truncation (so as to restore stochasticity), then to solve the finite system obtained.
    This substitutes for the original problem a pair of new ones: whether the approximants converge to the target, and how the truncation level is to be chosen.
    Neither is settled a priori, convergence depending on the augmentation scheme and on the tail behaviour of the chain \cite{LiuLi2018, Tweedie1998}.
    The difficulty is even greater when the target is a functional: the truncation level is chosen so that the discarded states carry negligible probability, but when $f$ is unbounded, negligible probability need not mean negligible contribution to $\pi(f)$.
    A truncation validated on $\pi$ alone therefore carries no guarantee for $\pi(f)$, whereas simulation never has to make that choice.

  \item The third obstruction does not concern size, but specification.
    The state space may be finite and yet be given only implicitly, as the set of configurations satisfying a system of constraints (e.g.\ the proper colourings of a graph), so that, although the transition rule is perfectly explicit locally, in the sense that the row $\Kernel{\cdot}{x}$ can be evaluated at any given $x$, the set $\StateSpace$ is never available as a list.
    Exact computation is then excluded in a strong sense: counting the elements of $\StateSpace$ is typically $\#\mathrm{P}$-hard.
    Simulation is the natural mode of access, since a trajectory never needs more than what is reachable from its current state, and the approximate-counting literature has shown that it yields rigorous approximations, not mere heuristics~\cite{JerrumSinclairVigoda2004}.
\end{itemize}

In this paper we therefore rule out the deterministic route to the quantities of interest $\pi$ and $\pi(f)$.
We nevertheless wish to exploit our knowledge of the kernel $\KernelMatrix$ (perfect in the MCMC setting, imperfect in the statistical one) in order to improve upon the estimator $\Empirical$, in the sense of reducing its (asymptotic) mean squared error (MSE).

\subsection{Look-ahead estimation}

In the MCMC literature, variance reduction is frequently sought through Rao--Black\-wel\-li\-sa\-tion: a Monte Carlo estimator is replaced by its conditional expectation given part of the randomness driving the simulation, which leaves the quantity being estimated unchanged while removing the variability that has been integrated out~\cite{CasellaRobert1996, GelfandSmith1990}.
The term covers several distinct choices of conditioning and does not always denote the same construction~\cite{RobertRoberts2021}.
One such choice is to condition on the state of the chain one step earlier, replacing each $f(X_i)$ by $\Expect[f(X_i) \mid X_{i-1}]$, an operation available as soon as the kernel can be evaluated.
Averaging these conditional expectations amounts to reporting $\PiHat{1} = \Empirical \KernelMatrix$ in place of the empirical occupation measure $\Empirical$.
This construction obviously iterates, giving $\PiHat{k} = \Empirical \KernelMatrix[k]$.
Estimators of this form appear in the literature under the name of look-ahead estimators~\cite{Braun2012,GlynnHenderson1998, HendersonGlynn2001}, but are also considered without reference to that name~\cite{McKeagueWefelmeyer2000,Stachurski2008}.

In statistics, where $\KernelMatrix$ is unknown, the constructions above cannot be applied directly.
One method of inference on $\pi$ is nevertheless directly inspired by the look-ahead idea, under the name of likelihood-look-ahead~\cite{GaribottiTsimikasHorowitz2006}: the unknown parameter is first estimated by maximum likelihood, giving $\ParamEstimator[n]$, and an MCMC method with kernel $\KernelMatrixParam{\ParamEstimator[n]}$ is then run to estimate $\pi$ or $\pi(f)$.
It is worth stressing that the MCMC step is here a numerical method for solving the invariance equation of $\KernelMatrixParam{\ParamEstimator[n]}$: deterministic numerical solvers could be applied just as well.
Consistency then rests on assumptions guaranteeing that the solution of $\mu \KernelMatrixParam{\theta} = \mu$ exists, and depends continuously on $\theta$.
Ideas of this kind have been used successfully for other classes of Markov models, such as piecewise-deterministic Markov processes~\cite{Bertail2008Storage}.

\subsection{Contribution}
\label{subsec:contribution}

In this paper, we study look-ahead methods both in the MCMC setting, where the transition kernel is known (Part~\ref{part:known-kernel}), and in the statistical setting, where the kernel is described by a parametric family whose parameter is unknown and has to be estimated (Part~\ref{part:parametric-kernel}).
The main objective is to identify rigorously, in each of these two settings, frameworks general enough for look-ahead methods to improve on the empirical estimator in terms of MSE.
To the best of our knowledge, in the MCMC setting with known kernel, the comparison with the empirical measure $\Empirical$ has only been carried out in~\cite{McKeagueWefelmeyer2000}: the other references on look-ahead estimators~\cite{Braun2012,GlynnHenderson1998,HendersonGlynn2001,Stachurski2008} work on general state spaces, where a comparison with the occupation measure does not directly make sense.
The main result of~\cite{McKeagueWefelmeyer2000} establishes that, under reversibility, i.e.\ when $\pi(x)\Kernel{y}{x} = \Kernel{x}{y}\pi(y)$ for all $x,y \in \StateSpace$, and under the usual assumptions of irreducibility, aperiodicity and uniform ergodicity, the asymptotic variance of $\PiHatOf{k}{f}$ improves on that of $\EmpiricalOf{f}$ whenever $f \in \Ltwopi$.

In the MCMC setting of Part~\ref{part:known-kernel}, Section~\ref{sec:theory-known-kernel} considers two main simulation schemes: either the observations are the terminal states, at a finite but large horizon, of independent Markov trajectories, or they are collected along a single Markov chain run over a long time.
The data are thus independent and identically distributed in the first case, and Markovian in the second.
We further distinguish the stationary case, where the chain is started from $\pi$, from the case where its initial distribution $\nu$ differs from $\pi$.
In the stationary case, the data are distributed according to $\pi$, whether they are independent or Markovian.
The non-stationary case is more delicate.
Independent observations are identically distributed, but according to $\mu = \nu \KernelMatrix[h]$, where $h$ denotes the simulation horizon, which in general differs from $\pi$; the distribution of Markovian observations, on the other hand, converges to $\pi$ in total variation.
This variety of observation schemes (independent or Markovian, stationary or not) gives rise to non-trivial and sometimes unexpected situations.
By comparison, \cite{McKeagueWefelmeyer2000} only considers Markovian data that are asymptotically stationary.

As mentioned above, the performance of the estimators under consideration is measured by their asymptotic MSE.
For a fixed $n$, the MSE of an estimator $\tilde{\pi}_n$ of $\pi$ is defined by
\[
  \MSE\left(\tilde{\pi}_n\right) = \Expect\left[\NormTwo{\tilde{\pi}_n - \pi}^{2}\right],
\]
and decomposes as
\[
  \MSE\left(\tilde{\pi}_n\right)
  = \SumOver{x} \Bias^{2}\!\left(\tilde{\pi}_n(x)\right)
  + \SumOver{x} \Variance\left(\tilde{\pi}_n(x)\right).
\]
The asymptotic MSE is then obtained by normalising by the parametric rate,
\begin{equation}\label{eq:def:amse}
  \AsymptoticMSE\left(\tilde{\pi}_n\right)
  = \limsup_{n \to +\infty} n \, \MSE\left(\tilde{\pi}_n\right).
\end{equation}

In the stationary setting, whether the data are independent or Markovian, the bias vanishes and the MSE reduces to its variance term.
For independent observations, Proposition~\ref{prop:variance-iid-pointwise} establishes that, for a fixed $n$, the variance decreases with $k$, without any additional assumption on $\KernelMatrix$.
For Markovian observations, the situation is more involved and is the subject of Theorem~\ref{thm:monotone-amse}.
Under reversibility, we show that the asymptotic variance decreases with $k$.
We also show that the asymptotic variance of the one-step estimator is smaller than that of the empirical estimator, even without reversibility.
Finally, we exhibit a non-reversible example in which the asymptotic variance of the two-step estimator exceeds that of the one-step estimator, which makes reversibility a relevant assumption in this context.
These results extend those of~\cite{McKeagueWefelmeyer2000} in a non-trivial way.
In particular, they show that, for stationary data, the one-step look-ahead estimator always improves on the empirical estimator.

In the non-stationary setting, whether the data are Markovian or independent, Proposition~\ref{prop:bias-decreasing-iid} shows that, for a fixed $n$, the squared bias decreases with $k$ as soon as $\KernelMatrix$ is doubly stochastic.
Without this assumption, there exist (reversible) kernels for which the bias increases with $k$.
For Markovian data, however, the squared bias remains of order $n^{-2}$ (Proposition~\ref{prop:bias-negligible}).
As a consequence, the asymptotic MSE in the non-stationary Markovian setting only involves the variance term, which behaves as in the stationary case studied in Theorem~\ref{thm:monotone-amse} (Proposition~\ref{prop:variance-nonstationary-markov}).

The look-ahead method naturally provides a whole family of estimators of $\pi$ or of $\pi(f)$, and a natural idea is to aggregate them so as to reduce the variance further.
The linear combination of these estimators that minimises the variance depends on their covariance matrix~\cite{LavancierRochet2016}, which makes the computation of this matrix a crucial issue.
For look-ahead estimators, we show that this covariance matrix can be computed explicitly for independent stationary data (Proposition~\ref{prop:covariance-iid}) and for stationary Markovian data (Proposition~\ref{prop:covariance-markov}), which makes the aggregated estimator directly computable.

Section~\ref{sec:random-surfer} is devoted to a numerical study of look-ahead estimators from Markovian observations.
The simulation model is the random surfer model on connected non-bipartite graphs, on which the random walk is aperiodic.
On Erd\H{o}s--R\'enyi graphs, we illustrate the asymptotic results established for the bias and the variance in the non-stationary setting.
We also study the effect of the spectral gap (which is related to the topological properties of the graph) on the improvement factors as $k$ grows.
The methods are finally applied to real graphs modelling the public transport network of the Lyon metropolitan area, in order to assess the importance of its stations.
The numerical results confirm the relevance of look-ahead methods compared with the empirical estimator.

In Part~\ref{part:parametric-kernel}, devoted to the statistical problem in which the transition kernel is unknown, we introduce a new approach, also inspired by look-ahead methods, which we call plug-in look-ahead.
It is a natural approach, yet to the best of our knowledge it has never been considered in the literature: it consists of the family of estimators $\Empirical \KernelMatrixParam[k]{\ParamEstimator[n]}$.
This approach differs markedly from the likelihood-look-ahead construction above~\cite{GaribottiTsimikasHorowitz2006}, since it never solves the invariance equation and therefore avoids the numerical cost attached to doing so.
It also enjoys, a priori, a second advantage.
As soon as $\KernelMatrixParam{\theta}$ depends continuously on $\theta$, the consistency of $\Empirical$ and of $\ParamEstimator[n]$ suffices to guarantee convergence towards the target $\pi$.
A transition kernel may, however, depend continuously on a parameter without its ergodic properties depending continuously on it as well, which necessarily limits the scope of likelihood-look-ahead methods while leaving the consistency of $\Empirical \KernelMatrixParam[k]{\ParamEstimator[n]}$ unaffected.
We have found no such example in the literature, but the following elementary construction makes the point.

\begin{proposition}
\label{prop:no-continuity}
Let $\StateSpace = \Integers$ and, for $\theta \in \Reals$, let $\KernelMatrixParam{\theta}$ be defined by
\[
  4\KernelParam{\theta}{-1}{0} = 4\KernelParam{\theta}{1}{0}
  = 2 \KernelParam{\theta}{0}{0} = 1,
\]
and, for every $x \neq 0$,
\[
  \KernelParam{\theta}{x + \mathrm{sgn}(x)}{x}
  = 1 - \frac{1}{1 + |\theta| x^{2}}
  = 1 - \KernelParam{\theta}{0}{x}.
\]
\begin{itemize}
  \item If $\theta = 0$, the chain is irreducible, aperiodic, positive recurrent and uniformly ergodic, and admits a unique invariant distribution.
  \item If $\theta \neq 0$, the chain escapes to $+\infty$ or to $-\infty$ with positive probability, and no invariant probability distribution exists.
\end{itemize}
\end{proposition}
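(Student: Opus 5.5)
For $\theta = 0$ the kernel is degenerate: for every $x \neq 0$ we have $\KernelParam{0}{0}{x} = 1$. The chain therefore jumps to $0$ from any nonzero state. From $0$ it stays with probability $1/2$ and moves to $\pm 1$ with probability $1/4$ each.

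The first step is to identify the reachable class. Only $\{-1,0,1\}$ is visited from $0$. States $|x|\ge 2$ are transient: they lead to $0$ in one step and are never re-entered.

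I would therefore read the first bullet as concerning this closed class, or equivalently as irreducibility of the chain up to the transient states $|x|\geq 2$, which uniqueness of the invariant law does not require. There is a single closed communicating class $\{-1,0,1\}$. On it the chain is irreducible and aperiodic, since $\KernelParam{0}{0}{0}>0$. It is finite, hence positive recurrent.

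Solving $\pi=\pi P_0$ gives $\pi(0)=1/2$ and $\pi(\pm1)=1/4$, and $\pi$ vanishes elsewhere. Uniqueness follows because every invariant law must vanish on transient states.

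Uniform ergodicity is the key check: a Doeblin minorisation $\Kernel[2]{\cdot}{x}\geq \tfrac12\Dirac{0}$ holds for every $x\in\Integers$. Indeed, from $x\neq0$ the chain is at $0$ after one step and stays there with probability $1/2$. From $x=0$ the chain is at $0$ after two steps with probability at least $1/4+2\cdot\tfrac14\cdot1=3/4$. This gives geometric convergence in total variation, uniform in the initial state.

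For $\theta\neq0$, write $c=|\theta|>0$. Starting from $x\geq1$, the chain moves outward to $x+1$ with probability $1-1/(1+cx^2)$ and otherwise returns to $0$. The probability of never returning to $0$ is
\[
\prod_{j\geq x}\Bigl(1-\tfrac{1}{1+cj^{2}}\Bigr).
\]
This product is positive because $\sum_j 1/(1+cj^2)<\infty$. Since $0$ reaches $1$ with probability $1/4$, escape to $+\infty$ has positive probability from every state, and the same holds for $-\infty$ by symmetry.

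The chain is irreducible. It is also transient, since the return probability to $0$ is strictly below $1$. For a countable irreducible chain, transience rules out an invariant probability.

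To make this step self-contained, I would argue directly. Suppose $\pi$ is invariant. Then $\pi(x+1)=\pi(x)\bigl(1-1/(1+cx^2)\bigr)$ for $x\geq1$, so $\pi(x)$ tends to a positive multiple of $\pi(1)$ and $\pi$ is not summable unless $\pi(1)=0$. The same argument applies on the negative side. Then mass balance at $0$, together with $\pi(\pm1)=\pi(0)/4$, forces $\pi(0)=0$, hence $\pi\equiv0$.

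The main obstacle is the reading of the first bullet at $\theta=0$: strict irreducibility on $\Integers$ fails there, so the statement should be interpreted on the closed class. The recursion and product computations are routine.
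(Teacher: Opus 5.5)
Your proof is correct and follows essentially the paper's route: a Doeblin minorisation at $\theta=0$, and for $\theta\neq0$ positivity of $\prod_{j\geq1}\bigl(1-1/(1+|\theta|j^{2})\bigr)$, which follows from $\sum_j 1/(1+|\theta|j^{2})<\infty$.

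There is one slip, which does not affect the logic because the statement only asserts existence and uniqueness. Your explicit invariant law at $\theta=0$ is wrong. No state other than $0$ leads to $\pm1$, so balance gives $\pi(\pm1)=\pi(0)/4$, the same relation you use later for $\theta\neq0$. Hence $\pi(0)=2/3$ and $\pi(\pm1)=1/6$. The vector $(1/4,1/2,1/4)$ you wrote is the row $P_0(\cdot\mid 0)$, not the invariant law.

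Your two-step minorisation is valid, but the paper uses the simpler one-step bound $P_0(\cdot\mid x)\geq\tfrac12\delta_0$ for every $x$, since $P_0(0\mid 0)=1/2$ and $P_0(0\mid x)=1$ for $x\neq0$. Your direct recursion ruling out an invariant probability for $\theta\neq0$ completes a step the paper leaves implicit.
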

\begin{proof}
For $\theta = 0$, the chain is supported on $\{-1,0,1\}$, where it is irreducible and aperiodic.
In addition, Doeblin's condition holds since, for any $x$,
\[
  2 \KernelParam{0}{\cdot}{x} \geq \Dirac{0}(\cdot),
\]
which proves the first claim by~\cite[Theorem~16.0.2]{MeynTweedie2009}.
If $\theta \neq 0$, the probability
\[
  \Prob\left(\forall n \in \Naturals,\ X_n \neq 0 \mid X_0 = 1\right)
  = \prod_{x \geq 1} \left(1 - \frac{1}{1 + |\theta| x^{2}}\right)
\]
is positive since $\sum_{x \geq 1} (1 + |\theta| x^{2})^{-1} \leq |\theta|^{-1} \sum_{x \geq 1} x^{-2} < +\infty$.
\end{proof}

Section~\ref{sec:theory-parametric-kernel} studies the theoretical properties of a sample-split version of the plug-in look-ahead estimator.
We show that its MSE decomposes into the error of the look-ahead estimator with known kernel (investigated in Part~\ref{part:known-kernel}) and additional terms due to the estimation of $\ParamTrue$ (Theorem~\ref{thm:mse-expansion}).
Building on this expansion, we establish three results, according to the rate of convergence of the estimator of $\ParamTrue$.
When this estimator converges strictly faster than the parametric rate, the asymptotic MSE is exactly that of the look-ahead estimator with known kernel (Corollary~\ref{cor:data-partition2}).
When it converges strictly slower, the MSE of the empirical estimator has a strictly better order of convergence than that of the plug-in look-ahead estimator (Corollary~\ref{cor:data-partition1}).
Finally, Corollary~\ref{cor:data-partition3} addresses the most interesting case, where the estimator of $\ParamTrue$ converges at the same rate as the empirical estimator of $\pi$.
In that case, we identify precisely, through a necessary and sufficient condition on the transition kernel, the situations in which the asymptotic MSE of the plug-in look-ahead estimator improves on that of the empirical estimator.

Section~\ref{sec:birth-death} illustrates the asymptotic properties of the one-step plug-in look-ahead estimator on a birth-and-death process.
As an application of Corollary~\ref{cor:data-partition3}, Proposition~\ref{prop:birth-death-comparison} identifies the parameter values for which the asymptotic MSE of the sample-split plug-in look-ahead estimator improves on that of the empirical estimator.
The simulations of Subsection~\ref{subsec:birth-death-simulations} illustrate these theoretical results.
They also consider the plug-in look-ahead estimator without sample splitting, in which all the data are used both to estimate $\ParamTrue$ and to compute the empirical estimator.
The methods are finally applied to real data recording the availability of bikes in the bike-sharing system of the Lyon metropolitan area.
Taken together, the numerical results confirm the strong practical interest of plug-in look-ahead methods.

\part{Fully-specified transition kernel: an MCMC approach}
\label{part:known-kernel}

In this part, the transition kernel $\KernelMatrix$ is assumed to be known, and we are interested in estimating its unique invariant distribution $\pi$, or the mean value $\pi(f)$ of a functional $f$ under $\pi$, which places us exactly in an MCMC framework.

\section{Mean squared error of look-ahead estimators}
\label{sec:theory-known-kernel}

This section is dedicated to the theoretical study of look-ahead estimators in terms of bias and variance, from independent data as well as from data with Markovian dependence.

\subsection{Look-ahead estimation of the invariant distribution}
\label{subsec:known-kernel-framework}

We consider a transition kernel $\KernelMatrix$ on a countable state space $\StateSpace$.
Throughout this part, $\KernelMatrix$ is assumed to be irreducible, aperiodic, positive recurrent, and uniformly ergodic.
Under these assumptions, there exists a unique invariant probability distribution $\pi$ on $\StateSpace$, characterised by the invariance relation $\pi = \pi \KernelMatrix$.
Our general objective is to estimate the distribution $\pi$ by exploiting the knowledge of $\KernelMatrix$.
To this end, four observation schemes are considered throughout.
\begin{itemize}
  \item Either the data are distributed according to $\pi$, independently (scheme denoted by \FrameworkIidStationary{}) or with a Markovian dependence with kernel $\KernelMatrix$ (scheme denoted by \FrameworkMarkovStationary): such stationary data are studied in Subsection~\ref{subsec:known-kernel-stationary-data}.

  \item Or they are distributed according to a probability distribution different from $\pi$ (which should be thought of as close to $\pi$), again either independently (scheme denoted by \FrameworkIidGeneral) or with a Markovian dependence with kernel $\KernelMatrix$ (scheme denoted by \FrameworkMarkovGeneral): non-stationary data are studied in Subsection~\ref{subsec:known-kernel-non-stationary}.
    In the independent case, this observation scheme corresponds to generating several independent non-stationary chains with kernel $\KernelMatrix$, only the terminal state of each of them, at a finite horizon, being observed.
    In the Markovian case, it corresponds to observing a single non-stationary chain.
\end{itemize}
For the sake of readability, each statement below carries, next to its number, the observation scheme under which it is established.

In each of these four situations, we observe a data set $(X_i)_{\IndexBetween{i}{1}{n}}$ and, for every $\IndexBetween{i}{1}{n}$, the transition probabilities $\Kernel{\cdot}{X_i}$ are assumed to be known.
Our goal is to use both the observations and the knowledge of these transition probabilities to estimate the invariant distribution $\pi$.

To this end, we consider the family of look-ahead estimators~\cite{Braun2012,GlynnHenderson1998, HendersonGlynn2001,McKeagueWefelmeyer2000,Stachurski2008} given, for $k \in \Naturals$ and $x \in \StateSpace$, by
\begin{equation}\label{eq:estimator-pointwise}
  \PiHatAt{k}{x} = \frac{1}{n} \SumFromTo{i}{1}{n} \Contribution{k}{i}{x},
\end{equation}
where
\[
  \forall \IndexBetween{i}{1}{n},
  \quad
  \Contribution{k}{i}{x} = \Kernel[k]{x}{X_i}.
\]
It is worth noting that, for $k = 0$, the estimator $\Empirical$ coincides with the empirical estimator~\eqref{eq:empirical-estimator} of $\pi$.
In addition, one has the matrix form used in the references mentioned above,
\begin{equation}\label{eq:estimator-global}
  \PiHat{k} = \Empirical \KernelMatrix[k].
\end{equation}

\begin{remark}
\label{rem:local-versus-global}
From a theoretical perspective, \eqref{eq:estimator-pointwise} and \eqref{eq:estimator-global} define the same estimator.
From a computational perspective, however, the two representations differ significantly.
The formulation given in \eqref{eq:estimator-pointwise} is \emph{local}, since it only requires access to the transition probabilities $\Kernel[k]{\cdot}{y}$ for the states $y$ that are actually visited by the observations $(X_i)_{\IndexBetween{i}{1}{n}}$.
In contrast, the formulation \eqref{eq:estimator-global} is \emph{global}, as computing $\PiHat{k}$ requires access to the entire transition matrix $\KernelMatrix[k]$.
This distinction becomes particularly relevant when the state space is very large, or only implicitly specified, as noted in the introduction.
In such settings, storing or even accessing the full transition matrix $\KernelMatrix[k]$ may be computationally prohibitive.
As a consequence, the global approach may become impractical or impossible to implement, while the local formulation remains feasible since it only relies on the transition probabilities associated with the visited states.
\end{remark}

\subsection{Estimation from stationary data}
\label{subsec:known-kernel-stationary-data}

In this subsection, each $X_i$ is distributed according to the invariant distribution $\pi$.
The first result below shows that, whatever the dependence scheme of the data, the family of look-ahead estimators consists of consistent and unbiased estimators of $\pi$, two fundamental properties of the classical empirical estimator $\Empirical$.

\begin{proposition}[\FrameworkIidStationary\,$\cup$\,\FrameworkMarkovStationary]
\label{prop:consistency-unbiasedness}
Assume that the data $(X_i)_{\IndexBetween{i}{1}{n}}$ are either independent with common distribution $\pi$, or form a stationary Markov chain with kernel $\KernelMatrix$.
For $k \in \Naturals$, the estimator $\PiHat{k}$ satisfies, in the pointwise topology,
\[
  \PiHat{k} \ConvergesAlmostSurely \pi.
\]
Moreover, for any $x \in \StateSpace$, $\Expect[\PiHatAt{k}{x}] = \pi(x)$.
\end{proposition}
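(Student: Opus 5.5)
Unbiasedness comes first, since it is immediate. For every $i$, $X_i \sim \pi$ under both schemes. Since $0 \le \Kernel[k]{x}{X_i} \le 1$, the expectation exists and
\[
  \Expect\bigl[\Contribution{k}{i}{x}\bigr] = \SumOver{y} \pi(y)\Kernel[k]{x}{y} = (\pi \KernelMatrix[k])(x) = \pi(x).
\]
The last equality follows by iterating the invariance relation $\pi = \pi\KernelMatrix$. Averaging over $i$ gives $\Expect[\PiHatAt{k}{x}] = \pi(x)$.

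For almost sure convergence, I fix $x \in \StateSpace$ and apply a strong law of large numbers to the bounded function $g_x(y) = \Kernel[k]{x}{y}$, noting that $\PiHatAt{k}{x} = \Empirical(g_x)$.

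In scheme \FrameworkIidStationary{}, the variables $g_x(X_i)$ are i.i.d. and bounded, so Kolmogorov's strong law gives $\PiHatAt{k}{x} \to \pi(g_x) = \pi(x)$ almost surely.

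In scheme \FrameworkMarkovStationary{}, I invoke the ergodic theorem for positive recurrent irreducible Markov chains, which holds for any initial distribution and in particular for $\pi$. For every $g \in \Lspace{1}{\pi}$ it gives $n^{-1}\sum_{i} g(X_i) \to \pi(g)$ almost surely. Boundedness of $g_x$ makes integrability automatic, and again $\pi(g_x) = \pi(x)$. An alternative route would be the known convergence $\Empirical(y) \to \pi(y)$ for each $y$ combined with a dominated-convergence argument; the ergodic theorem applied directly to $g_x$ avoids that.

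Pointwise convergence means convergence for every $x$ simultaneously on a single event of probability one. Since $\StateSpace$ is countable, I intersect the countably many almost sure events, one for each $x$, and obtain almost sure convergence of $\PiHat{k}$ to $\pi$ in the pointwise topology.

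I do not expect a genuine obstacle. The one point needing care is which form of the ergodic theorem to cite in the Markov case: it must allow functions other than indicators. This is standard (e.g.\ Meyn--Tweedie, Theorem~17.0.1), and I will cite it for bounded functions.
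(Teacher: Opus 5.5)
Your proposal is correct and follows the paper's proof: unbiasedness comes from $\sum_{y} \pi(y) P^{k}(x \mid y) = (\pi P^{k})(x) = \pi(x)$, and almost sure convergence comes from the strong law of large numbers for i.i.d.\ or Markovian data. The details you add, namely boundedness of $g_x$ and the countable intersection of almost sure events to get simultaneous pointwise convergence over $\mathcal{X}$, are left implicit in the paper and are handled correctly.
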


\begin{proof}
This result follows directly from the strong law of large numbers (for independent or Markovian data) and from the fact that, for any $x \in \StateSpace$ and $\IndexBetween{i}{1}{n}$,
\[
  \Expect\left[\Contribution{k}{i}{x}\right]
  = \SumOver{y} \pi(y) \Kernel[k]{x}{y} = \pi(x). \qedhere
\]
\end{proof}

As noted in the introduction, when $\KernelMatrix$ is not assumed to be known, the empirical estimator $\Empirical$ is the nonparametric maximum likelihood estimator of $\pi$, and therefore asymptotically minimises the variance among all unbiased estimators of $\pi$.
It is thus natural to ask what variance reduction is achieved by the look-ahead estimators $\PiHat{k}$, first under independence and then under Markovian dependence.
When the data are independent, the answer requires no asymptotic framework and takes the following simple form.

\begin{proposition}[\FrameworkIidStationary]
\label{prop:variance-iid-pointwise}
Assume that the data $(X_i)_{\IndexBetween{i}{1}{n}}$ are independent with common distribution $\pi$.
For $k \in \Naturals$ and $x \in \StateSpace$,
\[
  \Variance\left(\PiHatAt{k}{x}\right)
  = \frac{1}{n} \left(\SumOver{y} \pi(y) \Kernel[k]{x}{y}^{2} - \pi(x)^{2}\right).
\]
Moreover,
\[
  \Variance\left(\PiHatAt{k+1}{x}\right)
  \leq \Variance\left(\PiHatAt{k}{x}\right)
  \leq \Variance\left(\EmpiricalAt{x}\right).
\]
\end{proposition}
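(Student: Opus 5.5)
The variance formula is a direct computation from independence. Since the $X_i$ are i.i.d.\ with law $\pi$, the contributions $\Contribution{k}{i}{x} = \Kernel[k]{x}{X_i}$ are i.i.d.\ as well. Hence $\Variance(\PiHatAt{k}{x}) = n^{-1}\Variance(\Kernel[k]{x}{X_1})$. We have $\Expect[\Kernel[k]{x}{X_1}^2] = \SumOver{y}\pi(y)\Kernel[k]{x}{y}^2$, and Proposition~\ref{prop:consistency-unbiasedness} gives $\Expect[\Kernel[k]{x}{X_1}] = \pi(x)$. These two facts yield the stated formula. All sums are finite since $0\le \Kernel[k]{x}{y}\le 1$.

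For the monotonicity, the mean term $\pi(x)^2$ does not depend on $k$. It therefore suffices to show
\[
  \SumOver{y}\pi(y)\Kernel[k+1]{x}{y}^2 \leq \SumOver{y}\pi(y)\Kernel[k]{x}{y}^2 .
\]
The right inequality, with the empirical estimator, is then the case $k=0$ iterated. The plan is a Jensen argument with respect to the kernel, with the reversed order of indices handled through the stationary time-reversal. Write $\Kernel[k+1]{x}{y} = \sum_z \Kernel{z}{y}\Kernel[k]{x}{z}$ with $z$ the intermediate state; in the paper's notation $\Kernel{z}{y}$ is the probability of going from $y$ to $z$. By Cauchy--Schwarz (Jensen) with respect to the probability vector $\Kernel{\cdot}{y}$,
\[
  \Kernel[k+1]{x}{y}^2 \leq \sum_z \Kernel{z}{y}\,\Kernel[k]{x}{z}^2 .
\]
Multiplying by $\pi(y)$, summing over $y$, and using the invariance $\sum_y \pi(y)\Kernel{z}{y} = \pi(z)$ gives
\[
  \SumOver{y}\pi(y)\Kernel[k+1]{x}{y}^2 \leq \SumOver{z}\pi(z)\Kernel[k]{x}{z}^2 ,
\]
which is exactly the desired inequality. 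Equivalently, $\Kernel[k+1]{x}{X_1}$ is the conditional expectation of $\Kernel[k]{x}{X_2}$ given $X_1$, where $X_2$ is one step of the chain started from $X_1\sim\pi$. Since $X_2\sim\pi$ as well, the conditional Jensen inequality gives the result.

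The only step requiring care is getting the direction of the composition right, i.e.\ splitting $P^{k+1}=P\,P^k$ so that the averaging happens over the first step from $y$ and invariance absorbs it. The other splitting $P^{k}P$ would not directly close. Interchanging sums is justified by nonnegativity (Tonelli), so no further obstacle is expected, and no reversibility is needed.
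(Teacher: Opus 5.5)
Your proof is correct and is essentially the paper's: the paper proves the general-$f$ version (Proposition~\ref{prop:variance-iid-functional}) and specialises to $f=\Dirac{x}$, using the same ingredients. These are the decomposition of $\Kernel[k+1]{\cdot}{y}$ over the first step from $y$, Jensen's inequality with respect to $\Kernel{\cdot}{y}$, and invariance $\pi\KernelMatrix=\pi$ to absorb the extra step. The mention of a ``stationary time-reversal'' in your plan is unnecessary, since the argument you actually carry out is purely forward in time, as your conditional-expectation reformulation shows.
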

\begin{proof}
This result is a special case of Proposition~\ref{prop:variance-iid-functional}, whose proof is given in Appendix~\ref{app:proof-variance-monotone}.
\end{proof}

Under Markovian dependence, the situation is markedly more delicate and requires more work.
In particular, we shall only establish asymptotic results, starting with the following componentwise central limit theorem, whose main interest lies in the explicit expression it provides for the asymptotic variance of the limiting Gaussian distribution.

\begin{proposition}[\FrameworkMarkovStationary]
\label{prop:clt-pointwise}
Assume that the data $(X_i)_{\IndexBetween{i}{1}{n}}$ form a stationary Markov chain with kernel $\KernelMatrix$.
For $k \in \Naturals$ and $x \in \StateSpace$,
\[
  \sqrt{n}\left(\PiHatAt{k}{x} - \pi(x)\right)
  \ConvergesInDistribution \Gaussian{0}{\AsymptoticVariance[k]{x}},
\]
with
\begin{equation}\label{eq:formula:sigmakcarre}
  \AsymptoticVariance[k]{x}
  = \SumOver{y} \pi(y) \Kernel[k]{x}{y}^{2} - \pi(x)^{2}
  + 2 \SumFromTo{i}{1}{+\infty}
      \left(\SumOver{y} \pi(y) \Kernel[k]{x}{y}
      \left\{\Kernel[i+k]{x}{y} - \pi(x)\right\}\right).
\end{equation}
\end{proposition}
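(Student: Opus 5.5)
The plan is to recognise $\PiHatAt{k}{x}$ as an ergodic average of a bounded function along the chain and then invoke the Markov chain central limit theorem for uniformly ergodic chains. Fix $k$ and $x$ and set $g(y) = \Kernel[k]{x}{y}$. Then $\Contribution{k}{i}{x} = g(X_i)$ and $\PiHatAt{k}{x} = n^{-1}\SumFromTo{i}{1}{n} g(X_i)$. Moreover $0 \le g \le 1$, so $g$ is bounded and in particular $g \in \Ltwopi$. Proposition~\ref{prop:consistency-unbiasedness} gives $\pi(g) = \pi(x)$. Because $\KernelMatrix$ is uniformly ergodic and $g$ is bounded, the classical CLT for additive functionals of uniformly ergodic chains (see~\cite{DMPS2018} or~\cite[Theorem~17.0.1]{MeynTweedie2009}) applies. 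It yields
\[
\sqrt{n}\left(\PiHatAt{k}{x} - \pi(x)\right) \ConvergesInDistribution \Gaussian{0}{\sigma^2(g)},
\]
with $\sigma^2(g) = \Variance_\pi(g(X_1)) + 2\SumFromTo{i}{1}{+\infty}\Covariance_\pi(g(X_1), g(X_{1+i}))$. The series converges absolutely since the covariances decay geometrically. The theorem holds for any initial law, and in particular under stationarity.

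It then remains to identify this variance with~\eqref{eq:formula:sigmakcarre}. The variance term is
\[
\SumOver{y}\pi(y)g(y)^2 - \pi(g)^2 = \SumOver{y}\pi(y)\Kernel[k]{x}{y}^2 - \pi(x)^2 .
\]
For the covariance terms, use the Markov property under stationarity:
\[
\Expect[g(X_1)g(X_{1+i})] = \SumOver{y}\pi(y)g(y)\,(P^i g)(y).
\]
By Chapman--Kolmogorov,
\[
(P^i g)(y) = \SumOver{z}\Kernel[i]{z}{y}\Kernel[k]{x}{z} = \Kernel[i+k]{x}{y}.
\]
Subtracting $\pi(g)^2 = \pi(x)\SumOver{y}\pi(y)g(y)$ gives
\[
\Covariance_\pi(g(X_1), g(X_{1+i})) = \SumOver{y}\pi(y)\Kernel[k]{x}{y}\left\{\Kernel[i+k]{x}{y} - \pi(x)\right\},
\]
which is exactly the summand in~\eqref{eq:formula:sigmakcarre}.

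The main point requiring care is the convergence of the series and the exchange of sums. Uniform ergodicity gives $\sup_y |\Kernel[m]{x}{y} - \pi(x)| \le C\rho^m$ for some $C$ and $\rho<1$. Hence each summand is bounded in absolute value by $C\rho^{i+k}\SumOver{y}\pi(y)\Kernel[k]{x}{y} = C\rho^{i+k}\pi(x)$, so the series converges absolutely. Everything is nonnegative or dominated, so the interchanges of summation over a countable $\StateSpace$ are justified by Fubini.
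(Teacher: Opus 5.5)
Your proposal is correct and follows essentially the paper's route: the paper also invokes the central limit theorem for Markov chains with limiting variance $\Variance(Y_1)+2\sum_{i\geq 2}\Covariance(Y_1,Y_i)$, and identifies the covariances through Chapman--Kolmogorov, $\sum_z P^{i}(z\mid y)\,P^{k}(x\mid z)=P^{i+k}(x\mid y)$. The only difference is organisational: the paper proves the functional version for $f\in L^2(\pi)$ and then specialises to $f=\delta_x$, whereas you work directly with the bounded function $y\mapsto P^{k}(x\mid y)$ and also make the absolute convergence of the series explicit.
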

\begin{proof}
This result is a special case of Proposition~\ref{prop:clt-functional}, whose proof is given in Appendix~\ref{app:proof-clt}.
\end{proof}

Summing over the state space, we also obtain the following global result.

\begin{lemma}[\FrameworkMarkovStationary]
\label{lem:summability}
Assume that the data $(X_i)_{\IndexBetween{i}{1}{n}}$ form a stationary Markov chain with kernel $\KernelMatrix$.
For $k \in \Naturals$,
\[
  \SumOver{x} \AsymptoticVariance[k]{x} < +\infty,
\]
and, when $n$ goes to infinity,
\begin{equation}\label{eq:sumvar:markov:stationary}
  \SumOver{x} \Variance\left(\PiHatAt{k}{x}\right)
  = \frac{1}{n} \SumOver{x} \AsymptoticVariance[k]{x}
  + \LittleO\!\left(\frac{1}{n}\right).
\end{equation}
\end{lemma}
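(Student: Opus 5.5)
The plan is to write the finite-$n$ variance exactly, control it by a summable dominating sequence, and then pass to the limit using dominated convergence over both the lag index and the state space. By stationarity, for each $x$,
\[
  n\,\Variance\left(\PiHatAt{k}{x}\right)
  = \gamma_0(x) + 2\SumFromTo{i}{1}{n-1}\left(1-\frac{i}{n}\right)\gamma_i(x),
\]
where
\[
  \gamma_i(x)=\Covariance\left(\Contribution{k}{0}{x},\Contribution{k}{i}{x}\right)
  = \SumOver{y}\pi(y)\Kernel[k]{x}{y}\left\{\Kernel[i+k]{x}{y}-\pi(x)\right\}.
\]
This formula follows from the Markov property applied to $\Kernel[k]{x}{X_i}$. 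Comparing with \eqref{eq:formula:sigmakcarre}, it suffices to exhibit a summable dominating sequence $c_i(x)$ with $|\gamma_i(x)|\le c_i(x)$ and
\[
  \SumOver{x}\SumFromTo{i}{0}{+\infty} c_i(x)<+\infty.
\]
Dominated convergence on $\Naturals\times\StateSpace$ then gives both the finiteness of $\SumOver{x}\AsymptoticVariance[k]{x}$ and the convergence $n\SumOver{x}\Variance(\PiHatAt{k}{x})\to\SumOver{x}\AsymptoticVariance[k]{x}$. The convergence step uses the weights $(1-i/n)\Indicator{i<n}\to 1$, which are bounded by $1$; this is exactly \eqref{eq:sumvar:markov:stationary}.

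The domination comes from uniform ergodicity. It yields constants $C<\infty$ and $\rho<1$ with
\[
  \sup_y \SumOver{x}\left|\Kernel[m]{x}{y}-\pi(x)\right| \le C\rho^m .
\]
Using $0\le \Kernel[k]{x}{y}\le 1$, I would bound
\[
  |\gamma_i(x)|\le c_i(x):=\SumOver{y}\pi(y)\Kernel[k]{x}{y}\left|\Kernel[i+k]{x}{y}-\pi(x)\right|.
\]
Summing over $x$ and exchanging sums by Tonelli gives
\[
  \SumOver{x}c_i(x)\le \SumOver{y}\pi(y)\SumOver{x}\left|\Kernel[i+k]{x}{y}-\pi(x)\right|\le C\rho^{i+k}.
\]
This is summable in $i$. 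The $i=0$ term is handled the same way, since $c_0(x)\le\SumOver{y}\pi(y)\Kernel[k]{x}{y}=\pi(x)$, which is summable in $x$.

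The step needing the most care is the interchange of the series over $i$ with the sum over $x$. Here I would place the absolute values \emph{inside} the $y$-sum before summing over $x$, so that the total-variation bound applies uniformly in the starting point $y$. Summing $|\gamma_i(x)|$ pointwise and only then bounding would not produce a uniform-in-$x$ summable majorant. With the bound in the form above, Fubini--Tonelli justifies every rearrangement. Finally, the limit in $i$ of the pointwise series defining $\AsymptoticVariance[k]{x}$ is absolutely convergent for each $x$, and this absolute convergence is consistent with Proposition~\ref{prop:clt-pointwise}.
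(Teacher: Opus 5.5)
Your proposal is correct and follows essentially the same route as the paper: the same exact finite-$n$ variance formula, and the same majorant with the absolute value placed inside the $y$-sum so that the uniform total-variation bound gives $\sum_x c_i(x)\le C\rho^{i+k}$. The only difference is that the paper bounds the remainder $\frac{1}{n}\sum_{i\le n} iF_i(x)+\sum_{i>n}F_i(x)$ explicitly instead of invoking dominated convergence, which shows that the error term is in fact $O(n^{-2})$ rather than merely $o(n^{-1})$.
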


\ProofDeferred{app:proof-summability}

Since the look-ahead estimators are unbiased by Proposition~\ref{prop:consistency-unbiasedness}, this shows that
\begin{equation}\label{eq:mseinfty:stationaryMC}
    \AsymptoticMSE\left(\PiHat{k}\right) = \SumOver{x} \AsymptoticVariance[k]{x},
\end{equation}
where $\AsymptoticMSE$ is defined in~\eqref{eq:def:amse}.
Consequently, comparing asymptotic MSEs amounts to studying the variations of the sequence $(\SumOver{x} \AsymptoticVariance[k]{x})_{k \in \Naturals}$, which is the content of the following theorem.

\begin{theorem}[\FrameworkMarkovStationary]
\label{thm:monotone-amse}
Assume that the data $(X_i)_{\IndexBetween{i}{1}{n}}$ form a stationary Markov chain with kernel $\KernelMatrix$.
It holds that
\[
  \SumOver{x} \AsymptoticVariance[1]{x}
  \leq \SumOver{x} \AsymptoticVariance[0]{x}.
\]
If $\KernelMatrix$ is also assumed to be reversible, then for any $k \in \Naturals$,
\[
  \SumOver{x} \AsymptoticVariance[k+1]{x}
  \leq \SumOver{x} \AsymptoticVariance[k]{x}.
\]
On the contrary, there exists a non-reversible kernel $\KernelMatrix$ such that
\[
\SumOver{x} \AsymptoticVariance[2]{x}
  > \SumOver{x} \AsymptoticVariance[1]{x}.
\]
\end{theorem}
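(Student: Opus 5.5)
The plan is to write everything in matrix form and then use a Poisson-equation representation. Let $\DiagonalStationary=\mathrm{diag}(\pi)$, $\StationaryProjector=\Ones\pi$ and $Q=\KernelMatrix-\StationaryProjector$, so that $\KernelMatrix[m]-\StationaryProjector=Q^{m}$ for $m\geq 1$. Writing $A(y,x)$ for the probability of going from $y$ to $x$, the contribution $\Contribution{k}{i}{x}$ is $h^{k}_{x}(X_i)$ with $h^{k}_{x}=\KernelMatrix[k]\Indicator{\cdot=x}$. Summing \eqref{eq:formula:sigmakcarre} over $x$ gives
\[
  V_k:=\SumOver{x}\AsymptoticVariance[k]{x}
  =\Trace\bigl(\DiagonalStationary Q_k Q_k\Transpose\bigr)
  +2\SumFromTo{i}{1}{+\infty}\Trace\bigl(\DiagonalStationary Q_k (Q^{i+k})\Transpose\bigr),
\]
with $Q_k=Q^{k}$ for $k\geq1$ and $Q_0=\Identity-\StationaryProjector$. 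To get this, I discard the cross terms using $\pi\KernelMatrix[k]=\pi$ and $\pi Q^m=0$; each of them is of the form $\Trace(\DiagonalStationary\,\cdot\,\StationaryProjector\Transpose)$ or $\Trace(\DiagonalStationary\StationaryProjector\,\cdot)$ and vanishes. Absolute convergence of the series, and of the sum over $x$, is taken from Lemma~\ref{lem:summability} and the geometric decay coming from uniform ergodicity. I will also use the identity $\Trace(A\Transpose\DiagonalStationary B)=\SumOver{x}\langle A\Indicator{\cdot=x},B\Indicator{\cdot=x}\rangle_\pi$.

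\emph{Reversible case.} Reversibility reads $\KernelMatrix\Transpose=\DiagonalStationary\KernelMatrix\DiagonalStationary^{-1}$, hence $(Q^m)\Transpose=\DiagonalStationary Q^m\DiagonalStationary^{-1}$. Substituting this into the trace formula gives
\[
  V_k=\Trace(\DiagonalStationary Q^{2k})+2\SumFromTo{i}{1}{+\infty}\Trace(\DiagonalStationary Q^{2k+i})
\]
for $k\geq1$. For $k=0$ the same formula holds once the first term is read as $\Trace(\DiagonalStationary(\Identity-\StationaryProjector))$. Taking the difference of consecutive values, the series telescope:
\[
  V_k-V_{k+1}=\Trace\bigl(\DiagonalStationary\,Q_k^{2}(\Identity+Q)^2\bigr)
  =\SumOver{x}\NormPi{Q^{k}(\Identity+Q)\Indicator{\cdot=x}}^{2}\cdot\frac{1}{\pi(x)}\,\pi(x)\geq0.
\]
The last step uses that $Q$ is self-adjoint in $\Ltwopi$, so that $\pi(x)\,(Q^{2k}(\Identity+Q)^2)(x,x)=\NormPi{Q^k(\Identity+Q)\Indicator{\cdot=x}}^2$; for $k=0$, $Q^0$ is replaced by $\Identity-\StationaryProjector$. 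Telescoping the convergent series needs some care, but only routine bookkeeping with the geometric bounds. This route avoids inverting $\Identity-Q$ and avoids any spectral theorem on an infinite state space.

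\emph{One step without reversibility.} I will use the Poisson representation of the asymptotic variance, $\AsymptoticVariance{h}=\NormPi{g}^2-\NormPi{\KernelMatrix g}^2$, where $g=\sum_{j\geq0}Q^j(h-\pi(h))$. Let $g_x$ be the Poisson solution for $h^0_x=\Indicator{\cdot=x}$. Since $\KernelMatrix$ commutes with $\sum_j Q^j$ on centred functions, the Poisson solution for $h^1_x=\KernelMatrix h^0_x$ is $\KernelMatrix g_x$. Setting $a_j=\SumOver{x}\NormPi{\KernelMatrix[j]g_x}^2$, this gives $V_0-V_1=a_0-2a_1+a_2$, and the goal becomes the discrete convexity inequality $a_0+a_2\geq 2a_1$. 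Summing over $x$ turns every quantity into a weighted Hilbert--Schmidt trace of $G=\FundamentalMatrix(\Identity-\StationaryProjector)$, whose columns are the $g_x$. The defining relation $(\Identity-\KernelMatrix)G=\Identity-\StationaryProjector$ should let me rewrite $a_0-a_1=\SumOver{x}\bigl(2\langle g_x,h^0_x\rangle_\pi-\NormPi{h^0_x}^2\bigr)$, and similarly $a_1-a_2$ with $h^1_x$. The difference then becomes an expression in traces of $\DiagonalStationary\FundamentalMatrix$ and $\KernelMatrix\Transpose\DiagonalStationary\KernelMatrix\FundamentalMatrix$. My first attempt at the sign will be to recognise it as $\SumOver{x}\NormPi{(\Identity-\KernelMatrix)\,\cdot\,}^2$ of a suitable vector, or as a sum of squares obtained from Cauchy--Schwarz applied to $\langle g_x,\KernelMatrix g_x\rangle_\pi$. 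This is where the special structure of the indicator family enters: the inequality fails for a single general $h$, so it must come from summing over $x$, which turns the unweighted sum over indicators into a trace.

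\emph{Counterexample, and the main obstacle.} For the last claim I will take a small non-reversible chain on three states, for instance a perturbed rotation $\KernelMatrix=(1-\varepsilon)R+\varepsilon\StationaryProjector$ with $R$ a cyclic permutation, possibly with unequal weights. I will compute $V_1$ and $V_2$ in closed form from the trace formula, using $\sum_{i}Q^{i}=Q(\Identity-Q)^{-1}$, and exhibit a parameter value with $V_2>V_1$. The expected mechanism is that $Q^2$ correlates negatively with $Q$, so the series terms change sign between $k=1$ and $k=2$. I expect the main obstacle to be the non-reversible inequality $V_1\leq V_0$, specifically finding the right sum-of-squares identity. The reversible monotonicity and the counterexample should be mechanical once the trace formula is in place.
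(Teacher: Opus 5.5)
Your reversible argument is correct and differs from the paper's. Under detailed balance each term of your trace formula collapses to $\Trace(\DiagonalStationary Q^{2k+i})$, and telescoping gives the explicit identity $V_k-V_{k+1}=\sum_{x}\NormPi{(Q_k+Q^{k+1})\Indicator{\cdot=x}}^{2}$. This avoids the fundamental operator $\FundamentalMatrix$, the positivity of $2\FundamentalMatrix-\Identity$, the trace-class argument and the square roots of commuting positive operators that the paper relies on. The other two claims, however, are not established.

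\emph{The inequality $V_1\le V_0$ without reversibility.} Your reduction to $a_0+a_2\ge 2a_1$ is correct, but what follows is only a list of things you might try. So the part of the theorem the paper presents as original is left open.

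\begin{itemize}
\item An argument using only that the $g_x$ are centred Poisson solutions and that $P$ contracts $\Ltwopi$ cannot close the gap. It would apply verbatim to the $Pg_x$, the Poisson solutions for $P\Indicator{\cdot=x}$. There the same second-difference inequality reads $a_1+a_3\ge 2a_2$, i.e.\ $V_2\le V_1$, and this fails for the paper's counterexample.
\item The proof must therefore use that the right-hand side of $(\Identity-P)g_x=\Indicator{\cdot=x}-\pi(x)\Ones$ is a centred indicator. The paper does this probabilistically. Since $\pi(x)\HittingTime{y}{x}=g_x(x)-g_x(y)$, a short computation with the Poisson equation (the paper uses the hitting-time equation and the $\Ltwopi$-adjoint) gives the exact identity
\[
  V_0-V_1=\SumOverDistinctXY \pi(z)\Kernel{x}{z}\Kernel{y}{z}
  \bigl(\pi(x)\HittingTime{y}{x}+\pi(y)\HittingTime{x}{y}-1\bigr).
\]
\item Each bracket is non-negative. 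The commute time from $x$ through $y$ back to $x$ dominates the return time to $x$, whose mean is $1/\pi(x)$, and symmetrically for $y$. Hence $\HittingTime{x}{y}+\HittingTime{y}{x}\ge 1/\min(\pi(x),\pi(y))$, so $\pi(x)\HittingTime{y}{x}+\pi(y)\HittingTime{x}{y}\ge 1$.
\end{itemize}

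This commute-time bound is the ingredient your proposal is missing.

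\emph{The counterexample.} You exhibit no kernel, and the family you suggest cannot produce one.

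\begin{itemize}
\item Take $P=(1-\varepsilon)R+\varepsilon\StationaryProjector$ with $R$ a cyclic permutation of three states. Then $\pi$ is uniform and $Q^{m}=(1-\varepsilon)^{m}(R^{m}-\StationaryProjector)$. Your own trace formula then gives $V_k=(1-\varepsilon)^{2k}V_0$, which is strictly decreasing in $k$.
\item More generally, whenever $P$ is normal in $\Ltwopi$ (for example every circulant or every reversible kernel), diagonalising $Q$ shows that $V_k-V_{k+1}=\Trace(H_k\DiagonalStationary)$ with $H_k$ positive semidefinite, so $V_k-V_{k+1}\ge 0$.
\item So the sign pattern of the series terms is not the relevant mechanism: a counterexample must be non-normal.
\end{itemize}

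The paper uses the cycle $1\to2\to3\to1$ with holding probability $1/10$ at state $2$, so that $\pi=(9,10,9)/28$. For this kernel one finds $V_0\approx0.0246$, $V_1\approx0.0200$ and $V_2\approx0.0202$.
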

\ProofDeferred{proof:thm:monotone-amse}

This result extends the main theorem of~\cite{McKeagueWefelmeyer2000} in several respects.
First, it shows that the one-step look-ahead estimator $\PiHat{1}$ asymptotically performs at least as well as $\Empirical$ even without the reversibility assumption, which is an original result.
Moreover, it establishes the decrease of the asymptotic MSE for the estimation of the invariant distribution in the reversible setting.
Finally, without claiming that reversibility is a minimal assumption, it shows that it is a relevant one, since there exists a non-reversible transition kernel for which the asymptotic variance does not decrease with $k$.

\subsection{Estimation from non-stationary data}
\label{subsec:known-kernel-non-stationary}

In this subsection, we extend the analysis to the case where the data do not necessarily follow the invariant distribution $\pi$.
In the independent setting, all the observations are distributed according to a probability distribution $\mu$ assumed to differ from $\pi$.
In the Markovian setting, the initial state $X_1$ is distributed according to $\mu$ and the chain then evolves according to the kernel $\KernelMatrix$, that is, $X_i$ is distributed according to $\mu \KernelMatrix[i-1]$.

In contrast with the stationary setting, the look-ahead estimators, $\Empirical$ included, are in general biased as soon as $\mu \neq \pi$.
Whether the data are independent or Markovian, the (non-asymptotic) bias is studied in the following proposition.

\begin{proposition}[\FrameworkIidGeneral\,$\cup$\,\FrameworkMarkovGeneral]
\label{prop:bias-decreasing-iid}
Assume that the data $(X_i)_{\IndexBetween{i}{1}{n}}$ are either independent with common distribution $\mu$, or form a Markov chain with kernel $\KernelMatrix$ and initial distribution $\mu$.
Suppose that $\KernelMatrix$ is doubly stochastic.
Then, for $k \in \Naturals$,
\[
  \SumOver{x} \Bias^{2}\!\left(\PiHatAt{k+1}{x}\right) \leq \SumOver{x} \Bias^{2}\!\left(\PiHatAt{k}{x}\right).
\]
On the contrary, there exists a reversible kernel $\KernelMatrix$ that is not doubly stochastic and such that
\[
  \SumOver{x} \Bias^{2}\!\left(\PiHatAt{1}{x}\right) > \SumOver{x} \Bias^{2}\!\left(\PiHatAt{0}{x}\right).
\]
\end{proposition}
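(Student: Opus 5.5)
The plan is to write the bias in matrix form and then use that a doubly stochastic kernel is a contraction on $\ell^{2}$. First, in both observation schemes, $\Expect[\PiHat{k}] = \bar\mu_n \KernelMatrix[k]$. Here $\bar\mu_n$ is the average law of the observations: in the independent case $\bar\mu_n = \mu$, and in the Markovian case $\bar\mu_n = n^{-1}\SumFromTo{i}{1}{n} \mu \KernelMatrix[i-1]$. In both cases $\bar\mu_n$ is a probability distribution. By linearity of expectation and invariance $\pi \KernelMatrix[k] = \pi$, the bias vector is
\[
  b_k = \Expect[\PiHat{k}] - \pi = (\bar\mu_n - \pi)\KernelMatrix[k] = b_0 \KernelMatrix[k],
\]
so $b_{k+1} = b_k \KernelMatrix$. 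The quantity $\SumOver{x}\Bias^{2}(\PiHatAt{k}{x})$ is $\NormTwo{b_k}^{2}$, which is finite because $b_k$ is a difference of two probability vectors and hence lies in $\ell^{1} \subset \ell^{2}$.

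It therefore suffices to show $\NormTwo{v\KernelMatrix}\le\NormTwo{v}$ for every $v\in\ell^{2}(\StateSpace)$ when $\KernelMatrix$ is doubly stochastic. For each $y$, the column sums give $\SumOver{x}\Kernel{y}{x}=1$, so by Jensen (or Cauchy--Schwarz),
\[
  (v\KernelMatrix)(y)^{2} = \Bigl(\SumOver{x} v(x)\Kernel{y}{x}\Bigr)^{2} \le \SumOver{x} v(x)^{2}\Kernel{y}{x}.
\]
Summing over $y$ and exchanging the sums by Tonelli, the row sums equal one, so $\SumOver{y}\Kernel{y}{x} = 1$ and the right-hand side totals $\NormTwo{v}^{2}$. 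This gives the monotonicity claim.

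For the counterexample, I would take a two-state chain on $\{1,2\}$ with $\Kernel{2}{1}=a$ and $\Kernel{1}{2}=b$, where $a\neq b$. Every two-state irreducible chain is reversible, and it is doubly stochastic only when $a=b$. Here $\pi=(b,a)/(a+b)$. Any signed vector $v$ with zero total mass is proportional to $(1,-1)$, and
\[
  (1,-1)\KernelMatrix = (1-a-b)(1,-1).
\]
This looks like a contraction at first sight. It is one in the $\ell^{2}$ norm of the zero-mass direction, but that is not the obstacle here: the factor $|1-a-b|$ is at most one, so two states never give a counterexample. This identifies the main step: we need at least three states, where the zero-mass subspace is not invariant-orthogonal in $\ell^{2}$.

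I would then try a birth--death (hence reversible) chain on $\{1,2,3\}$ with very unequal stationary weights. One candidate is the walk on a star or path with holding probabilities, for instance
\[
  \KernelMatrix=\begin{pmatrix}0&1&0\\ 1/2&0&1/2\\ 0&1&0\end{pmatrix}
\]
(rows indexed by the current state), made aperiodic by adding laziness if needed. I would take $\mu=\Dirac{1}$ in the independent scheme and compute $b_0=\mu-\pi$ and $b_1=\mu\KernelMatrix-\pi$ explicitly. Since $\mu\KernelMatrix=\Dirac{2}$ and $\pi=(1/4,1/2,1/4)$, we get $b_0=(3/4,-1/2,-1/4)$ and $b_1=(-1/4,1/2,-1/4)$, with $\NormTwo{b_0}^{2}=14/16$ and $\NormTwo{b_1}^{2}=6/16$. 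This choice fails.

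So the hard step is choosing $\mu$ so that the mass flows toward a state of small $\pi$. I would instead pick $\mu=\Dirac{2}$, which gives $b_0=(-1/4,1/2,-1/4)$ with squared norm $6/16$, and $\mu\KernelMatrix=(1/2,0,1/2)$, which gives $b_1=(1/4,-1/2,1/4)$ with squared norm $6/16$. These are equal, not strictly increasing. To break the tie, I would add a small laziness $\varepsilon$ at the endpoints, or asymmetric weights, and compute the difference to first order in $\varepsilon$, adjusting the parameters until the inequality is strict. The Markovian case follows from the same vector computation with $\bar\mu_n$ in place of $\mu$, or it suffices to exhibit the example in one scheme as the statement requires.
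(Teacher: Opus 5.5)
Your monotonicity argument is correct and is the paper's own proof. Writing the bias vector as $(\bar\mu_n-\pi)P^k$, applying Jensen with the column sums $\sum_x P(y\mid x)=1$, and then Tonelli with the row sums is exactly what the appendix does. The paper treats the two schemes separately through $D_k$ and $D_k^n$; your $\bar\mu_n$ unifies them.

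The gap is in the second half. It is an existence claim, and you never exhibit an instance. The chains you compute fail, and the repair you propose cannot work as stated.

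\begin{itemize}
\item With $\mu=\delta_2$ on your bipartite path, $b_0=\delta_2-\pi$ has density $(-1,1,-1)$ with respect to $\pi$. This is the eigenfunction for the eigenvalue $-1$, so $b_1=-b_0$ exactly, whatever the branching probabilities out of state $2$. Asymmetric weights therefore keep the tie.
\item Laziness only pulls that eigenvalue inside. With $(1-\varepsilon)P+\varepsilon I$ you get $b_1=(2\varepsilon-1)b_0$.
\item With laziness $\varepsilon$ at the endpoints only, you get $\|b_1\|_2^2-\|b_0\|_2^2=\tfrac32-6\pi(1)<0$, since $\pi(1)=1/(4-2\varepsilon)>1/4$.
\end{itemize}

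So your planned first-order computation goes the wrong way. The freedom has to come from $\mu$, exploiting that $\pi$ is non-uniform. For reversible $P$, writing $b=h\pi$ gives $bP=(Ph)\pi$, and $P$ contracts $\sum\pi h^2$ but not $\sum\pi^2h^2$.

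In fact, on your symmetric path $\|b_1\|_2^2=\tfrac32(\mu(2)-\tfrac12)^2\le\|b_0\|_2^2$ for every $\mu$, so no choice of $\mu$ helps there. With branching probabilities $p,1-p$ out of state $2$, however, $\|b_1\|_2^2=(\mu(2)-\tfrac12)^2\bigl(1+p^2+(1-p)^2\bigr)$. Meanwhile $\|b_0\|_2^2$ can be as small as $\tfrac32(\mu(2)-\tfrac12)^2$, by splitting the excess equally between the two endpoints. Concretely:
\begin{itemize}
\item Take $p=0.9$, so $\pi=(0.45,0.5,0.05)$, and $\mu=(0.4,0.6,0)$. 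Then $\|b_0\|_2^2=0.015<0.0182=\|b_1\|_2^2$.
\item Replace $P$ by $(1-\varepsilon)P+\varepsilon I$ with $\varepsilon=0.01$. The kernel is now aperiodic, still reversible, and has the same non-uniform $\pi$, and $\|b_1\|_2^2\approx0.0175$, so the strict inequality survives.
\end{itemize}

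The paper instead gives an explicit reversible kernel with $\pi=(1/100,1/100,98/100)$ and $\mu=(5/1000,28/1000,967/1000)$, checked numerically. Either way, a concrete verified example is required to complete the proof. Taking $n=1$ (so $\bar\mu_1=\mu$) makes the same instance serve for the Markovian scheme.
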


\ProofDeferred{app:proof-bias-decreasing-iid}

Hence, in the non-stationary setting, even for a reversible kernel, the bias may increase when $\KernelMatrix$ is applied to $\Empirical$.
Under the doubly stochastic assumption, by contrast, the bias always decreases.
On the other hand, for independent data, the variance is always reduced compared to the standard empirical estimator, even for a finite number of observations, as stated in the next proposition.

\begin{proposition}[\FrameworkIidGeneral]
\label{prop:variance-nonstationary-iid}
Assume that the data $(X_i)_{\IndexBetween{i}{1}{n}}$ are independent with common distribution $\mu$.
For $k \in \Naturals$,
\[
  \SumOver{x} \Variance\!\left(\PiHatAt{k}{x}\right)
  \leq \SumOver{x} \Variance\!\left(\EmpiricalAt{x}\right).
\]
\end{proposition}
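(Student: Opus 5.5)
The approach is to avoid the bias-dependent term $\NormTwo{\mu\KernelMatrix[k]}^2$: instead of comparing the explicit variance formulas directly, I would write the total variance as a symmetrised expectation over two independent copies of the data and then compare the two estimators pointwise, realisation by realisation.

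First, by independence and identical distribution of the $X_i$,
\[
  \SumOver{x} \Variance\!\left(\PiHatAt{k}{x}\right)
  = \frac{1}{n}\,\Expect\!\left[\NormTwo{\Dirac{X_1}\KernelMatrix[k] - \mu\KernelMatrix[k]}^{2}\right],
\]
where $\Dirac{X_1}\KernelMatrix[k] = \Kernel[k]{\cdot}{X_1}$ is the row of $\KernelMatrix[k]$ indexed by $X_1$, and $\mu\KernelMatrix[k]$ is its mean. The sum over $x$ and the expectation can be exchanged by Tonelli's theorem, since all terms are nonnegative and bounded by $1$. The case $k=0$ gives the same identity for $\Empirical$.

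Next, I would use the standard symmetrisation identity for the variance of a vector-valued variable. Let $X'$ be an independent copy of $X_1$. Then
\[
  \Expect\!\left[\NormTwo{Y - \Expect Y}^{2}\right] = \tfrac12\,\Expect\!\left[\NormTwo{Y - Y'}^{2}\right],
\]
with $Y = \Dirac{X_1}\KernelMatrix[k]$ and $Y' = \Dirac{X'}\KernelMatrix[k]$. Hence it suffices to show that, for every pair $y, y' \in \StateSpace$,
\[
  \NormTwo{\Kernel[k]{\cdot}{y} - \Kernel[k]{\cdot}{y'}}^{2} \leq \NormTwo{\Dirac{y} - \Dirac{y'}}^{2} = 2\,\Indicator{y \neq y'}.
\]

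This pointwise inequality splits into two cases.
\begin{itemize}
  \item If $y = y'$, both sides vanish.
  \item If $y \neq y'$, write $p = \Kernel[k]{\cdot}{y}$ and $q = \Kernel[k]{\cdot}{y'}$, which are probability vectors. Then $\NormTwo{p-q}^2 = \NormTwo{p}^2 + \NormTwo{q}^2 - 2\langle p, q\rangle \leq 1 + 1 - 0 = 2$. This uses $\NormTwo{p}^2 \leq \sum_x p(x) = 1$, since $0 \leq p(x) \leq 1$, and $\langle p, q\rangle \geq 0$ by nonnegativity.
\end{itemize}
Taking expectations over $(X_1, X')$ and dividing by $2n$ yields the claim.

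The only point needing care is the symmetrisation step in $\ell^2(\StateSpace)$ for countable $\StateSpace$. This is harmless: all vectors involved are in $\ell^1 \subset \ell^2$ with norms bounded by $2$, so every expectation is finite and the identity follows coordinatewise. The main conceptual obstacle is recognising that the naive formula comparison fails, because $\NormTwo{\mu\KernelMatrix[k]}$ may be smaller than $\NormTwo{\mu}$ when $\KernelMatrix$ is not doubly stochastic. The symmetrised form removes this mean term entirely.
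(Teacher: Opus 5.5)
Your proof is correct and follows essentially the same route as the paper, which also rewrites the total variance as $\frac{1}{2n}\sum_{y,z}\mu(y)\mu(z)\sum_x\bigl(\Kernel[k]{x}{y}-\Kernel[k]{x}{z}\bigr)^2$ (your symmetrisation step) and then bounds the inner sum by $2\,\Indicator{y\neq z}$. The only cosmetic difference is how the bound $2$ is justified: the paper dominates the squares by absolute values and applies the triangle inequality in $\ell^1$, whereas you expand the square and use $\NormTwo{p}^2\le 1$ together with $\langle p,q\rangle\ge 0$.
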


\ProofDeferred{app:proof-variance-nonstationary-iid}

As a consequence, the independent setting still guarantees a systematic variance reduction over the standard empirical estimator, reflecting an improvement akin to the stationary case.
However, the bias term may increase with $k$ and always persists as $n$ tends to infinity.
The look-ahead estimators are therefore not necessarily better than $\Empirical$ in terms of (asymptotic or not) MSE.
In the Markovian setting, the situation is different: the squared bias may increase, but it remains of order $\BigO(n^{-2})$, as shown in the next proposition.

\begin{proposition}[\FrameworkMarkovGeneral]
\label{prop:bias-negligible}
Assume that the data $(X_i)_{\IndexBetween{i}{1}{n}}$ form a Markov chain with kernel $\KernelMatrix$ and initial distribution $\mu$.
For $k \in \Naturals$,
\[
  \SumOver{x} \Bias^{2}\!\left(\PiHatAt{k}{x}\right)
  = \BigO\!\left(\frac{1}{n^{2}}\right).
\]
\end{proposition}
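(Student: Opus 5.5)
The plan is to write the bias explicitly and control it with the geometric convergence given by uniform ergodicity. Since $X_i \sim \mu \KernelMatrix[i-1]$, linearity gives
\[
  \Expect\left[\PiHatAt{k}{x}\right] - \pi(x)
  = \frac{1}{n}\SumFromTo{i}{1}{n} \left( \mu \KernelMatrix[i-1+k](x) - \pi(x) \right).
\]
Writing $b_j = \mu\KernelMatrix[j] - \pi$ (a signed measure on $\StateSpace$), the bias vector is therefore $\frac{1}{n}\sum_{j=k}^{n-1+k} b_j$.

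The first step is to invoke uniform ergodicity. There exist constants $C<\infty$ and $\rho\in(0,1)$ such that $\sup_{y}\NormTV{\KernelMatrix[j](\cdot\vert y)-\pi}\le C\rho^{j}$ for all $j$. Averaging over the initial distribution $\mu$ gives $\NormTV{b_j}\le C\rho^j$, uniformly in $\mu$. Consequently
\[
  \sum_{j\ge 0}\NormTV{b_j} \le \frac{C}{1-\rho} < +\infty .
\]

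The second step passes from total variation to the squared $\ell^2$ norm appearing in $\SumOver{x}\Bias^2$. Since $\NormTwo{\nu}\le \|\nu\|_{\ell^1} \le 2\NormTV{\nu}$ for a signed measure (with either normalisation of TV), the triangle inequality in $\ell^2$ yields
\[
  \Bigl(\SumOver{x}\Bias^{2}\!\left(\PiHatAt{k}{x}\right)\Bigr)^{1/2}
  = \frac{1}{n}\NormTwo{\textstyle\sum_{j=k}^{n-1+k} b_j}
  \le \frac{2}{n}\sum_{j\ge 0}\NormTV{b_j}
  \le \frac{2C}{n(1-\rho)} .
\]
Squaring gives the bound $\BigO(n^{-2})$, with a constant that is in fact independent of $k$.

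There is no real obstacle. The only points needing care are that uniform ergodicity must be used in its geometric, supremum-over-starting-points form, so that the bound transfers to an arbitrary initial distribution $\mu$ on a countable space, and the comparison between the $\ell^2$ norm and total variation. Both are standard. No reversibility or double stochasticity is required, because summability of the bias terms alone makes the averaged bias $\BigO(1/n)$.
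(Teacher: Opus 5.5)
Your proposal is correct and follows essentially the same route as the paper. The paper also writes the bias as $\frac{1}{n}\sum_{i=1}^{n}\bigl(\mu P^{i+k-1}(x)-\pi(x)\bigr)$ and bounds the sum of squares over $x$ by the square of the $\ell^1$ sum; this is equivalent to your $\ell^2$ triangle inequality combined with $\|\cdot\|_2\le\|\cdot\|_1$. It then concludes with the uniform geometric bound $\|\mu P^{i}-\pi\|_{\mathrm{TV}}\le C\rho^{i}$. The only cosmetic difference is that the paper keeps a factor $\rho^{2k}$, while you sum from $j=0$ and get a constant independent of $k$.
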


\ProofDeferred{app:proof-bias-negligible}

Moreover, the asymptotic expansion of the variance given in Lemma~\ref{lem:summability} for stationary data remains valid for non-stationary data.

\begin{proposition}[\FrameworkMarkovGeneral]
\label{prop:variance-nonstationary-markov}
Assume that the data $(X_i)_{\IndexBetween{i}{1}{n}}$ form a Markov chain with kernel $\KernelMatrix$ and initial distribution $\mu$.
For $k \in \Naturals$, when $n$ goes to infinity,
\[
  \SumOver{x} \Variance\!\left(\PiHatAt{k}{x}\right)
  = \frac{1}{n} \SumOver{x} \AsymptoticVariance[k]{x}
  + \LittleO\!\left(\frac{1}{n}\right).
\]
\end{proposition}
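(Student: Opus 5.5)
We compare the non-stationary chain with a stationary one through a coupling, relying on uniform ergodicity. Uniform ergodicity gives constants $C<\infty$ and $\rho\in(0,1)$ with $\sup_{z}\NormTV{\Kernel[m]{\cdot}{z}-\pi}\leq C\rho^{m}$. Write $\PiHatAt{k}{x}-\Expect[\PiHatAt{k}{x}] = n^{-1}\sum_{i}(Y_i-\Expect Y_i)$ with $Y_i=\Contribution{k}{i}{x}$. Then
\[
  n\,\Variance\!\left(\PiHatAt{k}{x}\right)=\frac1n\sum_{i,j}\Covariance(Y_i,Y_j),
\]
and we summarise the total over $x$ as
\[
  n\SumOver{x}\Variance\!\left(\PiHatAt{k}{x}\right)=\frac1n\sum_{i,j}c_{ij},
  \qquad c_{ij}=\SumOver{x}\Covariance\!\left(\Kernel[k]{x}{X_i},\Kernel[k]{x}{X_j}\right).
\]
The stationary quantities $c^{\pi}_{ij}=c^{\pi}_{|i-j|}$ are exactly those appearing in the proof of Lemma~\ref{lem:summability}, where we know that $\frac1n\sum_{i,j}c^\pi_{ij}\to\SumOver{x}\AsymptoticVariance[k]{x}$. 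It therefore suffices to show $\frac1n\sum_{i,j}|c_{ij}-c^\pi_{ij}|\to0$.

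The key bound is the following. For $i\leq j$, with $\ell=j-i$, set
\[
  g_{\ell}(z,w)=\SumOver{x}\Kernel[k]{x}{z}\,\Kernel[k]{x}{w}\,\Kernel[\ell]{w}{z}.
\]
Then $\Expect[Y_iY_j]$, summed over $x$, equals $\SumOver{z}(\mu\KernelMatrix[i-1])(z)\SumOver{w} g_\ell(z,w)$. Since $\SumOver{x}\Kernel[k]{x}{z}\Kernel[k]{x}{w}\leq1$, the function $h_\ell(z)=\sum_w g_\ell(z,w)$ is bounded by $1$, uniformly in $\ell$. Hence the product term differs from its stationary version by at most $2\NormTV{\mu\KernelMatrix[i-1]-\pi}\leq 2C\rho^{i-1}$.

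For the mean-product term we use
\[
  \SumOver{x}\Expect Y_i\,\Expect Y_j=\SumOver{x}(\mu\KernelMatrix[i-1+k])(x)\,(\mu\KernelMatrix[j-1+k])(x),
\]
and bound its difference from $\SumOver{x}\pi(x)^2$ by $2C(\rho^{i-1}+\rho^{j-1})$, using $\SumOver{x}a(x)b(x)\leq\max b\cdot\sum a$ with entries in $[0,1]$. Both estimates combine into
\[
  |c_{ij}-c^\pi_{ij}|\leq C'\rho^{\min(i,j)-1}.
\]

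This bound alone does not yield summability in $(i,j)$ on its own, since summing over $j$ brings in an extra factor of $n$. We therefore also need decay in $|i-j|$ for both $c_{ij}$ and $c^\pi_{ij}$, which is the main technical point. For $c_{ij}$, conditioning on $X_i$ gives
\[
  \Covariance(Y_i,Y_j)=\Covariance\!\left(Y_i,\Kernel[k+\ell]{x}{X_i}\right).
\]
Replacing $\Kernel[k+\ell]{x}{X_i}$ by $\pi(x)$ changes it by $d_\ell(x,X_i)$, where $\SumOver{x}|d_\ell(x,z)|\leq 2C\rho^{\ell}$ uniformly in $z$. Since $|Y_i|\leq1$, this gives $|c_{ij}|\leq 4C\rho^{|i-j|}$, and the same holds for $c^\pi_{ij}$. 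Combining the two estimates,
\[
  |c_{ij}-c^\pi_{ij}|\leq C''\min\!\left(\rho^{\min(i,j)},\rho^{|i-j|}\right)\leq C''\rho^{\max(\min(i,j),|i-j|)/1}.
\]
Counting the pairs $(i,j)$ with $\max(\min(i,j),|i-j|)=m$ gives $O(m)$ of them, so $\sum_{i,j}|c_{ij}-c^\pi_{ij}|\leq C''\sum_m O(m)\rho^{m}<\infty$. Dividing by $n$ gives $o(1)$, indeed $O(1/n)$, which proves the claimed expansion.

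One detail remains: exchanging $\SumOver{x}$ with the covariance when $\StateSpace$ is infinite. This is justified by Tonelli and the bounds above, since every quantity is dominated by a summable-in-$x$ expression of the form $\Kernel[k]{x}{z}$ times something bounded by $1$.
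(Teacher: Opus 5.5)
Your argument is correct, and its skeleton matches the paper's: expand $\sum_x\Variance(\PiHatAt{k}{x})$ as a double sum of covariances, compare each term with its stationary counterpart, show that $\sum_{i,j}|c_{ij}-c^{\pi}_{ij}|$ stays bounded in $n$, and conclude with Lemma~\ref{lem:summability}. Despite your first sentence, no coupling is used; the argument is a direct comparison of moments.

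The difference lies in the per-pair estimate. For $i<j$ and $\mu_i=\mu\KernelMatrix[i-1]$, the paper writes the discrepancy exactly as $\Delta_1-\Delta_2$. Here $\Delta_1=\sum_y(\mu_i(y)-\pi(y))\Kernel[k]{x}{y}\{\Kernel[j-i+k]{x}{y}-\pi(x)\}$ is a product of a burn-in error and a mixing error. The other term, $\Delta_2=\mu_{i+k}(x)\{\mu_{j+k}(x)-\pi(x)\}$, involves the marginal at the later time. After summing over $x$, both are $O(\rho^{j})$, so the error decays in the later index and the double sum reduces to $\sum_j(j-1)\rho^{j-1}$.

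You instead combine two separate bounds, $|c_{ij}-c^{\pi}_{ij}|\leq C'\rho^{\min(i,j)-1}$ and $|c_{ij}|+|c^{\pi}_{ij}|\leq 8C\rho^{|i-j|}$, through $\min(\rho^a,\rho^b)=\rho^{\max(a,b)}$. This gives the weaker but still summable rate $\rho^{\max(\min(i,j),|i-j|)}\leq\rho^{\max(i,j)/2}$, up to constants, and requires your $O(m)$ lattice count.

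The paper's route gives a sharper per-pair bound, but it relies on a specific algebraic rearrangement and handles the diagonal terms separately. Yours needs no rearrangement and treats $i=j$ and $i\neq j$ at once. It uses only two generic facts: geometric convergence of the one-time marginals, and geometric decay of lag-$\ell$ covariances of functionals bounded by $1$. It would therefore carry over unchanged to other bounded additive functionals. Both arguments use only $\sup_z\NormTV{\Kernel[m]{\cdot}{z}-\pi}\leq C\rho^m$, so yours also keeps the uniformity in $\mu$ required by Remark~\ref{rem:uniformity-initial-distribution}.
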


\ProofDeferred{app:proof-variance-nonstationary-markov}

Combining the two previous results, although the squared bias of the look-ahead estimators may increase with $k$ (Proposition~\ref{prop:bias-decreasing-iid}), it is negligible with respect to the variance, which satisfies the same asymptotic formula~\eqref{eq:sumvar:markov:stationary} as in the stationary case.
As a consequence, the asymptotic MSE of the look-ahead estimators from non-stationary Markovian data is still given by~\eqref{eq:mseinfty:stationaryMC}, and Theorem~\ref{thm:monotone-amse} applies unchanged: the one-step estimator asymptotically performs at least as well as the empirical one, and the asymptotic MSE decreases with $k$ for reversible kernels.

\begin{remark}
\label{rem:uniformity-initial-distribution}
It is worth noting that Propositions~\ref{prop:bias-negligible} and~\ref{prop:variance-nonstationary-markov} hold uniformly in the initial distribution $\mu$.
Indeed, $\mu$ only enters the proofs of these two results through the bound $\NormTV{\mu \KernelMatrix[i] - \pi} \leq C \rho^{i}$, where the constants $C > 0$ and $\rho \in [0,1[$ given by uniform ergodicity do not depend on $\mu$ (see Appendices~\ref{app:proof-bias-negligible} and~\ref{app:proof-variance-nonstationary-markov}).
This uniformity is required for the theoretical results of Section~\ref{sec:theory-parametric-kernel}.
\end{remark}


\subsection{\texorpdfstring{Estimation of $\pi(f)$}{Estimation of π(f)}}
\label{subsec:known-kernel-functional-expectation}

It is common in estimation problems that, even though the distribution $\pi$ is unknown, it is not the quantity of interest: one rather wants to estimate $\pi(f)$ for some function $f \in \Ltwopi$.
For Markovian data with a reversible kernel, the reduction of the asymptotic MSE of the look-ahead estimators $\PiHatOf{k}{f}$ with respect to the empirical estimator $\EmpiricalOf{f}$ is established in~\cite{McKeagueWefelmeyer2000}.
We complement that result by a non-asymptotic study of the MSE from independent data distributed according to $\pi$.

\begin{proposition}[\FrameworkIidStationary]
\label{prop:variance-iid-functional}
Assume that the data $(X_i)_{\IndexBetween{i}{1}{n}}$ are independent with common distribution $\pi$.
For $k \in \Naturals$, the estimator $\PiHatOf{k}{f}$ is square-integrable and satisfies
\[
  \Variance\left(\PiHatOf{k}{f}\right)
  = \frac{1}{n} \left(\SumOver{y} \pi(y)\left(\SumOver{z} f(z) \Kernel[k]{z}{y}\right)^{2}
    - \pi(f)^{2}\right).
\]
Moreover,
\[
  \Variance\left(\PiHatOf{k+1}{f}\right)
  \leq \Variance\left(\PiHatOf{k}{f}\right)
  \leq \Variance\left(\EmpiricalOf{f}\right).
\]
\end{proposition}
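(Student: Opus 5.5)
Write $\PiHatOf{k}{f} = n^{-1}\SumFromTo{i}{1}{n} (\KernelMatrix[k] f)(X_i)$, where $(\KernelMatrix[k] f)(y) = \SumOver{z} f(z)\Kernel[k]{z}{y}$ is the conditional expectation of $f(X_{k})$ given $X_0 = y$. Interchanging the sums, $\PiHatOf{k}{f} = \SumOver{z} f(z)\PiHatAt{k}{z}$ is thus an i.i.d.\ average of the variables $g_k(X_i)$, with $g_k = \KernelMatrix[k] f$. The whole proof rests on viewing $\KernelMatrix$ as an operator on $\Ltwopi$.

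The first step is square-integrability. By Jensen's inequality for the probability measure $\Kernel[k]{\cdot}{y}$, we have $g_k(y)^2 \le (\KernelMatrix[k] f^2)(y)$ componentwise for $\Reals^{d}$-valued $f$, using the Euclidean norm. Invariance then gives
\[
  \SumOver{y}\pi(y)\, g_k(y)^2 \le \SumOver{y}\pi(y)(\KernelMatrix[k] f^2)(y) = \pi(f^2) < +\infty .
\]
So $g_k \in \Ltwopi$ and $\PiHatOf{k}{f}$ is square-integrable. Absolute convergence of the sums, which is needed for the interchange above, follows from the same bound applied to $|f|$.

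The second step is the variance formula. By independence, $\Variance(\PiHatOf{k}{f}) = n^{-1}\Variance(g_k(X_1))$. Invariance gives $\Expect[g_k(X_1)] = \pi \KernelMatrix[k] f = \pi(f)$. Hence
\[
  \Variance\bigl(g_k(X_1)\bigr) = \SumOver{y}\pi(y)g_k(y)^2 - \pi(f)^2,
\]
which is exactly the displayed formula. For vector-valued $f$, the square is read as the squared norm, or componentwise.

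The third step is monotonicity. Since the mean $\pi(f)$ does not depend on $k$, it suffices to show $\NormPi{\KernelMatrix g}^2 \le \NormPi{g}^2$ for every $g\in\Ltwopi$, and then apply this with $g = g_k$, since $g_{k+1} = \KernelMatrix g_k$. This contraction is the same Jensen-plus-invariance argument as in the first step:
\[
  \NormPi{\KernelMatrix g}^2 = \SumOver{y}\pi(y)\bigl(\KernelMatrix g\bigr)(y)^2 \le \SumOver{y}\pi(y)(\KernelMatrix g^2)(y) = \NormPi{g}^2 .
\]
Chaining the inequality from $k=0$ gives both $\Variance(\PiHatOf{k+1}{f}) \le \Variance(\PiHatOf{k}{f})$ and $\Variance(\PiHatOf{k}{f}) \le \Variance(\EmpiricalOf{f})$. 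No reversibility is needed, because in the i.i.d.\ scheme no cross-covariances appear.

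The only delicate point is justifying the interchanges of summation on a countable $\StateSpace$ when $f$ is unbounded. This is handled by working with $|f|$ first, via Tonelli, and using the $\Ltwopi$ bound from the first step.
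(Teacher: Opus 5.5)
Your proof is correct and follows the paper's argument essentially verbatim. Jensen's inequality for the probability measure $\Kernel[k]{\cdot}{y}$, combined with invariance of $\pi$, gives square-integrability and the variance formula. Monotonicity follows from the same Jensen-plus-invariance step applied to $\KernelMatrix[k+1]f = \KernelMatrix(\KernelMatrix[k]f)$, i.e.\ the $\Ltwopi$-contraction of $\KernelMatrix$; your extra care with vector-valued $f$ and the Tonelli justification of the interchanges is a harmless refinement that the paper leaves implicit.
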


\ProofDeferred{app:proof-variance-monotone}

In the stationary independent setting, we therefore observe a decrease of the MSE for the estimation of $\pi(f)$, with no additional requirement on $\KernelMatrix$ (in particular, regardless of whether it is reversible).
In the stationary Markovian case, without investigating the variance reduction properties brought out in~\cite{McKeagueWefelmeyer2000}, we state below a central limit theorem for $\PiHatOf{k}{f}$, similar to Proposition~\ref{prop:clt-pointwise}, which provides an explicit formula for the asymptotic variance.

\begin{proposition}[\FrameworkMarkovStationary]
\label{prop:clt-functional}
Assume that the data $(X_i)_{\IndexBetween{i}{1}{n}}$ form a stationary Markov chain with kernel $\KernelMatrix$.
For $k \in \Naturals$, when $n$ goes to infinity,
\[
  \sqrt{n}\left(\PiHatOf{k}{f} - \pi(f)\right)
  \ConvergesInDistribution \Gaussian{0}{\AsymptoticVariance[k]{f}},
\]
where, writing $\KernelAverage{k}{y}{f} = \SumOver{z} f(z)\Kernel[k]{z}{y}$ for $y \in \StateSpace$,
\[
  \AsymptoticVariance[k]{f}
  = \SumOver{y} \pi(y) \KernelAverage{k}{y}{f}^{2} - \pi(f)^{2}
  + 2 \SumFromTo{i}{1}{+\infty}
     \left(\SumOver{y,z} \pi(y)\Kernel[i]{z}{y}
     \KernelAverage{k}{y}{f}\KernelAverage{k}{z}{f} - \pi(f)^{2}\right).
\]
\end{proposition}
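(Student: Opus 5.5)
The plan is to reduce the statement to a central limit theorem for additive functionals of a uniformly ergodic stationary chain, applied to $g = \KernelAverage{k}{\cdot}{f}$. Indeed, by definition,
\[
  \PiHatOf{k}{f} = \frac{1}{n}\SumFromTo{i}{1}{n} \SumOver{z} f(z)\Kernel[k]{z}{X_i} = \frac{1}{n}\SumFromTo{i}{1}{n} g(X_i),
  \qquad g(y) = \KernelAverage{k}{y}{f}.
\]
The estimator is therefore an ergodic average of the single function $g$. The first step is to check that $g \in \Ltwopi$ and $\pi(g) = \pi(f)$.

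\textbf{Moments of $g$.} By Jensen's inequality applied to the probability $\Kernel[k]{\cdot}{y}$, we have $g(y)^2 \leq \SumOver{z} f(z)^2 \Kernel[k]{z}{y}$ (componentwise if $d>1$). Summing against $\pi$ and using invariance $\pi \KernelMatrix[k] = \pi$ gives $\pi(g^2) \leq \pi(f^2) < +\infty$. This also shows that the series defining $g$ converges absolutely for $\pi$-almost every $y$, hence for every $y$, since $\pi$ charges all states by irreducibility. The same invariance argument gives $\pi(g) = \pi(f)$.

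\textbf{CLT for $g$.} Next, invoke the CLT for uniformly ergodic chains and square-integrable functions (e.g.\ \cite{DMPS2018}; for uniformly ergodic chains only $g\in\Ltwopi$ is needed, not higher moments). It gives
\[
  \sqrt{n}\bigl(\PiHatOf{k}{f}-\pi(f)\bigr)\ConvergesInDistribution \Gaussian{0}{\sigma^2},
  \qquad
  \sigma^2 = \Variance_\pi(g(X_1)) + 2\SumFromTo{i}{1}{+\infty}\Covariance_\pi\bigl(g(X_1),g(X_{1+i})\bigr).
\]
Because the chain is stationary, this holds for the chain started at $\pi$. Uniform ergodicity yields geometric $\rho$-mixing in $L^2_0(\pi)$, with $\NormOperator{\KernelMatrix[i]-\StationaryProjector}\le C\rho^{i}$ on $\Ltwopi$. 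Hence $|\Covariance_\pi(g(X_1),g(X_{1+i}))| \leq C\rho^i \pi(g^2)$, and the series converges absolutely. For vector-valued $f$, apply the scalar result to every linear combination $u\Transpose f$ and conclude by Cramér--Wold.

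\textbf{Identifying the covariances.} It remains to write the covariance terms in the stated form. One has $\Variance_\pi(g(X_1)) = \SumOver{y}\pi(y)\KernelAverage{k}{y}{f}^2 - \pi(f)^2$. For the lag-$i$ term,
\[
  \Expect_\pi[g(X_1)g(X_{1+i})] = \SumOver{y,z}\pi(y)\Kernel[i]{z}{y}\,\KernelAverage{k}{y}{f}\,\KernelAverage{k}{z}{f},
\]
by the Markov property. Subtracting $\pi(g)^2=\pi(f)^2$ gives exactly the bracket appearing in $\AsymptoticVariance[k]{f}$. The interchanges of sums are justified by Fubini, using Cauchy--Schwarz in the form $\SumOver{y,z}\pi(y)\Kernel[i]{z}{y}|g(y)||g(z)| \le \pi(g^2)$ together with invariance.

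The main obstacle is justifying the $L^2$ CLT under only $f\in\Ltwopi$. Some standard versions require $2+\delta$ moments, so the precise reference matters: one needs the uniformly ergodic, $L^2$ version, or equivalently the spectral-gap/Poisson-equation approach. If a self-contained route is preferred, I would proceed as follows. Solve the Poisson equation $\hat g - \KernelMatrix\hat g = g-\pi(g)$ with $\hat g = \sum_{j\ge0}\KernelMatrix[j](g-\pi(g))\in\Ltwopi$, which converges by the geometric $L^2$ bound. Then decompose $\sum_i (g(X_i)-\pi(g))$ into a stationary ergodic martingale with square-integrable increments plus a remainder that is bounded in $L^2$. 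Finally, apply the martingale CLT (Billingsley--Ibragimov), whose limiting variance coincides with $\sigma^2$ above.
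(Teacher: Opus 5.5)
Your proposal is correct and takes the same route as the paper. The paper also treats $\widehat{\pi}_n^{\,k}(f)$ as the ergodic average of $Y_i^k(f)=S^k_{X_i}(f)$, invokes the Markov-chain CLT, and identifies the limiting variance from two ingredients: the Jensen-based second-moment computation of Appendix~\ref{app:proof-variance-monotone} and the lag covariances of Lemma~\ref{lem:covariance-contributions}. Your extra justifications (absolute summability of the autocovariances, Fubini, Cram\'er--Wold for $d>1$, and the Poisson-equation fallback) only make explicit what the paper leaves implicit.
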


\ProofDeferred{app:proof-clt}

\subsection{Aggregation of look-ahead estimators}
\label{subsec:known-kernel-aggregation}

As shown above, the reduction of the MSE is not a systematic property of look-ahead methods.
The most emblematic case is probably the non-reversible stationary Markovian setting, where the asymptotic variance may increase when $\KernelMatrix$ is applied to $\PiHat{1}$.
In any case, when $\KernelMatrix$ is known and in the stationary setting, look-ahead methods provide a family of consistent and unbiased estimators of $\pi(f)$.
A natural way to reduce the MSE is then to aggregate these estimators.
To this end, we consider a finite set $I \subset \Naturals$ and (possibly negative) weights $(\lambda_k)_{k \in I}$ summing to $1$.
The aggregated look-ahead estimator is then defined by
\[
  \PiHatAggregated{(\lambda_k)_{k \in I}}{f}
  = \sum_{k \in I} \lambda_k \PiHatOf{k}{f}.
\]
In the stationary setting, this estimator remains unbiased.
The question then arises of identifying optimal weights that minimise its variance.
An answer is given by the following general result (see~\cite{LavancierRochet2016} and references therein), expressed in our context in terms of the covariance matrix of look-ahead estimators,
\[
    \Sigma(f) = \left(\Covariance\left(\PiHatOf{k}{f},\PiHatOf{l}{f}\right)\right)_{k,l \in I}.
\]

\begin{proposition}
\label{prop:optimal-weights}
If $\Sigma(f)$ is non-degenerate, the weights summing to $1$ that minimise the variance of $\PiHatAggregated{(\lambda_k)_{k \in I}}{f}$ are given by
\[
  \lambda^{*}(f)
  = \frac{\Sigma(f)^{-1} \Ones}{\Ones\Transpose \Sigma(f)^{-1} \Ones}.
\]
\end{proposition}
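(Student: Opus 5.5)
The argument is a finite-dimensional constrained quadratic minimisation. Write $\Lambda = (\lambda_k)_{k \in I}$ as a column vector indexed by $I$. By bilinearity of the covariance,
\[
  \Variance\left(\PiHatAggregated{(\lambda_k)_{k \in I}}{f}\right) = \Lambda\Transpose \Sigma(f) \Lambda .
\]
So the task is to minimise $q(\Lambda) = \Lambda\Transpose \Sigma(f) \Lambda$ over the affine hyperplane $\{\Lambda : \Ones\Transpose \Lambda = 1\}$. Each $\PiHatOf{k}{f}$ is square integrable (Proposition~\ref{prop:variance-iid-functional} in the independent case, and the same computation in the Markovian case), so $\Sigma(f)$ is well defined. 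As a covariance matrix it is symmetric positive semidefinite. Non-degeneracy then makes it positive definite, hence invertible, and $\Ones\Transpose \Sigma(f)^{-1} \Ones > 0$. In particular the denominator of $\lambda^{*}(f)$ does not vanish, and $\Ones\Transpose \lambda^{*}(f) = 1$, so $\lambda^{*}(f)$ is admissible.

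To prove optimality, I would avoid Lagrange multipliers and use a direct completion-of-squares argument. Set $c = (\Ones\Transpose \Sigma(f)^{-1} \Ones)^{-1}$, so that $\Sigma(f) \lambda^{*}(f) = c \Ones$. For any admissible $\Lambda$, put $\Delta = \Lambda - \lambda^{*}(f)$; then $\Ones\Transpose \Delta = 0$. Expanding,
\[
  q(\Lambda) = q(\lambda^{*}(f)) + 2 \Delta\Transpose \Sigma(f) \lambda^{*}(f) + \Delta\Transpose \Sigma(f) \Delta .
\]
The cross term equals $2c\, \Delta\Transpose \Ones = 0$. Therefore
\[
  q(\Lambda) = q(\lambda^{*}(f)) + \Delta\Transpose \Sigma(f) \Delta \geq q(\lambda^{*}(f)),
\]
with equality if and only if $\Delta = 0$, by positive definiteness. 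This shows that $\lambda^{*}(f)$ is the unique minimiser, and that the minimal variance is $c = (\Ones\Transpose \Sigma(f)^{-1} \Ones)^{-1}$.

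There is no real obstacle; this is the standard result cited from \cite{LavancierRochet2016}. The only point needing care is the passage from non-degeneracy to positive definiteness, which is what gives both invertibility and strict positivity of the normalising constant.
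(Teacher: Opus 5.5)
Your proof is correct, and it takes a different route from the paper only in that the paper gives no argument of its own: it just refers to \cite{LavancierRochet2016}, Subsection~2.1, for the solution of minimising $\lambda\Transpose\Sigma(f)\lambda$ under $\Ones\Transpose\lambda = 1$.

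Your completion-of-squares argument makes the statement self-contained. The identity $q(\Lambda) = q(\lambda^{*}(f)) + \Delta\Transpose\Sigma(f)\Delta$ gives several things at once:
\begin{itemize}
\item global optimality directly, without the extra convexity step needed to turn a Lagrange stationary point into a minimiser;
\item uniqueness of the minimiser;
\item the minimal variance $(\Ones\Transpose\Sigma(f)^{-1}\Ones)^{-1}$.
\end{itemize}

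You also make explicit a point the citation leaves implicit. For a covariance matrix, non-degeneracy is the same as positive definiteness, and this is what makes the normaliser $\Ones\Transpose\Sigma(f)^{-1}\Ones$ strictly positive. Together with Proposition~\ref{prop:invertibility-sigma}, this gives a checkable criterion for when $\lambda^{*}(f)$ is well defined.

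Your remark on square integrability in the Markovian case is sound: under stationarity each $X_i$ has marginal $\pi$, so the Jensen bound from the independent case applies term by term.

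The citation, for its part, is shorter and places the result within the general averaging framework of the cited reference. That framework is not needed for the statement as posed.
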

\begin{proof}
See for instance~\cite[Subsection~2.1]{LavancierRochet2016}.
\end{proof}

For look-ahead estimators, determining the optimal weights therefore requires identifying the cases in which $\Sigma(f)$ is non-degenerate and evaluating its coefficients.
Remarkably, the non-degeneracy of $\Sigma(f)$ is expressed simply in terms of $\KernelMatrix$ and $f$, whether the data are independent or Markovian.

\begin{proposition}[\FrameworkIidStationary\,$\cup$\,\FrameworkMarkovStationary]
\label{prop:invertibility-sigma}
Assume that the data $(X_i)_{\IndexBetween{i}{1}{n}}$ are either independent with common distribution $\pi$, or form a stationary Markov chain with kernel $\KernelMatrix$.
Then $\Sigma(f)$ is non-degenerate if and only if the family $\{\Ones\} \cup \{\KernelMatrix[k] f\}_{k \in I}$ is linearly independent in $\Reals^{\StateSpace}$.
\end{proposition}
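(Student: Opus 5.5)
Since $\Sigma(f)$ is a covariance matrix, it is positive semidefinite, so non-degeneracy is equivalent to: for every nonzero $c=(c_k)_{k\in I}$,
\[
  \Variance\Bigl(\sum_{k\in I} c_k \PiHatOf{k}{f}\Bigr) > 0 .
\]
Writing $g_c = \sum_{k \in I} c_k \KernelMatrix[k] f$, and noting that $\PiHatOf{k}{f} = \Empirical(\KernelMatrix[k] f)$ with $\KernelMatrix[k]f(y) = \KernelAverage{k}{y}{f}$, we have $\sum_k c_k \PiHatOf{k}{f} = \EmpiricalOf{g_c}$. The question is therefore when the empirical mean of a fixed function $g$ has zero variance. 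The claim to prove is that, in both schemes, $\Variance(\EmpiricalOf{g})=0$ if and only if $g$ is constant on the support of $\pi$. Irreducibility gives $\pi(y)>0$ for every $y$, so this means $g \in \Span\{\Ones\}$. Once this holds, $\Sigma(f)$ is degenerate iff some nonzero $c$ makes $\sum_k c_k \KernelMatrix[k] f$ a multiple of $\Ones$. That is exactly a nontrivial linear relation in $\{\Ones\}\cup\{\KernelMatrix[k]f\}_{k\in I}$, because a relation whose $c$-part vanishes is trivial: $\Ones\neq 0$.

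In the i.i.d.\ case this is immediate. We have $\Variance(\EmpiricalOf{g}) = n^{-1}\Variance_\pi(g(X_1))$, which vanishes iff $g$ is $\pi$-a.s.\ constant, i.e.\ constant on $\StateSpace$. Square-integrability of $\KernelMatrix[k]f$ follows from $f\in\Ltwopi$ and Jensen, as in Proposition~\ref{prop:variance-iid-functional}.

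In the stationary Markov case the "if" direction is trivial: a constant $g$ gives a constant estimator. For the converse, assume $\Variance(\sum_{i=1}^n g(X_i))=0$. Then $\sum_{i=1}^n g(X_i)$ is almost surely equal to a constant $C$. I would argue directly through the chain structure, using induction or a path-comparison argument. For any $x,y$ with $\Kernel{y}{x}>0$, and any admissible path $(x_1,\dots,x_n)$ of positive probability, I compare it with a modified path of positive probability that differs in a controlled way.

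The simplest route is a two-step comparison. Take $n\ge 2$ and consider the sums $S = \sum_{i=1}^n g(X_i)$ and $S' = \sum_{i=2}^{n+1} g(X_i)$. By stationarity both have zero variance, and both equal the same constant $C$ almost surely, since both have the same mean $n\pi(g)$. Hence $g(X_1) = g(X_{n+1})$ almost surely. By irreducibility and aperiodicity, $\KernelMatrix[n]$ has positive entries between any pair of states for $n$ large. I therefore apply the argument with blocks of length $N$, where $N$ is large enough that $\Kernel[N]{y}{x}>0$; on a finite state space such an $N$ works uniformly for all pairs. This gives $g(x)=g(y)$ for all pairs $(x,y)$, so $g$ is constant.

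One subtlety is that the claim concerns a fixed $n$, while the shifting argument uses an arbitrary length. If $\Sigma(f)$ is meant to be non-degenerate at the given $n$, I would handle it as follows: from zero variance at length $n$ we get $g(X_1)=g(X_{n+1})$ a.s., i.e.\ $g(x)=g(y)$ whenever $\Kernel[n]{y}{x}>0$. Because $\KernelMatrix[n]$ is again irreducible, aperiodic and uniformly ergodic with $\pi$ invariant, $g$ is $\KernelMatrix[n]$-harmonic in the sense $\KernelMatrix[n]$-a.s.\ constant along transitions, and I conclude that $g$ is constant. The cleanest formulation is that $g$ is invariant under the chain $\KernelMatrix[n]$, which is irreducible, so invariant functions are constant.

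This step, reducing zero variance of a Markov additive sum to $g$ being constant for finite $n$ and countable $\StateSpace$ without assuming $\KernelMatrix[n]$ has full support, is the main obstacle. Irreducibility of $\KernelMatrix[n]$ itself requires aperiodicity, which is assumed. If instead $\Sigma(f)$ denotes the asymptotic covariance, I would replace the argument with the standard fact that the asymptotic variance vanishes iff $g-\pi(g)$ is a coboundary $h-\KernelMatrix h$. That case is harder and is not what is intended here.
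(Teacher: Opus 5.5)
Your final argument is correct, and its core is the same mechanism as the paper's proof. You compare a sample with its one-step shift, so that the difference telescopes to the first and last states. You then chain such comparisons using the irreducibility of $P^{n}$.

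The difference is in packaging. The paper argues deterministically. It proves a lemma that the empirical measures of positive-probability samples $(x_1,\dots,x_n)$ span all finitely supported vectors, using $n\widehat{\pi}_n^{0}(C)-n\widehat{\pi}_n^{0}(C')=\delta_{x_1}-\delta_{x_{n+1}}$ together with an extra step that puts each $\delta_x$ itself in the span. It then tests $\sum_k v_k P^{k}f - C\Ones$ against that span to conclude this function vanishes.

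You argue probabilistically. Stationarity makes $S$ and $S'$ almost surely equal to the same constant $n\pi(g)$, hence $g(X_1)=g(X_{n+1})$ almost surely, so $g$ is constant along every $P^{n}$-edge. Your route is slightly more economical: you only need the differences $\delta_x-\delta_y$, because constancy of $g$ already yields the linear relation with constant $\pi(g)$. You also check square-integrability of $P^{k}f$ explicitly, which the paper leaves implicit. The paper's lemma, in exchange, is a statement independent of $f$ that treats both observation schemes in one linear-algebraic framework.

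Your indexing is the correct one. A path $(x_1,\dots,x_{n+1})$ has $n$ transitions, so what is needed is irreducibility of $P^{n}$; the appendix writes $P^{n+1}$, which is a harmless slip. This irreducibility holds because, for an irreducible aperiodic chain, $P^{m}(y\vert x)>0$ for all large $m$, so $(P^{n})^{q}(y\vert x)>0$ for large $q$, even on a countably infinite $\StateSpace$.

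For the write-up, a few cleanups:
\begin{itemize}
\item Delete the detour with blocks of length $N$. The variance condition is imposed at the given $n$, and a uniform $N$ need not exist on a countable $\StateSpace$.
\item Drop the words ``harmonic'' and ``invariant functions are constant''. All you use is that $g(x)=g(y)$ along every $P^{n}$-edge, and that any two states are joined by a finite chain of such edges.
\item Drop the closing remark about the asymptotic covariance. In the paper, $\Sigma(f)$ is the covariance matrix at fixed $n$.
\end{itemize}
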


\ProofDeferred{app:proof-inversibility}

The two results below provide an exact evaluation of $\Sigma(f)$ for independent and Markovian data.
In practice, having an explicit formula eliminates the need to evaluate the estimators multiple times to estimate their covariance, thereby allowing a direct application of the optimal aggregation given in Proposition~\ref{prop:optimal-weights}.

\begin{proposition}[\FrameworkIidStationary]
\label{prop:covariance-iid}
Assume that the data $(X_i)_{\IndexBetween{i}{1}{n}}$ are independent with common distribution $\pi$.
For $k,l \in \Naturals$,
\[
  \Covariance\left(\PiHatOf{k}{f},\PiHatOf{l}{f}\right)
  = \frac{1}{n}\left\{\SumOver{y} \pi(y)
    \KernelAverage{k}{y}{f} \KernelAverage{l}{y}{f} - \pi(f)^{2}\right\}.
\]
\end{proposition}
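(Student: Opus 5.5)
The plan is to write both estimators as empirical means of i.i.d.\ variables and then use bilinearity of the covariance. For $y \in \StateSpace$, set $\KernelAverage{k}{y}{f} = \SumOver{z} f(z)\Kernel[k]{z}{y}$, as in Proposition~\ref{prop:clt-functional}. Then
\[
  \PiHatOf{k}{f} = \SumOver{x} f(x)\PiHatAt{k}{x} = \frac{1}{n}\SumFromTo{i}{1}{n} \KernelAverage{k}{X_i}{f}.
\]
Exchanging the finite sum over $i$ with the sum over $x$ is legitimate for $f \in \Ltwopi$. Indeed, by Jensen's inequality, $\KernelAverage{k}{y}{f}^{2} \leq \SumOver{z} f(z)^{2}\Kernel[k]{z}{y}$. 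Combined with the invariance $\pi \KernelMatrix[k] = \pi$, this gives
\[
  \SumOver{y}\pi(y)\KernelAverage{k}{y}{f}^{2} \leq \pi(f^{2}) < +\infty .
\]
Hence each $\KernelAverage{k}{X_i}{f}$ is square-integrable, and this is the same integrability already used for Proposition~\ref{prop:variance-iid-functional}.

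Bilinearity of the covariance then gives
\[
  \Covariance\left(\PiHatOf{k}{f},\PiHatOf{l}{f}\right) = \frac{1}{n^{2}}\sum_{i,j}\Covariance\left(\KernelAverage{k}{X_i}{f},\KernelAverage{l}{X_j}{f}\right).
\]
For $i \neq j$ the two variables are independent, so the cross terms vanish. The $n$ diagonal terms all equal $\Covariance(\KernelAverage{k}{X_1}{f},\KernelAverage{l}{X_1}{f})$, which leaves a factor $1/n$ in front of a single covariance.

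That covariance is computed directly. Its first term is $\Expect[\KernelAverage{k}{X_1}{f}\KernelAverage{l}{X_1}{f}] = \SumOver{y}\pi(y)\KernelAverage{k}{y}{f}\KernelAverage{l}{y}{f}$. For the means, $\Expect[\KernelAverage{k}{X_1}{f}] = \SumOver{z} f(z)\SumOver{y}\pi(y)\Kernel[k]{z}{y} = \pi(f)$, using $\pi\KernelMatrix[k]=\pi$ and Fubini, which is justified by absolute summability since $\pi(|f|)<\infty$. The same holds for $l$, so the product of the means is $\pi(f)^{2}$ and the formula follows.

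The only delicate step is justifying the interchanges of summation and the finiteness of the cross moment. Cauchy--Schwarz in $\Ltwopi$, applied with the Jensen bound above, settles both. The case $k=l$ reproduces Proposition~\ref{prop:variance-iid-functional}, which serves as a consistency check.
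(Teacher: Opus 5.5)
Your proof is correct and follows essentially the same route as the paper: expand the covariance over pairs $(i,j)$, use independence to discard the off-diagonal terms, and evaluate the $n$ diagonal terms using $\pi\KernelMatrix[k] = \pi$. The only difference is bookkeeping. The paper works with raw second moments $\Expect[\Contribution{k}{i}{f}\Contribution{l}{j}{f}]$, which equal $\pi(f)^{2}$ off the diagonal, and subtracts $\pi(f)^{2}$ at the end, whereas you centre first; your integrability remarks make explicit a point the paper leaves implicit.
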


\ProofDeferred{app:proof-covariance-iid}

\begin{proposition}[\FrameworkMarkovStationary]
\label{prop:covariance-markov}
Assume that the data $(X_i)_{\IndexBetween{i}{1}{n}}$ form a stationary Markov chain with kernel $\KernelMatrix$.
For $k,l \in \Naturals$,
\[
\begin{aligned}
  &\Covariance\left(\PiHatOf{k}{f},\PiHatOf{l}{f}\right)
  = \frac{1}{n} \left\{\SumOver{y} \pi(y)
     \KernelAverage{k}{y}{f}\KernelAverage{l}{y}{f} - \pi(f)^{2}\right\} \\
  &\enspace + \frac{1}{n^{2}} \SumFromTo{i}{1}{n-1} (n-i)
     \left(\SumOver{y,z} \pi(y)\Kernel[i]{z}{y}
     \left\{\KernelAverage{k}{y}{f}\KernelAverage{l}{z}{f}
     + \KernelAverage{l}{y}{f}\KernelAverage{k}{z}{f}\right\}
     - 2\pi(f)^{2}\right).
\end{aligned}
\]
\end{proposition}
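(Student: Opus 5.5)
The plan is to write each look-ahead estimator as an additive functional of the chain and to expand the covariance of two such averages by bilinearity. Since $\PiHatOf{k}{f} = n^{-1}\SumFromTo{i}{1}{n} \KernelAverage{k}{X_i}{f}$, where $\KernelAverage{k}{y}{f} = \Expect[f(X_{i+k}) \mid X_i = y]$ is the $k$-step conditional expectation, we have
\[
  \Covariance\left(\PiHatOf{k}{f},\PiHatOf{l}{f}\right)
  = \frac{1}{n^{2}} \SumFromTo{i}{1}{n}\SumFromTo{j}{1}{n}
    \Covariance\left(\KernelAverage{k}{X_i}{f},\KernelAverage{l}{X_j}{f}\right).
\]
Square-integrability of each term is the first thing to check, and here we reuse the argument of Proposition~\ref{prop:variance-iid-functional}. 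Under $\pi$, Jensen's inequality gives $\SumOver{y}\pi(y)\KernelAverage{k}{y}{f}^{2} \leq \pi(f^{2}) < +\infty$, because $f \in \Ltwopi$ and $\pi$ is invariant. All the covariances above are therefore finite, and the double sums over the state space that follow converge absolutely by Cauchy--Schwarz.

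Next we compute each covariance using stationarity. By Proposition~\ref{prop:consistency-unbiasedness}, $\Expect[\KernelAverage{k}{X_i}{f}] = \pi(f)$ for every $k$.

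\begin{itemize}
  \item For $i = j$, the covariance is $\SumOver{y}\pi(y)\KernelAverage{k}{y}{f}\KernelAverage{l}{y}{f} - \pi(f)^{2}$. There are $n$ such terms, so they produce the $1/n$ term.
  \item For $j = i + m$ with $m \geq 1$, the pair $(X_i, X_{i+m})$ has law $\pi(y)\Kernel[m]{z}{y}$ (start at $y$, reach $z$). The covariance is therefore $\SumOver{y,z}\pi(y)\Kernel[m]{z}{y}\KernelAverage{k}{y}{f}\KernelAverage{l}{z}{f} - \pi(f)^{2}$.
  \item For $i = j + m$, the roles of $k$ and $l$ are swapped, which gives the term with $\KernelAverage{l}{y}{f}\KernelAverage{k}{z}{f}$.
\end{itemize}

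Each lag $m \in \IntegerRange{1}{n-1}$ occurs exactly $n - m$ times in each of the two off-diagonal triangles. Grouping the two triangles gives the factor $(n-m)$ in front of the symmetrised bracket, and the two $-\pi(f)^{2}$ terms combine into $-2\pi(f)^{2}$. Summing over $m$ and dividing by $n^{2}$ yields exactly the stated formula.

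The main obstacle is only technical: on a countably infinite space, we must justify exchanging expectations with the infinite sums defining $\KernelAverage{k}{y}{f}$ and with the double sum over $y,z$. The plan is to control everything by $\pi(|f|^{2})$. We have $|\KernelAverage{k}{y}{f}| \leq \KernelAverage{k}{y}{|f|}$. Cauchy--Schwarz under the joint law of $(X_i, X_j)$, whose marginals are both $\pi$, then bounds $\SumOver{y,z}\pi(y)\Kernel[m]{z}{y}|\KernelAverage{k}{y}{f}||\KernelAverage{l}{z}{f}|$ by $\pi(f^{2})$. This allows Fubini at every step. For vector-valued $f$, the same computation applies coordinatewise, or with outer products.
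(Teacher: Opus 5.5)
Your proposal is correct and follows essentially the same route as the paper. The paper also expands the covariance as a double sum over $(i,j)$, computes each term from the lag-$m$ joint law $\pi(y)\Kernel[m]{z}{y}$ (packaged as Lemma~\ref{lem:covariance-contributions}), and groups the two off-diagonal triangles by lag with multiplicity $n-m$. Your Jensen and Cauchy--Schwarz integrability check is a useful detail that the paper leaves implicit.
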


\ProofDeferred{app:proof-covariance-markov}

\section{Look-ahead estimation in the random surfer model}
\label{sec:random-surfer}

This section is devoted to the illustration and the empirical validation of the theoretical results established for look-ahead methods in the previous section, through the random surfer model on randomly generated graphs.
It is also devoted to putting look-ahead methods to work in estimating the invariant distribution of that same model on real public transport networks.

\subsection{Simulation study on Erd\H{o}s--R\'enyi graphs}
\label{subsec:random-surfer-simulations}

We study numerically the look-ahead methods on a random walk model called the random surfer model, applied to connected and aperiodic Erd\H{o}s--R\'enyi graphs.

Recall that the Erd\H{o}s--R\'enyi model $G(N,p)$~\cite{ER1959,Gilbert1959} produces a random graph on $N$ labelled vertices in which each of the $N(N-1)/2$ possible edges is present independently of the others with probability $p$.
Such a graph is neither connected nor aperiodic in general, so we condition on the event that it is connected and not bipartite: on that event the random walk below is irreducible and aperiodic, since a connected graph carries a periodic walk exactly when it is bipartite.

On a realisation $G = (V,E)$ of that conditioned model, the random surfer model (in its undamped form)~\cite{BP1998, PBMW1999} moves from the current vertex to one of its neighbours chosen uniformly at random, that is,
\[
  \Kernel{y}{x} =
  \begin{cases}
    \dfrac{1}{\deg(x)} & \text{if } \{x,y\} \in E, \\[1.5ex]
    0 & \text{otherwise},
  \end{cases}
\]
where $\deg(x)$ denotes the degree of the vertex $x$.
The kernel $\KernelMatrix$ is reversible and its invariant distribution is explicit~\cite[Section~1]{Lovasz1993},
\[
  \forall x \in V, \quad \pi(x) \propto \deg(x).
\]
In this simulation study, we illustrate the behaviour of the look-ahead estimators $\PiHat{k} = \Empirical \KernelMatrix[k]$ on data with Markovian dependence with kernel $\KernelMatrix$.

For Erd\H{o}s--R\'enyi graphs of size $N = 20$ and edge probability $p = 0.5$, and for non-stationary chains started from the uniform distribution, Figure~\ref{fig:surfer-bias-variance} illustrates the decrease of the squared bias and of the variance as the horizon of the chain grows: the empirical biases and variances, estimated from $10^{6}$ replicates, fit the theoretical curves perfectly.
It should be noted that formula~\eqref{eq:formula:bias:markov} for the theoretical bias is non-asymptotic.
For the theoretical variance, we use the asymptotic approximation established in Proposition~\ref{prop:variance-nonstationary-markov} together with formula~\eqref{eq:totalvariance:from:clt}, established in the proof of Theorem~\ref{thm:monotone-amse}, which we prefer to formula~\eqref{eq:formula:sigmakcarre} since the latter involves infinite sums.

\begin{figure}[htbp]
  \centering
  \begin{subfigure}{\PlotWidth}
    \centering
    \includegraphics[width=\linewidth]{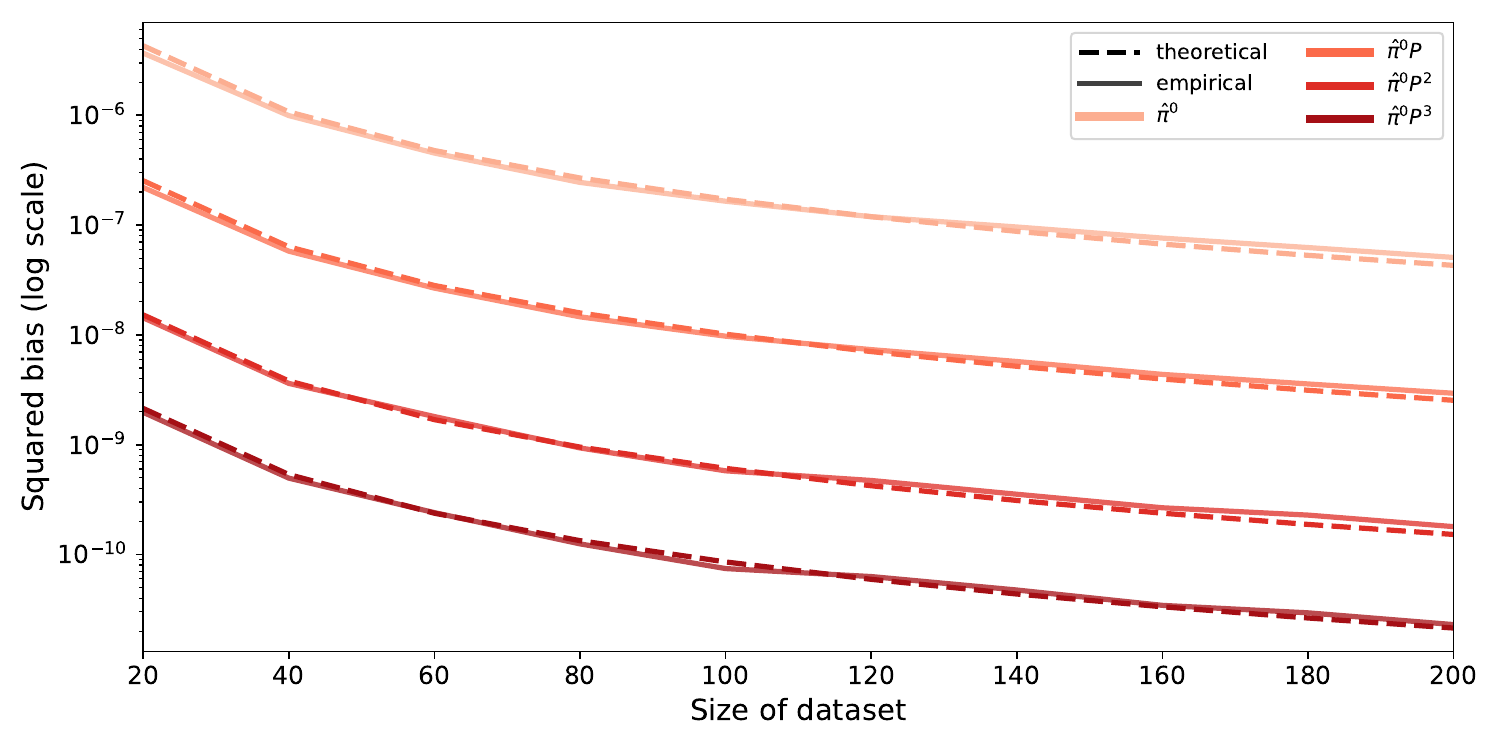}
    \caption{}
    \label{fig:surfer-bias}
  \end{subfigure}

  \vspace{\PlotGap}

  \begin{subfigure}{\PlotWidth}
    \centering
    \includegraphics[width=\linewidth]{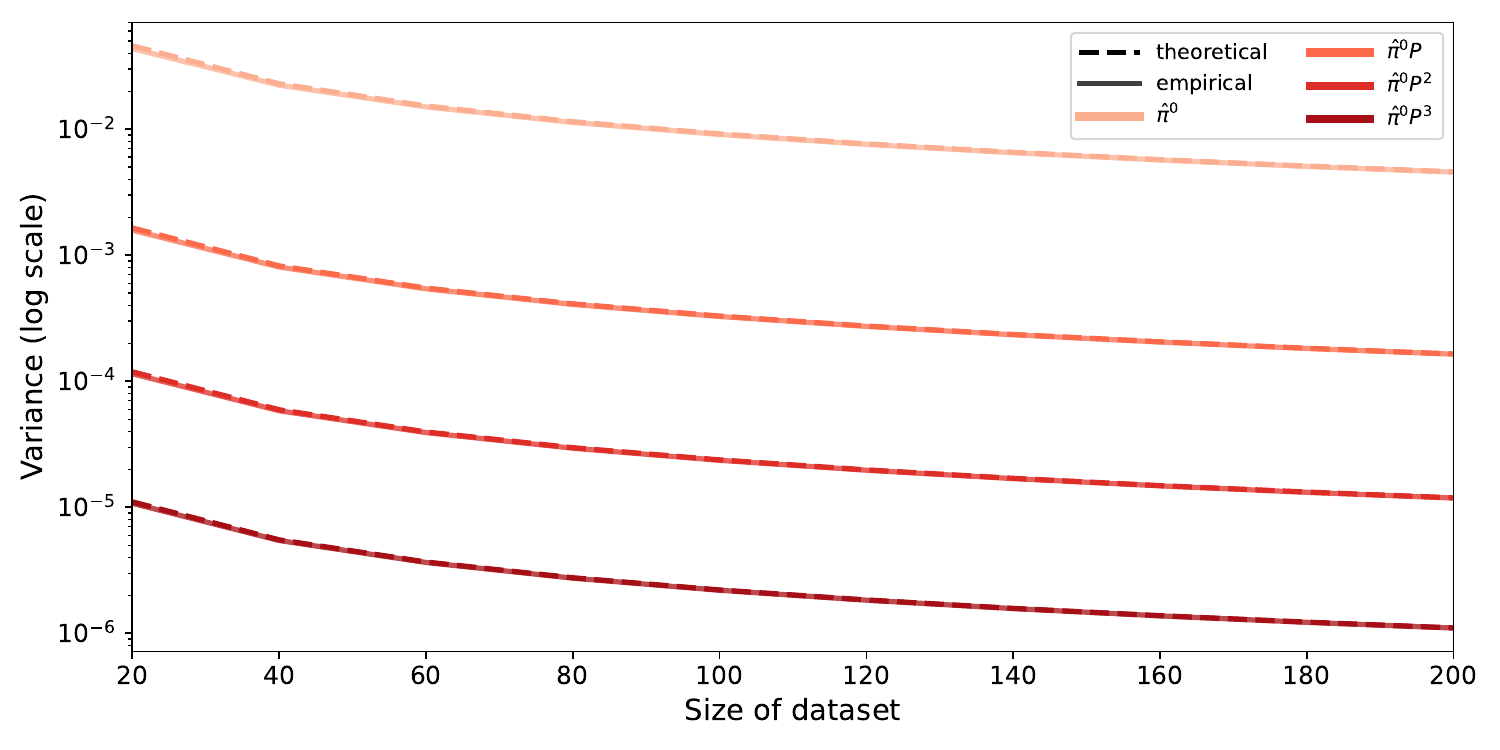}
    \caption{}
    \label{fig:surfer-variance}
  \end{subfigure}
  \caption{Behaviour of look-ahead estimators $\Empirical$, $\PiHat{1} = \Empirical \KernelMatrix$, $\PiHat{2} = \Empirical \KernelMatrix[2]$ and $\PiHat{3} = \Empirical \KernelMatrix[3]$ as a function of the size of the data set.
    Dashed lines are the theoretical values, solid lines the empirical ones.
    \textbf{A}~Squared bias.
    \textbf{B}~Variance.}
  \label{fig:surfer-bias-variance}
\end{figure}

Figure~\ref{fig:surfer-bias-variance} also brings out the incremental improvement of the look-ahead estimators, in both squared bias and variance, in a non-stationary setting.
Since the kernel $\KernelMatrix$ is reversible, the decrease of the variance is predicted, at least asymptotically, by Theorem~\ref{thm:monotone-amse}.
The observed decrease of the squared bias, on the other hand, does not fall within the assumptions of Proposition~\ref{prop:bias-decreasing-iid}, since the kernel is not doubly stochastic.

We now look for criteria that quantify the level of improvement brought by the look-ahead estimators.
A good candidate is the spectral gap of $\KernelMatrix$: since $\KernelMatrix$ is reversible, it is self-adjoint on $\Ltwopi$ and its eigenvalues are real, so that, ordering them as $1 = \lambda_1 > \lambda_2 \geq \dots \geq \lambda_N \geq -1$, one may define
\[
  \gamma = 1 - \max_{\IndexBetween{j}{2}{N}} \left|\lambda_j\right| .
\]
It is the relevant quantity here because it measures the geometric rate at which the chain forgets its initial condition: in $\Ltwopi$, each step of the chain multiplies the distance to equilibrium by at most $1-\gamma$.
The inverse of the spectral gap is thus interpreted as the relaxation time.
By tuning the parameters $N$ and $p$ of the Erd\H{o}s--R\'enyi graphs, the spectral gap of the random surfer model can easily be adjusted, as illustrated in Figure~\ref{fig:surfer-spectralgap-degree}.

\begin{figure}[htbp]
  \centering
  \includegraphics[width=\PlotWidth]{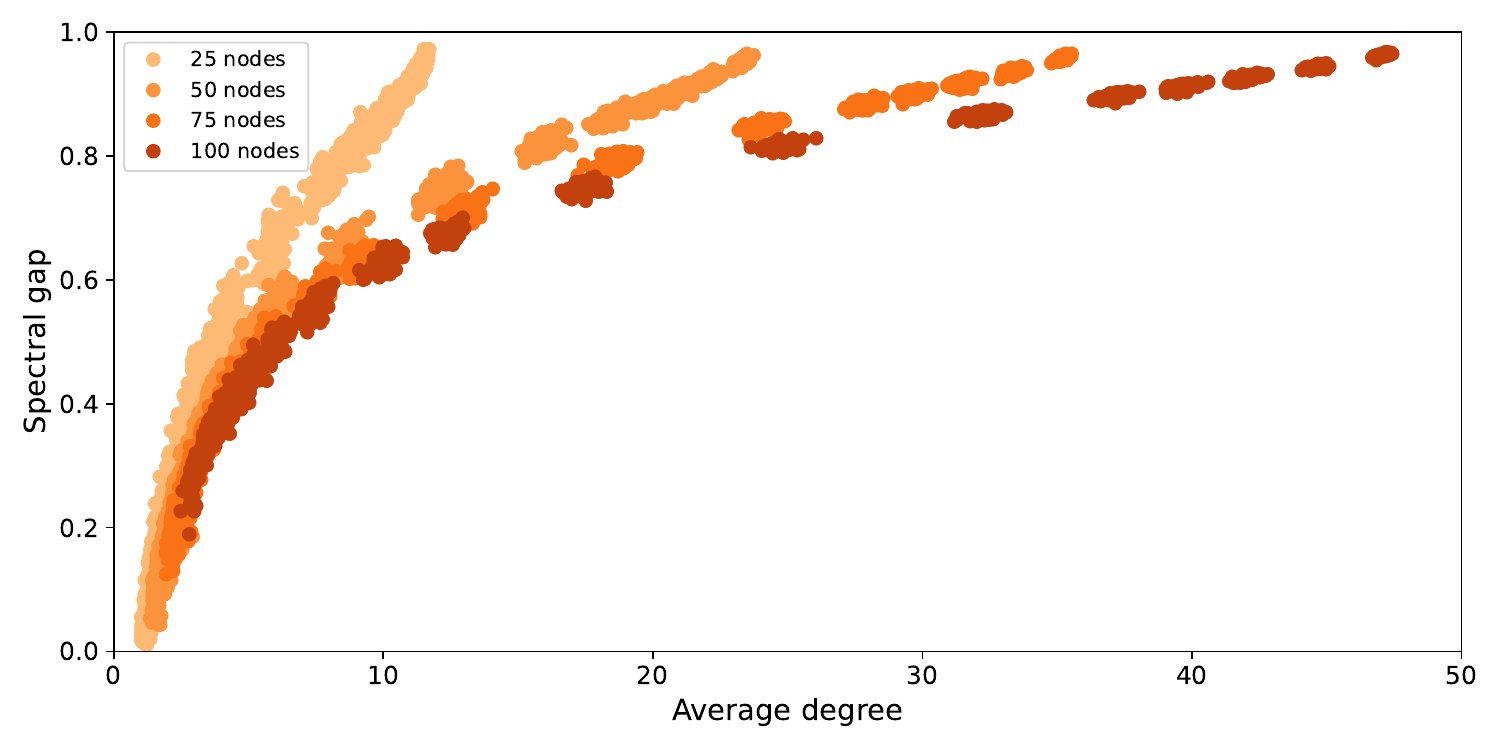}
  \caption{Spectral gap of the random surfer model on Erd\H{o}s--R\'enyi graphs of size $25$, $50$, $75$ and $100$ nodes as a function of the average degree.}
  \label{fig:surfer-spectralgap-degree}
\end{figure}

From stationary observations, for which the estimators are unbiased by Proposition~\ref{prop:consistency-unbiasedness}, Figure~\ref{fig:surfer-error-ratio} brings out that the improvement factor of the look-ahead methods, both in asymptotic variance and in the distribution of the squared error, depends significantly on the spectral gap: the larger the spectral gap, the greater the benefit of the look-ahead estimators, which is in line with intuition.
A large spectral gap means that the chain forgets its current position quickly, so that $\Kernel{\cdot}{X_i}$ is already close to $\pi$ and retains little of the randomness of $X_i$.
Averaging these conditional distributions can therefore remove a large part of the fluctuation.
A small gap, on the contrary, leaves $\Kernel{\cdot}{X_i}$ close to $\Dirac{X_i}$, and the look-ahead step changes almost nothing.

\begin{figure}[htbp]
  \centering
  \begin{subfigure}{\PlotWidth}
    \centering
    \includegraphics[width=\linewidth]{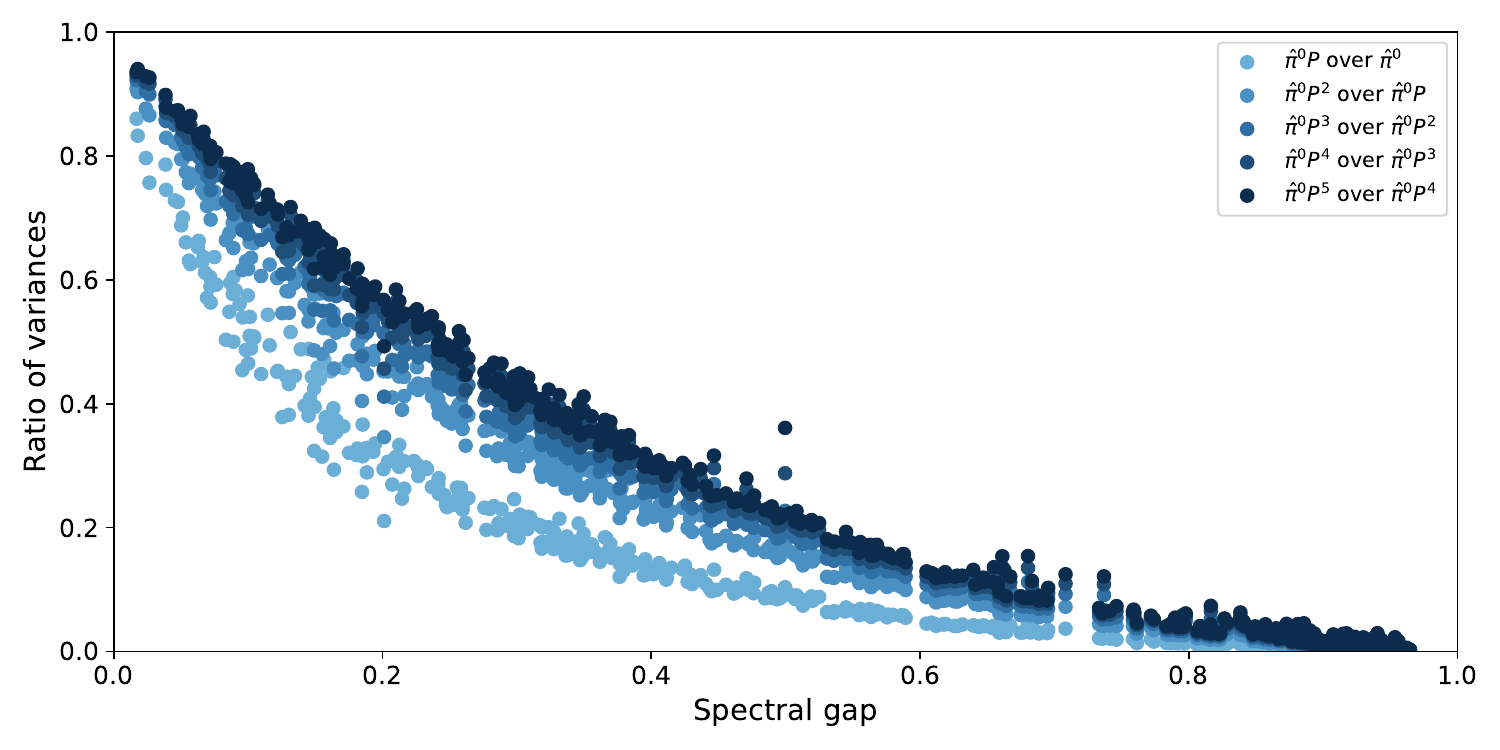}
    \caption{}
    \label{fig:surfer-ratiovar}
  \end{subfigure}

  \vspace{\PlotGap}

  \begin{subfigure}{\PlotWidth}
    \centering
    \includegraphics[width=\linewidth]{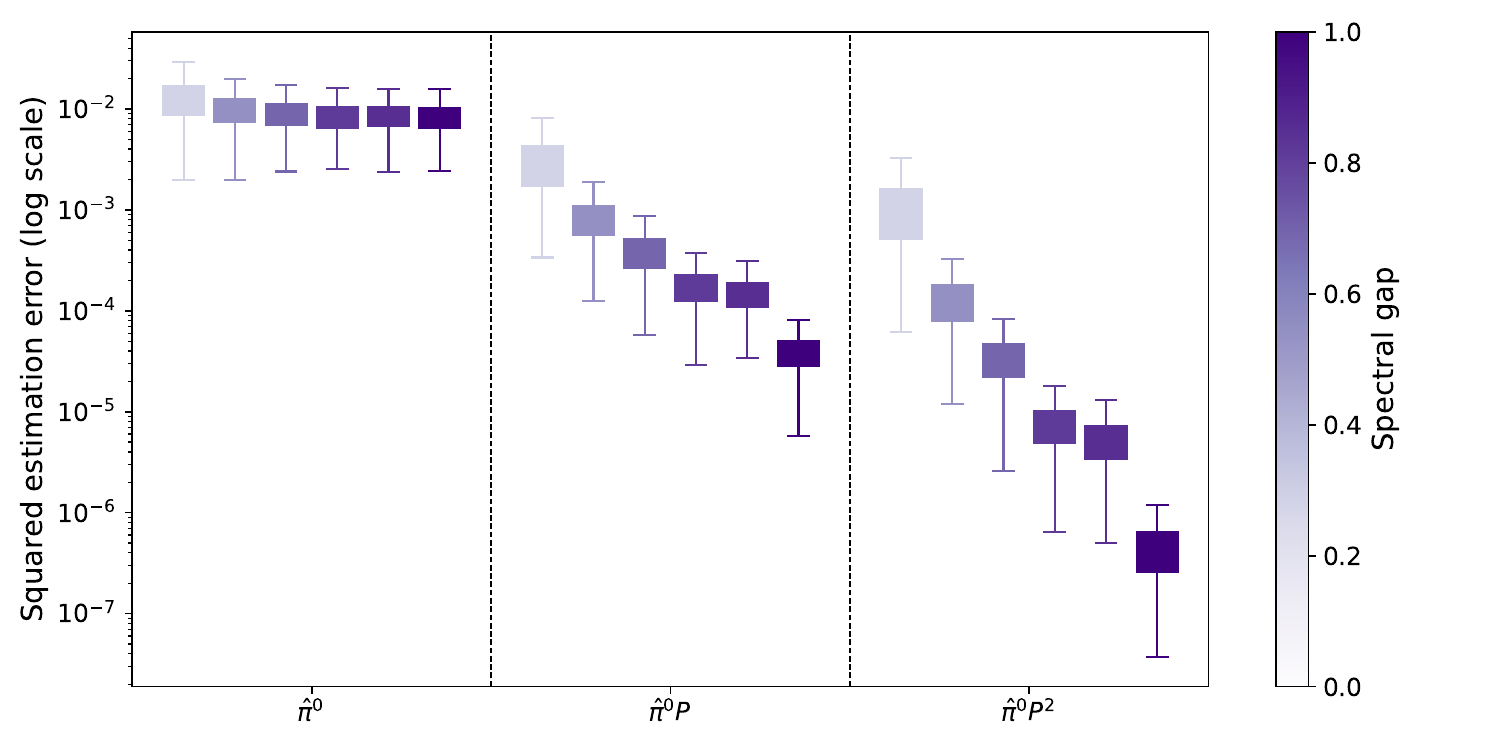}
    \caption{}
    \label{fig:surfer-boxplot}
  \end{subfigure}
  \caption{Influence of the spectral gap.
    \textbf{A}~Ratio of the variances of two consecutive look-ahead estimators as a function of the spectral gap.
    \textbf{B}~Boxplots over $200$ replicates of the squared estimation error of look-ahead estimators $\Empirical$, $\PiHat{1} = \Empirical \KernelMatrix$ and $\PiHat{2} = \Empirical \KernelMatrix[2]$, the shade of each box encoding the spectral gap.}
  \label{fig:surfer-error-ratio}
\end{figure}

\subsection{Analysis of real data: the public transport network of Lyon}
\label{subsec:random-surfer-real-data}

Beyond web search, for which it was introduced, the random surfer model is used across a wide range of domains as surveyed in~\cite{Gleich2015}: classical instances include the ranking of scientific publications in citation networks~\cite{CXMR2007} and the assessment of the importance of species in food webs~\cite{AP2009}.
Transport networks provide another typical example: the score that the random surfer attaches to a street predicts pedestrian and vehicle flows from the sole topology of the street network~\cite{Jiang2009}, and variants of the model are used to rank the nodes of an urban network~\cite{AOTV2012}.

In this subsection, we are interested in applying the random surfer model to the public transport network of the Lyon metropolitan area, France.
The transport network is seen as a temporal graph built from the General Transit Feed Specification (GTFS) feed of the Lyon public transport network (TCL), released as open data by the Grand Lyon metropolitan authority and valid from 15 October 2025 to 13 January 2026.\footnote{\href{https://data.grandlyon.com/portail/en/jeux-de-donnees/horaires-theoriques-reseau-transports-commun-lyonnais/info}{Grand Lyon open data portal}, data set \emph{Horaires th\'eoriques du r\'eseau Transports en Commun Lyonnais}.}
The feed describes the theoretical timetable of the network: its stops, its lines, and the scheduled time at which every vehicle run calls at every stop.

In what follows, the nodes of the network are fixed and consist of a spatially aggregated version of the stops appearing in the feed.
Indeed, a single location appears in the feed as several stops, typically one per direction of travel and sometimes one per line.
Stops carrying the same name, or very similar names, and lying very close to one another are merged into a single node, which simplifies the graph considerably while still describing faithfully the topology of the transport network and the dynamics that take place on it.
This yields $4\,329$ vertices, each placed at the mean position of the stops it aggregates.

The day is divided into twenty-four one-hour windows, and one weighted undirected graph is built for each of them.
Two vertices are joined whenever some vehicle run calls consecutively, within the window, at a stop of the first and at a stop of the second, and the weight $w(x,y)$ of the edge $\{x,y\}$ counts how many runs do so.
The vertex set is common to the twenty-four graphs, so that only the edges and their weights vary over the day.
Figure~\ref{fig:surfer-realdata-features} reports the main features (size and spectral gap) of the transport network at each hour of the day.

At each hour of the day, the random surfer model is applied to the largest connected component of the transport network just constructed, so as to model the importance of each of its nodes over time.
Here the surfer takes the edge weights into account: from a vertex $x$, it moves to a neighbour $y$ with probability proportional to the weight $w(x,y)$ of the edge joining them,
\[
  \Kernel{y}{x} = \frac{w(x,y)}{\sum_{z \in V} w(x,z)},
\]
which is, on a weighted graph, the natural counterpart of the uniform choice made in Subsection~\ref{subsec:random-surfer-simulations}.
The chain remains reversible, and its invariant distribution is proportional to the weighted degrees,
\[
	  \forall x \in V, \quad \pi(x) \propto \sum_{y \in V} w(x,y),
\]
which provides the ground truth against which the estimators are assessed below.
To estimate that distribution, we apply the look-ahead method in a stationary Markov framework, independently at each hour.

Figure~\ref{fig:surfer-realdata-features} shows that the topological properties of the network (including the spectral gap) vary significantly over the course of the day.
In the light of the numerical results of Subsection~\ref{subsec:random-surfer-simulations}, reaching the same target MSE at every hour requires the horizon of the simulated Markov chains to depend strongly on the network.
To that end, we fix a target MSE of $10^{-5}$ for $\Empirical$.
Using the theoretical results of Section~\ref{sec:theory-known-kernel}, in particular formula~\eqref{eq:mseinfty:stationaryMC} and Proposition~\ref{prop:clt-pointwise}, we are able to determine the horizon $n$ required for the MCMC simulations to reach that target as $n = 10^{5}\,\SumOver{x} \AsymptoticVariance[0]{x}$.
It is worth noting that formula~\eqref{eq:formula:sigmakcarre} involves infinite sums.
We therefore prefer formula~\eqref{eq:totalvariance:from:clt}, established in the proof of Theorem~\ref{thm:monotone-amse}.
Table~\ref{tab:variances} gives the sums of the theoretical variances for the estimators $\Empirical$, $\PiHat{1} = \Empirical \KernelMatrix$ and $\PiHat{2} = \Empirical \KernelMatrix[2]$ of the invariant distribution of the transport network at each hour of the day.
Figure~\ref{fig:surfer-realdata-estim-error} gives the data sizes thus inferred, together with the boxplots of the squared error over $200$ replicates of these same estimators.

\begin{table}[htbp]
  \centering
  \caption{Global asymptotic variance $\SumOver{x}\AsymptoticVariance[k]{x}$ of the look-ahead estimators $\PiHat{k} = \Empirical \KernelMatrix[k]$, $\IndexBetween{k}{0}{2}$, on the largest connected component of the transport network at each hour of the day.}
  \label{tab:variances}
  \footnotesize
  \setlength{\tabcolsep}{3.5pt}
  \begin{tabular}{l*{12}{r}}
    \toprule
    & 00{:}00 & 01{:}00 & 02{:}00 & 03{:}00 & 04{:}00 & 05{:}00
    & 06{:}00 & 07{:}00 & 08{:}00 & 09{:}00 & 10{:}00 & 11{:}00 \\
    \midrule
    $\SumOver{x}\AsymptoticVariance[0]{x}$
      & $11.94$ & $12.67$ & $9.85$ & $10.17$ & $18.89$ & $11.56$
      & $11.73$ & $10.48$ & $13.22$ & $14.91$ & $28.51$ & $35.89$ \\
    $\SumOver{x}\AsymptoticVariance[1]{x}$
      & $10.54$ & $11.26$ & $8.45$ & $8.76$ & $17.46$ & $10.16$
      & $10.33$ & $9.10$ & $11.83$ & $13.52$ & $27.12$ & $34.50$ \\
    $\SumOver{x}\AsymptoticVariance[2]{x}$
      & $9.83$ & $10.52$ & $7.71$ & $8.02$ & $16.72$ & $9.49$
      & $9.67$ & $8.44$ & $11.17$ & $12.86$ & $26.46$ & $33.83$ \\
    \midrule
    & 12{:}00 & 13{:}00 & 14{:}00 & 15{:}00 & 16{:}00 & 17{:}00
    & 18{:}00 & 19{:}00 & 20{:}00 & 21{:}00 & 22{:}00 & 23{:}00 \\
    \midrule
    $\SumOver{x}\AsymptoticVariance[0]{x}$
      & $11.93$ & $13.36$ & $16.67$ & $22.57$ & $13.23$ & $11.33$
      & $11.80$ & $12.04$ & $11.85$ & $10.85$ & $10.93$ & $11.51$ \\
    $\SumOver{x}\AsymptoticVariance[1]{x}$
      & $10.54$ & $11.97$ & $15.28$ & $21.18$ & $11.84$ & $9.94$
      & $10.41$ & $10.65$ & $10.46$ & $9.46$ & $9.53$ & $10.11$ \\
    $\SumOver{x}\AsymptoticVariance[2]{x}$
      & $9.89$ & $11.31$ & $14.62$ & $20.52$ & $11.18$ & $9.28$
      & $9.75$ & $9.99$ & $9.80$ & $8.79$ & $8.85$ & $9.42$ \\
    \bottomrule
  \end{tabular}
\end{table}

Figure~\ref{fig:surfer-realdata-estim-error} shows first that the empirical squared errors are close to the target variance of $10^{-5}$, with variations most likely due to the finite-horizon approximation of the asymptotic MSE: what is observed here is the $\LittleO(n^{-1})$ of Lemma~\ref{lem:summability}, which itself appears to be related to the spectral gap of the network.
It also shows that the numbers of observations required to achieve that target variance are highly variable.
Furthermore, in cases where a large theoretical sample size is required to reach the $10^{-5}$ threshold, the empirical variance achieved in practice turns out to be lower than the target value.
Finally, the model at hand is reversible, so that it is no surprise to observe a decrease of the error when passing from $\Empirical$ to $\PiHat{1}$ and then to $\PiHat{2}$.
Figure~\ref{fig:surfer-realdata-map} and Table~\ref{tab:heaviest-places} present the results obtained on the transport network at 12:00.

\begin{figure}[htbp]
  \centering
  \includegraphics[width=\PlotWidth]{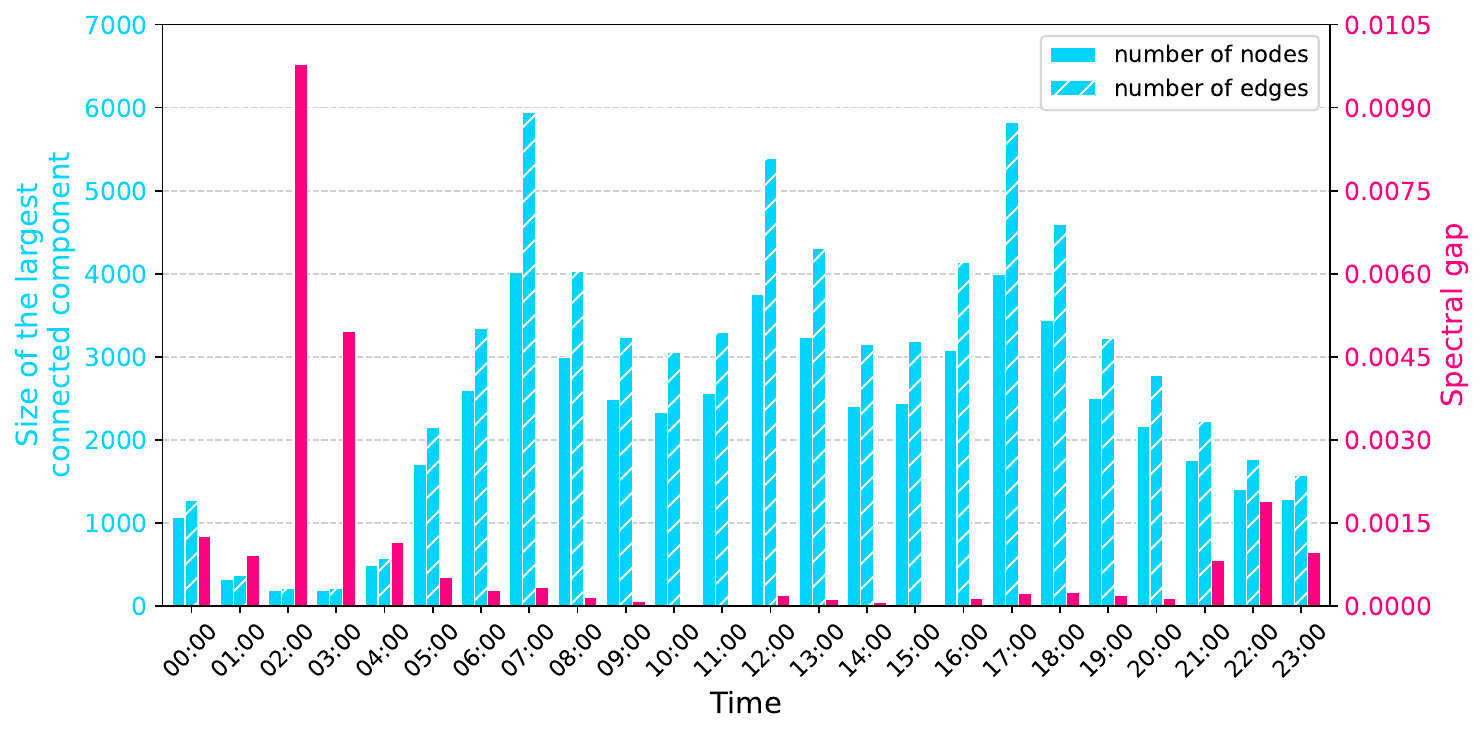}
  \caption{Features of the transport network.
    For each hour of the day, the size of the largest connected component, in number of nodes and in number of edges (left axis), and the spectral gap of the associated random surfer model (right axis).}
  \label{fig:surfer-realdata-features}
\end{figure}

\begin{figure}[htbp]
  \centering
  \includegraphics[width=\PlotWidth]{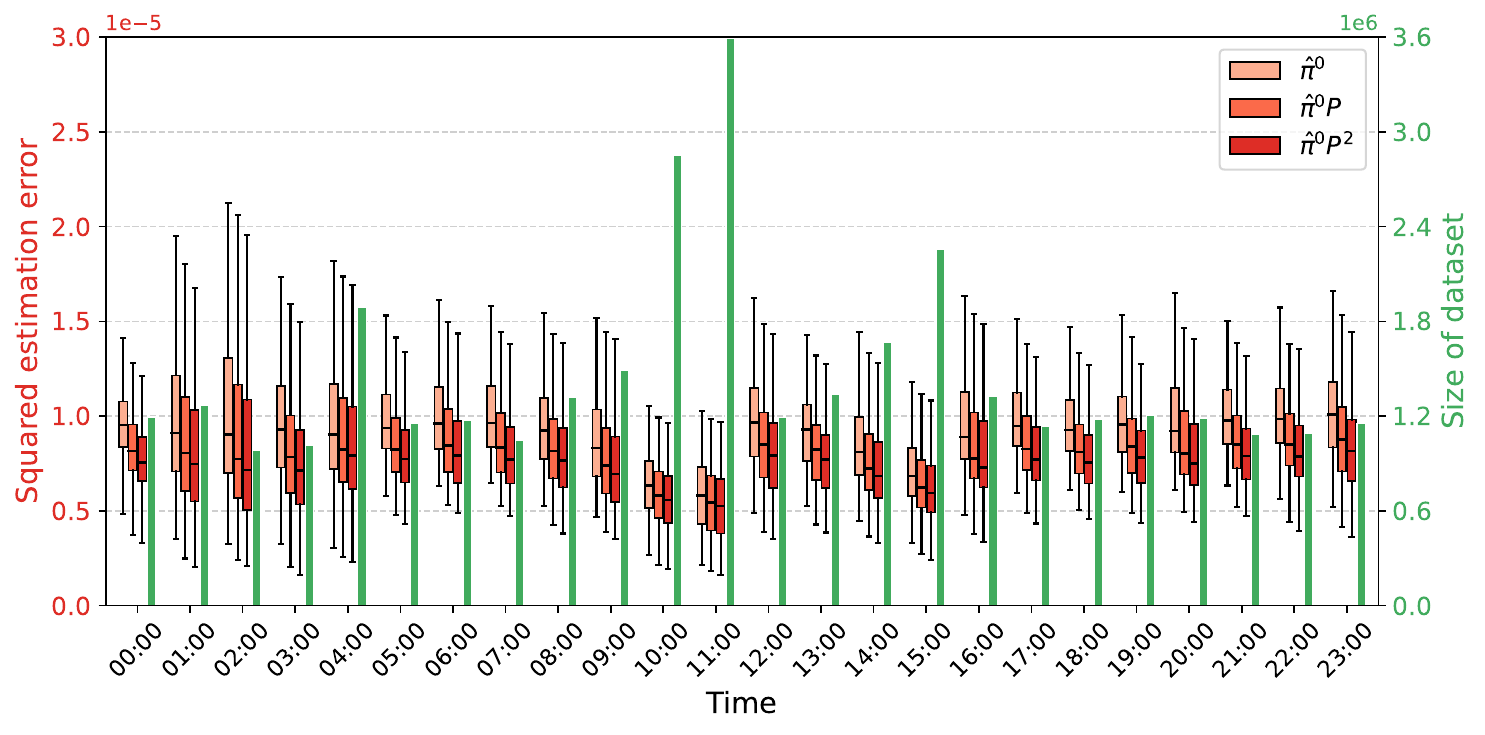}
  \caption{Boxplots over $200$ replicates of the squared estimation error of $\Empirical$, $\PiHat{1} = \Empirical \KernelMatrix$ and $\PiHat{2} = \Empirical \KernelMatrix[2]$ for each hour of the day; the size of the data set is reported on the right axis.}
  \label{fig:surfer-realdata-estim-error}
\end{figure}

\begin{figure}[htbp]
  \centering
  \includegraphics[trim=122 0 122 0, clip, width=\MapWidth]%
    {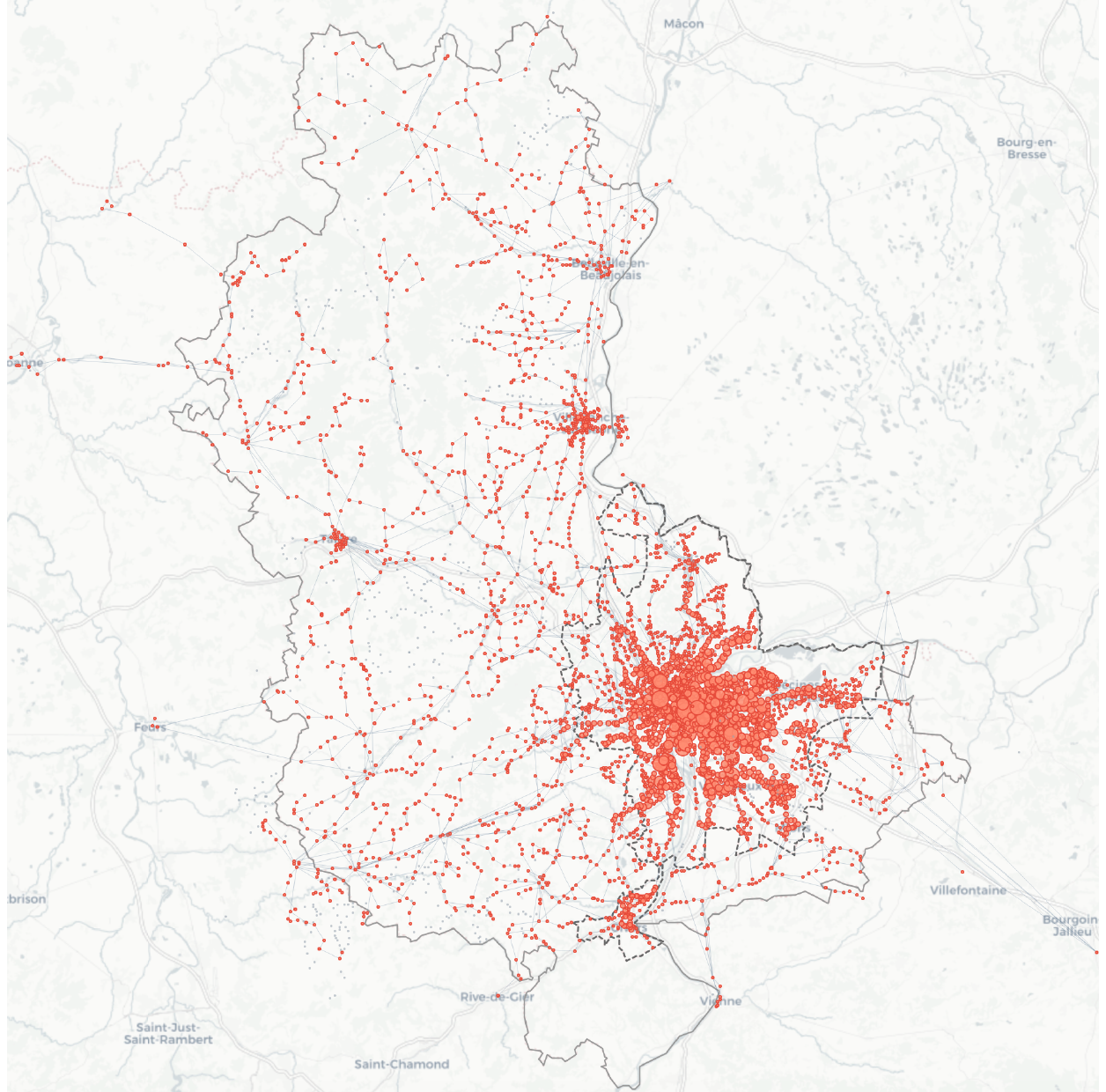}
  \caption{Theoretical transport network at 12:00; the size of a node is determined by its importance, itself defined from the invariant distribution of the random surfer model.}
  \label{fig:surfer-realdata-map}
\end{figure}

\begin{table}[htbp]
  \centering
  \caption{The ten most important places of the transport network at 12:00, that is, those carrying the largest invariant probability.
    Probabilities are given in units of $10^{-3}$.}
  \label{tab:heaviest-places}
  \small
  \setlength{\tabcolsep}{4pt}
  \begin{tabular}{rlr@{\hspace{1.5em}}rlr}
    \toprule
    \multicolumn{6}{c}{\emph{Ten most important places of the transport network at 12:00}} \\
    \midrule
    Rank & Place & Probability & Rank & Place & Probability \\
    \cmidrule(r){1-3} \cmidrule(l){4-6}
    1 & Saxe--Gambetta           & $10.05$ &  6 & Bellecour                     & $6.07$ \\
    2 & Gare Part-Dieu V.\,Merle & $7.49$  &  7 & Vieux Lyon Cat.\ St-Jean      & $5.84$ \\
    3 & Charpennes Charles Hernu & $7.35$  &  8 & Guilloti\`ere Gabriel P\'eri  & $5.73$ \\
    4 & Grange Blanche           & $6.60$  &  9 & Valmy                         & $5.65$ \\
    5 & Jean Mac\'e              & $6.52$  & 10 & Brotteaux                     & $5.58$ \\
    \bottomrule
  \end{tabular}
\end{table}

\newpage

\part{Parametrised transition kernel: a statistical approach}
\label{part:parametric-kernel}

In this part, the transition kernel is no longer assumed to be known, but to belong to a parametric family, which places us de facto in a statistical framework.

\section{Mean squared error of plug-in look-ahead estimators}
\label{sec:theory-parametric-kernel}

This section is dedicated to the theoretical study of the asymptotic MSE of plug-in look-ahead methods from stationary Markovian data.

\subsection{Plug-in look-ahead estimation of the invariant distribution}
\label{subsec:parametric-framework}

We consider a family $\{\KernelMatrixParam{\theta}\}_{\theta \in \ParamSpace}$, with $\ParamSpace \subset \Reals^{d}$, of transition kernels on a countable state space $\StateSpace$, and we observe a data set $(X_i)_{\IndexBetween{i}{1}{n}}$ with Markovian dependence of kernel $\KernelMatrixTrue$ for some $\ParamTrue\in\ParamSpace$.
Throughout this part, $\KernelMatrixTrue$ is assumed to be irreducible, aperiodic, positive recurrent and uniformly ergodic.
Consequently, $\KernelMatrixTrue$ admits a unique invariant distribution, denoted by $\pi$, and we assume that $\pi$ is the invariant distribution of no other transition kernel in the family, that is, $\pi \KernelMatrixParam{\theta} \neq \pi$ as soon as $\theta \in \ParamSpace \setminus \{\ParamTrue\}$.
Finally, the observed data are assumed to be stationary, that is, $X_1$ is distributed according to $\pi$.

Our objective is to estimate the invariant distribution $\pi$, taking into account our knowledge that the associated transition kernel belongs to the family $\{\KernelMatrixParam{\theta}\}_{\theta \in \ParamSpace}$.
Recall that we exclude here the approaches that consist in solving (exactly or approximately) the invariance equation for $\KernelMatrixParam{\ParamEstimator[n]}$, where $\ParamEstimator[n]$ is an estimator of $\ParamTrue$: the continuity of $\theta \mapsto \KernelMatrixParam{\theta}$ does not guarantee that its ergodic properties remain true around $\ParamTrue$, as shown by Proposition~\ref{prop:no-continuity}.

In analogy with the definition~\eqref{eq:estimator-pointwise} of look-ahead estimators with known kernel, it is natural to consider, for some $k \geq 1$ (assumed to be fixed in this section), for every $\theta \in \ParamSpace$ and every $x \in \StateSpace$,
\begin{equation}\label{eq:def:plugin:lookahead}
  \PiHatParamAt{\theta}{k}{x}
  = \frac{1}{n} \SumFromTo{i}{1}{n} \ContributionParam{\theta}{k}{i}{x},
\end{equation}
where
\[
  \forall \IndexBetween{i}{1}{n},
  \quad
  \ContributionParam{\theta}{k}{i}{x} = \KernelParam[k]{\theta}{x}{X_i}.
\]
The difference with the look-ahead estimators investigated in Sections~\ref{sec:theory-known-kernel} and~\ref{sec:random-surfer} is that the transition kernel that is applied does not in general (unless $\theta = \ParamTrue$) admit $\pi$ as its invariant distribution.
Since the parameter $\ParamTrue$ is unknown, it is natural to replace it in \eqref{eq:def:plugin:lookahead} by an estimator $\ParamEstimator[n]$, which defines the plug-in look-ahead estimator.
It should be remarked that \eqref{eq:def:plugin:lookahead} is equivalent to $\PiHatParam{\ParamEstimator[n]}{k} = \Empirical \KernelMatrixParam[k]{\ParamEstimator[n]}$.

To facilitate a rigorous theoretical analysis of the MSE, we consider in this paper a sample-split version of that estimator:
\begin{itemize}
  \item The first part of the data set, $(X_i)_{\IndexBetween{i}{1}{m}}$, is used to estimate $\ParamTrue$.
    The resulting estimator is denoted by $\ParamEstimator$; it may for instance be the maximum likelihood estimator.

  \item The second one, $(X_i)_{\IndexBetween{i}{m+1}{n}}$, is used to estimate $\pi$ by its empirical counterpart.
    The resulting estimator is denoted by $\Empirical[n-m]$.\footnote{Many references would rather write $\PiHat[m+1:n]{0}$, a notation free of any ambiguity, but a heavier one.}
\end{itemize}
The plug-in look-ahead estimator of $\pi(x)$ thus becomes
\[
  \PiHatParamAt[n-m]{\ParamEstimator}{k}{x}
  = \frac{1}{n-m} \SumFromTo{i}{m+1}{n}
    \ContributionParam{\ParamEstimator}{k}{i}{x}.
\]
In the terminology introduced in Remark~\ref{rem:local-versus-global}, this expression is local.
Equivalently, in global form, we can write
\begin{equation}\label{eq:def:plugin:lookahead:samplesplit}
  \PiHatParam[n-m]{\ParamEstimator}{k}
  = \Empirical[n-m] \KernelMatrixParam[k]{\ParamEstimator}.
\end{equation}
The objective of this section is to study theoretically the asymptotic MSE of $\PiHatParam[n-m]{\ParamEstimator}{k}$.
In particular, we aim to identify settings general enough for a reduction of the asymptotic MSE to be observed, compared with the empirical estimator $\Empirical$ (evaluated from the whole data set).
To this end, a sample-split sequence $m=m(n)$ is henceforth assumed to satisfy $m \leq n$, with both $m(n)$ and $n-m(n)$ increasing monotonically to infinity as $n$ goes to infinity.

\subsection{Asymptotic expansion of the mean squared error}
\label{subsec:parametric-mse-expansion}

Two factors may contribute to the MSE of $\PiHatParam[n-m]{\ParamEstimator}{k}$ given by \eqref{eq:def:plugin:lookahead:samplesplit}: an error due to $\Empirical[n-m]$, and an error due to $\ParamEstimator$.
Our aim here is to evaluate the contribution of each of them, that is, to obtain an expression of the MSE that separates them.
To that end, we are led to make regularity assumptions, in $\theta$, on the model.

The first condition bears on $\KernelMatrixParam[k]{\theta}$ directly: it is a uniform Lipschitz assumption, which involves the following metric (which will be motivated in Remark~\ref{rem:residual-norm}).
For a matrix $Q$ indexed on $\StateSpace$, we define
\[
  \NormOperator{Q}_{2,\text{res}}
  = \sup_{\substack{\mu, \nu \in \mathcal{P}(\StateSpace) \\ \mu \neq \nu}}
    \frac{\NormTwo{\left(\mu-\nu\right)Q}}{\NormTwo{\mu-\nu}},
\]
where $\mathcal{P}(\StateSpace)$ denotes the set of all probability distributions on $\StateSpace$.
In contrast to the standard matrix norm $\NormOperator{\cdot}_{2}$, this quantity does not define a norm on the space of matrices over $\StateSpace$.
Instead, it measures the maximum divergence that can be induced between two probability distributions under the action of a matrix $Q$.

\begin{condition}
\label{ass:regularity1}
The mapping $\theta \mapsto \NormOperator{\KernelMatrixParam[k]{\theta}}_{2,\text{res}}$ is uniformly bounded over $\ParamSpace$, and there exists a constant $[\KernelMatrix[k]]_{\text{Lip}} > 0$ such that,
\[
  \forall \theta \in \ParamSpace, \quad
  \NormOperator{\KernelMatrixParam[k]{\theta} - \KernelMatrixTrue[k]}_{2,\text{res}}
  \leq [\KernelMatrix[k]]_{\text{Lip}}
       \NormTwo{\theta - \ParamTrue}.
\]
\end{condition}

\begin{remark}
\label{rem:residual-norm}
The use of the metric $\NormOperator{\cdot}_{2,\text{res}}$ in Condition~\ref{ass:regularity1} may appear technical or ad hoc.
When the state space $\StateSpace$ is finite, one could adopt the stronger but more natural operator norm $\NormOperator{\cdot}_{2}$ instead of $\NormOperator{\cdot}_{2,\text{res}}$, sacrificing some generality for a more standard hypothesis.
However, when $\StateSpace$ is countably infinite, replacing $\NormOperator{\cdot}_{2,\text{res}}$ with $\NormOperator{\cdot}_{2}$ conflicts with the uniform ergodicity of the matrix $\KernelMatrixTrue$.
Indeed, uniformly ergodic transition kernels on a countable state space have an infinite $\NormOperator{\cdot}_{2}$ norm, which would result in an empty global set of assumptions.
\end{remark}

We then turn to the regularity of $\theta \mapsto \pi \KernelMatrixParam[k]{\theta}$.
Without much loss of generality, we first assume that $\theta \mapsto \KernelParam[k]{\theta}{x}{y}$ is differentiable at $\ParamTrue$ for every $x,y \in \StateSpace$, and we denote by $\Gradient \KernelParam[k]{\theta}{x}{y}$ its gradient, for every $\theta$ in a neighbourhood of $\ParamTrue$.
Adding the summability condition,
\[
  \forall x \in \StateSpace, \quad
  \SumOver{y} \pi(y) \NormTwo{\Gradient \KernelParam[k]{\ParamTrue}{x}{y}}
  < +\infty,
\]
we obtain, for every $x \in \StateSpace$, the differentiability of $\theta \mapsto (\pi \KernelMatrixParam[k]{\theta})(x)$ around $\ParamTrue$, with
\[
  \Gradient \left(\pi \KernelMatrixParam[k]{\theta}\right)\!\left(x\right)
  = \SumOver{y} \pi(y) \Gradient \KernelParam[k]{\theta}{x}{y}.
\]
We shall write $\Gradient(\pi \KernelMatrixParam[k]{\theta})$ for the matrix whose general term is,
\[
  \forall \IndexBetween{i}{1}{d},\ \forall x \in \StateSpace, \quad
  \left(\Gradient \left(\pi \KernelMatrixParam[k]{\theta}\right)\!\left(x\right)\right)_{i}
  = \partial_{\theta_i}\left(\pi\KernelMatrixParam[k]{\theta}\right)\!\left(x\right).
\]

\begin{condition}
\label{ass:regularity2}
For every $\theta \in \ParamSpace$,
\[
  \NormTwo{\pi\left(\KernelMatrixParam[k]{\theta} - \KernelMatrixTrue[k]\right)}^{2}
  = \Trace\left(\Gradient\left(\pi \KernelMatrixTrue[k]\right)\Transpose
    \left(\theta-\ParamTrue\right) \left(\theta-\ParamTrue\right)\Transpose
    \Gradient\left(\pi \KernelMatrixTrue[k]\right)\right)
  + \BigO\!\left(\NormTwo{\theta-\ParamTrue}^{3}\right).
\]
\end{condition}

While these conditions are essential for the analysis that follows, they are not really restrictive.
In fact, they hold naturally when the state space $\StateSpace$ is finite, as highlighted by the following proposition.

\begin{proposition}
\label{prop:regularity-finite}
If $\StateSpace$ is finite and $\ParamSpace$ is convex and compact, then Conditions~\ref{ass:regularity1} and~\ref{ass:regularity2} are satisfied provided that $\theta \mapsto \KernelMatrixParam{\theta}$ is of class $\mathcal{C}^{2}$.
\end{proposition}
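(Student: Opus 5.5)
Since $\StateSpace$ is finite, $\theta \mapsto \KernelMatrixParam[k]{\theta}$ is a polynomial in the entries of $\KernelMatrixParam{\theta}$, hence of class $\mathcal{C}^{2}$ on $\ParamSpace$ (on a neighbourhood, if one prefers to work with an open set containing the compact $\ParamSpace$). Every matrix norm on this finite-dimensional space is then equivalent, and the plan is to reduce both conditions to elementary Taylor bounds, using compactness to make all constants uniform and convexity to use the mean value inequality along segments.

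\textbf{Condition~\ref{ass:regularity1}.} First note that, for any matrix $Q$, $\NormOperator{Q}_{2,\text{res}} \leq \NormOperator{Q}_{2}$: the supremum defining the former is taken over the special vectors $\mu-\nu$, whereas the operator norm ranges over all nonzero vectors. It therefore suffices to bound $\NormOperator{\cdot}_{2}$. Continuity of $\theta \mapsto \KernelMatrixParam[k]{\theta}$ on the compact $\ParamSpace$ gives $\sup_{\theta} \NormOperator{\KernelMatrixParam[k]{\theta}}_{2} < +\infty$. For the Lipschitz bound, convexity of $\ParamSpace$ lets us write
\[
  \KernelMatrixParam[k]{\theta} - \KernelMatrixTrue[k] = \int_0^1 \sum_{j=1}^{d} (\theta-\ParamTrue)_j\, \partial_{\theta_j} \KernelMatrixParam[k]{\ParamTrue + t(\theta-\ParamTrue)}\, dt ,
\]
and the derivatives are continuous, hence bounded on the compact $\ParamSpace$. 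This yields $\NormOperator{\KernelMatrixParam[k]{\theta} - \KernelMatrixTrue[k]}_{2} \leq C \NormTwo{\theta-\ParamTrue}$ (via Cauchy--Schwarz over $j$), and we take $[\KernelMatrix[k]]_{\text{Lip}} = C$, increased if necessary to make it positive.

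\textbf{Condition~\ref{ass:regularity2}.} Set $g(\theta) = \pi \KernelMatrixParam[k]{\theta} \in \Reals^{\StateSpace}$, which is $\mathcal{C}^{2}$ with $\Gradient g(\ParamTrue) = \Gradient(\pi\KernelMatrixTrue[k])$ (the finite sum commutes with differentiation). A Taylor expansion with integral remainder along the segment, using convexity again, gives
\[
  g(\theta) - g(\ParamTrue) = \Gradient(\pi\KernelMatrixTrue[k])\Transpose(\theta-\ParamTrue) + R(\theta), \qquad \NormTwo{R(\theta)} \leq M\NormTwo{\theta-\ParamTrue}^{2},
\]
where $M$ is a uniform bound on the second derivatives over $\ParamSpace$. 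Squaring, the main term $\NormTwo{\Gradient(\pi\KernelMatrixTrue[k])\Transpose(\theta-\ParamTrue)}^{2}$ equals the stated trace by the identity $\NormTwo{v}^2 = \Trace(vv\Transpose)$ and cyclicity of the trace. The cross term is bounded by $2\NormTwo{\Gradient(\pi\KernelMatrixTrue[k])\Transpose(\theta-\ParamTrue)}\NormTwo{R(\theta)} = \BigO(\NormTwo{\theta-\ParamTrue}^{3})$, and $\NormTwo{R(\theta)}^{2} \leq M^{2}\NormTwo{\theta-\ParamTrue}^{4} \leq M^{2}\operatorname{diam}(\ParamSpace)\NormTwo{\theta-\ParamTrue}^{3}$, where compactness makes the diameter finite. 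This gives a bound uniform over $\ParamSpace$.

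The only step needing care is the comparison $\NormOperator{\cdot}_{2,\text{res}}\le\NormOperator{\cdot}_{2}$ and the uniformity of constants. Both are routine here, so I expect no genuine obstacle. The main point to state explicitly is that $\mathcal{C}^{2}$ on a compact convex set provides global, not merely local, Taylor bounds.
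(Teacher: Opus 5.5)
Your proof is correct and follows essentially the same route as the paper. For Condition~1 you bound $\NormOperator{\cdot}_{2,\text{res}}$ by the operator norm, then use continuity on the compact $\ParamSpace$ together with the mean value inequality along segments given by convexity. For Condition~2 you use a second-order Taylor expansion of $\theta \mapsto \pi\KernelMatrixParam[k]{\theta}$. You are slightly more explicit than the paper about uniformity over $\ParamSpace$, for instance by using its finite diameter to absorb the quartic remainder into $\BigO(\NormTwo{\theta-\ParamTrue}^{3})$ and by ensuring the Lipschitz constant is positive.
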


\ProofDeferred{app:proof-regularity-finite}

\begin{remark}
\label{rem:relaxed-parameter-space}
If the assumptions on the parameter space are relaxed to require only the convexity of $\ParamSpace$, one must additionally assume that the gradient and Hessian of the transition kernel, namely,
\[
  \forall x,y \in \StateSpace, \quad
  \theta \mapsto \Gradient \KernelParam[k]{\theta}{x}{y}
  \TextAnd
  \theta \mapsto \Hessian \KernelParam[k]{\theta}{x}{y},
\]
are uniformly bounded on $\ParamSpace$.
In practice, the parameter space $\ParamSpace$ is commonly chosen to be a hypercube, making it both convex and compact.
Consequently, in a finite state space setting, this geometric property ensures that the required regularity conditions reduce to the standard assumption that $\theta \mapsto \KernelMatrixParam{\theta}$ is of class $\mathcal{C}^{2}$.
\end{remark}

With this setup established, we are now ready to state our main theoretical result regarding the expansion of the MSE.

\begin{theorem}
\label{thm:mse-expansion}
Under Conditions~\ref{ass:regularity1} and~\ref{ass:regularity2},
\[
\begin{aligned}
  &\MSE\left(\PiHatParam[n-m]{\ParamEstimator}{k}\right)
  = \MSE\left(\PiHatParam[n-m]{\ParamTrue}{k}\right) \\
  &\enspace + \Trace\left(\Gradient\left(\pi \KernelMatrixTrue[k]\right)\Transpose
     \Expect\left[\left(\EstimationError\right)\left(\EstimationError\right)\Transpose\right]
     \Gradient\left(\pi \KernelMatrixTrue[k]\right)\right) \\
  &\enspace + \BigO\!\left(\Expect\left[\NormTwo{\EstimationError}^{3}\right]\right)
   + \BigO\!\left(\frac{\Expect\left[\NormTwo{\EstimationError}^{2}\right]^{1/2}}{n-m}\right)
   + \BigO\!\left(\frac{\Expect\left[\NormTwo{\EstimationError}^{2}\right]}{n-m}\right).
\end{aligned}
\]
\end{theorem}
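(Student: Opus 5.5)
We decompose the error of the plug-in estimator around the oracle look-ahead estimator with the true kernel. Write $\widehat{\mu} = \Empirical[n-m]$ and $\widehat{\theta} = \ParamEstimator$. The starting identity is
\[
  \widehat{\mu}\KernelMatrixParam[k]{\widehat{\theta}} - \pi
  = \underbrace{\left(\widehat{\mu}\KernelMatrixTrue[k] - \pi\right)}_{A}
  + \underbrace{\pi\left(\KernelMatrixParam[k]{\widehat{\theta}} - \KernelMatrixTrue[k]\right)}_{B}
  + \underbrace{\left(\widehat{\mu}-\pi\right)\left(\KernelMatrixParam[k]{\widehat{\theta}} - \KernelMatrixTrue[k]\right)}_{C},
\]
using $\pi\KernelMatrixTrue[k] = \pi$. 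Expanding $\Expect\NormTwo{A+B+C}^2$ gives $\Expect\NormTwo{A}^2 = \MSE(\PiHatParam[n-m]{\ParamTrue}{k})$, the main term $\Expect\NormTwo{B}^2$, and the remainders $\Expect\NormTwo{C}^2$ together with the cross terms $2\Expect\langle A,B\rangle$, $2\Expect\langle A,C\rangle$ and $2\Expect\langle B,C\rangle$.

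The term $\Expect\NormTwo{B}^2$ is handled directly by Condition~\ref{ass:regularity2} applied at $\theta=\widehat{\theta}$ and then taking expectations. Linearity of the trace gives exactly the displayed trace term with $\Expect[(\EstimationError)(\EstimationError)\Transpose]$, plus $\BigO(\Expect\NormTwo{\EstimationError}^3)$. This step requires the $\BigO$ constant in Condition~\ref{ass:regularity2} to be uniform over $\ParamSpace$, which is how I read the condition.

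For $C$, note that $\widehat{\mu}$ and $\pi$ are both probability measures, so the residual metric applies. Condition~\ref{ass:regularity1} gives $\NormTwo{C} \le [\KernelMatrix[k]]_{\text{Lip}}\NormTwo{\EstimationError}\,\NormTwo{\widehat{\mu}-\pi}$. To take the expectation of the product we condition on $X_{1:m}$. The second block is not independent of the first, but given $X_{1:m}$ it is a Markov chain started from $\Prob(X_{m+1}\in\cdot\mid X_m)$. The Markov property together with the uniformity in the initial distribution (Remark~\ref{rem:uniformity-initial-distribution}, Propositions~\ref{prop:bias-negligible} and~\ref{prop:variance-nonstationary-markov}) then gives
\[
  \Expect\left[\NormTwo{\widehat{\mu}-\pi}^2 \mid X_{1:m}\right] \le \frac{K}{n-m}
\]
with $K$ deterministic. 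Since $\widehat{\theta}$ is $X_{1:m}$-measurable, this yields $\Expect\NormTwo{C}^2 = \BigO(\Expect\NormTwo{\EstimationError}^2/(n-m))$, the last remainder.

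The cross terms use the same conditioning. The conditional bias satisfies $\NormTwo{\Expect[\widehat{\mu}\mid X_{1:m}]-\pi} = \BigO(1/(n-m))$ uniformly, by the argument of Proposition~\ref{prop:bias-negligible}: the bias is $(n-m)^{-1}\sum_i(\nu \KernelMatrixTrue[i]-\pi)$, bounded via geometric TV decay. The $\ell^2$ norm is dominated by $\ell^1$, hence by twice the TV norm, and $\KernelMatrixTrue[k]$ is bounded for $\NormOperator{\cdot}_{2,\text{res}}$ on differences of probabilities.

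Then
\[
  \Expect\langle A,B\rangle = \Expect\langle \Expect[A\mid X_{1:m}], B\rangle
  \le \BigO\!\left(\frac{1}{n-m}\right)\Expect\NormTwo{B},
\]
and $\NormTwo{B}\le \NormOperator{\KernelMatrixParam[k]{\widehat\theta}-\KernelMatrixTrue[k]}_{2,\text{res}}\NormTwo{\pi-\delta}$-type bounds are awkward here. Instead, use Condition~\ref{ass:regularity2} to bound $\NormTwo{B}\lesssim\NormTwo{\EstimationError}$ (the condition implies this, since $\Gradient(\pi\KernelMatrixTrue[k])$ is fixed and the $\BigO$ term is handled with boundedness of $\ParamSpace$ or the uniform bound in Condition~\ref{ass:regularity1}). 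Cauchy--Schwarz then gives $\BigO(\Expect[\NormTwo{\EstimationError}^2]^{1/2}/(n-m))$. The terms $\langle A,C\rangle$ and $\langle B,C\rangle$ are bounded by Cauchy--Schwarz, with $\Expect\NormTwo{A}^2 = \BigO(1/(n-m))$ from Proposition~\ref{prop:variance-nonstationary-markov}. A naive bound for $\langle A,C\rangle$ only gives $\Expect[\NormTwo{\EstimationError}^2]^{1/2}/(n-m)$, which fits the stated form. For $\langle B,C\rangle$, Cauchy--Schwarz yields $\Expect[\NormTwo{\EstimationError}^2]^{1/2}\cdot(\Expect\NormTwo{\EstimationError}^2/(n-m))^{1/2}$. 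This must be absorbed by AM-GM into the $\Expect\NormTwo{\EstimationError}^3$ and $\Expect\NormTwo{\EstimationError}^2/(n-m)$ terms, or better, handled by conditioning so that only the conditional bias of $\widehat{\mu}$ appears, giving $\BigO(\Expect\NormTwo{\EstimationError}^2/(n-m))$.

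The main obstacle is making the conditional moment bounds on the second block rigorous and uniform given the first block. This is exactly where Remark~\ref{rem:uniformity-initial-distribution} is needed, since the initial law of $X_{m+1}$ is random.
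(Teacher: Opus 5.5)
Your proposal is correct and follows essentially the paper's argument. The paper uses the same decomposition in two nested steps: it handles $\langle A+C,B\rangle$ jointly by conditioning on $X_{1:m}$ and invoking the uniform-in-$\mu$ bias bound of Proposition~\ref{prop:bias-negligible}, then uses Cauchy--Schwarz for $\langle A,C\rangle$ and the Lipschitz bound of Condition~\ref{ass:regularity1} for $C$. Your explicit conditional second-moment bound for $\Expect\NormTwo{C}^{2}$, and your remark on the uniformity of the $\BigO$ constant in Condition~\ref{ass:regularity2}, make precise steps the paper passes over silently.

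One small correction on $\langle B,C\rangle$. The Cauchy--Schwarz bound $\Expect[\NormTwo{\EstimationError}^{2}]/\sqrt{n-m}$ is absorbed by AM--GM and Lyapunov's inequality into the $\Expect\NormTwo{\EstimationError}^{3}$ and $\Expect[\NormTwo{\EstimationError}^{2}]^{1/2}/(n-m)$ terms. It does not go into $\Expect[\NormTwo{\EstimationError}^{2}]/(n-m)$, as you stated, but your conditioning alternative gives $\BigO(\Expect[\NormTwo{\EstimationError}^{2}]/(n-m))$ directly and is the route the paper effectively takes.
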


\ProofDeferred{app:proof-mse-expansion}

\subsection{Reduction of the asymptotic mean squared error}
\label{subsec:parametric-data-partition}

By Theorem~\ref{thm:mse-expansion}, the MSE of $\PiHatParam[n-m]{\ParamEstimator}{k}$ is controlled by (i) the MSE of the estimator $\PiHatParam[n-m]{\ParamTrue}{k}$ (which assumes the transition kernel to be known and was studied theoretically in Section~\ref{sec:theory-known-kernel}), and by (ii) the error made in estimating $\ParamTrue$.
To account for this second term without restricting the choice of the estimation procedure, we simply require $\ParamEstimator$ to satisfy the following general conditions.

First, we assume that
\[
  \Expect\left[\NormTwo{\EstimationError}^{3}\right]
  = \BigO\!\left(m^{-3/2}\right).
\]
For instance, this holds when $\ParamEstimator$ is a smooth function of empirical averages of bounded functions of the chain, since such averages satisfy a Hoeffding-type inequality as soon as the chain is uniformly ergodic~\cite{GlynnOrmoneit2002,Paulin2015}.
In addition, $g(m)$ denotes the rate of the quadratic error of $\ParamEstimator$, i.e.\ we assume that there exists a positive semidefinite matrix $\CovParam$ (of dimension $d$) such that, as $m$ goes to infinity,
\[
  \Expect\left[\left(\EstimationError\right)\left(\EstimationError\right)\Transpose\right]
  \EquivalentTo g(m) \CovParam.
\]
We further assume that
\[
  \Trace\left(\Gradient\left(\pi \KernelMatrixTrue[k]\right)\Transpose
  \CovParam \Gradient\left(\pi \KernelMatrixTrue[k]\right)\right) \neq 0 .
\]
This last assumption induces a link with the invariance equation, but remains very mild.
Indeed, it amounts to assuming that the second term in the MSE expansion of $\PiHatParam[n-m]{\ParamEstimator}{k}$ given in Theorem~\ref{thm:mse-expansion} does not vanish asymptotically.
By construction, were that term to vanish, this could only improve the performance of the plug-in look-ahead estimator.

\begin{remark}\label{rem:mse:zero}
In the sequel, we exclude the degenerate case where the asymptotic MSE of the look-ahead estimator $\PiHatParam[n-m]{\ParamTrue}{k}$, which appears in the expansion of Theorem~\ref{thm:mse-expansion}, vanishes, that is, $\SumOver{x} \AsymptoticVariance[k]{x} = 0$.
Interestingly, this occurs if and only if every row of the kernel $\KernelMatrixTrue[k]$ coincides with the invariant distribution, i.e.\ $\KernelParam[k]{\ParamTrue}{\cdot}{y} = \pi$ for every $y \in \StateSpace$.
In that degenerate case, the whole error comes from the estimation of $\ParamTrue$, and there is no point in allocating any data to the empirical estimation of $\pi$.
\end{remark}

Under all these assumptions, the following results compare the asymptotic MSE of the estimator of interest, $\PiHatParam[n-m]{\ParamEstimator}{k}$, with the asymptotic MSE of $\Empirical$, according to the rate $g(m)$.
Three regimes are considered: the case where $g(m)$ is slower than the parametric rate, in Corollary~\ref{cor:data-partition1}; the case where $g(m)$ is faster than the parametric rate, in Corollary~\ref{cor:data-partition2}; and the most interesting case, where $g(m)$ is the parametric rate, in Corollary~\ref{cor:data-partition3}.

\begin{corollary}
\label{cor:data-partition1}
Under Conditions~\ref{ass:regularity1} and~\ref{ass:regularity2}, if $\ParamEstimator$ satisfies the general assumptions above with $m^{-1} = \LittleO(g(m))$, then, for any split,
\[
  \AsymptoticMSE\left(\PiHatParam[n-m]{\ParamEstimator}{k}\right) = +\infty ,
\]
while $\AsymptoticMSE(\Empirical)$ is finite.
\end{corollary}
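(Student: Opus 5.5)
We start from the expansion of Theorem~\ref{thm:mse-expansion} and show that its second term dominates and is of order $g(m)$, which by assumption is strictly slower than $m^{-1}$ and hence than $n^{-1}$. Multiplying by $n$ and taking the $\limsup$ then gives $+\infty$.

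First we identify the leading term. By the assumption $\Expect[(\EstimationError)(\EstimationError)\Transpose] \EquivalentTo g(m)\CovParam$ and linearity of the trace,
\[
  \Trace\left(\Gradient\left(\pi \KernelMatrixTrue[k]\right)\Transpose
     \Expect\left[\left(\EstimationError\right)\left(\EstimationError\right)\Transpose\right]
     \Gradient\left(\pi \KernelMatrixTrue[k]\right)\right)
  = g(m)\,\bigl(c + \LittleO(1)\bigr),
\]
where $c = \Trace(\Gradient(\pi \KernelMatrixTrue[k])\Transpose \CovParam \Gradient(\pi \KernelMatrixTrue[k]))$. This quantity is nonnegative, being the trace of a positive semidefinite form, and is nonzero by assumption, so $c > 0$. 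If $\Gradient(\pi\KernelMatrixTrue[k])$ has infinitely many columns, the interchange of trace and limit needs a short justification. I would handle it by writing the trace as $\sum_x v_x\Transpose(\cdot)v_x$ with $\sum_x \NormTwo{v_x}^2<\infty$, which is implicit in Condition~\ref{ass:regularity2}, and then using the fact that matrix convergence in $d$ dimensions is uniform.

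Next we bound the remainders. The first remainder is $\BigO(\Expect[\NormTwo{\EstimationError}^3]) = \BigO(m^{-3/2}) = \LittleO(m^{-1}) = \LittleO(g(m))$. Taking the trace of the covariance equivalence gives $\Expect[\NormTwo{\EstimationError}^2] = \BigO(g(m))$. Hence the remaining two remainders are $\BigO(g(m)^{1/2}/(n-m))$ and $\BigO(g(m)/(n-m))$. The second is $\LittleO(g(m))$ because $n-m\to\infty$. For the first, if $g$ stays bounded away from $0$ the claim is immediate. Otherwise, we bound $g(m)^{1/2}/(n-m)$ crudely and need not compare it with $g(m)$. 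Indeed, the term $\MSE(\PiHatParam[n-m]{\ParamTrue}{k})$ is nonnegative, and the $\BigO(g^{1/2}/(n-m))$ term may carry either sign, so we only need a lower bound. We have
\[
  n\,\MSE \ge n\,c\,g(m)(1+\LittleO(1)) - C\,n\,g(m)^{1/2}/(n-m).
\]

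The main obstacle is exactly this cross term when $n-m$ is small relative to $n$, since the split is arbitrary. We handle it by cases. If $g(m)^{1/2}/(n-m) = \LittleO(g(m))$, that is $g(m)^{1/2}(n-m)\to\infty$, then $n\,\MSE \gtrsim c\,n\,g(m) \ge c\,m\,g(m)\to+\infty$, because $m\le n$ and $m\,g(m)\to\infty$. In the complementary regime we instead use the first term. By Lemma~\ref{lem:summability}, Proposition~\ref{prop:consistency-unbiasedness}, and Remark~\ref{rem:mse:zero}, we have $\MSE(\PiHatParam[n-m]{\ParamTrue}{k}) \sim (n-m)^{-1}\sum_x \AsymptoticVariance[k]{x}$ with a positive constant. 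I would try to show that the sum of the terms stays at least a constant times $\max(g(m),(n-m)^{-1})$, using $g^{1/2}/(n-m) \le \tfrac12(\varepsilon g + \varepsilon^{-1}(n-m)^{-2})$ and $(n-m)^{-2} = \LittleO((n-m)^{-1})$. Choosing $\varepsilon$ small relative to $c$ absorbs the cross term, and we get $n\,\MSE \ge n\,\bigl(c' g(m) + c''(n-m)^{-1}\bigr)$. Both terms cannot be small simultaneously. If $m\ge n/2$ the first term diverges, since $n\,g(m) \ge m\,g(m) \to \infty$. If $m< n/2$, then still $n\,g(m)\ge m\,g(m)\to\infty$. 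In either case the $\limsup$ is $+\infty$.

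Finally, $\AsymptoticMSE(\Empirical)=\SumOver{x}\AsymptoticVariance[0]{x}<\infty$ by Lemma~\ref{lem:summability} and \eqref{eq:mseinfty:stationaryMC}.
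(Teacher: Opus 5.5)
Your proof is correct, but it combines the terms of the expansion differently from the paper.

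\textbf{The paper's route.} The paper argues by regime of the split. When $n-m=\LittleO(n)$, the known-kernel term $\frac{1}{n-m}\SumOver{x}\AsymptoticVariance[k]{x}$ alone makes $\MSE(\PiHatParam[n-m]{\ParamEstimator}{k})/\MSE(\Empirical)$ diverge. When $(n-m)^{-1}=\BigO(n^{-1})$, the parameter term $g(m)\,\Trace(\cdots)$ does so because $n g(m)\to\infty$. In both regimes the other remainders are dismissed as non-negative or negligible.

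\textbf{Your route.} You obtain, for every split at once and for large $n$, the single lower bound $\MSE(\PiHatParam[n-m]{\ParamEstimator}{k})\ge c' g(m)+c''(n-m)^{-1}$. You split the sign-indefinite remainder with $g(m)^{1/2}/(n-m)\le\tfrac12\bigl(\varepsilon g(m)+\varepsilon^{-1}(n-m)^{-2}\bigr)$. You then absorb the two pieces into the trace term and into the known-kernel MSE. This second absorption is where $\SumOver{x}\AsymptoticVariance[k]{x}>0$, i.e.\ Remark~\ref{rem:mse:zero}, is genuinely needed. The bound $n g(m)\ge m g(m)\to+\infty$ then concludes.

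\textbf{What your route buys.}
\begin{itemize}
  \item The paper's two regimes are not exhaustive for an arbitrary sequence $m(n)$. One has to pass to subsequences, which is harmless for a $\limsup$ but is left unsaid.
  \item The paper leaves its treatment of the cross term $\BigO(\Expect[\NormTwo{\EstimationError}^{2}]^{1/2}/(n-m))$ implicit, whereas you handle it explicitly.
  \item You obtain the stronger conclusion $\lim_n n\,\MSE=+\infty$.
\end{itemize}

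\textbf{Superfluous case splits.} Your AM--GM bound covers every case by itself. So the preliminary distinctions can be dropped: $g$ bounded away from $0$ or not, $g(m)^{1/2}(n-m)\to\infty$ or not, and $m\ge n/2$ or not. The last one even uses the same inequality in both branches.

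\textbf{The hypotheses are incompatible.} Incidentally, your step $\BigO(\Expect[\NormTwo{\EstimationError}^{3}])=\LittleO(g(m))$ is where one can see that the hypotheses cannot hold together.
\begin{itemize}
  \item Lyapunov's inequality gives $\Expect[\NormTwo{\EstimationError}^{2}]\le\Expect[\NormTwo{\EstimationError}^{3}]^{2/3}=\BigO(m^{-1})$.
  \item On the other hand, $\Expect[\NormTwo{\EstimationError}^{2}]\EquivalentTo g(m)\Trace(\CovParam)$, with $\Trace(\CovParam)>0$ because $\CovParam$ is positive semidefinite and nonzero.
  \item Together these force $g(m)=\BigO(m^{-1})$, contradicting $m^{-1}=\LittleO(g(m))$.
\end{itemize}
The corollary is therefore vacuous as stated. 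Your proof goes through unchanged if the third-moment condition is weakened to $\Expect[\NormTwo{\EstimationError}^{3}]=\LittleO(g(m))$.
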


\ProofDeferred{app:proof-data-partition1}

In other words, when $\ParamEstimator$ converges at a slower rate than $\Empirical$, the plug-in look-ahead estimator should not be considered.

\begin{corollary}
\label{cor:data-partition2}
Under Conditions~\ref{ass:regularity1} and~\ref{ass:regularity2}, if $\ParamEstimator$ satisfies the general assumptions above with $g(m) = \LittleO(m^{-1})$, then, for any split such that $m = \LittleO(n)$, $n = \LittleO(g(m)^{-1})$ and $n = \LittleO(m^{3/2})$,
\[
  \AsymptoticMSE\left(\PiHatParam[n-m]{\ParamEstimator}{k}\right)
  = \AsymptoticMSE\left(\PiHatParam{\ParamTrue}{k}\right).
\]
\end{corollary}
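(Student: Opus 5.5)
We start from the expansion of Theorem~\ref{thm:mse-expansion}, multiply it by $n$, and show that each term except the first is negligible while the first has the right limit. The first term is $\MSE(\PiHatParam[n-m]{\ParamTrue}{k})$, the MSE of the look-ahead estimator with known kernel computed from the $n-m$ observations $X_{m+1},\dots,X_n$. By stationarity, this block is itself a stationary Markov chain with kernel $\KernelMatrixTrue$. The estimator is therefore unbiased by Proposition~\ref{prop:consistency-unbiasedness}, and by Lemma~\ref{lem:summability} its MSE equals $(n-m)^{-1}\SumOver{x}\AsymptoticVariance[k]{x} + \LittleO((n-m)^{-1})$. Since $m = \LittleO(n)$, we have $n/(n-m)\to 1$, so $n\,\MSE(\PiHatParam[n-m]{\ParamTrue}{k}) \to \SumOver{x}\AsymptoticVariance[k]{x}$. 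By~\eqref{eq:mseinfty:stationaryMC}, this limit is $\AsymptoticMSE(\PiHatParam{\ParamTrue}{k})$, the asymptotic MSE of the full-sample known-kernel look-ahead estimator.

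The remaining terms are handled one at a time, using $\Expect[(\EstimationError)(\EstimationError)\Transpose]\EquivalentTo g(m)\CovParam$ and hence $\Expect[\NormTwo{\EstimationError}^2]\EquivalentTo g(m)\Trace(\CovParam) = \BigO(g(m))$.
\begin{itemize}
  \item The trace term is bounded by a constant times $g(m)$, because the trace is linear in the middle matrix and $\Gradient(\pi\KernelMatrixTrue[k])$ is fixed. Multiplied by $n$, it tends to zero since $n = \LittleO(g(m)^{-1})$.
  \item The cubic remainder satisfies $n\,\BigO(m^{-3/2}) \to 0$ by the assumption $n = \LittleO(m^{3/2})$.
  \item The term $n\,\BigO(\Expect[\NormTwo{\EstimationError}^2]^{1/2}/(n-m))$ is $\BigO(g(m)^{1/2})$ because $n/(n-m)\to1$, and $g(m) = \LittleO(m^{-1}) \to 0$.
  \item The term $n\,\BigO(g(m)/(n-m)) = \BigO(g(m))$ also tends to zero.
\end{itemize}
Since all the correction terms are $\LittleO(1)$ after multiplication by $n$, the $\limsup$ in~\eqref{eq:def:amse} is in fact a limit, and it equals $\SumOver{x}\AsymptoticVariance[k]{x} = \AsymptoticMSE(\PiHatParam{\ParamTrue}{k})$, which proves the claim.

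The main point requiring care is the first step. One must check that Lemma~\ref{lem:summability}, stated for a chain indexed from $1$ to $n$, applies to the block $(X_i)_{m<i\le n}$ of length $n-m\to\infty$. This holds because that block is a stationary chain with the same kernel. One must also make sure the trace term is genuinely $\BigO(g(m))$ with a finite constant. This requires $\Gradient(\pi\KernelMatrixTrue[k])$ to have finite Hilbert--Schmidt norm over the possibly infinite $\StateSpace$, which is implicit in Condition~\ref{ass:regularity2} and in the nondegeneracy assumption making the trace finite. The assumption $g(m) = \LittleO(m^{-1})$ is not used directly beyond giving $g(m)\to0$; it is what makes the split constraints $m = \LittleO(n)$ and $n = \LittleO(g(m)^{-1})$ compatible.
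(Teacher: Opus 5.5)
Your proposal is correct and follows the paper's own proof essentially step for step. Starting from the expansion of Theorem~\ref{thm:mse-expansion}, you identify the known-kernel term as $\frac{1}{n}\SumOver{x}\AsymptoticVariance[k]{x} + \LittleO(1/n)$ using Lemma~\ref{lem:summability} and $m = \LittleO(n)$, then dispose of the trace term, the cubic remainder and the two cross terms via $n = \LittleO(g(m)^{-1})$, $n = \LittleO(m^{3/2})$ and $g(m) \to 0$, exactly as the paper does, while also making explicit two points the paper leaves implicit (stationarity of the block $(X_i)_{m<i\leq n}$ and finiteness of the trace constant on a countable state space).
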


\ProofDeferred{app:proof-data-partition2}

When $\ParamEstimator$ converges at a faster rate than $\Empirical$, allocating a number of observations negligible with respect to $n$ to the estimation of $\ParamTrue$ is enough to reach the performance of the look-ahead estimator with known kernel.
The comparison with $\AsymptoticMSE(\Empirical)$ is studied theoretically in Theorem~\ref{thm:monotone-amse}: for $k = 1$, the improvement in asymptotic MSE always holds; for $k > 1$, reversibility guarantees it.

The following result explores the case where $\ParamEstimator$ and $\Empirical$ converge at the same rate through the variance ratios
\[
\begin{aligned}
  \RatioLookAhead
  &= \frac{\SumOver{x} \AsymptoticVariance[k]{x}}
          {\SumOver{x} \AsymptoticVariance[0]{x}}, \\[0.8em]
  \RatioParameter
  &= \frac{\Trace\left(\Gradient\left(\pi \KernelMatrixTrue[k]\right)\Transpose
          \CovParam \Gradient\left(\pi \KernelMatrixTrue[k]\right)\right)}
          {\SumOver{x} \AsymptoticVariance[0]{x}}.
\end{aligned}
\]

\begin{corollary}
\label{cor:data-partition3}
Under Conditions~\ref{ass:regularity1} and~\ref{ass:regularity2}, if $\ParamEstimator$ satisfies the general assumptions above with $g(m) \EquivalentTo m^{-1}$,
\begin{equation}\label{eq:def:optimalsplit}
  \SplitFractionOptimal
= \operatorname*{argmin}_{\SplitFraction \in \left]0,1\right[}
     \AsymptoticMSE\left(\PiHatParam[n-m]{\ParamEstimator}{k}\right)
= \frac{\sqrt{\RatioParameter}}
          {\sqrt{\RatioLookAhead}+\sqrt{\RatioParameter}}.
\end{equation}
Moreover, for any split of the form $m \EquivalentTo \SplitFractionOptimal n$,
\[
  \AsymptoticMSE\left(\PiHatParam[n-m]{\ParamEstimator}{k}\right)
  < \AsymptoticMSE\left(\Empirical\right),
\]
if and only if $\sqrt{\RatioLookAhead}+\sqrt{\RatioParameter} < 1$.
\end{corollary}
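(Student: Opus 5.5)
The plan is to start from the expansion of Theorem~\ref{thm:mse-expansion}, evaluate each term at the rates $m \EquivalentTo \SplitFraction n$ and $n-m \EquivalentTo (1-\SplitFraction)n$, and then optimise over $\SplitFraction$.

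\textbf{First term.} The known-kernel estimator $\PiHatParam[n-m]{\ParamTrue}{k}$ is computed from the stationary chain $(X_i)_{\IndexBetween{i}{m+1}{n}}$. By Proposition~\ref{prop:consistency-unbiasedness} it is unbiased, and Lemma~\ref{lem:summability} applies to the $n-m$ observations, so that
\[
  \MSE\bigl(\PiHatParam[n-m]{\ParamTrue}{k}\bigr) = \frac{1}{n-m}\SumOver{x}\AsymptoticVariance[k]{x} + \LittleO\!\left(\frac{1}{n}\right).
\]

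\textbf{Second term.} Since $g(m)\EquivalentTo m^{-1}$, the trace term equals $m^{-1}\Trace(\Gradient(\pi \KernelMatrixTrue[k])\Transpose \CovParam \Gradient(\pi \KernelMatrixTrue[k]))(1+\LittleO(1))$. One should check that the equivalence of the expected outer-product matrices passes through the linear map $M \mapsto \Trace(A\Transpose M A)$. This holds because $A\Transpose M A$ is linear in $M$, the dimension $d$ is finite, and the columns of $A=\Gradient(\pi\KernelMatrixTrue[k])$ are square-summable. That summability follows from Condition~\ref{ass:regularity2}, which makes the squared norm finite; I would verify this carefully.

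\textbf{Remainder terms.} Using $\Expect[\NormTwo{\EstimationError}^3]=\BigO(m^{-3/2})$ and $\Expect[\NormTwo{\EstimationError}^2]\EquivalentTo m^{-1}\Trace(\CovParam)$, the three remainders are $\BigO(m^{-3/2})$, $\BigO(m^{-1/2}(n-m)^{-1})$ and $\BigO(m^{-1}(n-m)^{-1})$. All are $\LittleO(n^{-1})$ as long as $\SplitFraction\in\left]0,1\right[$.

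\textbf{Optimisation over the split.} Multiplying by $n$ and letting $n\to\infty$, and dividing by $V_0=\SumOver{x}\AsymptoticVariance[0]{x}$, we get
\[
  \frac{\AsymptoticMSE\bigl(\PiHatParam[n-m]{\ParamEstimator}{k}\bigr)}{V_0} = \frac{\RatioLookAhead}{1-\SplitFraction}+\frac{\RatioParameter}{\SplitFraction}=:\phi(\SplitFraction).
\]
Here $V_0>0$: by Remark~\ref{rem:mse:zero} and the standing exclusion, $\SumOver{x}\AsymptoticVariance[k]{x}>0$, and $V_0=0$ would force $\delta_y=\pi$, which is impossible on an irreducible chain with more than one state. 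Both $\RatioLookAhead$ and $\RatioParameter$ are positive, so $\phi$ is strictly convex on $\left]0,1\right[$ and tends to $+\infty$ at both endpoints. Setting $\phi'(\SplitFraction)=0$ gives $\RatioLookAhead\SplitFraction^2=\RatioParameter(1-\SplitFraction)^2$, hence $\sqrt{\RatioLookAhead}\,\SplitFraction=\sqrt{\RatioParameter}(1-\SplitFraction)$. This yields formula~\eqref{eq:def:optimalsplit}, and uniqueness of the argmin follows from strict convexity. Plugging back in gives $\phi(\SplitFractionOptimal)=(\sqrt{\RatioLookAhead}+\sqrt{\RatioParameter})^2$, which one can also see from Cauchy--Schwarz.

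\textbf{Comparison with the empirical estimator.} The full-sample empirical estimator has $\AsymptoticMSE(\Empirical)=V_0$ by \eqref{eq:mseinfty:stationaryMC}. The strict inequality is therefore equivalent to $(\sqrt{\RatioLookAhead}+\sqrt{\RatioParameter})^2<1$, that is, to $\sqrt{\RatioLookAhead}+\sqrt{\RatioParameter}<1$.

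\textbf{Main obstacle.} The delicate step is justifying that the $\limsup$ in \eqref{eq:def:amse} is an actual limit equal to $V_0\,\phi(\SplitFraction)$. This needs the Lemma~\ref{lem:summability} asymptotics to hold for the shifted sub-chain, which is fine because the chain is stationary. It also needs the remainders of Theorem~\ref{thm:mse-expansion} to be uniformly $\LittleO(n^{-1})$ whenever $m/n\to\SplitFraction\in\left]0,1\right[$; this is where the sample-splitting independence structure of the theorem is used.
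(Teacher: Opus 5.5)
Your proposal is correct and follows essentially the same route as the paper: substitute $m \sim \alpha n$ into the expansion of Theorem~\ref{thm:mse-expansion}, check that the three remainders are $O(n^{-3/2}) = o(n^{-1})$, divide by $\MSE(\Empirical)$ to get $\RatioLookAhead/(1-\SplitFraction) + \RatioParameter/\SplitFraction$, and conclude by strict convexity with minimum value $(\sqrt{\RatioLookAhead}+\sqrt{\RatioParameter})^{2}$. One point in your ``main obstacle'' needs correcting: once Theorem~\ref{thm:mse-expansion} is granted, its remainders carry explicit rates and nothing further is needed, and the two halves of the trajectory are not independent anyway --- the theorem handles them by conditioning on $X_1,\dots,X_m$ and using the bias bound of Proposition~\ref{prop:bias-negligible}, which is uniform in the initial distribution (Remark~\ref{rem:uniformity-initial-distribution}).
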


\ProofDeferred{app:proof-data-partition3}

In the most interesting case where $\ParamEstimator$ and $\Empirical$ converge at the same rate, the comparison of the asymptotic MSEs rests on a delicate evaluation of the asymptotic MSE of $\Empirical$, of $\PiHat{k}$ and of $\ParamEstimator[m]$.

\begin{remark}
\label{rem:additional-knowledge-corollary}
From a practical viewpoint, one does not need to use the exact optimal split ratio $\SplitFractionOptimal$ to guarantee that $\PiHatParam[n-m]{\ParamEstimator}{k}$ improves upon $\Empirical$.
Indeed, any split of the form $m \EquivalentTo \SplitFraction n$ will yield a strictly smaller asymptotic MSE, provided that
\[
  \SplitFraction \in \left]\psi^{-}(\RatioLookAhead,\RatioParameter),
  \psi^{+}(\RatioLookAhead,\RatioParameter)\right[,
\]
where the expressions for the boundaries $\psi^{\pm}(\RatioLookAhead,\RatioParameter)$ are provided by \eqref{eq:def:interval:alpha} in Appendix~\ref{app:proof-data-partition3}.
Conversely, any split sequence where $m$ and $n-m$ are not proportional asymptotically (i.e.\ $m = \LittleO(n)$ or $n-m = \LittleO(n)$) will lead to an estimator converging at a slower rate than the standard parametric rate, making such splitting strategies irrelevant.
\end{remark}

\section{Plug-in look-ahead estimation in a birth-and-death process}
\label{sec:birth-death}

In this section we study, both theoretically and numerically, the behaviour of the (sample-split) one-step plug-in look-ahead estimator ($k=1$) on a birth-and-death process.
Numerically only, we further consider the estimator $\PiHatParam{\ParamEstimator[n]}{1} = \Empirical \KernelMatrixParam{\ParamEstimator[n]}$, in which the whole trajectory is used to build both $\Empirical$ and $\ParamEstimator[n]$, without any splitting of the data set.
Finally, the method is applied to a real data set recording the availability of bikes in the stations of a bike-sharing system.

\subsection{Theoretical investigations}
\label{subsec:birth-death-theory}

The model of interest is a birth-and-death process on $\IntegerRange{0}{L}$, $L \geq 2$, defined from two parameters $\theta, \varepsilon \in \, ]0,1[$ by its transition matrix $\BDKernel$,
\[
\begin{alignedat}{2}
  \KernelParam{\theta,\varepsilon}{0}{0}
    &= \KernelParam{\theta,\varepsilon}{L}{L}   &&= \varepsilon, \\
  \KernelParam{\theta,\varepsilon}{1}{0}
    &= \KernelParam{\theta,\varepsilon}{L-1}{L} &&= 1-\varepsilon,
\end{alignedat}
\]
and, for $\IndexBetween{x}{1}{L-1}$,
\[
\begin{aligned}
  \KernelParam{\theta,\varepsilon}{x-1}{x} &= (1-\varepsilon)(1-\theta), \\
  \KernelParam{\theta,\varepsilon}{x}{x} &= \varepsilon, \\
  \KernelParam{\theta,\varepsilon}{x+1}{x} &= (1-\varepsilon)\theta.
\end{aligned}
\]
All other entries of $\BDKernel$ are zero.
The parameter $\varepsilon$ controls how lazy the chain is, and makes it aperiodic; it is assumed to be known in what follows.
The parameter $\theta$ controls whether the model tends to favour births or deaths.
The following proposition describes the elementary properties of the model.

\begin{proposition}
\label{prop:birth-death-law}
For $\theta, \varepsilon \in \, ]0,1[$, $\BDKernel$ is irreducible, aperiodic and reversible.
Moreover, if $\theta < 0.5$, the invariant distribution $\BDpi$ is given by
\[
\begin{aligned}
  \BDpi(0) &= \frac{1-\rho}{2\left(1-\rho^{L}\right)}, \\
  \forall \IndexBetween{x}{1}{L-1}, \quad \BDpi(x)
    &= \BDpi(0)(1+\rho)\rho^{x-1}, \\
  \BDpi(L) &= \BDpi(0)\rho^{L-1},
\end{aligned}
\]
where $\rho = \theta/(1-\theta)$.
\end{proposition}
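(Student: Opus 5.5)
Note first that the kernel, as written, is a matrix with rows indexed by the current state: $\Kernel{y}{x}$ is the probability of moving from $x$ to $y$. The argument splits into three parts: the structural properties (irreducibility and aperiodicity), reversibility via detailed balance, and the explicit invariant law, which follows from reversibility and a normalisation.

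\emph{Irreducibility and aperiodicity.} For every $x$ with $\IndexBetween{x}{1}{L-1}$, both $\KernelParam{\theta,\varepsilon}{x+1}{x}$ and $\KernelParam{\theta,\varepsilon}{x-1}{x}$ are positive, since $\theta,\varepsilon \in \, ]0,1[$. At the boundary, $\KernelParam{\theta,\varepsilon}{1}{0}$ and $\KernelParam{\theta,\varepsilon}{L-1}{L}$ are equal to $1-\varepsilon > 0$. Hence any two states communicate through nearest-neighbour moves, and the chain is irreducible. Every diagonal entry equals $\varepsilon > 0$, so the period is $1$. Since the state space is finite, positive recurrence and uniform ergodicity follow, which places the model within the framework of Part~\ref{part:parametric-kernel}.

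\emph{Reversibility.} A birth-and-death chain on a finite path only has transitions between neighbouring states (and self-loops). Kolmogorov's criterion holds trivially on a tree, so the chain is reversible. Concretely, I would not invoke the criterion but instead exhibit the measure $\BDpi$ solving the detailed balance equations
\[
  \BDpi(x)\,\KernelParam{\theta,\varepsilon}{x+1}{x} = \BDpi(x+1)\,\KernelParam{\theta,\varepsilon}{x}{x+1},
  \qquad \IndexBetween{x}{0}{L-1}.
\]
A solution of detailed balance is automatically invariant. This proves reversibility for every $\theta \in \, ]0,1[$, because the explicit solution constructed below is valid without any restriction on $\theta$; the normalising constant simply takes a different form when $\theta = 1/2$.

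\emph{The explicit formula.} I would solve the detailed balance equations recursively, in three steps.
\begin{itemize}
  \item At $x=0$: $\BDpi(0)(1-\varepsilon) = \BDpi(1)(1-\varepsilon)(1-\theta)$, which gives $\BDpi(1) = \BDpi(0)/(1-\theta) = \BDpi(0)(1+\rho)$, using $1+\rho = 1/(1-\theta)$.
  \item For $\IndexBetween{x}{1}{L-2}$: both interior rates share the factor $(1-\varepsilon)$, so $\BDpi(x+1) = \rho\,\BDpi(x)$. This yields $\BDpi(x) = \BDpi(0)(1+\rho)\rho^{x-1}$ for $\IndexBetween{x}{1}{L-1}$.
  \item At the last edge: $\BDpi(L-1)(1-\varepsilon)\theta = \BDpi(L)(1-\varepsilon)$, so $\BDpi(L) = \theta(1+\rho)\rho^{L-2}\BDpi(0)$. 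Since $\theta(1+\rho) = \theta/(1-\theta) = \rho$, this equals $\BDpi(0)\rho^{L-1}$.
\end{itemize}

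\emph{Normalisation.} The condition $\sum_x \BDpi(x) = 1$ requires
\[
  \BDpi(0)\left[1 + (1+\rho)\frac{1-\rho^{L-1}}{1-\rho} + \rho^{L-1}\right] = 1 ,
\]
where the geometric sum uses $\rho \neq 1$. Multiplying the bracket by $1-\rho$ gives
\[
  (1-\rho) + (1+\rho)(1-\rho^{L-1}) + \rho^{L-1} - \rho^{L} = 2 - 2\rho^{L},
\]
so $\BDpi(0) = (1-\rho)/\big(2(1-\rho^{L})\big)$, as claimed. The assumption $\theta < 0.5$ only serves to make $\rho < 1$; the formula in fact holds for any $\theta \neq 1/2$. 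There is no real obstacle anywhere in this argument. The only step requiring care is the bookkeeping at the two boundary edges, where the transition rates lack the factor $\theta$ or $1-\theta$; this is exactly what produces the $(1+\rho)$ factor on interior states and its absence at $L$.
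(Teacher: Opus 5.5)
Your proposal is correct and follows the paper's proof: you solve the detailed balance equations edge by edge (boundary edge at $0$, interior recursion $\BDpi(x+1)=\rho\,\BDpi(x)$, boundary edge at $L$) and then normalise. The differences are cosmetic. The paper first writes $\BDKernel = (1-\varepsilon)\KernelMatrixParam{\theta,0} + \varepsilon\Identity$ to reduce to $\varepsilon = 0$, whereas you cancel the common factor $(1-\varepsilon)$ in each equation directly. You also carry out explicitly the normalisation computation that the paper only asserts.
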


\ProofDeferred{app:proof-birth-death-law}

For the sake of readability, we drop from now on the indexing by $\theta$ and $\varepsilon$ of the transition kernel and of its invariant distribution.
Since its state space is finite, the model satisfies the main assumptions made in the general theoretical study of Section~\ref{sec:theory-parametric-kernel}.
The main objective of this subsection is to apply Corollary~\ref{cor:data-partition3} to the one-step plug-in look-ahead estimator ($k=1$) for this process.
To that end, we begin by studying the asymptotic variance of the empirical estimator.

\begin{lemma}
\label{prop:birth-death-variance}
For $f \in \Ltwopi$, the asymptotic MSE of the empirical estimator $\EmpiricalOf{f}$ of $\pi(f)$ is given by
\[
  \AsymptoticVariance[0]{f}
  = 2 \SumFromTo{j}{0}{L-1} \frac{1}{\pi(j)\Kernel{j+1}{j}}
    \left(\SumFromTo{i}{0}{j} \pi(i)\bar{f}(i)\right)^{2}
  - \pi\left( {\bar{f}}^2 \right),
\]
where $\bar{f} = f - \pi(f)$.
\end{lemma}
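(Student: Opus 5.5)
The plan is to start from the Markov CLT of Proposition~\ref{prop:clt-functional} with $k=0$, which gives
\[
  \AsymptoticVariance[0]{f} = \pi(\bar f^{2}) + 2\SumFromTo{i}{1}{+\infty} \ScalarProduct{\bar f}{\KernelMatrix[i]\bar f},
\]
and rewrite it in terms of the solution of the Poisson equation. Set $g = \SumFromTo{i}{0}{+\infty} \KernelMatrix[i]\bar f$, where $(\KernelMatrix h)(y)=\SumOver{z}\Kernel{z}{y}h(z)$. On the finite state space this series converges geometrically, and $g$ solves $(\Identity-\KernelMatrix)g=\bar f$ with $\pi(g)=0$. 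The standard identity then reads
\[
  \AsymptoticVariance[0]{f} = 2\ScalarProduct{\bar f}{g} - \pi(\bar f^{2}).
\]
This accounts for the $-\pi(\bar f^{2})$ term, so it remains to show that
\[
  \ScalarProduct{\bar f}{g} = \SumFromTo{j}{0}{L-1}\frac{1}{\pi(j)\Kernel{j+1}{j}}\Big(\SumFromTo{i}{0}{j}\pi(i)\bar f(i)\Big)^{2}.
\]

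For that step, use reversibility (Proposition~\ref{prop:birth-death-law}) and the nearest-neighbour structure. Reversibility gives the Dirichlet form identity
\[
  \ScalarProduct{\bar f}{g} = \ScalarProduct{(\Identity-\KernelMatrix)g}{g} = \SumFromTo{j}{0}{L-1}\pi(j)\Kernel{j+1}{j}\big(g(j+1)-g(j)\big)^{2}.
\]
This holds because the only nonzero off-diagonal conductances are $c_j=\pi(j)\Kernel{j+1}{j}=\pi(j+1)\Kernel{j}{j+1}$; the laziness $\varepsilon$ sits on the diagonal and does not contribute.

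Next, solve the Poisson equation explicitly through increments. Write $\Delta_j = g(j+1)-g(j)$. Multiplying $(\Identity-\KernelMatrix)g=\bar f$ at state $i$ by $\pi(i)$ gives the flux form $c_{i-1}\Delta_{i-1} - c_i\Delta_i = \pi(i)\bar f(i)$, with the convention $c_{-1}=0$. Summing from $i=0$ to $j$ telescopes to
\[
  -c_j\Delta_j = \SumFromTo{i}{0}{j}\pi(i)\bar f(i),
\]
so $\Delta_j = -c_j^{-1}F_j$ with $F_j = \SumFromTo{i}{0}{j}\pi(i)\bar f(i)$. The final equation at $i=L$ is automatically consistent, since $F_L=\pi(\bar f)=0$. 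Substituting into the Dirichlet form gives $\sum_j c_j\Delta_j^{2} = \sum_j F_j^{2}/c_j$, which is exactly the claimed expression.

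The main obstacle is bookkeeping rather than conceptual. First, the convention of the paper is that $\Kernel{y}{x}$ denotes the probability of moving from $x$ to $y$, so $\Kernel{j+1}{j}$ is the upward rate from $j$ and the conductance $c_j$ is $\pi(j)\Kernel{j+1}{j}$. Second, one has to justify the rearrangement from the autocovariance series to $2\ScalarProduct{\bar f}{g}-\pi(\bar f^{2})$. This is immediate from geometric ergodicity on a finite space. The sign of $\Delta_j$ is irrelevant because it appears squared.
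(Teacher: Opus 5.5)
Your proposal is correct and follows essentially the same route as the paper. The paper starts from the identity $\AsymptoticVariance[0]{f} = 2\ScalarProduct{\bar f}{\FundamentalMatrix \bar f} - \ScalarProduct{\bar f}{\bar f}$ established in the proof of Theorem~\ref{thm:monotone-amse}; its $g = \FundamentalMatrix\bar f$ coincides with your Neumann series $\sum_{i \geq 0} \KernelMatrix[i]\bar f$. It then rewrites $\ScalarProduct{(\Identity-\KernelMatrix)g}{g}$ as the nearest-neighbour Dirichlet form and solves for the increments $g(j)-g(j+1)$ by the same telescoping of $\pi(i)[(\Identity-\KernelMatrix)g](i)$. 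The only difference is cosmetic: you rederive the starting identity directly from the autocovariance series of Proposition~\ref{prop:clt-functional} rather than quoting it.
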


\ProofDeferred{app:proof-birth-death-variance}

Applying it to the elementary functions $\Dirac{x}$, $x \in \StateSpace$, yields the first quantity of interest, namely the asymptotic MSE of the empirical estimator.

\begin{proposition}
\label{prop:birth-death-MSE-pi0}
For $\theta < 0.5$ and $\varepsilon \in \, ]0,1[$, when $L$ goes to infinity,
\[
  \SumFromTo{x}{0}{L} \AsymptoticVariance[0]{x}
  = \frac{2}{1-\varepsilon} \times \frac{(1+\rho)^{2}}{4(1-\rho)}
  - \frac{1+\rho}{2} + \LittleO_{L}(1).
\]
\end{proposition}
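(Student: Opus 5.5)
The plan is to apply Lemma~\ref{prop:birth-death-variance} with $f = \Dirac{x}$ for each $x \in \IntegerRange{0}{L}$, sum over $x$, and then let $L \to \infty$ term by term, controlled by a geometric domination. Write $\Pi(j) = \SumFromTo{i}{0}{j} \pi(i)$ for the cumulative distribution. For $f = \Dirac{x}$ we have $\bar f = \Dirac{x} - \pi(x)$. Hence $\pi(\bar f^{2}) = \pi(x)(1-\pi(x))$, and
\[
  \SumFromTo{i}{0}{j} \pi(i)\bar f(i) = \pi(x)\left(\Indicator{x \le j} - \Pi(j)\right).
\]
Summing the lemma over $x$ gives the exact finite-$L$ identity
\[
  \SumFromTo{x}{0}{L} \AsymptoticVariance[0]{x}
  = 2\SumFromTo{j}{0}{L-1} \frac{(1-\Pi(j))^{2}\sum_{x\le j}\pi(x)^{2} + \Pi(j)^{2}\sum_{x> j}\pi(x)^{2}}{\pi(j)\Kernel{j+1}{j}}
  - \Bigl(1 - \SumFromTo{x}{0}{L}\pi(x)^{2}\Bigr).
\]
The transition probabilities are $\Kernel{1}{0} = 1-\varepsilon$ and $\Kernel{j+1}{j} = (1-\varepsilon)\theta$ for $1 \le j \le L-1$, which gives the common factor $2/(1-\varepsilon)$.

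Second, I identify the limits using Proposition~\ref{prop:birth-death-law}. As $L \to \infty$ with $\rho<1$ fixed, $\pi(0) \to c := (1-\rho)/2$ and $\pi(x) \to c(1+\rho)\rho^{x-1}$ for $x \geq 1$. The mass at $L$ is $O(\rho^{L})$ and is negligible. The subtracted term is then
\[
  \SumFromTo{x}{0}{L}\pi(x)^{2} \to c^{2}\Bigl(1+\frac{(1+\rho)^{2}}{1-\rho^{2}}\Bigr) = \frac{1-\rho}{2},
\]
so it contributes $-(1+\rho)/2 + \LittleO_L(1)$, which is the second term of the claim. For the main sum, the limiting tail is $1-\Pi(j) = (1+\rho)\rho^{j}/2$ for $j \ge 0$. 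The two partial sums of squares are explicit geometric expressions, namely $\sum_{x>j}\pi(x)^{2} = c^{2}(1+\rho)^{2}\rho^{2j}/(1-\rho^{2})$, and $\sum_{x\le j}\pi(x)^{2}$ is the total $(1-\rho)/2$ minus that tail.

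Third, I plug these into the $j$-th summand. The $j = 0$ term has weight $1/(c(1-\varepsilon))$ and the terms $j \ge 1$ have weight $1/(c(1+\rho)\rho^{j-1}(1-\varepsilon)\theta)$. Each summand behaves like a constant times $\rho^{j}$, plus a term in $\rho^{2j}$ divided by $\rho^{j}$, so the series is geometric in $j$. Summing it and using $\theta = \rho/(1+\rho)$ to clear $\theta$ should collapse to
\[
  \frac{2}{1-\varepsilon}\cdot\frac{(1+\rho)^{2}}{4(1-\rho)}.
\]

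Finally, to justify exchanging $L \to \infty$ with the sum over $j$, I will bound the finite-$L$ quantities uniformly in $L$. From the explicit formulas, $\pi_L(j)$ and $1-\Pi_L(j)$ are within a factor $(1\pm O(\rho^{L-j}))$ of their limits, and both are of order $\rho^{j}$ uniformly, including for $j$ near $L$. Each summand is therefore bounded by $C\rho^{j}$ with $C$ independent of $L$, and dominated convergence applies. I expect the main obstacle to be the algebraic simplification of the geometric sums in the third step into the stated closed form, together with careful handling of the boundary terms $j=0$ and $j$ close to $L$. I would first do the computation for the limiting (infinite) chain and only then check the uniform domination.
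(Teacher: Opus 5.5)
Your proposal is correct, and it starts exactly where the paper does: Lemma~\ref{prop:birth-death-variance} applied with $f=\Dirac{x}$, with weights $1/(\pi(j)\Kernel{j+1}{j})$ and partial sums $\pi(x)(\Indicator{x\le j}-\Pi(j))$, summed over $x$. The difference is in how you take $L\to\infty$.

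The paper stays at finite $L$ throughout. It rewrites $\Pi(j)^2=(1-\Pi(j))^2-(1-2\Pi(j))$, so that the $x$-sum becomes $(1-\Pi(j))^2\Pi^2(L)-(1-2\Pi(j))(\Pi^2(L)-\Pi^2(j))$. It then evaluates both $j$-sums in closed form at finite $L$ and checks by hand that the remainders are $\LittleO_{L}(1)$: the term $E_L$, and a second sum with an explicit expression involving $L\rho^{2L-1}$.

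You instead compute on the limiting chain and transfer the result by dominated convergence, which avoids all finite-$L$ bookkeeping. Your domination is sound. Since $\sum_{x>j}\pi_L(x)^2\le(1-\Pi_L(j))^2$, the numerator is at most $2(1-\Pi_L(j))^2\le C\rho^{2j}$. Using $(1+\rho)\theta=\rho$, one gets $\pi_L(j)\Kernel{j+1}{j}\ge(1-\varepsilon)\frac{1-\rho}{2}\rho^{j}$ for every $0\le j\le L-1$.

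The algebra you left as ``should collapse'' does collapse. Set $t_j=\frac{1+\rho}{2}\rho^{j}$ and $s_j=\frac{1-\rho^{2}}{4}\rho^{2j}$. Your limiting numerator is then $t_j^{2}\,\frac{1-\rho}{2}+s_j(1-2t_j)$, and the limiting weight is $1/\bigl((1-\varepsilon)\frac{1-\rho}{2}\rho^{j}\bigr)$ for all $j\ge 0$, including $j=0$.

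The first part contributes exactly $\frac{2}{1-\varepsilon}\cdot\frac{(1+\rho)^{2}}{4(1-\rho)}$. The second part is a multiple of $\sum_{j\ge 0}\bigl(\rho^{j}-(1+\rho)\rho^{2j}\bigr)=\frac{1}{1-\rho}-\frac{1+\rho}{1-\rho^{2}}=0$. This vanishing is the limiting counterpart of the paper's second sum being $\LittleO_{L}(1)$.
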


\ProofDeferred{app:proof-birth-death-MSE-pi0}

We now compute the asymptotic variance of the one-step look-ahead estimator (with known kernel).

\begin{proposition}
\label{prop:birth-death-MSE-pi1}
For $\theta < 0.5$ and $\varepsilon \in \, ]0,1[$, when $L$ goes to infinity,
\[
  \SumFromTo{x}{0}{L} \AsymptoticVariance[1]{x}
  = \frac{2}{1-\varepsilon} \times \frac{(1+\rho)^{2}}{4(1-\rho)}
  - \frac{1+\rho}{2} + 1 - \varepsilon^{2}
  - \frac{(1-\varepsilon)^{2}}{1+\rho} - 2(\varepsilon + \rho)
  + \LittleO_{L}(1).
\]
\end{proposition}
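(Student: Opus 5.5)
Since $\KernelMatrix$ is reversible, the natural route is to express $\SumOver{x} \AsymptoticVariance[1]{x}$ through the asymptotic variances of empirical estimators of functionals. By Proposition~\ref{prop:clt-pointwise}, $\AsymptoticVariance[1]{x}$ is the asymptotic variance of the ergodic average of $f_x(y) = \Kernel{x}{y}$. For a reversible kernel with $f$ centred, the standard identity gives
\[
\AsymptoticVariance[0]{f} = \langle \bar f, (I+P)(I-P)^{-1}\bar f\rangle_\pi .
\]
Replacing $\bar f$ by $P\bar f$ then yields
\[
\AsymptoticVariance[1]{f} = \AsymptoticVariance[0]{f} - \langle \bar f,(I+P)\bar f\rangle_\pi + \langle P\bar f,P\bar f\rangle_\pi \ldots
\]
More precisely, one checks the exact relation
\[
\AsymptoticVariance[0]{f}-\AsymptoticVariance[1]{f}
= \NormPi{\bar f}^2 + 2\langle \bar f, P\bar f\rangle_\pi - \NormPi{P\bar f}^2 ,
\]
which is what formula~\eqref{eq:totalvariance:from:clt} in the proof of Theorem~\ref{thm:monotone-amse} should encode. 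It follows from $\AsymptoticVariance[1]{f}=\langle P\bar f,(I+P)(I-P)^{-1}P\bar f\rangle_\pi$ together with $P^2(I+P)(I-P)^{-1} = (I+P)(I-P)^{-1} - (I+P)(I+P) + P^{2}\ldots$; I will simply redo this algebra carefully.

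Applied to $f=\Dirac{x}$, the correction terms are explicit, local quantities. We have $\NormPi{\bar\delta_x}^2 = \pi(x)(1-\pi(x))$ and $\langle \bar\delta_x,P\bar\delta_x\rangle_\pi = \pi(x)\Kernel{x}{x}-\pi(x)^2$. For $P\bar\delta_x$, note that $(P\delta_x)(y) = \Kernel{x}{y}$ is supported on the neighbours of $x$, so
\[
\NormPi{P\bar\delta_x}^2 = \sum_{y} \pi(y)\Kernel{x}{y}^2 - \pi(x)^2 .
\]
Summing over $x$ and subtracting from Proposition~\ref{prop:birth-death-MSE-pi0}, the main term $\frac{2}{1-\varepsilon}\frac{(1+\rho)^2}{4(1-\rho)} - \frac{1+\rho}{2}$ is inherited unchanged, and only the finite-range sum
\[
\Delta_L = \sum_x \Bigl(\pi(x) + 2\varepsilon\pi(x) - \sum_y \pi(y)\Kernel{x}{y}^2 - \pi(x)^2\Bigr)
\]
remains to be computed, with sign conventions adjusted to match the statement. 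Here $\sum_x\pi(x)=1$, $\sum_x \pi(x)\Kernel{x}{x}=\varepsilon$, and $\sum_x\pi(x)^2\to \pi(0)^2(1+(1+\rho)^2/(1-\rho^2)) $ as $L\to\infty$ using Proposition~\ref{prop:birth-death-law}, with $\pi(0)\to(1-\rho)/2$.

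The key computation is $\sum_{x,y}\pi(y)\Kernel{x}{y}^2 = \sum_y \pi(y)\sum_x \Kernel{x}{y}^2$. The inner sum equals $\varepsilon^2+(1-\varepsilon)^2$ at the boundary states $y\in\{0,L\}$, and $\varepsilon^2+(1-\varepsilon)^2(\theta^2+(1-\theta)^2)$ in the interior. Since $\pi(L)\to0$ and $\pi(0)\to(1-\rho)/2$, this gives a limit in closed form. It then remains to rewrite $\theta^2+(1-\theta)^2 = (1+\rho^2)/(1+\rho)^2$, and the $\pi(x)^2$ sum in terms of $\rho$, and to simplify the result to $-1+\varepsilon^2+\frac{(1-\varepsilon)^2}{1+\rho}+2(\varepsilon+\rho)$ (the negative of the correction in the statement). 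All error terms are geometric in $\rho^L$, hence $\LittleO_L(1)$.

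The main obstacle is getting the exact reversible identity relating $\AsymptoticVariance[1]{}$ to $\AsymptoticVariance[0]{}$ right, including signs, and matching it with the convention behind formula~\eqref{eq:totalvariance:from:clt}. The final algebraic simplification in $\rho$ is tedious, and I would verify it numerically for a few values of $(\rho,\varepsilon)$ before finalizing it.
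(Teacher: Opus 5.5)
There is a genuine error at exactly the step you flagged as the obstacle. Your relation between the two asymptotic variances has the wrong sign on its last term. Your starting points are right: for centred $g$ and reversible $\KernelMatrix$, $\AsymptoticVariance[0]{g}=\ScalarProduct{g}{(\Identity+\KernelMatrix)(\Identity-\KernelMatrix)^{-1}g}$ (equivalent to \eqref{eq:variancef:from:clt}), and $\AsymptoticVariance[1]{x}=\AsymptoticVariance[0]{\KernelMatrix\Dirac{x}}$. The operator identity you were reaching for is simply $\KernelMatrix[2](\Identity+\KernelMatrix)(\Identity-\KernelMatrix)^{-1}=(\Identity+\KernelMatrix)(\Identity-\KernelMatrix)^{-1}-(\Identity+\KernelMatrix)^{2}$ on centred functions, with no extra $\KernelMatrix[2]$. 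By self-adjointness this gives
\[
  \AsymptoticVariance[0]{x}-\AsymptoticVariance[1]{x}
  =\ScalarProduct{\bar{\delta}_x}{(\Identity+\KernelMatrix)^{2}\bar{\delta}_x}
  =\NormPi{\bar{\delta}_x}^{2}+2\ScalarProduct{\bar{\delta}_x}{\KernelMatrix\bar{\delta}_x}+\NormPi{\KernelMatrix\bar{\delta}_x}^{2}.
\]
A quick check confirms this. On an eigenfunction with eigenvalue $\lambda$, the true difference is $(1+\lambda)^{2}\NormPi{\bar f}^{2}\geq 0$, as it must be for a reversible kernel. Your formula instead gives $(1+2\lambda-\lambda^{2})\NormPi{\bar f}^{2}$, which is negative for $\lambda$ close to $-1$.

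Your evaluations of the three local terms are correct, so the right summand is $\pi(x)+2\varepsilon\pi(x)+\SumOver{y}\pi(y)\Kernel{x}{y}^{2}-4\pi(x)^{2}$. Your $\Delta_L$ has $-\SumOver{y}\pi(y)\Kernel{x}{y}^{2}$ and $-\pi(x)^{2}$ instead; your own identity would even give $-2\pi(x)^{2}$. This is not a convention issue. Your $\Delta_L$ tends to $1+2\varepsilon-\varepsilon^{2}-\frac{(1-\varepsilon)^{2}}{1+\rho}-\frac{1-\rho}{2}$, which is not the target, so adjusting signs to match the statement would conceal an actual mistake.

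With the correction, the argument closes immediately. Sum over $x$, use $\SumOver{x}\pi(x)^{2}\to(1-\rho)/2$, and use your limit $\varepsilon^{2}+\frac{(1-\varepsilon)^{2}}{1+\rho}$ for $\sum_{x,y}\pi(y)\Kernel{x}{y}^{2}$. This gives $\TotalVariance{0}-\TotalVariance{1}=1+2\varepsilon+\varepsilon^{2}+\frac{(1-\varepsilon)^{2}}{1+\rho}-2(1-\rho)+\LittleO_{L}(1)$, exactly the negative of the correction in the statement.

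Once fixed, this is a genuinely different route from the paper's. The paper plugs $f=\Kernel{x}{\cdot}$ into the one-dimensional formula of Lemma~\ref{prop:birth-death-variance}. It then uses reversibility to write $S'_x(j)=S_x(j)+\pi(j)\Kernel{j+1}{j}\big(\Dirac{x}(j+1)-\Dirac{x}(j)\big)$, and evaluates $\sum_{j}\pi(j)\KernelParam{\theta,0}{j+1}{j}=1/2$ and $\sum_{j}\big(S_{j+1}(j)-S_j(j)\big)\to-(1+\rho)/2$. Your spectral route gives, for any reversible kernel, $\TotalVariance{0}-\TotalVariance{1}=\SumOver{x}\NormPi{(\Identity+\KernelMatrix)\bar{\delta}_x}^{2}$. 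That makes the sign of the improvement evident and reduces the birth--death case to three elementary sums. The paper's route stays within the explicit birth--death formula and needs no operator calculus.
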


\ProofDeferred{app:proof-birth-death-MSE-pi1}

We shall turn to the estimation of the parameter $\theta$.
From Markovian data $(X_i)_{\IndexBetween{i}{1}{n}}$, we introduce the estimator
\[
  \ParamEstimator[n]
  = \frac{\#\left\{\IndexBetween{i}{1}{n-1} \,:\,
      X_i \notin \left\{0,L\right\},\ X_{i+1} = X_i+1\right\}}
         {\#\left\{\IndexBetween{i}{1}{n-1} \,:\,
      X_i \notin \left\{0,L\right\},\ X_{i+1} \neq X_i\right\}}.
\]
This estimator satisfies the following central limit theorem.

\begin{proposition}
\label{prop:birth-death-MSE-parameter}
For $\theta < 0.5$ and $\varepsilon \in \, ]0,1[$, when $n$ goes to infinity,
\[
  \sqrt{n}\left(\ParamEstimator[n]-\theta\right)
  \ConvergesInDistribution
  \Gaussian{0}{s^2},
\]
where, when $L$ goes to infinity,
\[ s^2 = \frac{2\rho}{(1-\varepsilon)(1+\rho)^{3}} + \LittleO_{L}(1).\]
\end{proposition}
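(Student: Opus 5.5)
The plan is to write the estimator as a ratio and to notice that its centred numerator is a sum of martingale differences. Set
\[
  \xi_i = \Indicator{X_i \notin \{0,L\}}\left(\Indicator{X_{i+1} = X_i+1} - \theta\,\Indicator{X_{i+1} \neq X_i}\right),
  \qquad
  B_n = \SumFromTo{i}{1}{n-1} \Indicator{X_i \notin \{0,L\},\ X_{i+1} \neq X_i}.
\]
Then
\[
  \ParamEstimator[n] - \theta = \frac{\SumFromTo{i}{1}{n-1} \xi_i}{B_n}.
\]
The proof then has three steps: a martingale CLT for the numerator, a law of large numbers for the denominator, and an evaluation of the resulting constant as $L \to \infty$.

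\textbf{Step 1: martingale CLT for the numerator.} Let $\mathcal{F}_i = \sigma(X_1,\dots,X_i)$ and let $x \in \IntegerRange{1}{L-1}$.
\begin{itemize}
  \item Given $X_i = x$, an up-move occurs with probability $(1-\varepsilon)\theta$ and a move occurs with probability $1-\varepsilon$. Hence $\Expect[\xi_i \mid \mathcal{F}_i] = (1-\varepsilon)\theta - \theta(1-\varepsilon) = 0$, so $(\xi_i)$ is a bounded martingale difference sequence.
  \item The conditional variance is
  \[
    \Expect[\xi_i^2 \mid X_i = x] = (1-\varepsilon)\left[\theta(1-\theta)^2 + (1-\theta)\theta^2\right] = (1-\varepsilon)\theta(1-\theta),
  \]
  and it vanishes at $x \in \{0,L\}$.
  \item By the ergodic theorem for the stationary, irreducible, aperiodic finite chain (Proposition~\ref{prop:birth-death-law}),
  \[
    \frac{1}{n}\SumFromTo{i}{1}{n-1} \Expect[\xi_i^2 \mid \mathcal{F}_i] \ConvergesAlmostSurely (1-\varepsilon)\theta(1-\theta)\,\pi_{\mathrm{int}},
    \qquad \pi_{\mathrm{int}} = 1 - \pi(0) - \pi(L).
  \]
  \item Since the increments are bounded, the Lindeberg condition is trivial. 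The martingale CLT therefore gives
  \[
    n^{-1/2}\sum_i \xi_i \ConvergesInDistribution \Gaussian{0}{(1-\varepsilon)\theta(1-\theta)\pi_{\mathrm{int}}}.
  \]
\end{itemize}

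\textbf{Step 2: denominator and Slutsky.} The ergodic theorem applied to the pair chain $(X_i,X_{i+1})$ gives $B_n/n \to (1-\varepsilon)\pi_{\mathrm{int}} > 0$ almost surely. By Slutsky's lemma,
\[
  \sqrt{n}\left(\ParamEstimator[n] - \theta\right) \ConvergesInDistribution \Gaussian{0}{s^2},
  \qquad
  s^2 = \frac{\theta(1-\theta)}{(1-\varepsilon)\pi_{\mathrm{int}}}.
\]
This expression is exact for every $L$.

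\textbf{Step 3: the limit $L \to \infty$.} Proposition~\ref{prop:birth-death-law} gives $\pi(0) \to (1-\rho)/2$ and $\pi(L) = \pi(0)\rho^{L-1} \to 0$, since $\rho < 1$. Hence $\pi_{\mathrm{int}} = (1+\rho)/2 + \LittleO_L(1)$. With $\theta = \rho/(1+\rho)$ one has $\theta(1-\theta) = \rho/(1+\rho)^2$. Substituting yields
\[
  s^2 = \frac{2\rho}{(1-\varepsilon)(1+\rho)^{3}} + \LittleO_L(1),
\]
which is the claimed constant.

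The main obstacle is Step 1, specifically recognising that the numerator has exactly zero conditional mean. This is what lets a plain martingale CLT replace a Markov-chain CLT for additive functionals of $(X_i,X_{i+1})$, whose asymptotic variance would otherwise carry an infinite series of autocovariances. If that route failed, the fallback would be the general Markov CLT for the pair chain combined with the delta method, and one would then have to show that all cross-covariances vanish.
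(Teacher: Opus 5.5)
Your proof is correct, and it takes a different route from the paper. The paper works with the embedded sequence of jumps from interior states. With $\tau_j$ the $j$-th such jump time and $B_j$ the indicator that this jump is upward, it asserts that the $B_j$ are i.i.d.\ Bernoulli$(\theta)$, so that $\widehat{\theta}_n$ is the mean of a random number $N_n$ of i.i.d.\ variables. A central limit theorem for random sample sizes then gives $\sqrt{N_n}(\widehat{\theta}_n-\theta) \to \mathcal{N}(0,\theta(1-\theta))$. The ergodic theorem for $N_n/n$ together with Slutsky's lemma then yields the same exact constant $\theta(1-\theta)/\bigl((1-\varepsilon)(1-\pi(0)-\pi(L))\bigr)$ as your Step~2.

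You instead stay on the original time scale. You use the fact that the centred numerator is a bounded martingale difference sequence whose conditional variance you compute explicitly, so a textbook martingale CLT applies directly.

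Your route buys self-containedness, since it avoids two points the paper passes over quickly. The first is the i.i.d.\ claim for the $B_j$. The paper states it without proof, and it needs a strong Markov or coupling argument at the stopping times $\tau_j+1$, because the $\tau_j$ themselves are not stopping times. The second is the appeal to an Anscombe-type theorem, which is needed there because $N_n$ is not independent of the $B_j$.

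The paper's route buys interpretability: $\widehat{\theta}_n$ is literally a Bernoulli sample mean built from about $n(1-\varepsilon)(1+\rho)/2$ effective observations, which makes the form of $s^2$ immediate. Your Step~3 coincides with the paper's final computation.
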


\ProofDeferred{app:proof-birth-death-MSE-parameter}

It is worth noting that, by symmetry arguments, the formulas of Propositions~\ref{prop:birth-death-MSE-pi0}, \ref{prop:birth-death-MSE-pi1} and~\ref{prop:birth-death-MSE-parameter} also hold for $\theta > 0.5$, through the change of variable $\varphi(\theta) = 1-\theta$.
All this leads to the following result.

\begin{proposition}
\label{prop:birth-death-comparison}
There exists a threshold $\bar{\theta} \in \; ]0, 0.5[$, with $\bar{\theta} \simeq 0.289$, such that the following holds.
  \begin{itemize}
    \item If $\theta \in \; ]0, \bar{\theta}[$, then, for any split,
      \[
        \lim_{L \to +\infty} \lim_{\varepsilon \to 0}
        \AsymptoticMSE\left(\PiHatParam[n-m]{\ParamEstimator}{1}\right)
        > \lim_{L \to +\infty} \lim_{\varepsilon \to 0}
        \AsymptoticMSE\left(\Empirical\right).
      \]

    \item If $\theta \in \; ]\bar{\theta}, 0.5[$, there exists $\SplitFractionOptimal $ (seen as the limit in $\varepsilon$, then in $L$, of the optimal split ratio \eqref{eq:def:optimalsplit} given in Corollary~\ref{cor:data-partition3}) such that, if $m \EquivalentTo \SplitFractionOptimal n$,
      \[
        \lim_{L \to +\infty} \lim_{\varepsilon \to 0}
        \AsymptoticMSE\left(\PiHatParam[n-m]{\ParamEstimator}{1}\right)
        < \lim_{L \to +\infty} \lim_{\varepsilon \to 0}
        \AsymptoticMSE\left(\Empirical\right).
      \]
  \end{itemize}
\end{proposition}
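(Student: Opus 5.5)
The plan is to apply Corollary~\ref{cor:data-partition3} with $k=1$ and then take explicit limits of the ratios $\RatioLookAhead$ and $\RatioParameter$, first as $\varepsilon \to 0$ and then as $L \to +\infty$.

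\textbf{Checking the hypotheses.} The state space is finite and $\theta \mapsto \BDKernel$ is polynomial, hence $\mathcal{C}^{2}$. Proposition~\ref{prop:regularity-finite}, applied on a compact convex neighbourhood of $\theta$ inside $]0,1[$, therefore yields Conditions~\ref{ass:regularity1} and~\ref{ass:regularity2}.

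For the estimator of $\theta$, Proposition~\ref{prop:birth-death-MSE-parameter} gives a CLT with variance $s^{2}$. To get $\Expect[(\ParamEstimator-\theta)^{2}] \EquivalentTo s^{2}/m$ and $\Expect[|\ParamEstimator-\theta|^{3}] = \BigO(m^{-3/2})$, I would use the following argument. The estimator is a ratio of two empirical averages of bounded functionals of the pair chain. The denominator has positive mean $(1-\varepsilon)(1-\pi(0)-\pi(L))$, so a Hoeffding inequality for uniformly ergodic chains~\cite{GlynnOrmoneit2002,Paulin2015} bounds the probability that it is small. A delta-method expansion on the complementary event then gives the moment bounds, and uniform integrability turns the CLT into convergence of the second moment. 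This also fixes $\CovParam = s^{2}$ with $g(m) = m^{-1}$.

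\textbf{The gradient term.} Only rows $x \in \IntegerRange{1}{L-1}$ depend on $\theta$, with $\partial_\theta \Kernel{x+1}{x} = 1-\varepsilon$ and $\partial_\theta \Kernel{x-1}{x} = -(1-\varepsilon)$. Hence
\[
  \Gradient(\pi\KernelMatrix)(y) = (1-\varepsilon)\bigl(\pi(y-1)\Indicator{1 \leq y-1 \leq L-1} - \pi(y+1)\Indicator{1 \leq y+1 \leq L-1}\bigr).
\]
Substituting the geometric form of $\pi$ from Proposition~\ref{prop:birth-death-law}, with $\pi(0) \to (1-\rho)/2$ as $L \to +\infty$, the sum of squares becomes an explicit rational function of $\rho$. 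I would compute it by separating the boundary terms $y \in \{0,1,2\}$ from the geometric bulk. Multiplying by $s^{2}$ gives a quantity $T(\rho,\varepsilon) + \LittleO_L(1)$.

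\textbf{The variances.} At $\varepsilon = 0$, Propositions~\ref{prop:birth-death-MSE-pi0} and~\ref{prop:birth-death-MSE-pi1} give
\[
  V_0 = \frac{\rho(1+\rho)}{1-\rho}
  \TextAnd
  V_1 = V_0 + \frac{\rho}{1+\rho} - 2\rho .
\]
Since all these expressions are continuous in $\varepsilon$ for fixed $L$ (finite sums), the iterated limits can be taken directly. This yields limiting ratios $\RatioLookAhead = V_1/V_0$ and $\RatioParameter = T/V_0$, both explicit functions of $\rho \in \, ]0,1[$.

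\textbf{Comparison.} The splits where $m/n$ does not converge to some $\SplitFraction \in \, ]0,1[$ give a slower rate (Remark~\ref{rem:additional-knowledge-corollary}), hence an infinite asymptotic MSE. For $m \EquivalentTo \SplitFraction n$, the proof of Corollary~\ref{cor:data-partition3} gives
\[
  \AsymptoticMSE = \frac{V_1}{1-\SplitFraction} + \frac{T}{\SplitFraction},
\]
whose minimum over $\SplitFraction$ is $(\sqrt{V_1}+\sqrt{T})^{2}$. So both bullets reduce to the sign of $h(\rho) - 1$, where $h(\rho) = \sqrt{\RatioLookAhead} + \sqrt{\RatioParameter}$:
\begin{itemize}
  \item if $h(\rho) > 1$, every split is strictly worse than $\Empirical$;
  \item if $h(\rho) < 1$, the optimal split is strictly better.
\end{itemize}

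\textbf{Main obstacle.} The hard step is showing that $h(\rho) = 1$ has exactly one root in $]0,1[$, with $h > 1$ below it and $h < 1$ above it. I would first check the endpoint behaviour. As $\rho \to 0$, one has $T/V_0 \to$ a positive constant or $+\infty$, and $V_1/V_0 \to 1$, so $h > 1$. As $\rho \to 1$, one has $V_0 \to \infty$, so $\RatioParameter \to 0$, while $\RatioLookAhead < 1$.

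Next I would try to prove that $h$ is strictly monotone, by differentiating the explicit rational expressions, or at least that $h - 1$ has a constant sign on each side of the root. If monotonicity is messy, a fallback is to clear the square roots: rewrite $h = 1$ as a polynomial equation in $\rho$ and count its roots in $]0,1[$ via Sturm sequences. The root is then located numerically, and converting via $\theta = \rho/(1+\rho)$ should give $\bar{\theta} \simeq 0.289$.
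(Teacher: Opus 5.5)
Your route is the paper's: check the framework and Conditions~\ref{ass:regularity1}--\ref{ass:regularity2}, apply Corollary~\ref{cor:data-partition3} with $k=1$, take the iterated limits, and compare $\sqrt{r_1}+\sqrt{r_2}$ with $1$. Carried out, your computations give exactly the paper's values $r_1=\rho(1+3\rho)/(1+\rho)^2$ and $r_2=(1-\rho)^3/(1+\rho)^2$, with $T=\rho(1-\rho)^2/(1+\rho)$. Your extra care about the moment conditions on $\widehat{\theta}_m$, which the paper takes for granted from the CLT, is a genuine improvement.

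One small fix on the hypotheses: $\widehat{\theta}_m$ can take any value in $[0,1]$, endpoints included. So rather than a neighbourhood of $\theta$, take $\Theta=[0,1]$. Proposition~\ref{prop:regularity-finite} then applies directly, and since the kernel is affine in $\theta$, both conditions hold exactly.

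The step that fails is your analysis of $h=\sqrt{r_1}+\sqrt{r_2}$.
\begin{itemize}
\item As $\rho\to0$, $r_1\to0$, not $1$ (indeed $V_1\sim\rho^2$ while $V_0\sim\rho$), and $r_2\to1$.
\item As $\rho\to1$, $r_2\to0$ but also $r_1\to1$.
\end{itemize}
So $h\to1$ at both ends, and the endpoint limits alone say nothing about the sign of $h-1$. What gives $h>1$ near $0$ is the expansion $h=1+\sqrt{\rho}-\tfrac52\rho+\ldots$. What gives $h<1$ near $1$ is $h=1-\tfrac38(1-\rho)+\tfrac12(1-\rho)^{3/2}+\ldots$. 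In particular $h$ is not monotone, so your attempt to prove strict monotonicity cannot succeed.

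Your fallback of clearing the radicals does work, and is short. Multiply by $1+\rho$ and square twice; both sides are nonnegative at each stage, since $(1-\rho)^{3/2}<1+\rho$. Using $(1+\rho)^2+(1-\rho)^3-\rho(1+3\rho)=(1-\rho)(2+\rho^2)$, one gets
\[
  h(\rho)<1 \iff 2(1+\rho)\sqrt{1-\rho}<2+\rho^{2} \iff \rho\left(\rho^{3}+4\rho^{2}+8\rho-4\right)>0 .
\]
The cubic is strictly increasing on $[0,1]$, with value $-4$ at $0$ and $9$ at $1$. It therefore has a unique root $\bar\rho\simeq0.4082$ there, with $h>1$ for $\rho<\bar\rho$ and $h<1$ for $\rho>\bar\rho$. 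Then $\bar\theta=\bar\rho/(1+\bar\rho)\simeq0.2899$. This turns the paper's purely numerical last step into a proof, and no Sturm sequences are needed.
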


\ProofDeferred{app:proof-birth-death-comparison}

When the state space is large enough and the chain is not too lazy, we have therefore identified precisely the region of the parameter space on which the plug-in look-ahead estimator outperforms the empirical estimator in terms of asymptotic MSE.

\subsection{Simulation study}
\label{subsec:birth-death-simulations}

We now illustrate Proposition~\ref{prop:birth-death-comparison} by numerical simulations of the model.
Throughout this subsection, we take $L = 40$ and $\varepsilon = 0.01$.
Figure~\ref{fig:simus-example} displays two trajectories of the process, for $\theta = 0.475$ and $\theta = 0.575$, together with the corresponding invariant distributions.
The two values lie on either side of $0.5$ and illustrate the role of the parameter: the chain drifts towards $0$ when deaths are favoured and towards $L$ when births are, the invariant distribution concentrating accordingly at the corresponding end of the state space.

\begin{figure}[htbp]
  \centering
  \includegraphics[width=\PlotWidth]{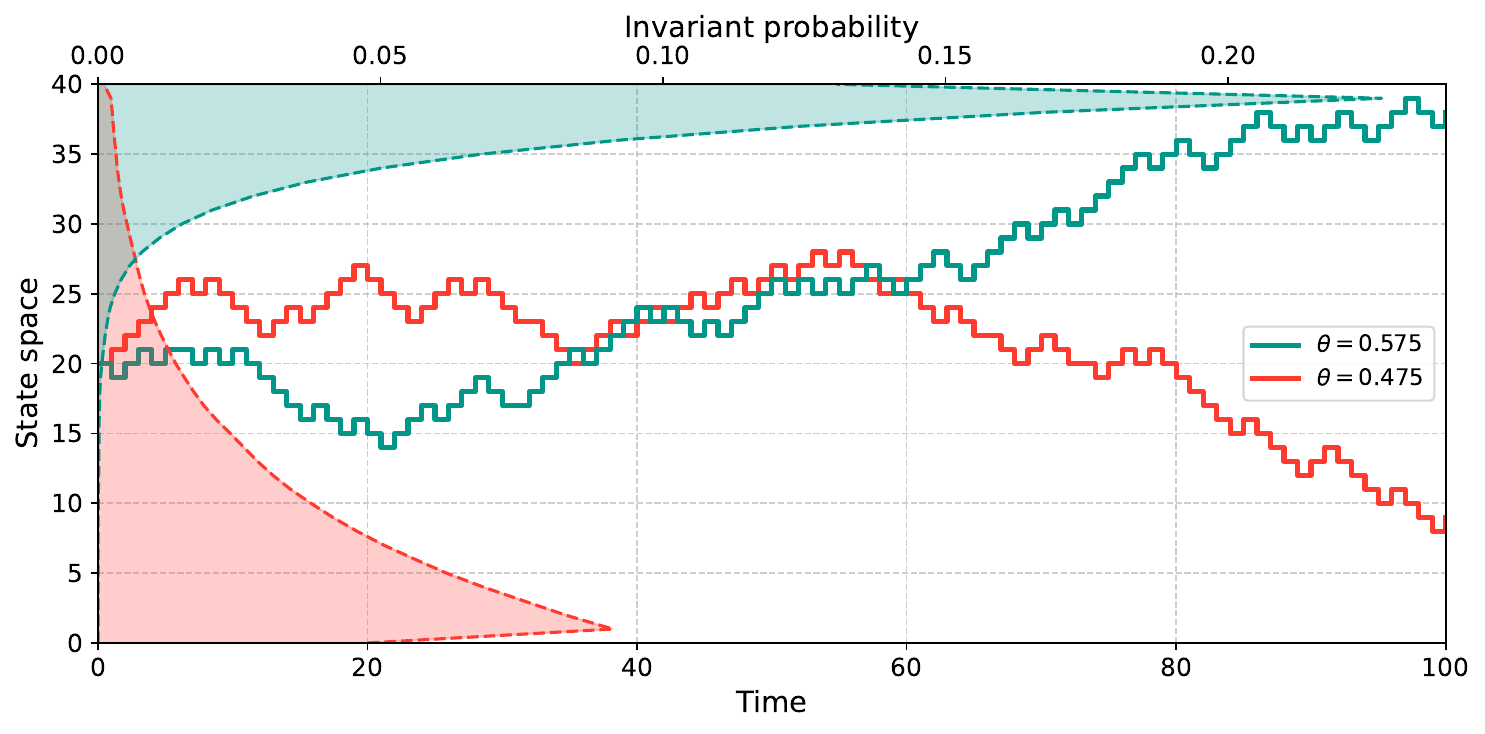}
  \caption{Two trajectories of the birth-and-death process, for $\theta = 0.475$ and $\theta = 0.575$, together with the corresponding invariant distributions (dashed lines, top axis).
    In both cases $L = 40$, $\varepsilon = 0.01$, and the chain is started at $L/2$.}
  \label{fig:simus-example}
\end{figure}

For the model with parameter $\theta = 0.4$, we compare numerically the performances of $\Empirical$, of $\PiHatParam[n-m]{\ParamEstimator}{1}$ (for various split ratios $m/n$) and of $\PiHatParam{\ParamEstimator[n]}{1}$ (evaluated from the whole data set) in Figure~\ref{fig:simus-splitratio}.
We first observe that the theoretical range of split ratios for which the asymptotic MSE of $\PiHatParam[n-m]{\ParamEstimator}{1}$ is smaller than the asymptotic MSE of $\Empirical$ (mentioned in Remark~\ref{rem:additional-knowledge-corollary} and taken for the limit model in $L$ and $\varepsilon$) matches the split ratios for which the actual MSE of the plug-in look-ahead estimator, evaluated at a finite horizon and from the non-asymptotic model, improves on that of the empirical estimator.
The best split ratio given by Corollary~\ref{cor:data-partition3}, whether oracle or estimated by plug-in, improves the distribution of the estimation error compared with the empirical estimator, with entirely comparable performance in both cases.
Finally, the plug-in look-ahead estimator evaluated from the whole data set improves the performance further still, compared with the estimator built on a split data set.

In Figure~\ref{fig:simus-theta}, we compare the performance of $\Empirical$, of $\PiHatParam[n-m]{\ParamEstimator}{1}$ (with the best estimated split ratio), and of $\PiHatParam{\ParamEstimator[n]}{1}$ (evaluated from the whole data set), as a function of the parameter $\theta$.
Although we work on the non-asymptotic model and consider a finite-horizon MSE, we recover the theoretical property established in Proposition~\ref{prop:birth-death-comparison}: the MSE ratio of the sample-split plug-in look-ahead estimator crosses $1$ at $\theta \simeq 0.29$, in close agreement with the theoretical threshold $\bar{\theta} \simeq 0.289$.
The picture is less clear-cut for the distribution of the error, since the boxplots overlap widely and the medians are almost indistinguishable: the gain is carried by the mean rather than by the bulk of the distribution.
Whatever the value of the parameter, the plug-in look-ahead estimator built from the whole data set improves on the performance of $\Empirical$.

\begin{figure}[htbp]
  \centering
  \includegraphics[width=\PlotWidth]{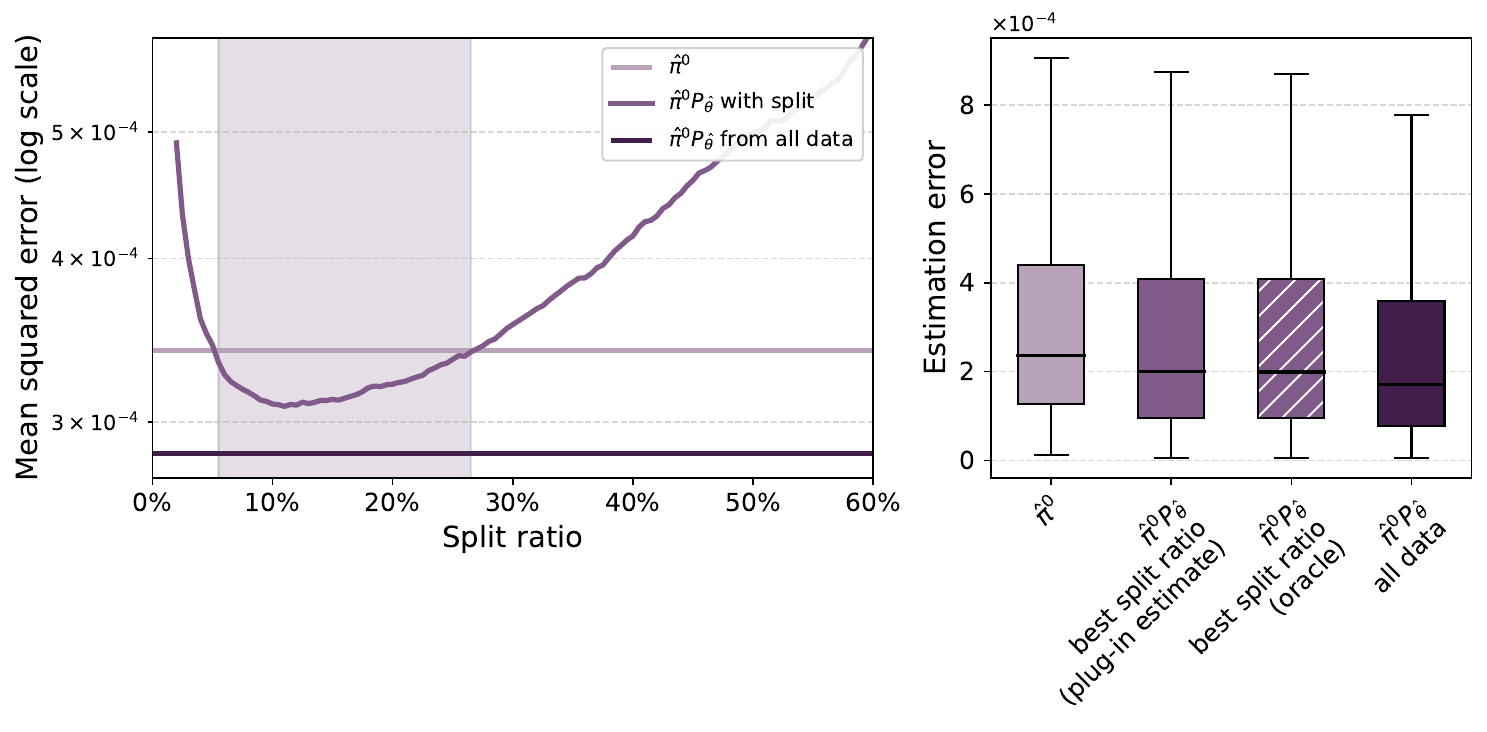}
  \caption{MSE of $\Empirical$, of $\PiHatParam[n-m]{\ParamEstimator}{1} = \Empirical[n-m] \KernelMatrixParam{\ParamEstimator}$ (for split ratios between $0$ and $0.6$) and of $\PiHatParam{\ParamEstimator[n]}{1} = \Empirical \KernelMatrixParam{\ParamEstimator[n]}$ (left), and boxplots of the squared estimation error for the same estimators, the optimal split ratio \eqref{eq:def:optimalsplit} given by Corollary~\ref{cor:data-partition3} being either theoretical (oracle) or estimated by plug-in (right).
    MSE values and boxplots are computed from $1000$ trajectories of horizon $10\,000$, for the model with parameter $\theta = 0.4$.
    On the left panel, the highlighted range of split ratios is the theoretical interval given in Remark~\ref{rem:additional-knowledge-corollary} for the limit model in $L$ and $\varepsilon$.}
  \label{fig:simus-splitratio}
\end{figure}

\begin{figure}[htbp]
  \centering
  \includegraphics[width=\PlotWidth]{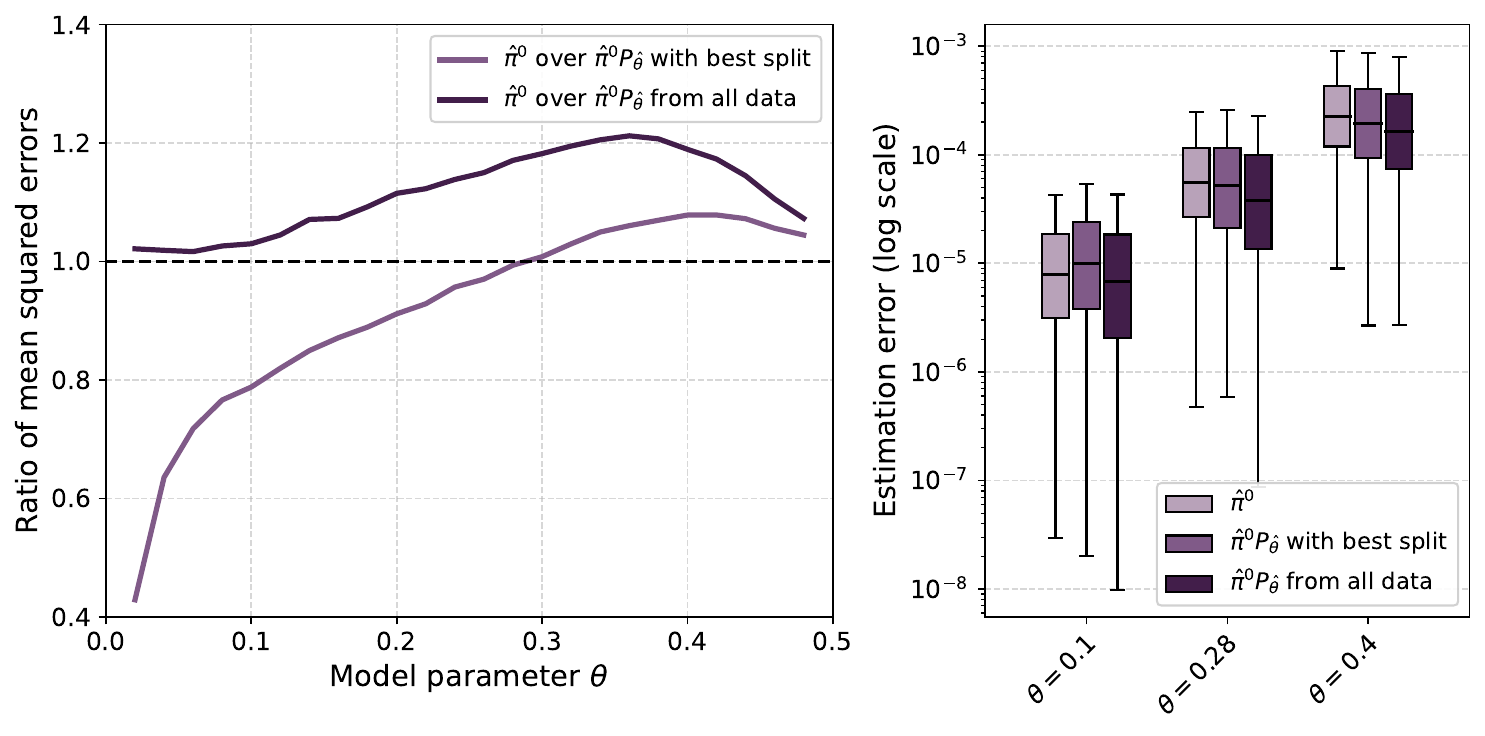}
  \caption{Comparison of $\Empirical$, of $\PiHatParam[n-m]{\ParamEstimator}{1} = \Empirical[n-m] \KernelMatrixParam{\ParamEstimator}$ (with the best estimated split ratio) and of $\PiHatParam{\ParamEstimator[n]}{1} = \Empirical \KernelMatrixParam{\ParamEstimator[n]}$, when $L = 40$ and $\varepsilon = 0.01$.
    Left: ratio of the MSE of $\Empirical$ to the MSE of each of the two plug-in look-ahead estimators, as a function of the parameter $\theta$; a value larger than $1$ indicates an improvement over $\Empirical$.
    Right: boxplots of the squared estimation error for three values of $\theta$.
    MSE values and boxplots are estimated from $10\,000$ trajectories of horizon $10\,000$ for each value of $\theta$.}
  \label{fig:simus-theta}
\end{figure}

The effect of the horizon, for the model with $\theta = 0.4 > \bar{\theta}$, is studied in Table~\ref{tab:simus-mse}.
For the sample-split estimator, with the best split ratio estimated by plug-in, the MSE of $\PiHatParam[n-m]{\ParamEstimator}{1}$ decreases with $n$ while remaining smaller than the MSE of $\Empirical$: the ratio rises from $1.035$ at $n = 200$ and settles around $1.075$ from $n = 1000$ on.
For the estimator built from the whole data set, the improvement is markedly stronger and essentially stable, the ratio decreasing slowly from $1.218$ to $1.194$.
Even for data sets of small size, the asymptotic approximations on which Corollary~\ref{cor:data-partition3} rests therefore seem to remain informative, at least on this model.

\begin{table}[htbp]
  \centering
  \caption{Finite-horizon MSE of $\Empirical$, of $\PiHatParam[n-m]{\ParamEstimator}{1} = \Empirical[n-m] \KernelMatrixParam{\ParamEstimator}$ (with the best split ratio estimated by plug-in) and of $\PiHatParam{\ParamEstimator[n]}{1} = \Empirical \KernelMatrixParam{\ParamEstimator[n]}$, as a function of the horizon $n$ of the data set, estimated from $10\,000$ trajectories for each horizon, for the model with $\theta = 0.4$, $L = 40$ and $\varepsilon = 0.01$.
    All MSE values are given in units of $10^{-3}$.
    Each ratio is that of the MSE of $\Empirical$ to the MSE of the estimator of the corresponding block; a value larger than $1$ indicates an improvement over $\Empirical$.}
  \label{tab:simus-mse}
  \begin{tabular}{lcccccc}
    \toprule
    Horizon $n$ & $200$ & $500$ & $1\,000$ & $2\,000$ & $5\,000$ & $10\,000$ \\
    \midrule
    $\MSE(\Empirical)$
      & $15.397$ & $6.548$ & $3.290$ & $1.647$ & $0.674$ & $0.334$ \\
    \addlinespace[0.8ex]
    \multicolumn{7}{l}{\emph{Plug-in estimator, best split ratio (plug-in estimate)}} \\
    \quad $\MSE(\PiHatParam[n-m]{\ParamEstimator}{1})$
      & $14.873$ & $6.182$ & $3.059$ & $1.541$ & $0.627$ & $0.310$ \\
    \quad ratio
      & $1.035$ & $1.059$ & $1.076$ & $1.069$ & $1.075$ & $1.075$ \\
    \addlinespace[0.8ex]
    \multicolumn{7}{l}{\emph{Plug-in estimator, whole data set}} \\
    \quad $\MSE(\PiHatParam{\ParamEstimator[n]}{1})$
      & $12.644$ & $5.442$ & $2.744$ & $1.376$ & $0.565$ & $0.280$ \\
    \quad ratio
      & $1.218$ & $1.203$ & $1.199$ & $1.197$ & $1.193$ & $1.194$ \\
    \bottomrule
  \end{tabular}
\end{table}

\subsection{Analysis of real data: bike availability in the V\'elo'v system of Lyon}
\label{subsec:birth-death-real-data}

The purpose of this subsection is to apply the plug-in look-ahead method to a real data set that the birth-and-death process of Subsection~\ref{subsec:birth-death-theory} can plausibly describe.
We consider the V\'elo'v bike-sharing system of the Lyon metropolitan area, France, a network of $454$ stations.
Each station carries a fixed number of docks, every one of which is either free or holds a bike (mechanical or electric); a user may take a bike from an occupied dock at any time of the day or of the night, and must return it to a free dock of some station.
The network is known, together with the GPS position of each station and its number of docks.\footnote{\href{https://data.grandlyon.com/portail/fr/jeux-de-donnees/stations-velo-v-metropole-lyon/info}{Grand Lyon open data portal}, data set \emph{Stations V\'elo'v de la M\'etropole de Lyon}.}
Figure~\ref{fig:velov-map} shows the stations of the network, together with the boundary of the 9th district of Lyon, on which part of the study is focused.

The usage history of the system between January and June 2026 is also available,\footnote{\href{https://data.grandlyon.com/portail/fr/jeux-de-donnees/stations-velo-v-de-la-metropole-de-lyon---disponibilites-temps-reel/info}{Grand Lyon open data portal}, data set \emph{Stations V\'elo'v de la M\'etropole de Lyon --- disponibilit\'es temps r\'eel}.} that is, for each station, the number of available bikes over time.
The feed is not sampled on a regular grid: two consecutive records are a median of $119$~seconds and a mean of $194$~seconds apart.

Two families of stations are discarded before any analysis.
For $61$ of them, the number of available bikes reported in the raw feed exceeds the number of docks of the station at least once.
For $25$ further stations, the history is missing or empty over at least one of the six months.
Removing both families leaves the $368$ stations on which the study is carried out.

For a given station, what happens between two consecutive records cannot be observed.
A bike may for instance have been returned and another one taken in the meantime, leaving the number of available bikes unchanged in spite of two hidden events.
We therefore assume that, between two records, the smallest number of events explaining the observed variation has occurred: for instance, a variation of $0$ bikes is read as no event at all, and a variation of $-2$ bikes as two bikes taken in succession.
Such events are imputed at regular intervals between the two records, and the counts they produce are referred to as imputed.
Table~\ref{tab:velov-summary} summarises the data set at the scale of the whole network (imputed counts included) and Figure~\ref{fig:velov-lyon9} details the stations of the 9th district.

\begin{figure}[htbp]
  \centering
  \includegraphics[trim=122 0 122 0, clip, width=\MapWidth]%
    {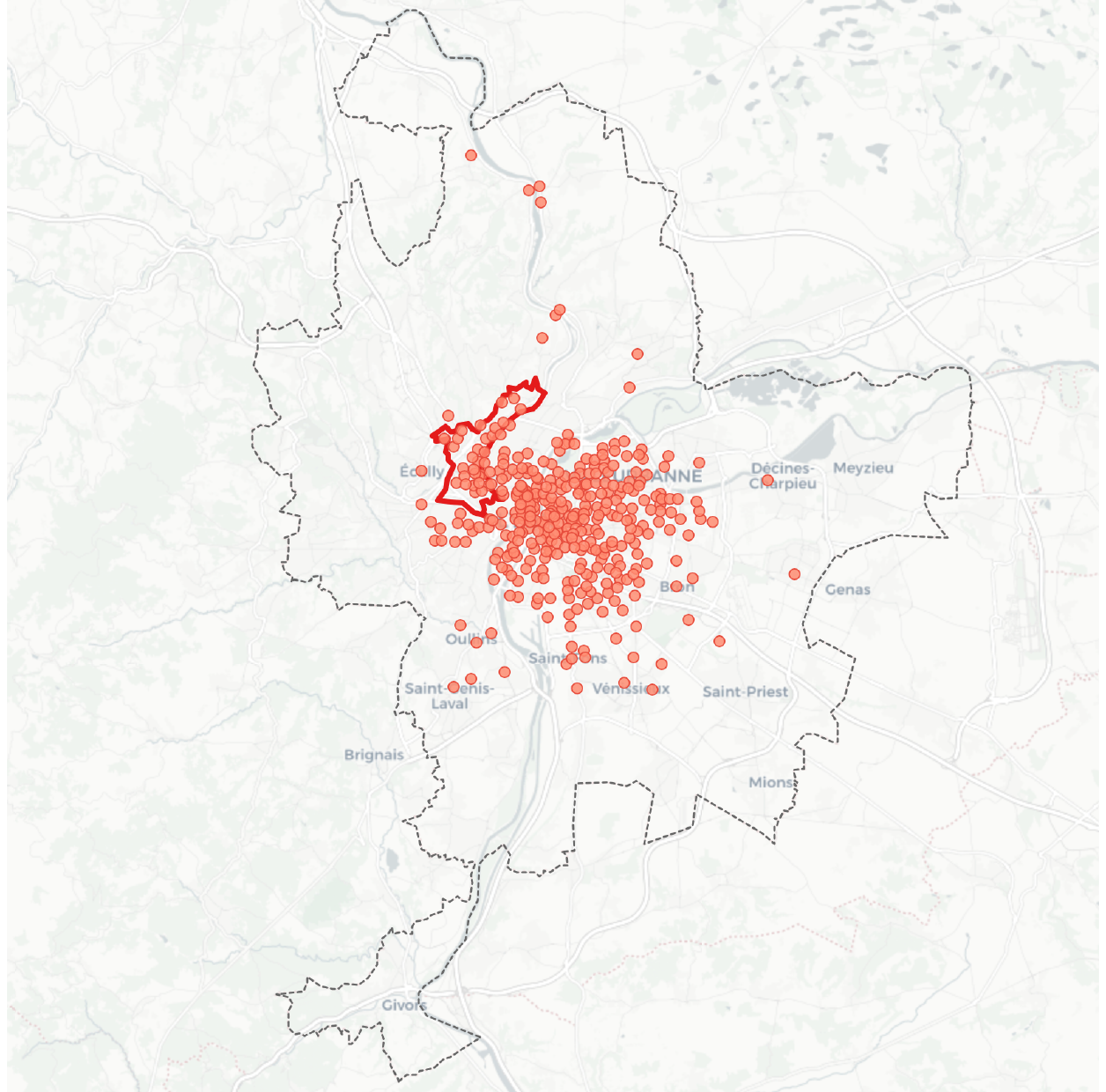}
  \caption{The $368$ stations of the V\'elo'v network of the Lyon metropolitan area retained for the statistical study, together with the boundary of the 9th district of Lyon, on which part of the study is focused.}
  \label{fig:velov-map}
\end{figure}

\begin{table}[htbp]
  \centering
  \caption{Summary of the usage history of the V\'elo'v system between January and June 2026, imputed counts included.}
  \label{tab:velov-summary}
  \small
  \begin{tabular}{lr@{\hspace{3em}}lr}
    \toprule
    \multicolumn{4}{c}{\emph{Network}} \\
    \midrule
    Number of stations & $368$ & Average number of docks & $20.99$ \\
    \midrule
    \multicolumn{4}{c}{\emph{Number of counts per time slot}} \\
    \midrule
    06:00--10:00 & $2\,500\,350$ & 14:00--18:00 & $3\,826\,854$ \\
    10:00--14:00 & $2\,785\,519$ & 18:00--06:00 & $3\,809\,677$ \\
    \midrule
    Total number of counts & $12\,922\,400$
      & Proportion of imputed counts & $13.80\%$ \\
    \bottomrule
  \end{tabular}
\end{table}

\begin{figure}[htbp]
  \centering
  \includegraphics[width=\PlotWidth]{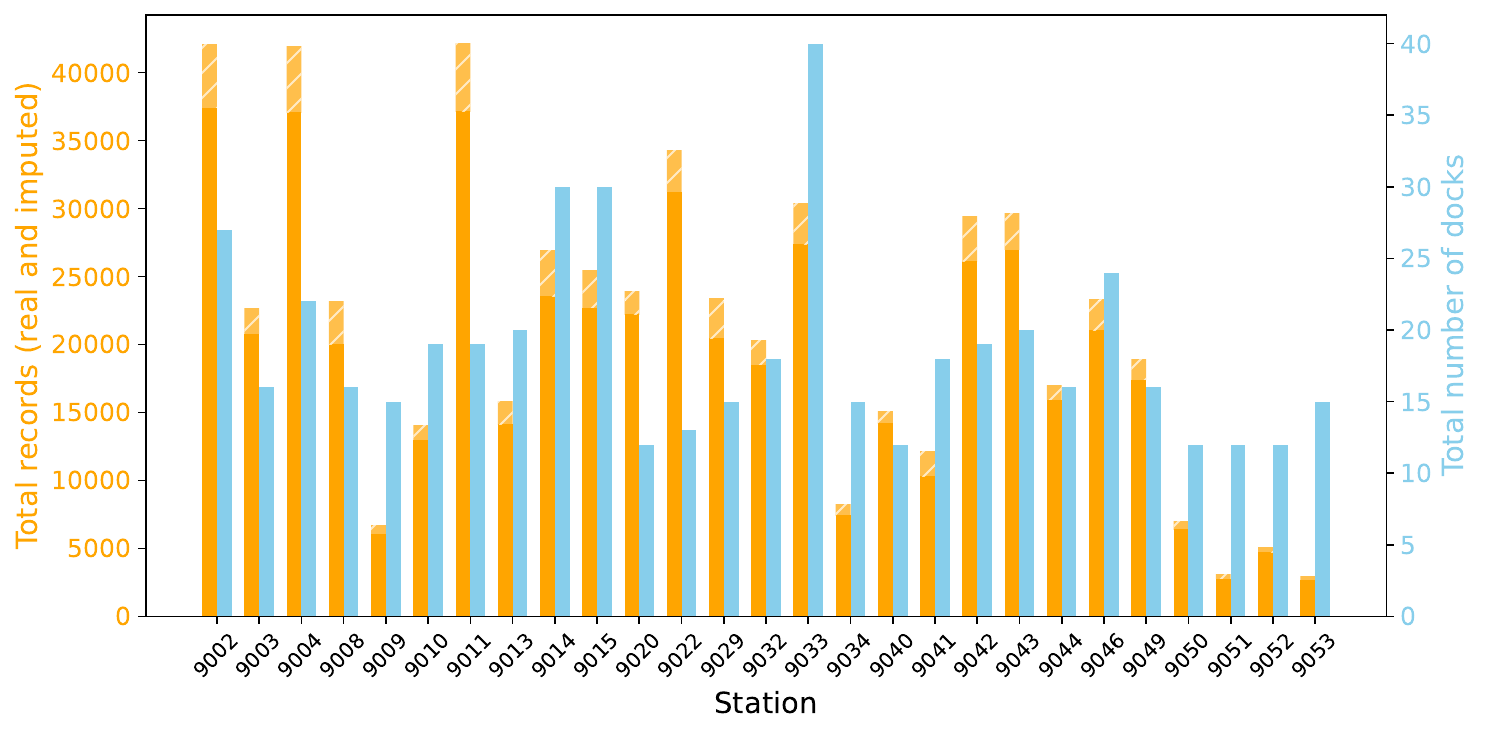}
  \caption{Number of counts available between January and June 2026 for each station of the 9th district of Lyon, split into real and imputed counts, together with the number of docks of the station.}
  \label{fig:velov-lyon9}
\end{figure}

Only the numbers of available bikes at the instants of variation are retained, those variations being of at most one unit by the convention above.
For each station this yields a discrete-time process on $\IntegerRange{0}{L}$, where $L$ is the number of docks of the station, whose increments are exactly $\pm 1$: a process of the birth-and-death type studied in Subsection~\ref{subsec:birth-death-theory}.
Fitting that model requires the dynamics to be homogeneous in time, which is hard to sustain over a whole day, if only because of commuting patterns.
The day is therefore split into four time slots, namely $06$:$00$--$10$:$00$, $10$:$00$--$14$:$00$, $14$:$00$--$18$:$00$ and $18$:$00$--$06$:$00$.
We assume that, for each station and within each slot, the data are generated by a time-homogeneous birth-and-death process with $\varepsilon = 0$ and some $\theta \in \; ]0,1[$.
The objective is then to estimate, for each station and each slot, the invariant distribution of that model, so as to draw relevant conclusions on the underlying process.
The plug-in look-ahead estimator used throughout is $\PiHatParam{\ParamEstimator[n]}{1}$, built from the whole data of the slot without any splitting, as in the last estimator compared in Subsection~\ref{subsec:birth-death-simulations}.
Figures~\ref{fig:velov-9022} and~\ref{fig:velov-9002} display the data and the estimates, for the empirical and for the plug-in look-ahead estimators, at two stations of the 9th district whose behaviours differ markedly.
In both cases, the plug-in look-ahead estimator uses the information carried by the model to smooth the empirical distribution.

\afterpage{%
\begin{landscape}
  \begin{figure}
    \centering
    \begin{subfigure}{\QuadPlotWidth}
      \centering
      \includegraphics[width=\linewidth]{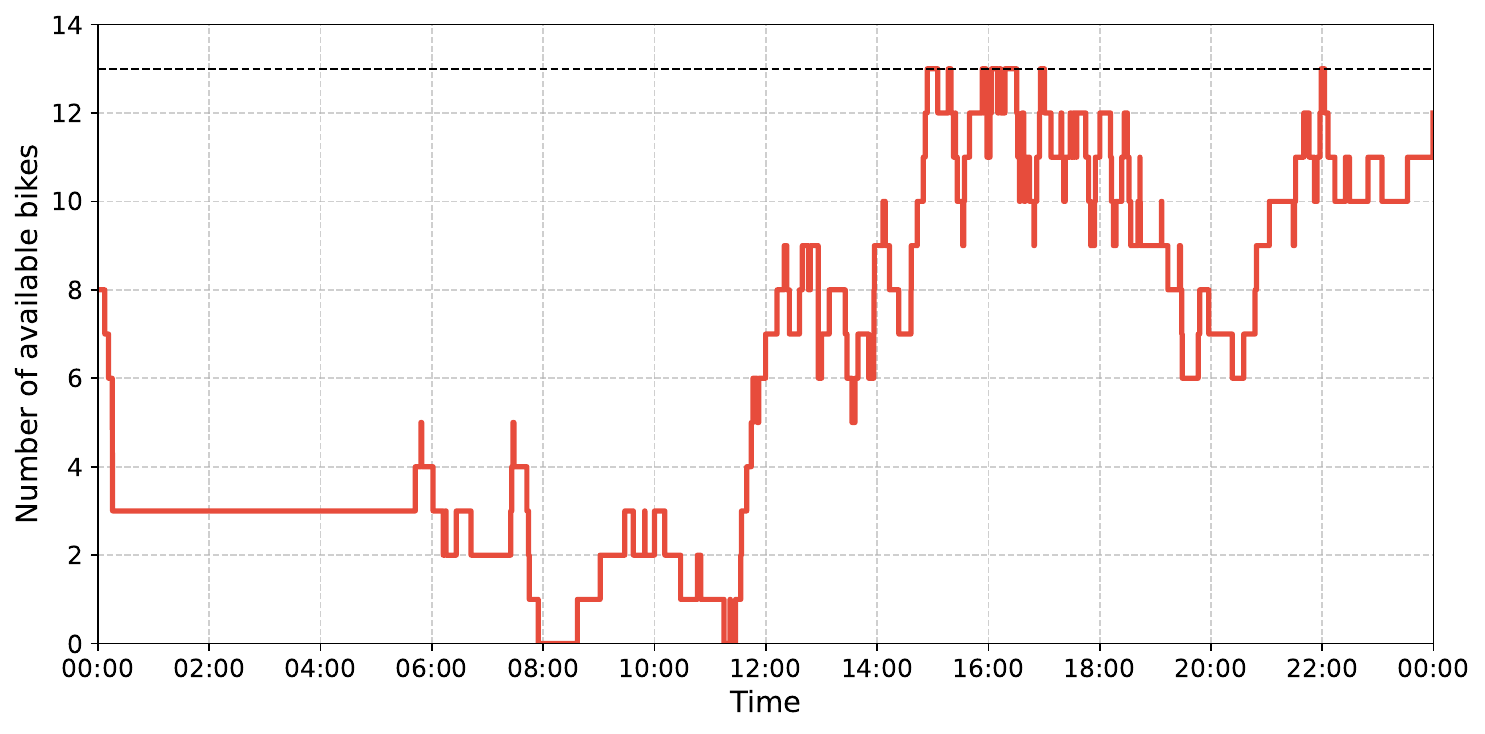}
      \caption{}
      \label{fig:velov-9022-trajectory}
    \end{subfigure}
    \hfill
    \begin{subfigure}{\QuadPlotWidth}
      \centering
      \includegraphics[width=\linewidth]{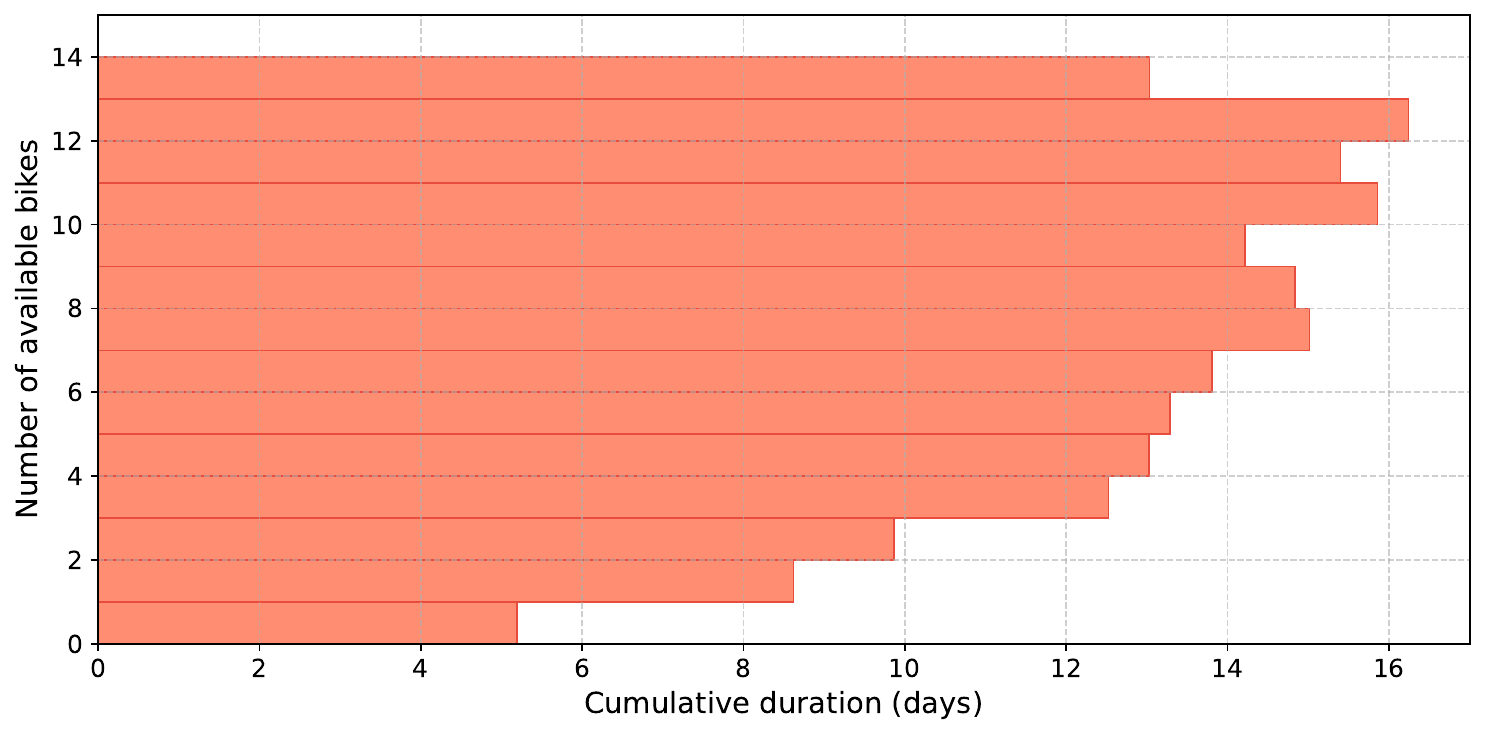}
      \caption{}
      \label{fig:velov-9022-duration}
    \end{subfigure}

    \vspace{\PlotGap}

    \begin{subfigure}{\QuadPlotWidth}
      \centering
      \includegraphics[width=\linewidth]{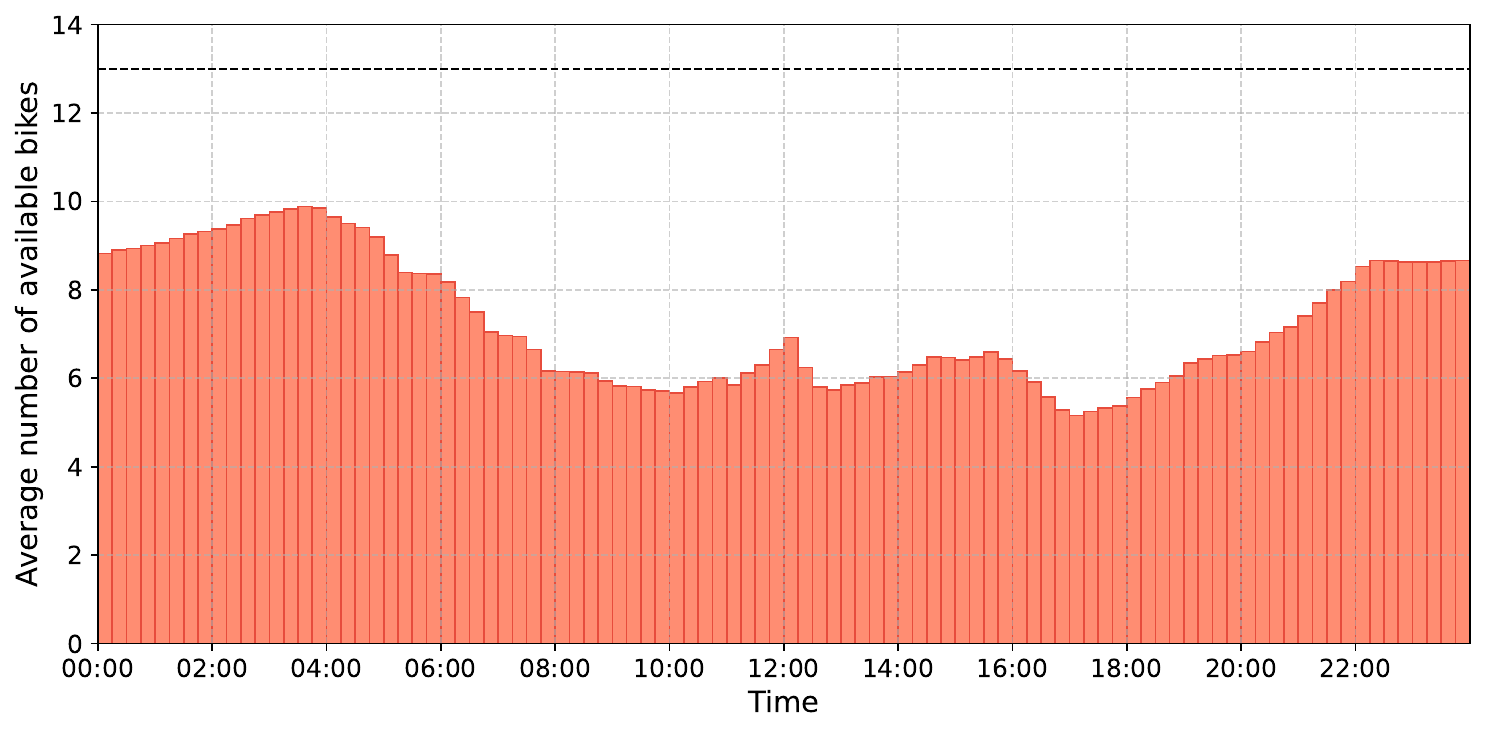}
      \caption{}
      \label{fig:velov-9022-distribution}
    \end{subfigure}
    \hfill
    \begin{subfigure}{\QuadPlotWidth}
      \centering
      \includegraphics[width=\linewidth]{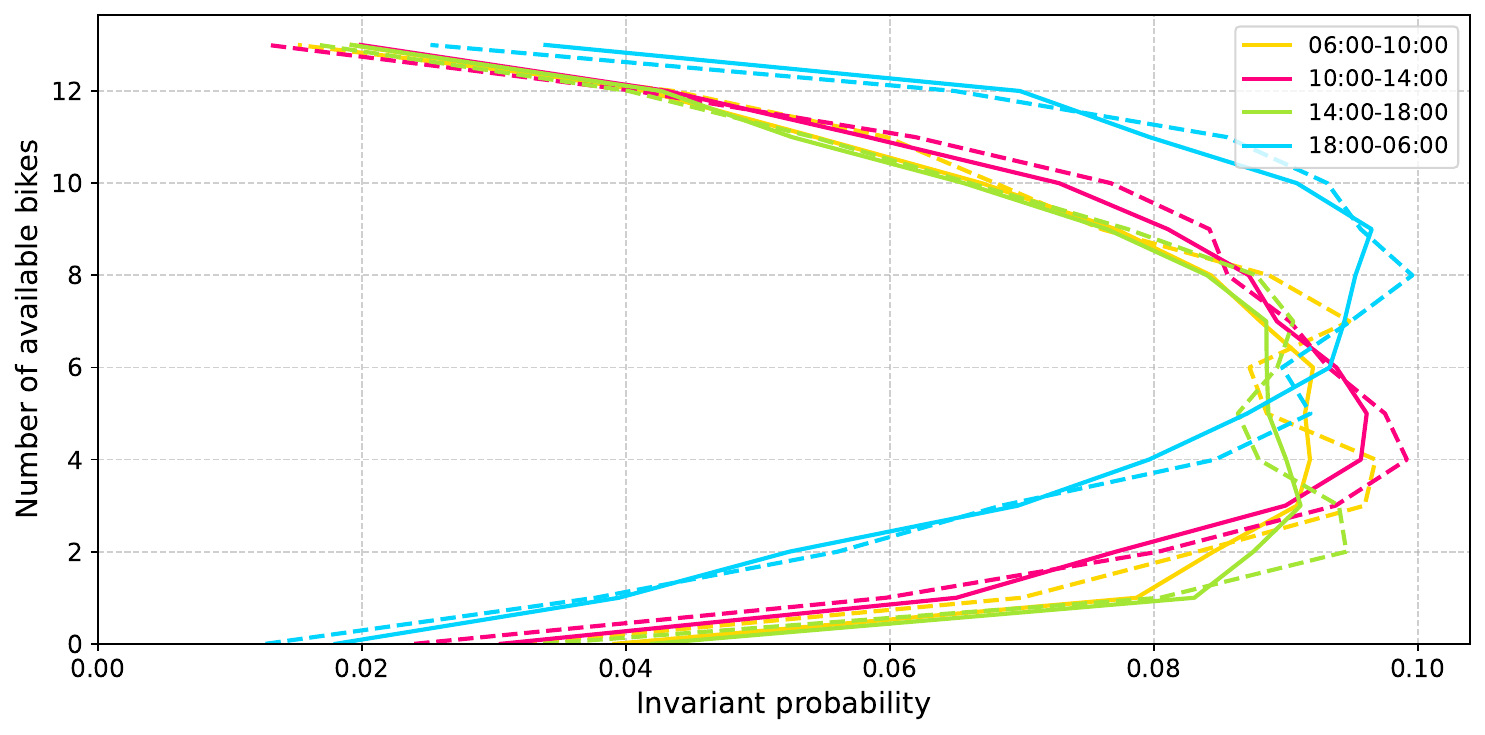}
      \caption{}
      \label{fig:velov-9022-invariant}
    \end{subfigure}
    \caption{Station $9022$, which has $13$ docks.
      \textbf{A}~Number of available bikes on Thursday 2 April 2026.
      \textbf{B}~Cumulative time spent at each level of availability between January and June 2026.
      \textbf{C}~Average number of available bikes over the day.
      \textbf{D}~Invariant distribution estimated on each time slot; dashed lines correspond to the empirical estimator, solid lines to the plug-in look-ahead estimator.}
    \label{fig:velov-9022}
  \end{figure}
\end{landscape}
\begin{landscape}
  \begin{figure}
    \centering
    \begin{subfigure}{\QuadPlotWidth}
      \centering
      \includegraphics[width=\linewidth]{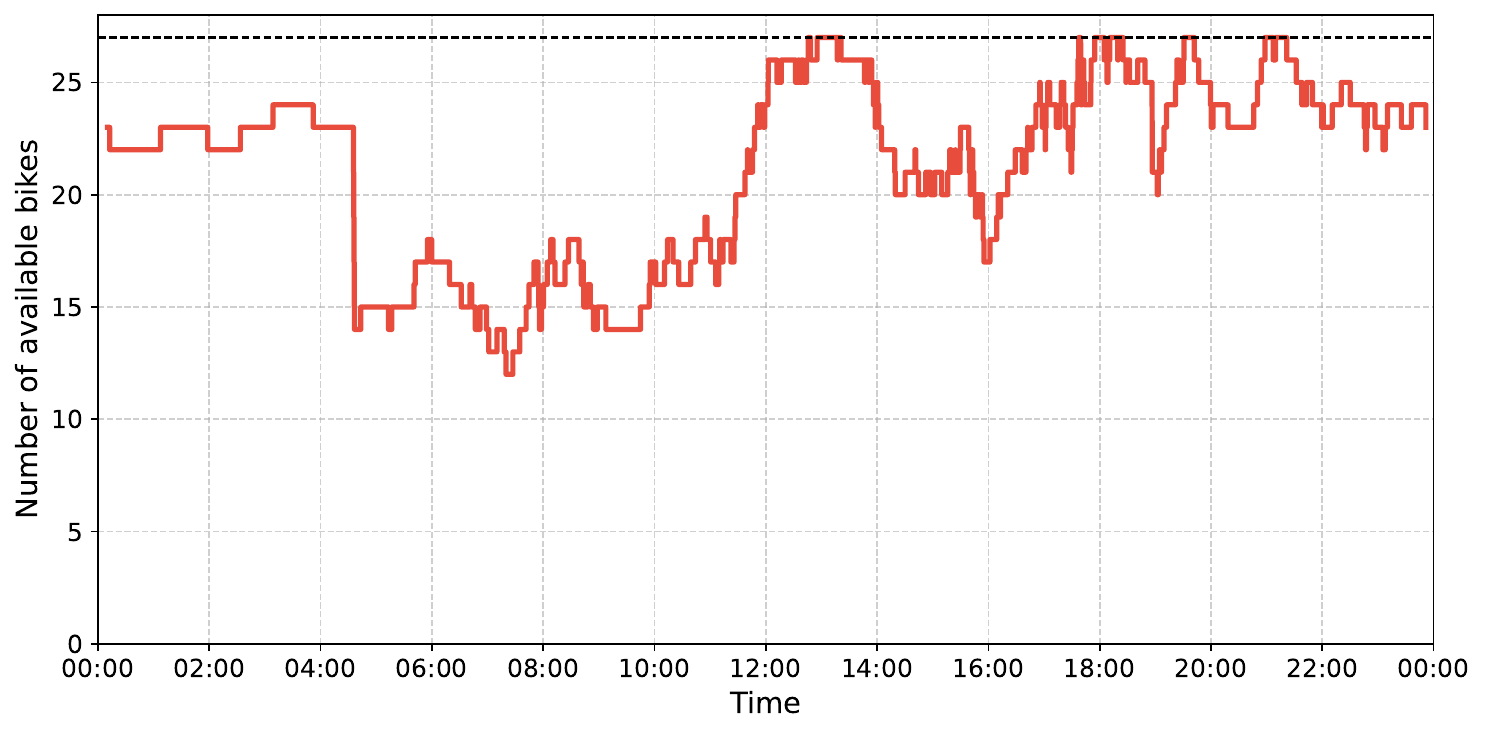}
      \caption{}
      \label{fig:velov-9002-trajectory}
    \end{subfigure}
    \hfill
    \begin{subfigure}{\QuadPlotWidth}
      \centering
      \includegraphics[width=\linewidth]{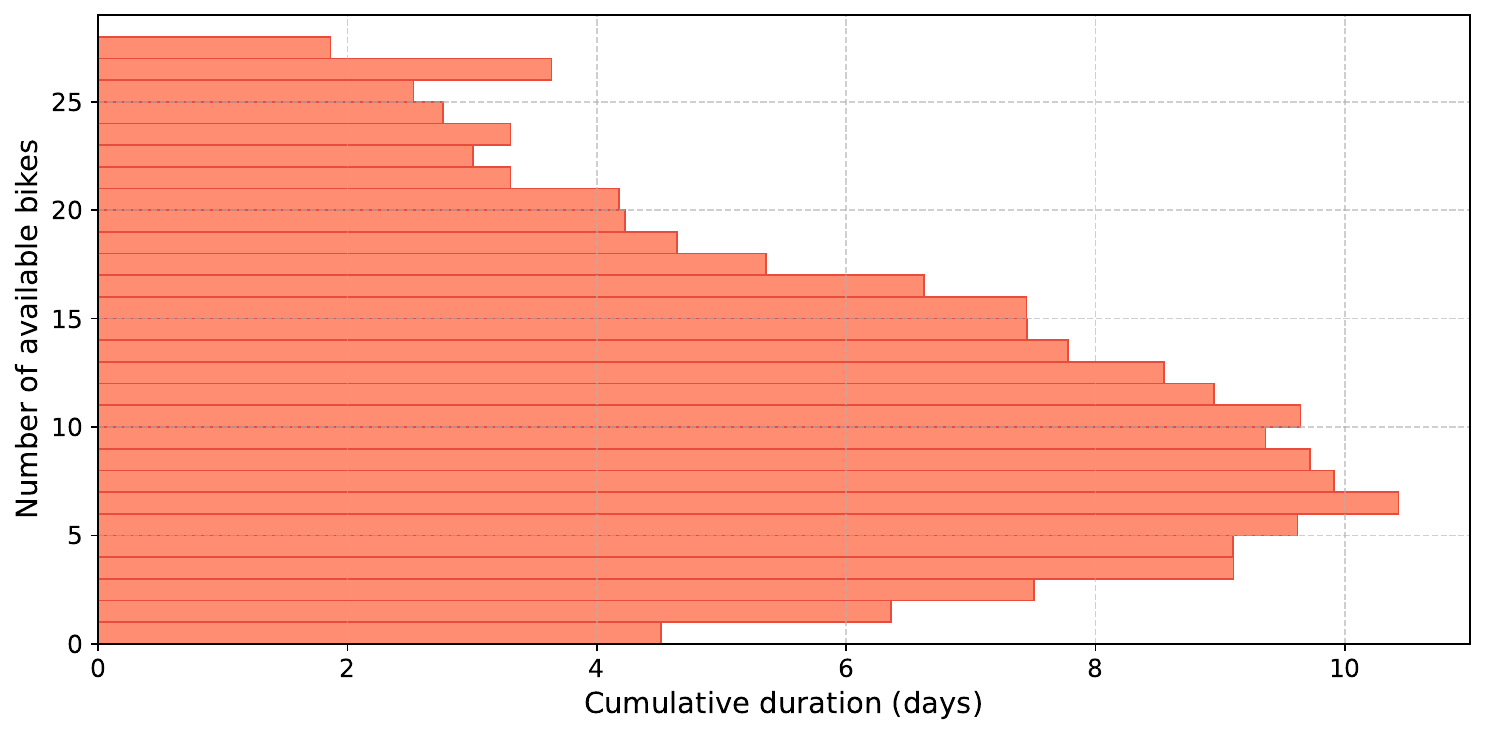}
      \caption{}
      \label{fig:velov-9002-duration}
    \end{subfigure}

    \vspace{\PlotGap}

    \begin{subfigure}{\QuadPlotWidth}
      \centering
      \includegraphics[width=\linewidth]{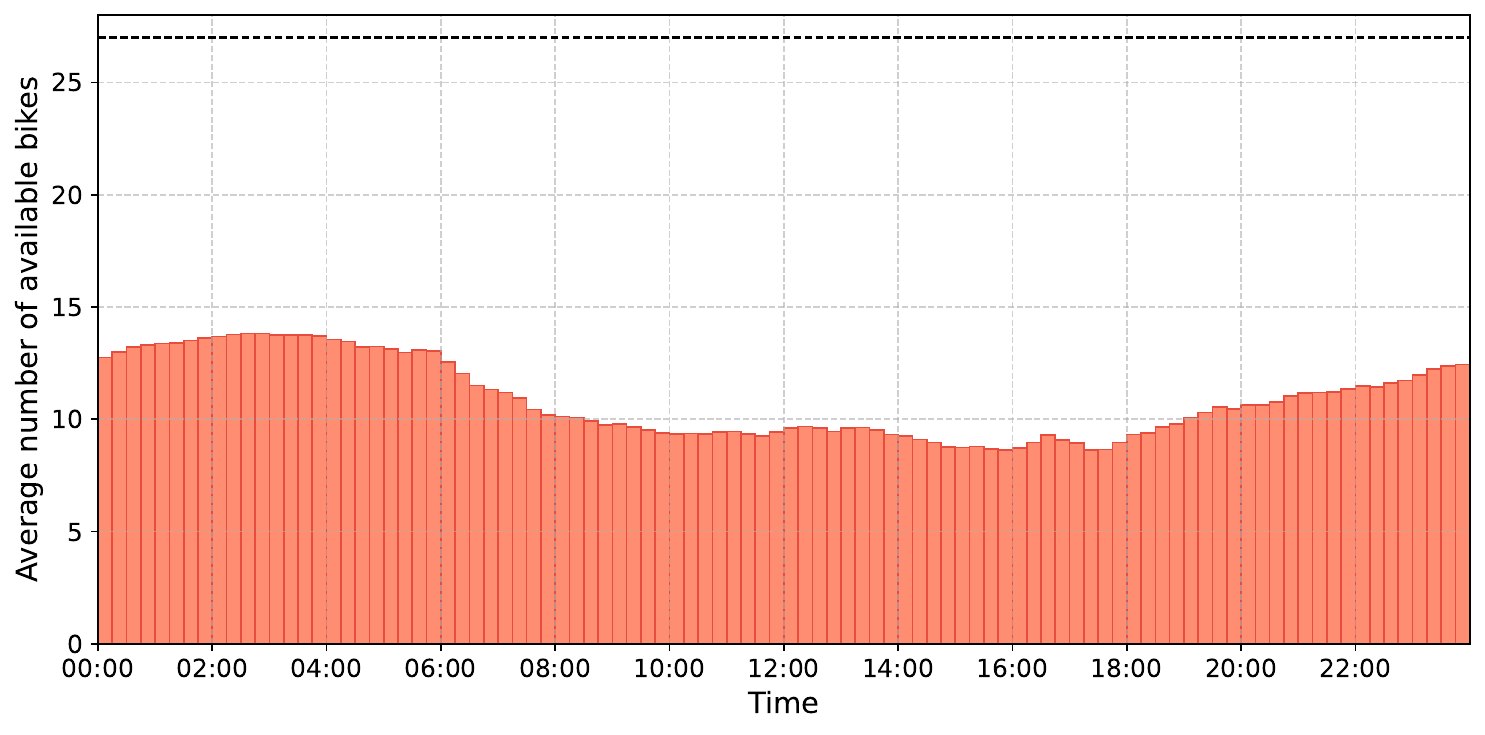}
      \caption{}
      \label{fig:velov-9002-distribution}
    \end{subfigure}
    \hfill
    \begin{subfigure}{\QuadPlotWidth}
      \centering
      \includegraphics[width=\linewidth]{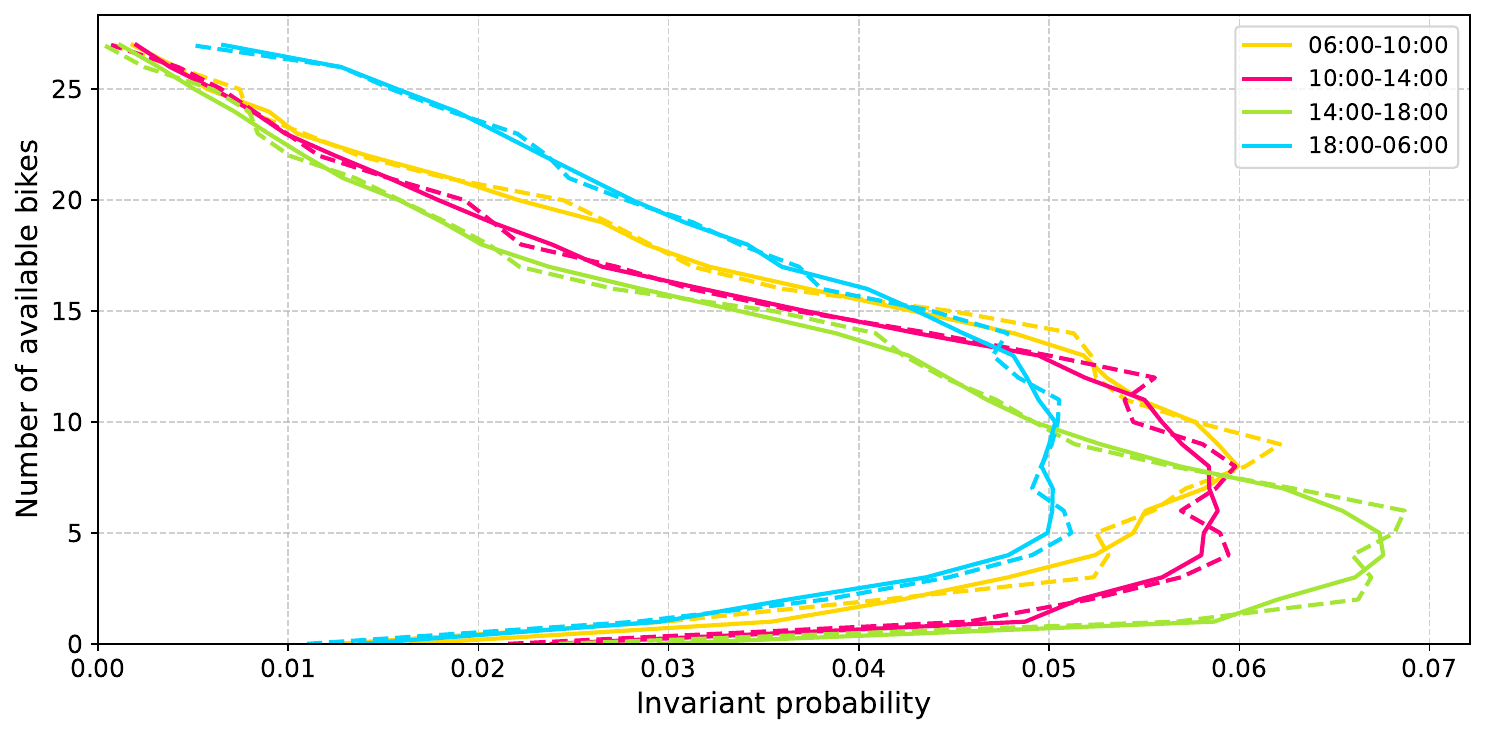}
      \caption{}
      \label{fig:velov-9002-invariant}
    \end{subfigure}
    \caption{Station $9002$, which has $27$ docks: same panels as in Figure~\ref{fig:velov-9022}.
      \textbf{A}~Number of available bikes on Thursday 2 April 2026.
      \textbf{B}~Cumulative time spent at each level of availability between January and June 2026.
      \textbf{C}~Average number of available bikes over the day.
      \textbf{D}~Invariant distribution estimated on each time slot; dashed lines correspond to the empirical estimator, solid lines to the plug-in look-ahead estimator.}
    \label{fig:velov-9002}
  \end{figure}
\end{landscape}
}

Beyond the invariant distribution itself, a quantity of interest for this problem is the availability of a station, that is, the probability $1-\pi(0)-\pi(L)$ that it offers at the same time an available bike and a free dock.
It can be evaluated from the empirical estimator and from the plug-in look-ahead estimator.
Figure~\ref{fig:velov-boxplots} compares the two with the value obtained by solving the invariance equation $\pi = \pi \KernelMatrixParam{\ParamEstimator[n]}$ for the estimated kernel, which provides no ground truth but a reference that assumes the model to be exact.
Relative to that reference, the error of the plug-in look-ahead estimator is the smaller of the two, as expected from the theoretical and numerical results of Section~\ref{sec:theory-parametric-kernel} and of Subsection~\ref{subsec:birth-death-simulations}.
A second, less immediate phenomenon also appears.
Consider for instance the probability that no bike is available.
The plug-in look-ahead estimator tends to over-estimate it, relative to the empirical estimator, when $\theta$ is estimated below $1/2$, and to under-estimate it when $\theta$ is estimated above $1/2$.
This is what one should expect: a station with $\theta > 1/2$ tends to receive more bikes than it gives, and that information is carried by the transition kernel, hence available to the plug-in look-ahead estimator but not to the empirical one, as Figure~\ref{fig:velov-estim-vs-theta} shows.
The same phenomenon, reversed, is observed for the probability that the station is full.
Seeing the dynamics through the transition kernel therefore improves the estimation of the quantity of interest, always under the assumption that the model is correct.
The estimates obtained for every time slot and every station of the 9th district are reported in Figure~\ref{fig:velov-lyon9-results} and Table~\ref{tab:velov-availability}.

\begin{figure}[htbp]
  \centering
  \includegraphics[width=\PlotWidth]{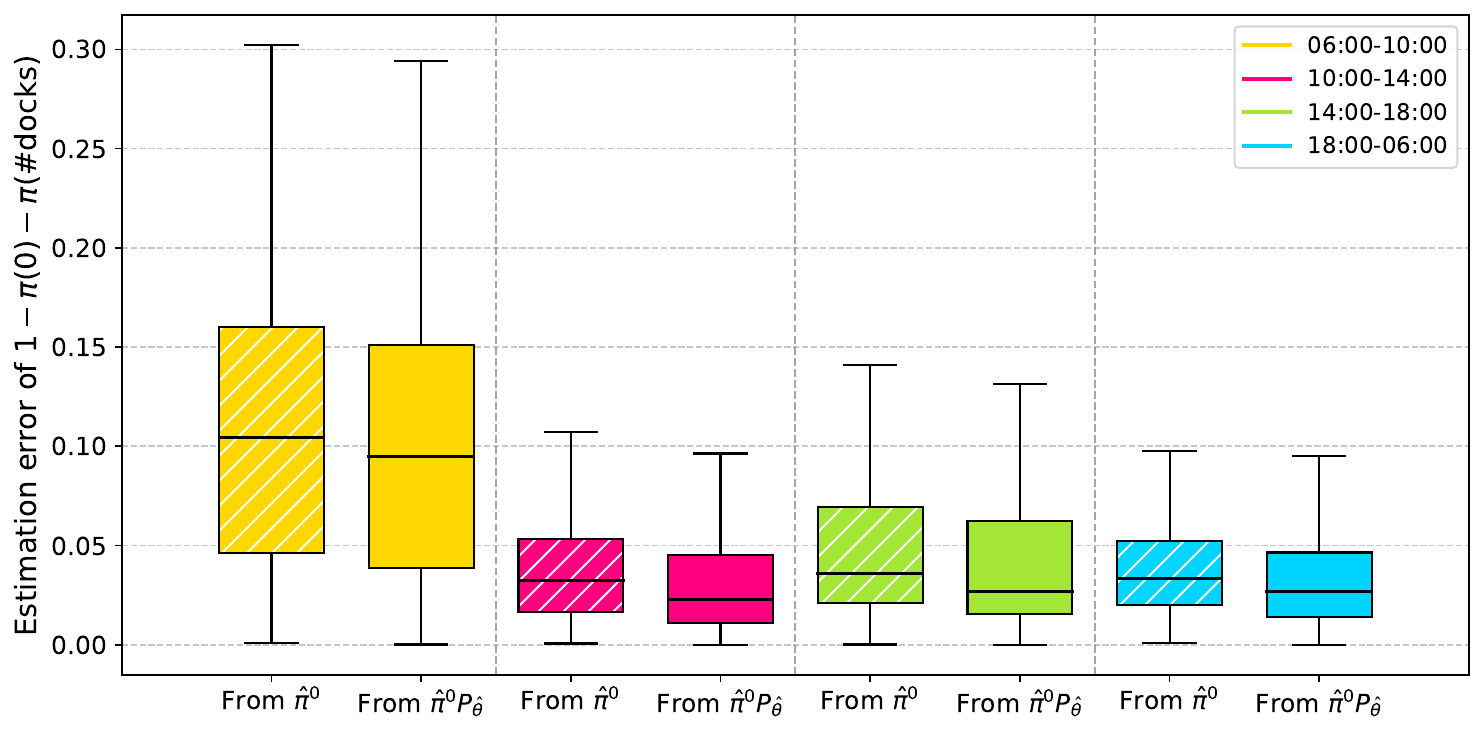}
  \caption{Distribution, over the $368$ stations of the network and for each time slot, of the error made on the availability $1-\pi(0)-\pi(L)$ by the empirical estimator and by the plug-in look-ahead estimator.
    The reference is the value obtained by solving the invariance equation for the estimated kernel.}
  \label{fig:velov-boxplots}
\end{figure}

\begin{figure}[htbp]
  \centering
  \includegraphics[width=\PlotWidth]{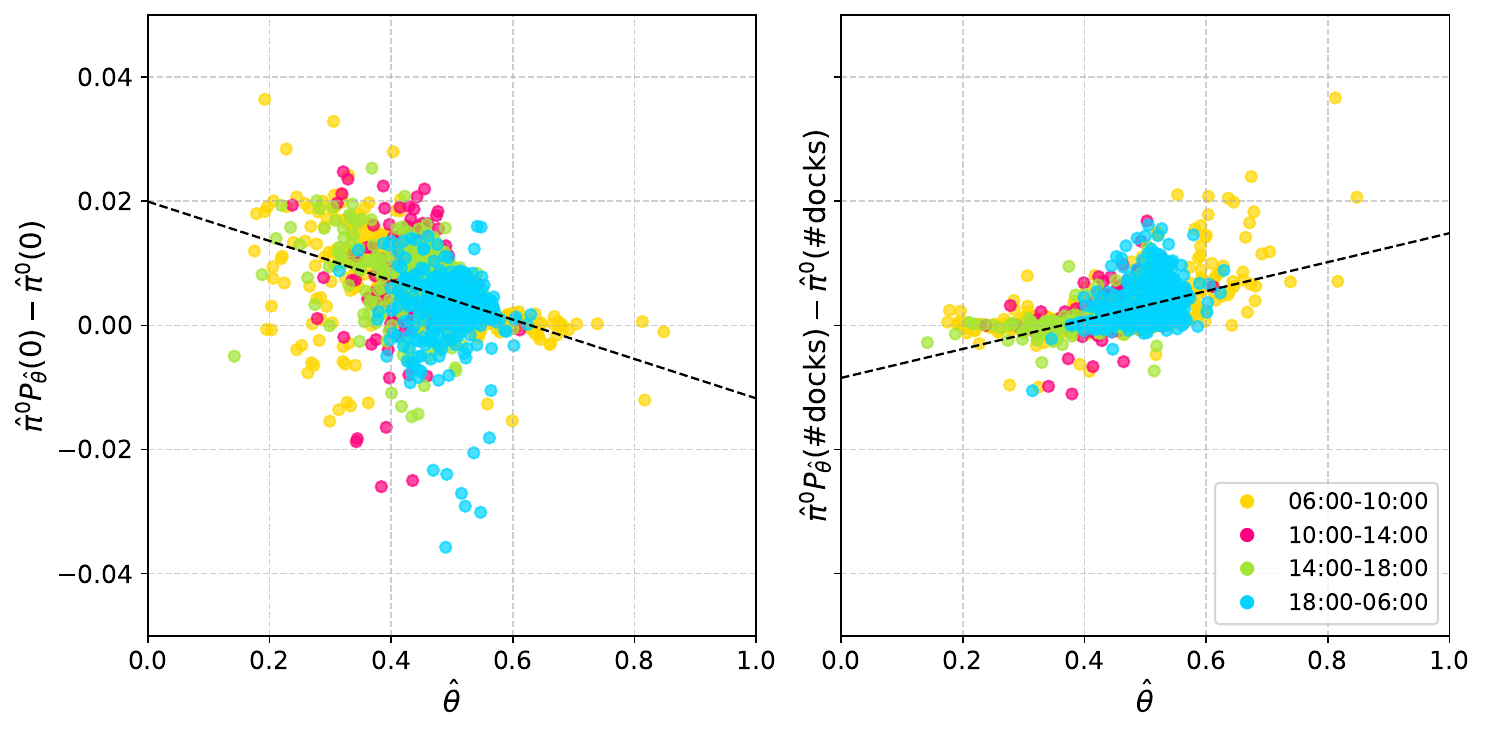}
  \caption{Difference between the plug-in look-ahead estimator and the empirical estimator, over the $368$ stations of the network, as a function of the estimated parameter, for the probability that a station is empty (left) and for the probability that it is full (right).}
  \label{fig:velov-estim-vs-theta}
\end{figure}

\begin{figure}[htbp]
  \centering
  \includegraphics[trim=122 0 122 0, clip, width=\MapWidth]%
    {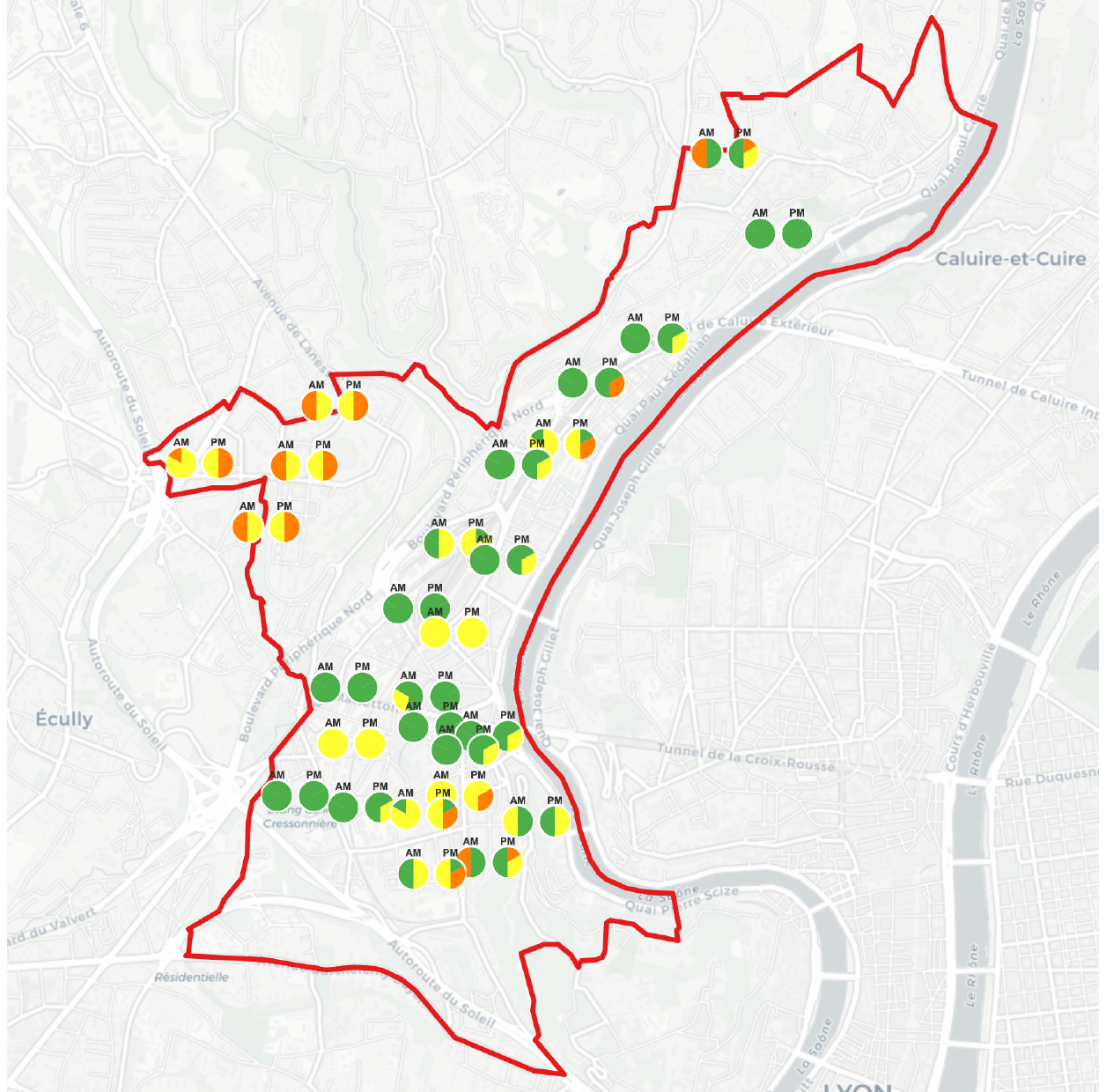}
  \caption{Plug-in look-ahead estimate of the probability that a station is available, that is, neither empty nor full, for the stations of the 9th district of Lyon and for each time slot.
    Green stands for a probability above $95\%$, yellow for a probability between $90$ and $95\%$, orange between $80$ and $90\%$, and red below $80\%$.
    The corresponding results over the whole metropolitan area are summarised in Table~\ref{tab:velov-availability}.}
  \label{fig:velov-lyon9-results}
\end{figure}

\begin{table}[htbp]
  \centering
  \caption{Plug-in look-ahead estimates of the probability that a station is available, that is, neither empty nor full, over the $368$ stations of the metropolitan area and for each time slot.
    For each availability class, the table gives the number of stations falling in the class and the share of the network it represents.}
  \label{tab:velov-availability}
  \small
  \begin{tabular*}{\textwidth}{@{\extracolsep{\fill}}lrrrrrrrr}
    \toprule
    & \multicolumn{8}{c}{Estimated availability (\%)} \\
    \cmidrule{2-9}
    & \multicolumn{2}{c}{\textcolor{AvailRed}{$[0,80)$}}
    & \multicolumn{2}{c}{\textcolor{AvailOrange}{$[80,90)$}}
    & \multicolumn{2}{c}{\textcolor{AvailYellow}{$[90,95)$}}
    & \multicolumn{2}{c}{\textcolor{AvailGreen}{$[95,100]$}} \\
    \cmidrule(lr){2-3} \cmidrule(lr){4-5} \cmidrule(lr){6-7} \cmidrule(lr){8-9}
    Time slot & Stations & Share & Stations & Share
              & Stations & Share & Stations & Share \\
    \midrule
    06:00--10:00 & $12$ & $3\%$       & $89$ & $24\%$ & $123$ & $33\%$ & $144$ & $39\%$ \\
    10:00--14:00 & $9$  & $2\%$       & $52$ & $14\%$ & $116$ & $32\%$ & $191$ & $52\%$ \\
    14:00--18:00 & $2$  & $<\!1\%$    & $58$ & $16\%$ & $155$ & $42\%$ & $153$ & $42\%$ \\
    18:00--06:00 & $0$  & $0\%$       & $24$ & $7\%$  & $167$ & $45\%$ & $177$ & $48\%$ \\
    \bottomrule
  \end{tabular*}
\end{table}

\bibliographystyle{plain}
\bibliography{references}

\newpage
\appendix

\section{Proofs of Section~\ref{sec:theory-known-kernel}}
\label{app:known-kernel}

\subsection{Proof of Proposition~\ref{prop:variance-iid-functional}}
\label{app:proof-variance-monotone}

For $\IndexBetween{i}{1}{n}$, we have
\[
  \Expect\left[\Contribution{k}{i}{f}^{2}\right]
  = \SumOver{y} \pi(y)\left(\SumOver{z} f(z)\Kernel[k]{z}{y}\right)^{2}.
\]
By Jensen's inequality,
\[
\begin{aligned}
  \Expect\left[\Contribution{k}{i}{f}^{2}\right]
  &\leq \SumOver{y} \pi(y) \SumOver{z} f(z)^{2} \Kernel[k]{z}{y} \\
  &= \SumOver{z} f(z)^{2} \SumOver{y} \pi(y) \Kernel[k]{z}{y}
   = \SumOver{z} f(z)^{2} \pi(z)
   = \pi\!\left(f^{2}\right).
\end{aligned}
\]
Hence we immediately deduce that $\PiHatOf{k}{f} \in \Ltwopi$ and that
\[
  \Variance\left(\PiHatOf{k}{f}\right)
  = \frac{1}{n} \left(\SumOver{y} \pi(y)\left(\SumOver{z} f(z)\Kernel[k]{z}{y}\right)^{2}
    - \pi(f)^2\right).
\]
We now prove that $\Variance(\PiHatOf{k}{f})$ is decreasing in $k$.
We have
\[
\begin{aligned}
  \Expect\left[\Contribution{k+1}{i}{f}^{2}\right]
  &= \SumOver{y} \pi(y)\left(\SumOver{z} f(z)\Kernel[k+1]{z}{y}\right)^{2} \\
  &= \SumOver{y} \pi(y)\left(\SumOver{z} f(z) \SumOver{x} \Kernel{x}{y}\Kernel[k]{z}{x}\right)^{2} \\
  &= \SumOver{y} \pi(y)\left(\SumOver{x} \Kernel{x}{y}
     \left\{\SumOver{z} f(z)\Kernel[k]{z}{x}\right\}\right)^{2}.
\end{aligned}
\]
Using Jensen's inequality again,
\[
\begin{aligned}
  \Expect\left[\Contribution{k+1}{i}{f}^{2}\right]
  &\leq \SumOver{y} \pi(y) \SumOver{x} \Kernel{x}{y}
     \left\{\SumOver{z} f(z)\Kernel[k]{z}{x}\right\}^{2} \\
  &= \SumOver{x} \pi(x)\left\{\SumOver{z} f(z)\Kernel[k]{z}{x}\right\}^{2}
   = \Expect\left[\Contribution{k}{i}{f}^{2}\right].
\end{aligned}
\]
The result directly follows from this inequality.
Proposition~\ref{prop:variance-iid-pointwise} follows by taking $f = \Dirac{x}$ for $x \in \StateSpace$.

\subsection{Proof of Proposition~\ref{prop:clt-functional}}
\label{app:proof-clt}

We begin with the following lemma.

\begin{lemma}
\label{lem:covariance-contributions}
Assume that the data $(X_i)_{\IndexBetween{i}{1}{n}}$ form a stationary Markov chain with kernel $\KernelMatrix$.
For $f \in \Ltwopi$, $k,l \in \Naturals$ and $i,j \in \Naturals$,
\[
  \Covariance\left(\Contribution{k}{i}{f}, \Contribution{l}{j}{f}\right)
  = \begin{cases}
      \SumOver{y,z} \pi(y)\Kernel[j-i]{z}{y}
      \KernelAverage{k}{y}{f}\KernelAverage{l}{z}{f} - \pi(f)^2
      & \text{if } i \leq j, \\[1ex]
      \SumOver{y,z} \pi(y)\Kernel[i-j]{z}{y}
      \KernelAverage{l}{y}{f}\KernelAverage{k}{z}{f} - \pi(f)^2
      & \text{if } j \leq i,
    \end{cases}
\]
where $\KernelAverage{k}{y}{f} = \SumOver{x} f(x)\Kernel[k]{x}{y}$.
\end{lemma}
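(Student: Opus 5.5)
We prove Lemma~\ref{lem:covariance-contributions} by conditioning on the earlier observation and using the Markov property together with stationarity. By symmetry of the covariance, it suffices to treat $i \leq j$; the case $j \leq i$ then follows by exchanging the roles of $(k,i)$ and $(l,j)$, which produces exactly the second formula. We also check that everything is well defined. Since $f \in \Ltwopi$, the Jensen argument in the proof of Proposition~\ref{prop:variance-iid-functional} gives $\Expect[\Contribution{k}{i}{f}^2] \leq \pi(f^2) < +\infty$, using only the stationarity of each $X_i$. Hence each contribution $\Contribution{k}{i}{f} = \KernelAverage{k}{X_i}{f}$ is square-integrable, and the covariance exists by Cauchy--Schwarz.

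We begin with the means. By stationarity, $X_i \sim \pi$ for every $i$. Invariance of $\pi$ under $\KernelMatrix[k]$ then gives
\[
\Expect[\KernelAverage{k}{X_i}{f}] = \SumOver{y}\pi(y)\SumOver{x} f(x)\Kernel[k]{x}{y} = \pi(f).
\]
The interchange of sums is justified by Fubini, since $\SumOver{y}\pi(y)\SumOver{x}|f(x)|\Kernel[k]{x}{y} = \pi(|f|) < +\infty$. Consequently the covariance equals $\Expect[\KernelAverage{k}{X_i}{f}\KernelAverage{l}{X_j}{f}] - \pi(f)^2$.

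Next we compute the cross moment. For $i \leq j$, the Markov property gives $\Prob(X_i = y, X_j = z) = \pi(y)\Kernel[j-i]{z}{y}$, with the convention $\Kernel[0]{z}{y} = \Indicator{z=y}$. Hence
\[
\Expect[\KernelAverage{k}{X_i}{f}\KernelAverage{l}{X_j}{f}] = \SumOver{y,z}\pi(y)\Kernel[j-i]{z}{y}\KernelAverage{k}{y}{f}\KernelAverage{l}{z}{f}.
\]
This yields the first formula.

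The only point requiring care is the absolute convergence of this double sum when $\StateSpace$ is infinite. We handle it with Cauchy--Schwarz for the joint law of $(X_i,X_j)$, whose two marginals are both $\pi$. The double sum of absolute values is bounded by
\[
\Bigl(\SumOver{y}\pi(y)\KernelAverage{k}{y}{f}^2\Bigr)^{1/2}\Bigl(\SumOver{z}\pi(z)\KernelAverage{l}{z}{f}^2\Bigr)^{1/2} \leq \pi(f^2) < +\infty,
\]
where the last inequality again uses the Jensen bound. This justifies all rearrangements and concludes the proof.
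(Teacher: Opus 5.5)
Your proposal is correct and follows essentially the same route as the paper: both compute the cross moment directly from the joint law $\Prob(X_i=y,X_j=z)=\pi(y)\Kernel[j-i]{z}{y}$ and subtract $\pi(f)^2$. The differences are only cosmetic. You obtain the case $j\le i$ from the symmetry of the covariance rather than by relabelling the joint law, and you add the Fubini and Cauchy--Schwarz integrability checks that the paper leaves implicit.
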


\begin{proof}
To prove this result, we need to evaluate the expectation $\Expect[\Contribution{k}{i}{f}\Contribution{l}{j}{f}]$.
A direct calculation yields
\[
\begin{aligned}
  \Expect\left[\Contribution{k}{i}{f}\Contribution{l}{j}{f}\right]
  &= \SumOver{y,z} \Prob\left(X_i = y, X_j = z\right)
     \left(\SumOver{x} f(x)\Kernel[k]{x}{y}\right)
     \left(\SumOver{x} f(x)\Kernel[l]{x}{z}\right) \\
  &= \begin{cases}
       \SumOver{y,z} \pi(y)\Kernel[j-i]{z}{y}
       \KernelAverage{k}{y}{f}\KernelAverage{l}{z}{f} & \text{if } i \leq j, \\[1ex]
       \SumOver{y,z} \pi(z)\Kernel[i-j]{y}{z}
       \KernelAverage{k}{y}{f}\KernelAverage{l}{z}{f} & \text{if } j \leq i,
     \end{cases} \\
  &= \begin{cases}
       \SumOver{y,z} \pi(y)\Kernel[j-i]{z}{y}
       \KernelAverage{k}{y}{f}\KernelAverage{l}{z}{f} & \text{if } i \leq j, \\[1ex]
       \SumOver{y,z} \pi(y)\Kernel[i-j]{z}{y}
       \KernelAverage{l}{y}{f}\KernelAverage{k}{z}{f} & \text{if } j \leq i,
     \end{cases}
\end{aligned}
\]
which gives the expected formula.
\end{proof}

Proposition~\ref{prop:clt-functional} stems from the central limit theorem for Markov chains, the limiting variance being given, for $k \in \Naturals$ and $f \in \Ltwopi$, by
\[
  \AsymptoticVariance[k]{f}
  = \Variance\left(\Contribution{k}{1}{f}\right)
  + 2 \SumFromTo{i}{2}{+\infty}
    \Covariance\left(\Contribution{k}{1}{f}, \Contribution{k}{i}{f}\right),
\]
where the expression of $\Variance(\Contribution{k}{1}{f})$ was established in Appendix~\ref{app:proof-variance-monotone} and the expression of $\Covariance(\Contribution{k}{1}{f}, \Contribution{k}{i}{f})$ is directly derived from Lemma~\ref{lem:covariance-contributions}.

Proposition~\ref{prop:clt-pointwise} is a direct application of Proposition~\ref{prop:clt-functional} with $f = \Dirac{x}$, $x \in \StateSpace$, for which some computations allow us to write
\[
\begin{aligned}
  \AsymptoticVariance[k]{x}
  &= \SumOver{y} \pi(y)\left(\SumOver{z} \Dirac{x}(z)\Kernel[k]{z}{y}\right)^{2}
     - \pi\left(\Dirac{x}\right)^{2} \\
  &\quad + 2 \SumFromTo{i}{1}{+\infty}\left(\SumOver{y,z} \pi(y)\Kernel[i]{z}{y}
     \KernelAverage{k}{y}{\Dirac{x}}\KernelAverage{k}{z}{\Dirac{x}}
     - \pi\left(\Dirac{x}\right)^{2}\right) \\
  &= \SumOver{y} \pi(y)\Kernel[k]{x}{y}^{2} - \pi(x)^{2}
     + 2 \SumFromTo{i}{1}{+\infty}\left(\SumOver{y,z} \pi(y)\Kernel[i]{z}{y}
     \Kernel[k]{x}{y}\Kernel[k]{x}{z} - \pi(x)^{2}\right) \\
  &= \SumOver{y} \pi(y)\Kernel[k]{x}{y}^{2} - \pi(x)^{2}
     + 2 \SumFromTo{i}{1}{+\infty}\left(\SumOver{y} \pi(y)\Kernel[k]{x}{y}
     \SumOver{z} \Kernel[i]{z}{y}\Kernel[k]{x}{z} - \pi(x)^{2}\right) \\
  &= \SumOver{y} \pi(y)\Kernel[k]{x}{y}^{2} - \pi(x)^{2}
     + 2 \SumFromTo{i}{1}{+\infty}\left(\SumOver{y} \pi(y)\Kernel[k]{x}{y}
     \Kernel[i+k]{x}{y} - \pi(x)^{2}\right) \\
  &= \SumOver{y} \pi(y)\Kernel[k]{x}{y}^{2} - \pi(x)^{2}
     + 2 \SumFromTo{i}{1}{+\infty}\left(\SumOver{y} \pi(y)\Kernel[k]{x}{y}
     \left\{\Kernel[i+k]{x}{y} - \pi(x)\right\}\right),
\end{aligned}
\]
which ends the proof.

\subsection{Proof of Lemma~\ref{lem:summability}}
\label{app:proof-summability}

For $i \in \Naturals$ and $x \in \StateSpace$, we define
\[
  F_i(x) = \SumOver{y} \pi(y)\Kernel[k]{x}{y}
  \left\{\Kernel[i+k]{x}{y} - \pi(x)\right\}.
\]
Thus,
\[
  \left|F_i(x)\right|
  \leq \SumOver{y} \pi(y)\Kernel[k]{x}{y}
     \left|\Kernel[i+k]{x}{y} - \pi(x)\right|.
\]
Since $\KernelMatrix$ is uniformly ergodic, there exist $C \geq 0$ and $\rho \in [0,1[$ such that
\[
  \forall y \in \StateSpace,\ \forall i \in \Naturals, \quad
  \NormTV{\Kernel[i]{\cdot}{y} - \pi} = \SumOver{x} \left|\Kernel[i]{x}{y} - \pi(x)\right| \leq C \rho^{i}.
\]
This leads to
\[
\begin{aligned}
  \SumOver{x} \left|F_i(x)\right|
  &\leq \SumOver{x} \SumOver{y} \pi(y)\Kernel[k]{x}{y}
     \left|\Kernel[i+k]{x}{y} - \pi(x)\right| \\
  &= \SumOver{y} \SumOver{x} \pi(y)\Kernel[k]{x}{y}
     \left|\Kernel[i+k]{x}{y} - \pi(x)\right| \\
  &\leq \SumOver{y} \pi(y) \SumOver{x}
     \left|\Kernel[i+k]{x}{y} - \pi(x)\right|
   \leq C \rho^{i+k}.
\end{aligned}
\]
Hence,
\[
\begin{aligned}
  \SumOver{x} \AsymptoticVariance[k]{x}
  &= \SumOver{x}\left(\SumOver{y} \pi(y)\Kernel[k]{x}{y}^{2} - \pi(x)^{2}
     + 2 \SumFromTo{i}{1}{+\infty} F_i(x)\right) \\
  &\leq \SumOver{x} \SumOver{y} \pi(y)\Kernel[k]{x}{y}^{2}
     + 2 \SumOver{x} \SumFromTo{i}{1}{+\infty}\left|F_i(x)\right| \\
  &\leq \SumOver{x} \SumOver{y} \pi(y)\Kernel[k]{x}{y}
     + 2 \SumFromTo{i}{1}{+\infty} \SumOver{x}\left|F_i(x)\right| \\
  &\leq 2 \SumOver{x} \pi(x) + 2 \SumFromTo{i}{1}{+\infty} C \rho^{i+k}
   \leq 2 + \frac{2C \rho^{k+1}}{1-\rho} < +\infty.
\end{aligned}
\]
This proves the first part of Lemma~\ref{lem:summability}.
For the second part, Proposition~\ref{prop:covariance-markov} gives
\[
\begin{aligned}
  \SumOver{x} \Variance\left(\PiHatAt{k}{x}\right)
  &= \SumOver{x}\left\{\frac{1}{n}\left(\SumOver{y} \pi(y)\Kernel[k]{x}{y}^{2} - \pi(x)^{2}\right)
     + \frac{2}{n^{2}} \SumFromTo{i}{1}{n}(n-i) F_i(x)\right\} \\
  &= \frac{1}{n} \SumOver{x} \AsymptoticVariance[k]{x}
    - \frac{2}{n} \SumOver{x} \SumFromTo{i}{1}{+\infty} F_i(x)
    + \frac{2}{n^{2}} \SumOver{x} \SumFromTo{i}{1}{n}(n-i) F_i(x) \\
  &= \frac{1}{n} \SumOver{x} \AsymptoticVariance[k]{x}
     - \frac{2}{n} \SumOver{x}\left(\frac{1}{n} \SumFromTo{i}{1}{n} i F_i(x)
     + \SumFromTo{i}{n+1}{+\infty} F_i(x)\right).
\end{aligned}
\]
We define, for $x \in \StateSpace$,
\[
  \mathcal{E}_n(x) = \frac{1}{n} \SumFromTo{i}{1}{n} i F_i(x)
  + \SumFromTo{i}{n+1}{+\infty} F_i(x),
\]
and we have
\[
\begin{aligned}
  \SumOver{x} \left|\mathcal{E}_n(x)\right|
  &= \SumOver{x}\left|\frac{1}{n} \SumFromTo{i}{1}{n} i F_i(x)
     + \SumFromTo{i}{n+1}{+\infty} F_i(x)\right| \\
  &\leq \SumOver{x}\left(\SumFromTo{i}{1}{n} \frac{i}{n}\left|F_i(x)\right|
     + \SumFromTo{i}{n+1}{+\infty}\left|F_i(x)\right|\right) \\
  &= \SumFromTo{i}{1}{n} \frac{i}{n} \SumOver{x}\left|F_i(x)\right|
     + \SumFromTo{i}{n+1}{+\infty} \SumOver{x}\left|F_i(x)\right| \\
  &\leq \SumFromTo{i}{1}{n} \frac{i}{n} C \rho^{i+k}
     + \SumFromTo{i}{n+1}{+\infty} C \rho^{i+k} \\
  &\leq \frac{C}{n} \frac{\rho^{k+1}}{(1-\rho)^{2}}
     + C \frac{\rho^{k}}{1-\rho} \rho^{n+1}.
\end{aligned}
\]
This proves that $\SumOver{x}|\mathcal{E}_n(x)|$ goes to $0$ when $n$ goes to infinity, and hence the second part of Lemma~\ref{lem:summability}.

\subsection{Proof of Theorem~\ref{thm:monotone-amse}}\label{proof:thm:monotone-amse}

\subsubsection{One-step comparison in the general case}

We begin by establishing the comparison between $\PiHat{1}$ and $\Empirical$ in the general non-reversible setting.
For any $x \in \StateSpace$, we can write
\[
  \AsymptoticVariance[0]{x} - \AsymptoticVariance[1]{x} = T_1(x) + 2 T_2(x),
\]
where
\[
\begin{aligned}
  T_1(x) &= \pi(x) - \SumOver{y} \pi(y) \Kernel{x}{y}^{2}, \\
  T_2(x) &= \SumFromTo{i}{1}{+\infty}
    \left\{\pi(x) \Kernel[i]{x}{x}
    - \SumOver{y} \pi(y) \Kernel{x}{y} \Kernel[i+1]{x}{y}\right\}.
\end{aligned}
\]
Through expansion and rearrangement, we obtain
\[
\begin{aligned}
  T_2(x)
  &= \SumFromTo{i}{1}{+\infty}
     \left\{\SumOver{y} \pi(y) \Kernel{x}{y} \Kernel[i]{x}{x}
     - \SumOver{y} \pi(y) \Kernel{x}{y} \Kernel[i+1]{x}{y}\right\} \\
  &= \SumFromTo{i}{1}{+\infty} \SumOver{y} \pi(y) \Kernel{x}{y}
     \left\{\Kernel[i]{x}{x} - \Kernel[i+1]{x}{y}\right\} \\
  &= \SumOver{y} \pi(y) \Kernel{x}{y}
     \SumFromTo{i}{1}{+\infty}
     \left\{\Kernel[i]{x}{x} - \Kernel[i+1]{x}{y}\right\}.
\end{aligned}
\]
For $y \in \StateSpace$, since $\Kernel[i+1]{x}{y}$ converges at an exponential rate to $\pi(x)$ when $i$ goes to infinity, we may write
\[
  \SumFromTo{i}{1}{+\infty}\left\{\Kernel[i]{x}{x} - \Kernel[i+1]{x}{y}\right\}
  = \SumFromTo{i}{1}{+\infty}\left\{\Kernel[i]{x}{x} - \Kernel[i]{x}{y}\right\}
  + \Big(\Kernel{x}{y} - \pi(x)\Big).
\]
It follows that
\[
\begin{aligned}
  T_2(x)
  &= \SumOver{y} \pi(y) \Kernel{x}{y}
     \left(\SumFromTo{i}{1}{+\infty}\left\{\Kernel[i]{x}{x} - \Kernel[i]{x}{y}\right\}
     + \Kernel{x}{y} - \pi(x)\right) \\
  &= \SumOver{y} \pi(y) \Kernel{x}{y}
     \SumFromTo{i}{1}{+\infty}\left\{\Kernel[i]{x}{x} - \Kernel[i]{x}{y}\right\}
     + \SumOver{y} \pi(y) \Kernel{x}{y}^{2} - \pi(x)^{2} \\
  &= \SumOver{y} \pi(y) \Kernel{x}{y}
     \left(\SumFromTo{i}{0}{+\infty}\left\{\Kernel[i]{x}{x} - \Kernel[i]{x}{y}\right\}
     - \Big(\Kernel[0]{x}{x} - \Kernel[0]{x}{y}\Big)\right) \\
  &\quad + \SumOver{y} \pi(y) \Kernel{x}{y}^{2} - \pi(x)^{2} \\
  &= \SumOver{y} \pi(y) \Kernel{x}{y}
     \left(\SumFromTo{i}{0}{+\infty}\left\{\Kernel[i]{x}{x} - \Kernel[i]{x}{y}\right\}\right) \\
  &\quad - \pi(x) + \pi(x)\Kernel{x}{x}
     + \SumOver{y} \pi(y) \Kernel{x}{y}^{2} - \pi(x)^{2}.
\end{aligned}
\]
Let $\GreenFunction{x}{y} = \SumFromTo{i}{0}{+\infty}\{\Kernel[i]{x}{y} - \pi(x)\}$ denote the centred Green function.
We can write
\[
  \SumFromTo{i}{0}{+\infty}\left\{\Kernel[i]{x}{x} - \Kernel[i]{x}{y}\right\}
  = \GreenFunction{x}{x} - \GreenFunction{x}{y}.
\]
We also introduce $\HittingTime{y}{x}$, the expected hitting time (or mean first passage time) from $y$ to $x$,
\[
  \HittingTime{y}{x} = \frac{\GreenFunction{x}{x} - \GreenFunction{x}{y}}{\pi(x)}.
\]
This identity implies that
\[
  \SumFromTo{i}{0}{+\infty}\left\{\Kernel[i]{x}{x} - \Kernel[i]{x}{y}\right\}
  = \pi(x)\, \HittingTime{y}{x}.
\]
By recombining $T_1(x)$ and $T_2(x)$, we have
\[
\begin{aligned}
  \AsymptoticVariance[0]{x} - \AsymptoticVariance[1]{x}
  &= \SumOver{y} \pi(y) \Kernel{x}{y}^{2} - \pi(x) + 2\pi(x)\Kernel{x}{x}
     - 2\pi(x)^{2} \\
  &\quad + 2 \SumOver{y} \pi(y)\pi(x) \Kernel{x}{y} \HittingTime{y}{x}.
\end{aligned}
\]
For a fixed $x \in \StateSpace$, we define the vector $h = (\HittingTime{y}{x})_{y \in \StateSpace}$.
Given the standard identity for hitting times,
\[
  \forall y \neq x, \quad
  \HittingTime{y}{x} = 1 + \SumOver{z} \Kernel{z}{y} \HittingTime{z}{x},
\]
and
\[
  1 + \SumOver{z} \Kernel{z}{x} \HittingTime{z}{x} = \frac{1}{\pi(x)},
\]
we derive that $h$ satisfies the following operator equation,
\[
  h = \Ones + \KernelMatrix h - \frac{\Dirac{x}}{\pi(x)}.
\]
Manipulating the inner product $\ScalarProduct{u}{v} = \SumOver{y} u(y)v(y)\pi(y)$ on $\Ltwopi$, we obtain
\[
  \ScalarProduct{\Kernel{x}{\cdot}}{(\Identity - \KernelMatrix)h}
  = \SumOver{y} \Kernel{x}{y}\left(1 - \frac{\Dirac{x}(y)}{\pi(x)}\right)\pi(y)
  = \pi(x) - \Kernel{x}{x}.
\]
On the other hand, since the adjoint of $\KernelMatrix$ for this inner product is
\[
  \Kernel[*]{y}{x} = \frac{\pi(y)\Kernel{x}{y}}{\pi(x)},
\]
we also have
\[
\begin{aligned}
  \ScalarProduct{\Kernel{x}{\cdot}}{(\Identity - \KernelMatrix)h}
  &= \ScalarProduct{(\Identity - \KernelMatrix[*])\Kernel{x}{\cdot}}{h} \\
  &= \SumOver{y,z} (\Identity - \KernelMatrix[*])_{y,z} \Kernel{x}{z} h(y) \pi(y) \\
  &= \SumOver{y,z}
     \left(\Dirac{y}(z) - \frac{\pi(z)\Kernel{y}{z}}{\pi(y)}\right)
     \Kernel{x}{z} h(y) \pi(y) \\
  &= \SumOver{y} \Kernel{x}{y} h(y) \pi(y)
     - \SumOver{y,z} \pi(z) \Kernel{y}{z} \Kernel{x}{z} h(y).
\end{aligned}
\]
By multiplying by $\pi(x)$ and then summing over $x$, we obtain
\[
\begin{aligned}
  \SumOver{x} \pi(x)^{2} - \SumOver{x} \pi(x)\Kernel{x}{x}
  &= \SumOver{x,y} \pi(x)\pi(y)\Kernel{x}{y} \HittingTime{y}{x} \\
  &\quad - \SumOver{x,y,z} \pi(x)\pi(z)\Kernel{y}{z}\Kernel{x}{z} \HittingTime{y}{x}.
\end{aligned}
\]
Our quantity of interest for comparing the two estimators is
\[
\begin{aligned}
  I &= \SumOver{x} \AsymptoticVariance[0]{x} - \SumOver{x} \AsymptoticVariance[1]{x} \\
    &= \SumOver{x,y} \pi(y)\Kernel{x}{y}^{2} - 1
       + 2 \SumOver{x,y,z} \pi(x)\pi(z)\Kernel{y}{z}\Kernel{x}{z} \HittingTime{y}{x}.
\end{aligned}
\]
Noticing that
\[
\begin{aligned}
  \SumOver{y,z} \pi(z)\Kernel{y}{z}^{2}
  + \SumOverDistinctXY \pi(z)\Kernel{x}{z}\Kernel{y}{z}
  &= \SumOver{x,y,z} \pi(z)\Kernel{x}{z}\Kernel{y}{z} \\
  &= \SumOver{z} \pi(z)\left(\SumOver{x} \Kernel{x}{z}\right)^{2} = 1,
\end{aligned}
\]
and substituting into $I$, we get
\[
\begin{aligned}
  I &= 2 \SumOver{x,y,z} \pi(x)\pi(z)\Kernel{y}{z}\Kernel{x}{z} \HittingTime{y}{x}
       - \SumOverDistinctXY \pi(z)\Kernel{x}{z}\Kernel{y}{z} \\
    &= \SumOverDistinctXY \pi(z)\Kernel{x}{z}\Kernel{y}{z}
       \Big(2\pi(x)\HittingTime{y}{x} - 1\Big).
\end{aligned}
\]
Moreover, by symmetry in $x$ and $y$,
\[
  I = \SumOverDistinctXY \pi(z)\Kernel{x}{z}\Kernel{y}{z}
      \Big(\pi(x)\HittingTime{y}{x} + \pi(y)\HittingTime{x}{y} - 1\Big).
\]
For $x \neq y$, let $\CommuteTime{y}{x} = \inf\{t \geq T_y \,:\, X_t = x\}$ be the commute time from $y$ to $x$, where $T_y = \inf\{t \geq 0 \,:\, X_t = y\}$ denotes the hitting time of $y$.
Then
\[
  \HittingTime{y}{x} + \HittingTime{x}{y}
  = \Expect_x\left[\CommuteTime{y}{x}\right]
  \geq \frac{1}{\pi(x)},
\]
which translates into
\[
  1 \leq \pi(x)\HittingTime{y}{x} + \pi(x)\HittingTime{x}{y}.
\]
If $\pi(x) \leq \pi(y)$, we finally get
\[
  1 \leq \pi(x)\HittingTime{y}{x} + \pi(y)\HittingTime{x}{y},
\]
and the symmetry in $x$ and $y$ also ensures the result when $\pi(y) \leq \pi(x)$.
Hence $I$ is non-negative as a sum of non-negative terms, which proves that
\[
  \AsymptoticMSE\left(\PiHat{1}\right) \leq \AsymptoticMSE\left(\Empirical\right).
\]

\subsubsection{The reversible case}

We now assume that $\KernelMatrix$ is reversible, meaning that $\KernelMatrix$ is a self-adjoint bounded operator on $\Ltwopi$.
Let $\StationaryProjector$ be the bounded operator defined on $\Ltwopi$ by
\[
  \StationaryProjector h = \langle \pi, h \rangle \Ones,
\]
where $\langle \cdot, \cdot \rangle$ denotes the classical Euclidean scalar product.
Remark that $\KernelMatrix \StationaryProjector = \StationaryProjector \KernelMatrix = \StationaryProjector^{2} = \StationaryProjector$ and thus that $(\KernelMatrix - \StationaryProjector)^{j} = \KernelMatrix[j] - \StationaryProjector$ for $j \geq 1$.
For $x \in \StateSpace$, we have, by~\cite[Lemma~21.2.9]{DMPS2018},
\[
  \left|(\KernelMatrix - \StationaryProjector)^{j} f(x)\right|
  = \left|\KernelMatrix[j] f(x) - \pi(f)\right|
  \leq \NormTV{\Kernel[j]{\cdot}{x} - \pi}^{1/2}
       \left(\KernelMatrix[j] f^{2}(x) + \pi(f^{2})\right)^{1/2}.
\]
Hence, since $\KernelMatrix$ is uniformly ergodic, there exist $C > 0$ and $\rho\in[0,1[$ such that
\[
  \left|(\KernelMatrix - \StationaryProjector)^{j} f(x)\right|
  \leq \sqrt{C}\, \rho^{j}
       \left(\KernelMatrix[j] f^{2}(x) + \pi(f^{2})\right)^{1/2},
\]
and
\[
  \NormPi{(\KernelMatrix - \StationaryProjector)^{j} f}^{2}
  = \pi\!\left(\left|(\KernelMatrix - \StationaryProjector)^{j} f\right|^{2}\right)
  \leq 2C\, \rho^{2j} \pi(f^{2})
  = 2C\, \rho^{2j} \NormPi{f}^{2}.
\]
From this it follows that
\[
  \NormOperator{(\KernelMatrix - \StationaryProjector)^{j}}_{\pi} \leq \sqrt{2C}\, \rho^{j}.
\]
This implies that the Neumann series
\[
  \FundamentalMatrix = \sum_{j \geq 0} (\KernelMatrix - \StationaryProjector)^{j}
\]
converges in operator norm and that $\FundamentalMatrix = (\Identity - \KernelMatrix + \StationaryProjector)^{-1}$.
Because taking the adjoint is continuous for this topology \cite[p.~34]{Murphy1990}, and because $\KernelMatrix$ and $\StationaryProjector$ are self-adjoint as operators on $\Ltwopi$, the operator $\FundamentalMatrix$ is self-adjoint as well.

Now, according to \cite[Theorem~21.2.10]{DMPS2018}, for $f \in \Ltwopi$ centred, $\sqrt{n}\, \EmpiricalOf{f}$ converges in distribution to a centred Gaussian random variable with variance
\[
  \AsymptoticVariance[0]{f}
  = \pi(f^{2}) + 2 \sum_{j \geq 1} \pi\!\left(f \KernelMatrix[j] f\right)
  = \ScalarProduct{f}{f} + 2 \sum_{j \geq 1} \ScalarProduct{f}{\KernelMatrix[j] f}.
\]
Because $f$ is centred, we have $\StationaryProjector f = 0$, so we can rewrite
\begin{equation}\label{eq:variancef:from:clt}
\begin{aligned}
  \AsymptoticVariance[0]{f}
  &= \ScalarProduct{f}{(\KernelMatrix - \StationaryProjector)^{0} f}
     + 2 \sum_{j \geq 1} \ScalarProduct{f}{(\KernelMatrix - \StationaryProjector)^{j} f} \\
  &= 2 \sum_{j \geq 0} \ScalarProduct{f}{(\KernelMatrix - \StationaryProjector)^{j} f}
     - \ScalarProduct{f}{f}
   = 2 \ScalarProduct{f}{\FundamentalMatrix f} - \ScalarProduct{f}{f}.
\end{aligned}
\end{equation}
Moreover, for $f,g \in \Ltwopi$ centred and $a,b \in \Reals$, one can deduce that $af + bg$ converges in distribution to a Gaussian random variable and thus, by the characterisation of Gaussian vectors,
\[
  \Big(\sqrt{n}\, \EmpiricalOf{f},\ \sqrt{n}\, \EmpiricalOf{g}\Big)
\]
converges in law to a centred two-dimensional Gaussian random variable whose covariance matrix is denoted by $\Sigma(f,g)$.
For $a,b \in \Reals$,
\[
  \AsymptoticVariance[0]{af + bg}
  = a^{2} \AsymptoticVariance[0]{f} + b^{2} \AsymptoticVariance[0]{g}
  + 2ab\, \Sigma(f,g),
\]
and
\[
\begin{aligned}
  \AsymptoticVariance[0]{af + bg}
  &= 2 \ScalarProduct{af + bg}{\FundamentalMatrix(af + bg)}
     - \ScalarProduct{af + bg}{af + bg} \\
  &= 2a^{2} \ScalarProduct{f}{\FundamentalMatrix f}
     + 2b^{2} \ScalarProduct{g}{\FundamentalMatrix g}
     + 2ab \ScalarProduct{f}{\FundamentalMatrix g}
     + 2ab \ScalarProduct{g}{\FundamentalMatrix f} \\
  &\hspace{4mm}
     - \Big(a^{2} \ScalarProduct{f}{f} + b^{2} \ScalarProduct{g}{g}
     + 2ab \ScalarProduct{f}{g}\Big) \\
  &= a^{2} \AsymptoticVariance[0]{f} + b^{2} \AsymptoticVariance[0]{g}
     + 2ab \Big(\ScalarProduct{f}{\FundamentalMatrix g}
     + \ScalarProduct{g}{\FundamentalMatrix f} - \ScalarProduct{f}{g}\Big).
\end{aligned}
\]
Hence, by identification,
\[
  \Sigma(f,g) = \ScalarProduct{f}{\FundamentalMatrix g}
  + \ScalarProduct{g}{\FundamentalMatrix f} - \ScalarProduct{f}{g}.
\]
Finally, for any $f,g \in \Ltwopi$ (no longer assumed to be centred), the vector
\[
  \Big(\sqrt{n}\left(\EmpiricalOf{f} - \pi(f)\right),\
       \sqrt{n}\left(\EmpiricalOf{g} - \pi(g)\right)\Big)
\]
converges in law to a centred two-dimensional Gaussian random variable with covariance
\[
\begin{aligned}
  \Sigma(f,g)
  &= \ScalarProduct{f - \pi(f)}{\FundamentalMatrix(g - \pi(g))}
   + \ScalarProduct{g - \pi(g)}{\FundamentalMatrix(f - \pi(f))} \\
  &\hspace{0.4cm}
   - \ScalarProduct{f - \pi(f)}{g - \pi(g)} \\[1ex]
  &= \ScalarProduct{f}{\FundamentalMatrix g}
   - \ScalarProduct{\pi(f)}{\FundamentalMatrix g}
   - \ScalarProduct{f}{\FundamentalMatrix \pi(g)} \\
  &\hspace{0.4cm}
   + \ScalarProduct{\pi(f)}{\FundamentalMatrix \pi(g)}
   + \ScalarProduct{g}{\FundamentalMatrix f}
   - \ScalarProduct{\pi(g)}{\FundamentalMatrix f} \\
  &\hspace{0.4cm}
   - \ScalarProduct{g}{\FundamentalMatrix \pi(f)}
   + \ScalarProduct{\pi(g)}{\FundamentalMatrix \pi(f)} \\
  &\hspace{0.4cm}
   - \Big(\ScalarProduct{f}{g} - \ScalarProduct{f}{\pi(g)}
   - \ScalarProduct{\pi(f)}{g} + \pi(f)\pi(g)\Big).
\end{aligned}
\]
We remark that $\FundamentalMatrix \pi(f) = \pi(f)$ and $\FundamentalMatrix \pi(g) = \pi(g)$, hence
\[
\begin{aligned}
  \Sigma(f,g)
  &= \ScalarProduct{f}{\FundamentalMatrix g} - \pi(f)\pi(\FundamentalMatrix g)
   - \ScalarProduct{f}{\pi(g)} + \pi(f)\pi(g) \\
  &\hspace{0.4cm}
   + \ScalarProduct{g}{\FundamentalMatrix f} - \pi(g)\pi(\FundamentalMatrix f)
   - \ScalarProduct{g}{\pi(f)} + \pi(g)\pi(f) \\
  &\hspace{0.4cm}
   - \ScalarProduct{f}{g} + \pi(f)\pi(g) + \pi(f)\pi(g) - \pi(f)\pi(g),
\end{aligned}
\]
and since $\pi(\FundamentalMatrix f) = \pi(f)$ and $\pi(\FundamentalMatrix g) = \pi(g)$,
\[
  \Sigma(f,g) = \ScalarProduct{f}{\FundamentalMatrix g}
  + \ScalarProduct{g}{\FundamentalMatrix f} - \ScalarProduct{f}{g}
  - \pi(f)\pi(g).
\]
As a consequence, for any $f,g \in \Ltwopi$, the vector
\[
  \Big(\sqrt{n}\left(\EmpiricalOf{\KernelMatrix[k] f} - \pi(f)\right),\
       \sqrt{n}\left(\EmpiricalOf{\KernelMatrix[k] g} - \pi(g)\right)\Big)
\]
converges in law to a centred two-dimensional Gaussian random variable with covariance
\[
  \Sigma\!\left(\KernelMatrix[k] f, \KernelMatrix[k] g\right)
  = \ScalarProduct{\KernelMatrix[k] f}{\FundamentalMatrix \KernelMatrix[k] g}
  + \ScalarProduct{\KernelMatrix[k] g}{\FundamentalMatrix \KernelMatrix[k] f}
  - \ScalarProduct{\KernelMatrix[k] f}{\KernelMatrix[k] g} - \pi(f)\pi(g).
\]
Since $\KernelMatrix[k]$ and $\FundamentalMatrix$ are self-adjoint in $\Ltwopi$,
\[
\begin{aligned}
  \Sigma\!\left(\KernelMatrix[k] f, \KernelMatrix[k] g\right)
  &= \ScalarProduct{f}{\KernelMatrix[k] \FundamentalMatrix \KernelMatrix[k] g}
   + \ScalarProduct{\KernelMatrix[k] \FundamentalMatrix \KernelMatrix[k] g}{f}
   - \ScalarProduct{f}{\KernelMatrix[2k] g} - \pi(f)\pi(g) \\
  &= 2 \ScalarProduct{f}{\KernelMatrix[k] \FundamentalMatrix \KernelMatrix[k] g}
   - \ScalarProduct{f}{\KernelMatrix[2k] g} - \pi(f)\pi(g) \\
  &= \ScalarProduct{f}{\KernelMatrix[k](2\FundamentalMatrix - \Identity)\KernelMatrix[k] g}
   - \pi(f)\pi(g).
\end{aligned}
\]
In particular,
\[
  0 \leq \Sigma\!\left(\KernelMatrix[k] f, \KernelMatrix[k] f\right)
  = \ScalarProduct{f}{\KernelMatrix[k](2\FundamentalMatrix - \Identity)\KernelMatrix[k] f}
  - \pi(f)^{2},
\]
which implies that $\KernelMatrix[k](2\FundamentalMatrix - \Identity)\KernelMatrix[k]$ is a self-adjoint positive operator on $\Ltwopi$.

Let $(\HilbertBasis{x})_{x \in \StateSpace} = (\Dirac{x}/\sqrt{\pi(x)})_{x \in \StateSpace}$ be a Hilbert basis of $\Ltwopi$, and set $b_x = \Dirac{x}$.
Denoting by $\DiagonalStationary$ the diagonal matrix whose diagonal is given by $\pi$, we have
\[
\begin{aligned}
  \SumOver{x}
  \ScalarProduct{\HilbertBasis{x}}{(2\FundamentalMatrix - \Identity)\DiagonalStationary \HilbertBasis{x}}
  &= \SumOver{x}
     \ScalarProduct{\HilbertBasis{x}}{(2\FundamentalMatrix - \Identity)\pi(x) \HilbertBasis{x}} \\
  &= \SumOver{x} \pi(x)
     \ScalarProduct{\frac{b_x}{\sqrt{\pi(x)}}}
     {(2\FundamentalMatrix - \Identity) \frac{b_x}{\sqrt{\pi(x)}}} \\
  &= \SumOver{x} \ScalarProduct{b_x}{(2\FundamentalMatrix - \Identity) b_x} \\
  &= \SumOver{x} \left(\Sigma(b_x,b_x) + \pi(x)^{2}\right).
\end{aligned}
\]
However, for $x \in \StateSpace$, $\Sigma(b_x,b_x) = \AsymptoticVariance[0]{x}$, and since Lemma~\ref{lem:summability} ensures that $\SumOver{x} \AsymptoticVariance[0]{x} < +\infty$, we deduce that
\[
  \SumOver{x}
  \ScalarProduct{\HilbertBasis{x}}{(2\FundamentalMatrix - \Identity)\DiagonalStationary \HilbertBasis{x}}
  < +\infty.
\]
As a consequence, $(2\FundamentalMatrix - \Identity)\DiagonalStationary$ is a trace class operator.
Since the space of trace class operators is an ideal, it follows that $\KernelMatrix[k](2\FundamentalMatrix - \Identity)\DiagonalStationary\KernelMatrix[k]$ is trace class as well \cite[Section~3.6]{Murphy1990}, and that for $x \in \StateSpace$,
\[
  \AsymptoticVariance[k]{x}
  = \ScalarProduct{\HilbertBasis{x}}
    {\KernelMatrix[k](2\FundamentalMatrix - \Identity)\KernelMatrix[k] \DiagonalStationary \HilbertBasis{x}}
  - \pi(x)^{2}.
\]
Thus,
\begin{align}
  \TotalVariance{k} = \SumOver{x} \AsymptoticVariance[k]{x}
  &= \SumOver{x}
     \ScalarProduct{\HilbertBasis{x}}
     {\KernelMatrix[k](2\FundamentalMatrix - \Identity)\KernelMatrix[k] \DiagonalStationary \HilbertBasis{x}}
     - \SumOver{x} \pi(x)^{2} \nonumber \\
  &= \Trace\left(\KernelMatrix[k](2\FundamentalMatrix - \Identity)\KernelMatrix[k] \DiagonalStationary\right)
     - \SumOver{x} \pi(x)^{2}.
  \label{eq:totalvariance:from:clt}
\end{align}
Thus,
\[
\begin{aligned}
  \TotalVariance{k} - \TotalVariance{k+1}
  &= \Trace\left(\KernelMatrix[k](2\FundamentalMatrix - \Identity)\KernelMatrix[k] \DiagonalStationary\right)
   - \Trace\left(\KernelMatrix[k+1](2\FundamentalMatrix - \Identity)\KernelMatrix[k+1] \DiagonalStationary\right) \\
  &= \Trace\left((2\FundamentalMatrix - \Identity)\KernelMatrix[k]
     \left(\DiagonalStationary - \KernelMatrix \DiagonalStationary \KernelMatrix\right)
     \KernelMatrix[k]\right) \\
  &= \Trace\left((2\FundamentalMatrix - \Identity)\KernelMatrix[2k]
     \left(\Identity - \KernelMatrix[2]\right) \DiagonalStationary\right).
\end{aligned}
\]
Now, let $u \in \Ltwopi$.
We evaluate the associated quadratic form,
\[
\begin{aligned}
  \ScalarProduct{u}{\KernelMatrix[2] u}
  &= \SumOver{x} \pi(x) (\KernelMatrix u)^{2}(x) \\
  &= \SumOver{x} \pi(x)\left(\SumOver{y} \Kernel{y}{x} u(y)\right)^{2} \\
  &\leq \SumOver{x,y} \pi(x) \Kernel{y}{x} u(y)^{2} \\
  &= \SumOver{y} \pi(y) u(y)^{2}
   = \ScalarProduct{u}{u},
\end{aligned}
\]
using Jensen's inequality.
Thus, $\ScalarProduct{u}{(\Identity - \KernelMatrix[2])u} \geq 0$, and we deduce that $\Identity - \KernelMatrix[2]$ is positive semidefinite.
Now, remark that if $A$ and $B$ are two commuting positive semidefinite self-adjoint bounded operators, then $AB$ remains positive semidefinite.
Indeed, according to \cite[Theorem~2.4.4]{Murphy1990}, if $A^{1/2}$ denotes the square root of $A$, then $B$ commutes with $A^{1/2}$ and
\[
  \ScalarProduct{u}{ABu} = \ScalarProduct{u}{A^{1/2} B A^{1/2} u}
  = \ScalarProduct{A^{1/2} u}{B A^{1/2} u} \geq 0,
\]
since $A^{1/2}$ is self-adjoint and $B$ is positive semidefinite.

It is then easy to see that $\KernelMatrix[2k]$ commutes with $\Identity - \KernelMatrix[2]$, so that $\KernelMatrix[2k](\Identity - \KernelMatrix[2])$ is positive semidefinite.
Then, it follows from $\KernelMatrix \FundamentalMatrix = \FundamentalMatrix \KernelMatrix$ that $(2\FundamentalMatrix - \Identity)$ and $\KernelMatrix[2k](\Identity - \KernelMatrix[2])$ commute, so that $(2\FundamentalMatrix - \Identity)\KernelMatrix[2k](\Identity - \KernelMatrix[2])$ is positive semidefinite.
Ultimately,
\[
  \Trace\left((2\FundamentalMatrix - \Identity)\KernelMatrix[2k](\Identity - \KernelMatrix[2])\DiagonalStationary\right)
\]
is the trace of the product of two positive semidefinite self-adjoint operators, which is therefore non-negative \cite{Simon2005}.
Hence $\TotalVariance{k} - \TotalVariance{k+1} \geq 0$, which completes the proof.

\subsubsection{A non-reversible counterexample}
\label{sssec:counterexample}
The stochastic matrix
\[
\KernelMatrix = \begin{pmatrix}
    0 & 1 & 0 \\
    0 & \frac{1}{10} & \frac{9}{10} \\
    1 & 0 & 0
\end{pmatrix}\!,
\]
whose stationary distribution is $\pi = (9/28, 10/28, 9/28)$, provides an example of a Markov kernel that is irreducible and aperiodic, yet non-reversible, and for which
\[
\AsymptoticMSE\left(\PiHat{1}\right) < \AsymptoticMSE\left(\PiHat{2}\right).
\]

\subsection{Proof of Proposition~\ref{prop:bias-decreasing-iid}}
\label{app:proof-bias-decreasing-iid}

\subsubsection{Independent data}

Let $x \in \StateSpace$.
For $k \in \Naturals$, we have
\[
  \Expect\left[\PiHatAt{k}{x}\right] = \Expect\left[\Contribution{k}{1}{x}\right]
  = \SumOver{y} \mu(y) \Kernel[k]{x}{y}
\]
and
\[
  \Bias\left(\PiHatAt{k}{x}\right) = \SumOver{y} \left(\mu(y)-\pi(y)\right) \Kernel[k]{x}{y}.
\]
Thus,
\[
  \SumOver{x} \Bias\left(\PiHatAt{k}{x}\right)^{2}
  = \SumOver{x} \left(\SumOver{y} \left(\mu(y)-\pi(y)\right) \Kernel[k]{x}{y}\right)^{2}.
\]
We now prove that this quantity is non-increasing in $k$.
We have
\[
\begin{aligned}
  \SumOver{x} \Bias\left(\PiHatAt{k+1}{x}\right)^{2}
  &= \SumOver{x} \left(\SumOver{y} \left(\mu(y)-\pi(y)\right) \Kernel[k+1]{x}{y}\right)^{2} \\
  &= \SumOver{x} \left(\SumOver{y} \left(\mu(y)-\pi(y)\right)
     \left[\SumOver{z} \Kernel[k]{z}{y} \Kernel{x}{z}\right]\right)^{2} \\
  &= \SumOver{x} \left(\SumOver{z} \left[\SumOver{y} \left(\mu(y)-\pi(y)\right)
     \Kernel[k]{z}{y}\right] \Kernel{x}{z}\right)^{2},
\end{aligned}
\]
so that, letting $D_k(z) = \SumOver{y} (\mu(y)-\pi(y)) \Kernel[k]{z}{y}$ for $z \in \StateSpace$,
\[
  \SumOver{x} \Bias\left(\PiHatAt{k+1}{x}\right)^{2}
  = \SumOver{x} \left(\SumOver{z} D_k(z) \Kernel{x}{z}\right)^{2}.
\]
Since $\KernelMatrix$ is doubly stochastic, $\SumOver{z} \Kernel{x}{z} = 1$, hence we can apply Jensen's inequality, which yields
\[
\begin{aligned}
  \SumOver{x} \Bias\left(\PiHatAt{k+1}{x}\right)^{2}
  &\leq \SumOver{x} \SumOver{z} D_k(z)^{2} \Kernel{x}{z}
   = \SumOver{z} D_k(z)^{2} \\
  &= \SumOver{z} \left(\SumOver{y} \left(\mu(y)-\pi(y)\right) \Kernel[k]{z}{y}\right)^{2}
   = \SumOver{x} \Bias\left(\PiHatAt{k}{x}\right)^{2},
\end{aligned}
\]
which proves the first claim.
The reversible kernel
\[
  \KernelMatrix =
  \begin{pmatrix}
    \frac{1}{100} & \frac{1}{100} & \frac{98}{100} \\[0.5ex]
    \frac{1}{100} & \frac{98}{100} & \frac{1}{100} \\[0.5ex]
    \frac{98}{9800} & \frac{1}{9800} & \frac{9701}{9800}
  \end{pmatrix},
\]
whose invariant distribution is $\pi = (1/100, 1/100, 98/100)$, provides the counterexample.
Indeed, for $\mu = (5/1000, 28/1000, 967/1000)$,
\[
  \SumOver{x} \Bias^{2}\!\left(\PiHatAt{1}{x}\right)
  > \SumOver{x} \Bias^{2}\!\left(\PiHatAt{0}{x}\right).
\]

\subsubsection{Markovian data}

For $\IndexBetween{i}{1}{n}$, let $\mu_i = \mu \KernelMatrix[i-1]$, so that $X_i$ is distributed according to $\mu_i$.
Thus, for $x \in \StateSpace$,
\[
  \Expect\left[\PiHatAt{k}{x}\right]
  = \frac{1}{n} \SumFromTo{i}{1}{n} \Expect\left[\Contribution{k}{i}{x}\right]
  = \frac{1}{n} \SumFromTo{i}{1}{n} \SumOver{y} \mu_i(y) \Kernel[k]{x}{y}
\]
and
\[
  \Bias\left(\PiHatAt{k}{x}\right)
  = \frac{1}{n} \SumFromTo{i}{1}{n} \SumOver{y} \left(\mu_i(y)-\pi(y)\right) \Kernel[k]{x}{y}.
\]
Hence,
\[
\begin{aligned}
  \SumOver{x} \Bias\left(\PiHatAt{k}{x}\right)^{2}
  &= \frac{1}{n^{2}} \SumOver{x} \left(\SumFromTo{i}{1}{n}
     \SumOver{y} \left(\mu_i(y)-\pi(y)\right) \Kernel[k]{x}{y}\right)^{2} \\
  &= \frac{1}{n^{2}} \SumOver{x} \left(\SumOver{y} \left(\SumFromTo{i}{1}{n}
     \left(\mu_i(y)-\pi(y)\right)\right) \Kernel[k]{x}{y}\right)^{2}.
\end{aligned}
\]
We now prove that this quantity is non-increasing in $k$.
We have
\[
\begin{aligned}
  \SumOver{x} \Bias\left(\PiHatAt{k+1}{x}\right)^{2}
  &= \frac{1}{n^{2}} \SumOver{x} \left(\SumOver{y} \left(\SumFromTo{i}{1}{n}
     \left(\mu_i(y)-\pi(y)\right)\right) \Kernel[k+1]{x}{y}\right)^{2} \\
  &= \frac{1}{n^{2}} \SumOver{x} \left(\SumOver{y} \left(\SumFromTo{i}{1}{n}
     \left(\mu_i(y)-\pi(y)\right)\right)
     \left[\SumOver{z} \Kernel[k]{z}{y} \Kernel{x}{z}\right]\right)^{2} \\
  &= \frac{1}{n^{2}} \SumOver{x} \left(\SumOver{z} \left[\SumOver{y}
     \left(\SumFromTo{i}{1}{n} \left(\mu_i(y)-\pi(y)\right)\right)
     \Kernel[k]{z}{y}\right] \Kernel{x}{z}\right)^{2},
\end{aligned}
\]
so that, letting $D_k^{n}(z) = \SumOver{y} \SumFromTo{i}{1}{n} (\mu_i(y)-\pi(y)) \Kernel[k]{z}{y}$ for $z \in \StateSpace$,
\[
  \SumOver{x} \Bias\left(\PiHatAt{k+1}{x}\right)^{2}
  = \frac{1}{n^{2}} \SumOver{x} \left(\SumOver{z} D_k^{n}(z) \Kernel{x}{z}\right)^{2}.
\]
Since $\KernelMatrix$ is doubly stochastic, $\SumOver{z} \Kernel{x}{z} = 1$, hence we can apply Jensen's inequality,
\[
\begin{aligned}
  \SumOver{x} \Bias\left(\PiHatAt{k+1}{x}\right)^{2}
  &\leq \frac{1}{n^{2}} \SumOver{x} \SumOver{z} D_k^{n}(z)^{2} \Kernel{x}{z}
   = \frac{1}{n^{2}} \SumOver{z} D_k^{n}(z)^{2} \\
  &= \frac{1}{n^{2}} \SumOver{z} \left(\SumOver{y} \SumFromTo{i}{1}{n}
     \left(\mu_i(y)-\pi(y)\right) \Kernel[k]{z}{y}\right)^{2} \\
  &= \SumOver{x} \Bias\left(\PiHatAt{k}{x}\right)^{2},
\end{aligned}
\]
which proves the first claim.
The counterexample is the same as for independent and identically distributed data.

\subsection{Proof of Proposition~\ref{prop:variance-nonstationary-iid}}
\label{app:proof-variance-nonstationary-iid}

First, we remark that
\[
  \Variance\left(\EmpiricalAt{x}\right) = \frac{1}{n}\Big(\mu(x) - \mu(x)^{2}\Big).
\]
In addition,
\[
\begin{aligned}
  \Variance\left(\PiHatAt{k}{x}\right)
  &= \frac{1}{n}\left(\SumOver{y} \mu(y)\Kernel[k]{x}{y}^{2}
     - \left(\SumOver{y} \mu(y)\Kernel[k]{x}{y}\right)^{2}\right) \\
  &= \frac{1}{n}\left(\SumOver{y,z} \mu(y)\mu(z)\Kernel[k]{x}{y}^{2}
     - \SumOver{y,z} \mu(y)\mu(z)\Kernel[k]{x}{y}\Kernel[k]{x}{z}\right) \\
  &= \frac{1}{n}\left(\frac{1}{2} \SumOver{y,z} \mu(y)\mu(z)
     \Big(\Kernel[k]{x}{y} - \Kernel[k]{x}{z}\Big)^{2}\right).
\end{aligned}
\]
Thus we have
\[
  \SumOver{x} \Variance\left(\PiHatAt{k}{x}\right)
  = \frac{1}{n}\left(\frac{1}{2} \SumOver{y,z} \mu(y)\mu(z)
    \SumOver{x}\Big(\Kernel[k]{x}{y} - \Kernel[k]{x}{z}\Big)^{2}\right).
\]
For $y,z \in \StateSpace$, we observe that
\[
\begin{aligned}
  \SumOver{x}\Big(\Kernel[k]{x}{y} - \Kernel[k]{x}{z}\Big)^{2}
  &\leq \SumOver{x}\left|\Kernel[k]{x}{y} - \Kernel[k]{x}{z}\right| \\
  &\leq \SumOver{x}\Kernel[k]{x}{y} + \SumOver{x}\Kernel[k]{x}{z} = 2,
\end{aligned}
\]
and trivially that $\SumOver{x}(\Kernel[k]{x}{y} - \Kernel[k]{x}{z})^{2} = 0$, if $y = z$, which leads to
\[
\begin{aligned}
  \SumOver{x} \Variance\left(\PiHatAt{k}{x}\right)
  &\leq \frac{1}{n}\left(\frac{1}{2} \SumOver{y,z} \mu(y)\mu(z) \times 2 \times
     \Indicator{y \neq z}\right) \\
  &\leq \frac{1}{n}\left(\SumOver{y,z} \mu(y)\mu(z) - \SumOver{y} \mu(y)^{2}\right) \\
  &\leq \frac{1}{n}\left(1 - \SumOver{y} \mu(y)^{2}\right)
   \leq \SumOver{x} \Variance\left(\EmpiricalAt{x}\right),
\end{aligned}
\]
which states the expected result.

\subsection{Proof of Proposition~\ref{prop:bias-negligible}}
\label{app:proof-bias-negligible}

For $i \in \Naturals^{*}$, let $\mu_i = \mu \KernelMatrix[i-1]$.
For $x \in \StateSpace$ and $k \in \Naturals$,
\[
\begin{aligned}
  \Expect\left[\PiHatAt{k}{x}\right]
  &= \frac{1}{n} \SumFromTo{i}{1}{n} \Expect\left[\Contribution{k}{i}{x}\right] \\
  &= \frac{1}{n} \SumFromTo{i}{1}{n}
     \left(\SumOver{y} \mu_i(y) \Kernel[k]{x}{y}\right)
   = \frac{1}{n} \SumFromTo{i}{1}{n} \mu_{i+k}(x).
\end{aligned}
\]
Hence,
\begin{equation}\label{eq:formula:bias:markov}
  \SumOver{x} \Bias\left(\PiHatAt{k}{x}\right)^{2}
  = \frac{1}{n^{2}} \SumOver{x}\left(\SumFromTo{i}{1}{n}
     \Big(\mu_{i+k}(x) - \pi(x)\Big)\right)^{2},
\end{equation}
and therefore
\[
\begin{aligned}
  \SumOver{x} \Bias\left(\PiHatAt{k}{x}\right)^{2}
  &\leq \frac{1}{n^{2}} \SumOver{x}\left(\SumFromTo{i}{1}{n}
     \Big|\mu_{i+k}(x) - \pi(x)\Big|\right)^{2} \\
  &\leq \frac{1}{n^{2}}\left(\SumOver{x} \SumFromTo{i}{1}{n}
     \Big|\mu_{i+k}(x) - \pi(x)\Big|\right)^{2} \\
  &= \frac{1}{n^{2}}\left(\SumFromTo{i}{1}{n}
     \NormTV{\mu_{i+k} - \pi}\right)^{2}.
\end{aligned}
\]
Since $\KernelMatrix$ is uniformly ergodic, there exist $C > 0$ and $\rho \in [0,1[$ such that, for any probability distribution $\mu$ on $\StateSpace$ and any $i \in \Naturals$, $\NormTV{\mu \KernelMatrix[i] - \pi} \leq C \rho^{i}$.
Hence,
\[
  \SumOver{x} \Bias\left(\PiHatAt{k}{x}\right)^{2}
  \leq \frac{1}{n^{2}}\left(\SumFromTo{i}{1}{n} C \rho^{i+k-1}\right)^{2}
  = \frac{C^{2}}{n^{2}} \rho^{2k} \frac{(1-\rho^{n})^{2}}{(1-\rho)^{2}}
  = \BigO\!\left(\frac{1}{n^{2}}\right),
\]
and the result is uniform in $\mu$, as noticed in Remark~\ref{rem:uniformity-initial-distribution}.

\subsection{Proof of Proposition~\ref{prop:variance-nonstationary-markov}}
\label{app:proof-variance-nonstationary-markov}

For $i \in \Naturals^{*}$, let $\mu_i = \mu \KernelMatrix[i-1]$.
We start by noting that
\[
  \Variance\left(\PiHatAt{k}{x}\right)
  = \frac{1}{n^{2}} \SumFromTo{i}{1}{n} \Variance\left(\Contribution{k}{i}{x}\right)
  + \frac{2}{n^{2}} \sum_{1 \leq i < j \leq n}
    \Covariance\left(\Contribution{k}{i}{x}, \Contribution{k}{j}{x}\right).
\]
Hence, we compute all the relevant quantities.
For $x \in \StateSpace$ and $i,j \in \Naturals$ with $1 \leq i < j$,
\[
  \Expect\left[\Contribution{k}{i}{x}\right]
  = \SumOver{y} \mu_i(y) \Kernel[k]{x}{y}
  = \mu_{i+k}(x),
\]
\[
  \Expect\left[\Contribution{k}{i}{x}^{2}\right]
  = \SumOver{y} \mu_i(y) \Kernel[k]{x}{y}^{2},
\]
and
\[
\begin{aligned}
  \Expect\left[\Contribution{k}{i}{x}\Contribution{k}{j}{x}\right]
  &= \SumOver{y,z} \Prob(X_i = y, X_j = z) \Kernel[k]{x}{y}\Kernel[k]{x}{z} \\
  &= \SumOver{y,z} \mu_i(y) \Kernel[j-i]{z}{y}
     \Kernel[k]{x}{y}\Kernel[k]{x}{z} \\
  &= \SumOver{y} \mu_i(y)
     \Kernel[j-i+k]{x}{y} \Kernel[k]{x}{y}.
\end{aligned}
\]
Now, we compare these values to the ones obtained in the stationary framework.
Since $\KernelMatrix$ is uniformly ergodic, we first introduce $C > 0$ and $\rho \in [0,1[$ such that, for any probability distribution $\mu$ on $\StateSpace$ and any $i \in \Naturals$, $\NormTV{\mu \KernelMatrix[i] - \pi} \leq C \rho^{i}$.
We compute
\[
\begin{aligned}
  \SumOver{x}\left|\frac{1}{n} \SumFromTo{i}{1}{n}
    \mu_{i+k}(x)^{2} - \pi(x)^{2}\right|
  &\leq \SumOver{x} \frac{1}{n} \SumFromTo{i}{1}{n}
     \left|\mu_{i+k}(x)^{2} - \pi(x)^{2}\right| \\
  &\leq 2 \SumOver{x} \frac{1}{n} \SumFromTo{i}{1}{n}
     \left|\mu_{i+k}(x) - \pi(x)\right| \\
  &= \frac{2}{n} \SumFromTo{i}{1}{n} \NormTV{\mu_{i+k} - \pi}
   \leq \frac{2 C \rho^{k}}{1-\rho}\, \frac{1}{n},
\end{aligned}
\]
and
\[
\begin{aligned}
  &\SumOver{x}\left|\frac{1}{n} \SumFromTo{i}{1}{n} \SumOver{y}
    \mu_i(y)\Kernel[k]{x}{y}^{2}
    - \SumOver{y} \pi(y)\Kernel[k]{x}{y}^{2}\right| \\
  &\qquad \leq \SumOver{x} \frac{1}{n} \SumFromTo{i}{1}{n} \SumOver{y}
     \left|\mu_i(y) - \pi(y)\right| \Kernel[k]{x}{y}^{2} \\
  &\qquad \leq \SumOver{x} \frac{1}{n} \SumFromTo{i}{1}{n} \SumOver{y}
     \left|\mu_i(y) - \pi(y)\right| \Kernel[k]{x}{y} \\
  &\qquad = \frac{1}{n} \SumFromTo{i}{1}{n} \SumOver{y}
     \left|\mu_i(y) - \pi(y)\right|
   \leq \frac{C}{n(1-\rho)}.
\end{aligned}
\]
This guarantees that
\[
  \frac{1}{n^{2}} \SumOver{x} \SumFromTo{i}{1}{n}
  \Variance\left(\Contribution{k}{i}{x}\right)
  = \frac{1}{n}\left(\SumOver{x}\left\{\SumOver{y} \pi(y)\Kernel[k]{x}{y}^{2}
    - \pi(x)^{2}\right\}\right) + \LittleO\!\left(\frac{1}{n}\right).
\]
For the covariance contribution, rather than directly comparing it with its asymptotic value, we compare it with itself under the assumption $\mu = \pi$.
In other words, for $1 \leq i < j \leq n$ and $x \in \StateSpace$, we want to bound
\[
  \left|\Covariance\left(\Contribution{k}{i}{x}, \Contribution{k}{j}{x}\right)
  - \SumOver{y} \pi(y)\Kernel[j-i+k]{x}{y}\Kernel[k]{x}{y} + \pi(x)^{2}\right| = \left| \Delta_1(i,j,x) - \Delta_2(i,j,x)\right|,
\]
where
\[
\begin{aligned}
  \Delta_1(i,j,x) &= \SumOver{y}\left\{\mu_i(y) - \pi(y)\right\}
    \Kernel[k]{x}{y}\left\{\Kernel[j-i+k]{x}{y} - \pi(x)\right\} ,\\
  \Delta_2(i,j,x)&=\SumOver{y} \mu_i(y) \Kernel[k]{x}{y}
    \left\{\mu_{j+k}(x) - \pi(x)\right\}.
\end{aligned}
\]
For the first error term, we have
\[
  \left|\Delta_1(i,j,x)\right|
  \leq \SumOver{y}\left|\mu_i(y) - \pi(y)\right|
    \times \left|\Kernel[j-i+k]{x}{y} - \pi(x)\right|,
\]
then, summing over $x \in \StateSpace$,
\[
\begin{aligned}
  \SumOver{x}\left|\Delta_1(i,j,x)\right|
  &\leq \SumOver{y}\left|\mu_i(y) - \pi(y)\right|
     \times \SumOver{x}\left|\Kernel[j-i+k]{x}{y} - \pi(x)\right| \\
  &= \SumOver{y}\left|\mu_i(y) - \pi(y)\right|
     \times \NormTV{\Dirac{y}\KernelMatrix[j-i+k] - \pi} \\
  &\leq C \rho^{j-i+k} \NormTV{\mu_i - \pi}
   \leq C^{2} \rho^{j+k-1}.
\end{aligned}
\]
Finally, summing over $1 \leq i < j \leq n$,
\[
  \sum_{1 \leq i < j \leq n} \SumOver{x}\left|\Delta_1(i,j,x)\right|
  \leq C^{2} \rho^{k} \sum_{1 \leq i < j \leq n} \rho^{j-1}
  = C^{2} \rho^{k} \SumFromTo{j}{1}{n} (j-1)\rho^{j-1}
  \leq C^{2} \frac{\rho^{k+1}}{(1-\rho)^{2}}.
\]
For the second error term, we have
\[
  \left|\Delta_2(i,j,x)\right| \leq \SumOver{y} \mu_i(y) \Kernel[k]{x}{y}
     \left|\mu_{j+k}(x) - \pi(x)\right| \leq \left|\mu_{j+k}(x) - \pi(x)\right|,
\]
then, summing over $x \in \StateSpace$,
\[
  \SumOver{x}\left|\Delta_2(i,j,x)\right|
  \leq \NormTV{\mu_{j+k} - \pi} \leq C \rho^{j+k-1},
\]
and, summing over $1 \leq i < j \leq n$,
\[
  \sum_{1 \leq i < j \leq n} \SumOver{x}\left|\Delta_2(i,j,x)\right|
  \leq C \frac{\rho^{k+1}}{(1-\rho)^{2}}.
\]
From these bounds on the error, we deduce that
\[
\begin{aligned}
  &\frac{2}{n^{2}} \SumOver{x} \sum_{1 \leq i < j \leq n}
   \Covariance\left(\Contribution{k}{i}{x}, \Contribution{k}{j}{x}\right) \\
  &\qquad = \frac{2}{n^{2}} \SumOver{x} \sum_{1 \leq i < j \leq n}
   \left(\SumOver{y} \pi(y)\Kernel[j-i+k]{x}{y}\Kernel[k]{x}{y} - \pi(x)^{2}\right)
   + \LittleO\!\left(\frac{1}{n}\right),
\end{aligned}
\]
and, from Lemma~\ref{lem:summability}, that
\[
\begin{aligned}
  &\frac{2}{n^{2}} \SumOver{x} \sum_{1 \leq i < j \leq n}
   \left(\SumOver{y} \pi(y)\Kernel[j-i+k]{x}{y}\Kernel[k]{x}{y} - \pi(x)^{2}\right) \\
  &\qquad = \frac{2}{n} \SumOver{x} \SumFromTo{i}{1}{+\infty}
   \left(\SumOver{y} \pi(y)\Kernel[i+k]{x}{y}\Kernel[k]{x}{y} - \pi(x)^{2}\right)
  + \LittleO\!\left(\frac{1}{n}\right).
\end{aligned}
\]
Finally, recombining everything together yields the expected result, which is uniform in the initial distribution as noticed in Remark~\ref{rem:uniformity-initial-distribution}.

\subsection{Proof of Proposition~\ref{prop:invertibility-sigma}}
\label{app:proof-inversibility}

To prove this result, we first establish the following lemma.

\begin{lemma}
\label{lem:span-empirical}
Let $F$ be the vector subspace of~$\Reals^{\StateSpace}$ composed of the finite support vectors of~$\Reals^{\StateSpace}$, and for $n \in \Naturals^{*}$, let
\[
  E_n = \left\{ \EmpiricalAt[n]{A} \,:\, A \in \StateSpace^{n},\
  \Prob\left((X_1, \dots, X_n) = A\right) > 0 \right\},
\]
where, for $A = (a_1, \dots, a_n) \in \StateSpace^{n}$, $\EmpiricalAt[n]{A} = n^{-1} \SumFromTo{i}{1}{n} \Dirac{a_i}$ denotes the empirical measure of the sample $A$.
Then $\Span(E_n) = F$.
\end{lemma}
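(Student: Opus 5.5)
The plan is to prove the two inclusions $\Span(E_n) \subset F$ and $F \subset \Span(E_n)$.

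The first inclusion is immediate. Every $\EmpiricalAt[n]{A}$ is a convex combination of at most $n$ Dirac masses, so it has finite support and lies in $F$. Since $F$ is a vector subspace, it contains $\Span(E_n)$.

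For the converse it suffices to show that $\Dirac{x} \in \Span(E_n)$ for every $x \in \StateSpace$, because the Dirac masses span $F$. The setting is either independent data with law $\pi$ or a stationary chain with kernel $\KernelMatrix$. In both cases $\pi(x) > 0$ for all $x$, by irreducibility and positive recurrence, so positive-probability samples are plentiful. The strategy is to build, for each $x$, two admissible samples whose empirical measures differ exactly by a multiple of $\Dirac{x}$, and then take their difference.

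\textbf{Independent case.} Every $A \in \StateSpace^n$ has probability $\prod_i \pi(a_i) > 0$, so every constant sample $(x,\dots,x)$ is admissible and gives $\EmpiricalAt[n]{(x,\dots,x)} = \Dirac{x}$ directly.

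\textbf{Markovian case.} Here a sample $A$ is admissible if and only if $\Kernel{a_{i+1}}{a_i} > 0$ for every $i$. The key tool is aperiodicity together with irreducibility. Aperiodicity gives a finite set of return-loop lengths at $x$ whose gcd is $1$. By the standard Frobenius/Chinese-remainder argument there is an $N_0$ with $\Kernel[N]{x}{x} > 0$ for all $N \ge N_0$.

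I would first treat $n$ larger than roughly $N_0+1$. Take the constant path $(x,x,\dots,x)$ if $\Kernel{x}{x} > 0$. Otherwise, compare two admissible paths that agree everywhere except that one spends an extra visit at $x$: a closed path $x \to \cdots \to x$ of some length, versus a modification obtained by splicing in a loop at $x$ and shortening elsewhere. Their empirical measures differ by $\frac{1}{n}(\Dirac{x} - \Dirac{w})$ for some $w$. Running over all $x$, these differences together with one fixed element of $E_n$ generate the relations $\Dirac{x}-\Dirac{w}$ along the edges of the transition graph. Irreducibility makes that graph connected, so we obtain all $\Dirac{x}-\Dirac{y}$. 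Adding any element of $E_n$, which has total mass $1$, then yields each $\Dirac{x}$. In fact this cleaner formulation is what I would aim for in general: show that $\Span(E_n)$ contains one probability vector plus all differences $\Dirac{x}-\Dirac{y}$, and conclude from that.

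\textbf{Main obstacle.} The delicate step is small $n$, for example $n=1$ or $n=2$. For $n=1$ it is trivial, since $E_1 = \{\Dirac{x} : \pi(x) > 0\}$. For general $n$, producing the difference $\Dirac{x}-\Dirac{y}$ requires two admissible length-$n$ paths differing in a single coordinate. The statement is presumably meant with $\Span$ allowing $n$ fixed, so I would try paths $(\dots, u, x, v, \dots)$ and $(\dots, u, y, v, \dots)$ sharing neighbours, which works whenever $x$ and $y$ have a common predecessor and a common successor. If that fails, I would fall back on a telescoping argument that shifts one coordinate at a time along admissible paths. Should the lemma only hold for $n$ large, I would note that this is all the subsequent proof of Proposition~\ref{prop:invertibility-sigma} needs.
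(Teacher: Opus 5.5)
Your treatment of $\Span(E_n)\subset F$ and of the independent case is correct. So is your closing step: one element of $E_n$ together with all differences $\Dirac{x}-\Dirac{y}$ produces every $\Dirac{x}$, and the combination is finite because that element has finite support.

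The gap is the Markovian case, which is the real content of the lemma. You never actually write the differences $\Dirac{x}-\Dirac{y}$ as combinations of empirical measures of admissible samples of the fixed length $n$.
\begin{itemize}
\item Splicing a loop at $x$ into a path adds the occupation measure of the whole loop, not one extra visit to $x$. ``Shortening elsewhere'' removes an unspecified set of states. So the two empirical measures do not in general differ by $\frac{1}{n}(\Dirac{x}-\Dirac{w})$, and with $w$ uncontrolled you cannot claim the differences along every edge of the transition graph.
\item The single-coordinate fallback needs $x$ and $y$ to share both a predecessor and a successor, which typically fails.
\item The ``telescoping that shifts one coordinate at a time'' is never spelled out.
\item Retreating to large $n$ does not help. $\Sigma(f)$ in Proposition~\ref{prop:invertibility-sigma} is the covariance matrix at a fixed sample size $n$, the proposition is asserted for every $n$, and its proof applies the lemma at that same $n$.
\end{itemize}

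The missing idea is to shift the observation window along a single path. Take any $(x_1,\dots,x_{n+1})$ with $\Kernel{x_{i+1}}{x_i}>0$ for all $i$. Since $\pi>0$, both $C=(x_1,\dots,x_n)$ and $C'=(x_2,\dots,x_{n+1})$ are admissible, and $n\EmpiricalAt[n]{C}-n\EmpiricalAt[n]{C'}=\Dirac{x_1}-\Dirac{x_{n+1}}$. Hence $\Dirac{u}-\Dirac{v}\in\Span(E_n)$ whenever $\Kernel[n]{v}{u}>0$.

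Your aperiodicity tool then finishes the job with no restriction on $n$. For each pair $x,y$, irreducibility and aperiodicity give $\Kernel[N]{y}{x}>0$ for all large $N$, in particular for some multiple of $n$, so $\KernelMatrix[n]$ is irreducible. Joining $x=w_1,\dots,w_q=y$ with $\Kernel[n]{w_{i+1}}{w_i}>0$ and telescoping gives $\Dirac{x}-\Dirac{y}\in\Span(E_n)$ for every $n\geq 1$.

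This is the paper's argument. The paper invokes irreducibility of $\KernelMatrix[n+1]$, but windows of $n+1$ states correspond to $n$-step transitions; since all powers are irreducible, this is harmless. Aperiodicity is genuinely needed: for the deterministic flip between two states with $n=2$, every admissible sample has empirical measure $(1/2,1/2)$, so $\Span(E_2)$ is a line.
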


\begin{proof}
For $x \in \StateSpace$, let $e_x = (\Dirac{x}(y))_{y \in \StateSpace}$.
Since every empirical measure has finite support, $E_n \subset F$, hence $\Span(E_n) \subset F$, and it suffices to show that $e_x \in \Span(E_n)$ for every $x \in \StateSpace$.
\begin{itemize}
\item \textit{Independent and identically distributed observations.}\\
  Let $x \in \StateSpace$.
  We have
\[
  \Prob\left((X_1, \dots, X_n) = (x, \dots, x)\right) = \pi(x)^{n} > 0,
\]
and $\EmpiricalAt[n]{(x, \dots, x)} = e_x$.
Therefore, $\Span(E_n) = F$.
\item \textit{Stationary Markov chain.}\\
  Since $\KernelMatrix$ is irreducible and positive recurrent, $\pi(x) > 0$ for every $x \in \StateSpace$, so that, the chain being stationary, any sequence $(x_1, \dots, x_n) \in \StateSpace^{n}$ such that $\Kernel{x_{i+1}}{x_i} > 0$ for all $\IndexBetween{i}{1}{n-1}$ satisfies $\Prob((X_1, \dots, X_n) = (x_1, \dots, x_n)) > 0$.
  Let $(x_1, \dots, x_{n+1}) \in \StateSpace^{n+1}$ be a chain of length $n+1$ such that for all $\IndexBetween{i}{1}{n}$, $\Kernel{x_{i+1}}{x_i} > 0$.
  We define $C = (x_1, \dots, x_n)$ and $C' = (x_2, \dots, x_{n+1})$, which both belong to the set of samples defining $E_n$.
  We have
\[
  \EmpiricalAt[n]{C} = \frac{1}{n} \SumFromTo{i}{1}{n} e_{x_i}
  \quad \text{and} \quad
  \EmpiricalAt[n]{C'} = \frac{1}{n} \SumFromTo{i}{2}{n+1} e_{x_i},
\]
hence
\[
  n \EmpiricalAt[n]{C} - n \EmpiricalAt[n]{C'} = e_{x_1} - e_{x_{n+1}}.
\]
Since $\KernelMatrix$ is irreducible and aperiodic, $\KernelMatrix[n+1]$ is irreducible.
Hence, for all $x, y \in \StateSpace$ with $x \neq y$, there exists a path $x = w_1, \dots, w_q = y$ of length $q \geq 2$ such that for all $\IndexBetween{i}{1}{q-1}$, $\Kernel[n+1]{w_{i+1}}{w_i} > 0$.
Thus, for each $\IndexBetween{i}{1}{q-1}$, there exists a chain $(z_1^i, \dots, z_{n+1}^i) \in \StateSpace^{n+1}$ of length $n+1$ such that $z_1^i = w_i$, $z_{n+1}^i = w_{i+1}$, and for all $\IndexBetween{j}{1}{n}$, $\Kernel{z_{j+1}^i}{z_j^i} > 0$.
Setting $C_i = (z_1^i, \dots, z_n^i)$ and $C'_i = (z_2^i, \dots, z_{n+1}^i)$, the previous identity yields
\[
  n \EmpiricalAt[n]{C_i} - n \EmpiricalAt[n]{C'_i}
  = e_{z_1^i} - e_{z_{n+1}^i} = e_{w_i} - e_{w_{i+1}}.
\]
Summing these differences gives
\[
  \SumFromTo{i}{1}{q-1}\left[ n \EmpiricalAt[n]{C_i} - n \EmpiricalAt[n]{C'_i} \right]
  = \SumFromTo{i}{1}{q-1} \left(e_{w_i} - e_{w_{i+1}}\right)
  = e_{w_1} - e_{w_q}
  = \Dirac{x} - \Dirac{y}.
\]
Therefore, for all $x, y \in \StateSpace$ with $x \neq y$, $e_x - e_y \in \Span(E_n)$.

On the other hand, for any $x \in \StateSpace$, consider a path $(x, x_2, \dots, x_n) \in \StateSpace^{n}$ such that $\Prob((X_1, \dots, X_n) = (x, x_2, \dots, x_n)) > 0$.
We write
\[
\begin{aligned}
  n \EmpiricalAt[n]{(x, x_2, \dots, x_n)}
  &= e_x + \SumFromTo{i}{2}{n} e_{x_i}
   = e_x + \SumFromTo{i}{2}{n} \left[ e_x + (e_{x_i} - e_x) \right] \\
  &= n e_x + \SumFromTo{i}{2}{n} (e_{x_i} - e_x).
\end{aligned}
\]
Since $n \EmpiricalAt[n]{(x, x_2, \dots, x_n)} \in \Span(E_n)$ and $\SumFromTo{i}{2}{n} (e_{x_i} - e_x) \in \Span(E_n)$, it follows that $e_x \in \Span(E_n)$ for all $x \in \StateSpace$.
The vector space spanned by $(e_x)_{x \in \StateSpace}$ is precisely $F$, consequently, $F = \Span(E_n)$.
\end{itemize}
\end{proof}

Coming back to the proof of Proposition~\ref{prop:invertibility-sigma}, $\Sigma(f)$ is degenerate if and only if there exists $v \in \Reals^{I}$, $v \neq 0$, such that $v\Transpose \Sigma(f) v = 0$.
However,
\[
  v\Transpose \Sigma(f) v = \Variance\left(v\Transpose \PiHatOf{I}{f}\right),
\]
where $\PiHatOf{I}{f} = (\PiHatOf{k}{f})_{k \in I}$.
Hence, $\Sigma(f)$ is degenerate if and only if $v\Transpose \PiHatOf{I}{f}$ is almost surely constant for some $v \in \Reals^{I}$, $v \neq 0$.
Assume that this is the case, and let $C \in \Reals$ denote this constant, so that $C = v\Transpose \PiHatOf{I}{f}$ almost surely.
Then, on one hand
\[
  v\Transpose \PiHatOf{I}{f}
  = \sum_{k \in I} v_k \PiHatOf{k}{f}
  = \sum_{k \in I} v_k \EmpiricalOf{\KernelMatrix[k] f}
  = \EmpiricalOf{\sum_{k \in I} v_k \KernelMatrix[k] f},
\]
and on the other hand,
\[
  C = \EmpiricalOf{C \Ones}.
\]
Hence, almost surely,
\[
  \EmpiricalOf{\sum_{k \in I} v_k \KernelMatrix[k] f - C \Ones} = 0,
\]
that is, $\EmpiricalOf{g} = \SumOver{x} \EmpiricalAt{x} g(x) = 0$ almost surely, where $g = \sum_{k \in I} v_k \KernelMatrix[k] f - C \Ones$.
In other words, $\langle \mu, g \rangle = \SumOver{x} \mu(x) g(x) = 0$ for every $\mu \in E_n$, hence, by linearity, for every $\mu \in \Span(E_n)$.
Using Lemma~\ref{lem:span-empirical}, we conclude that $g \in F^{\perp}$, the orthogonal of $F$ for $\langle \cdot, \cdot \rangle$.
However, $F^{\perp} = \{\mathbf{0}_{\Reals^{\StateSpace}}\}$, since $\langle e_x, g \rangle = g(x)$ for every $x \in \StateSpace$, thus
\[
  \sum_{k \in I} v_k \KernelMatrix[k] f - C \Ones = 0.
\]
Since $v \neq 0$, the family $\{\Ones\} \cup \{\KernelMatrix[k] f\}_{k \in I}$ is linearly dependent in $\Reals^{\StateSpace}$.

Conversely, assume that this family is linearly dependent, that is, $\sum_{k \in I} v_k \KernelMatrix[k] f - C \Ones = 0$ for some $(v, C) \in \Reals^{I} \times \Reals$, $(v, C) \neq 0$.
Then $v \neq 0$, since otherwise $C \Ones = 0$, hence $C = 0$.
Moreover, by the computation above,
\[
  v\Transpose \PiHatOf{I}{f}
  = \EmpiricalOf{\sum_{k \in I} v_k \KernelMatrix[k] f}
  = \EmpiricalOf{C \Ones} = C,
\]
so that $v\Transpose \Sigma(f) v = \Variance(v\Transpose \PiHatOf{I}{f}) = 0$, and $\Sigma(f)$ is degenerate, which concludes the proof.

\subsection{Proof of Proposition~\ref{prop:covariance-iid}}
\label{app:proof-covariance-iid}

Independently of the data framework, we have
\begin{align}
  \Covariance\left(\PiHatOf{k}{f}, \PiHatOf{l}{f}\right)
  &= \Expect\left[\PiHatOf{k}{f}\PiHatOf{l}{f}\right]
   - \Expect\left[\PiHatOf{k}{f}\right]\Expect\left[\PiHatOf{l}{f}\right] \nonumber \\
  &= \frac{1}{n^{2}} \SumFromTo{i}{1}{n} \SumFromTo{j}{1}{n}
     \Expect\left[\Contribution{k}{i}{f}\Contribution{l}{j}{f}\right]
   - \pi(f)^2.
  \label{eq:covariance-double-sum}
\end{align}
Consequently, we need to compute $\Expect[\Contribution{k}{i}{f}\Contribution{l}{j}{f}]$ for $\IndexBetween{i,j}{1}{n}$.
In the independent setting,
\[
  \Expect\left[\Contribution{k}{i}{f}\Contribution{l}{j}{f}\right]
  = \begin{cases}
      \SumOver{y} \pi(y)\KernelAverage{k}{y}{f}\KernelAverage{l}{y}{f}
      & \text{if } i = j, \\[1ex]
      \pi(f)^2 & \text{otherwise.}
    \end{cases}
\]
Hence,
\[
\begin{aligned}
  \Covariance\left(\PiHatOf{k}{f}, \PiHatOf{l}{f}\right)
  &= \frac{1}{n^{2}}\left\{n \SumOver{y} \pi(y)\KernelAverage{k}{y}{f}\KernelAverage{l}{y}{f}
     + n(n-1)\pi(f)^2\right\} - \pi(f)^2 \\
  &= \frac{1}{n}\left\{\SumOver{y} \pi(y)\KernelAverage{k}{y}{f}\KernelAverage{l}{y}{f}
     - \pi(f)^2\right\}.
\end{aligned}
\]

\subsection{Proof of Proposition~\ref{prop:covariance-markov}}
\label{app:proof-covariance-markov}

In the Markov setting, reusing~\eqref{eq:covariance-double-sum} and grouping the pairs $(i,j)$ according to the lag $|j-i|$, together with Lemma~\ref{lem:covariance-contributions}, we have
\[
\begin{aligned}
  &n^2\Covariance\left(\PiHatOf{k}{f}, \PiHatOf{l}{f}\right)
  = \SumFromTo{i}{1}{n} \SumFromTo{j}{1}{n}
     \Covariance\left(\Contribution{k}{i}{f}, \Contribution{l}{j}{f}\right) \\
  &\enspace = \SumFromTo{i}{1}{n}
     \Covariance\left(\Contribution{k}{i}{f}, \Contribution{l}{i}{f}\right)
     + \SumFromTo{i}{1}{n-1} \SumFromTo{j}{i+1}{n}
     \Covariance\left(\Contribution{k}{i}{f}, \Contribution{l}{j}{f}\right)
     + \SumFromTo{j}{1}{n-1} \SumFromTo{i}{j+1}{n}
     \Covariance\left(\Contribution{k}{i}{f}, \Contribution{l}{j}{f}\right) \\
  &\enspace = n \left\{\SumOver{y} \pi(y)\KernelAverage{k}{y}{f}\KernelAverage{l}{y}{f}
     - \pi(f)^{2}\right\} \\
  &\qquad + \SumFromTo{i}{1}{n-1}(n-i) \left\{\SumOver{y,z} \pi(y)\Kernel[i]{z}{y}
     \KernelAverage{k}{y}{f}\KernelAverage{l}{z}{f} - \pi(f)^{2}\right\} \\
  &\qquad + \SumFromTo{i}{1}{n-1}(n-i) \left\{\SumOver{y,z} \pi(y)\Kernel[i]{z}{y}
     \KernelAverage{l}{y}{f}\KernelAverage{k}{z}{f} - \pi(f)^{2}\right\}.
\end{aligned}
\]
This yields,
\[
\begin{aligned}
  &\Covariance\left(\PiHatOf{k}{f}, \PiHatOf{l}{f}\right)
  = \frac{1}{n} \left\{\SumOver{y} \pi(y)\KernelAverage{k}{y}{f}\KernelAverage{l}{y}{f}
     - \pi(f)^{2}\right\} \\
  &\qquad + \frac{1}{n^{2}} \SumFromTo{i}{1}{n-1}(n-i)
     \left(\SumOver{y,z} \pi(y)\Kernel[i]{z}{y}
     \left\{\KernelAverage{k}{y}{f}\KernelAverage{l}{z}{f}
     + \KernelAverage{l}{y}{f}\KernelAverage{k}{z}{f}\right\}
     - 2\pi(f)^{2}\right),
\end{aligned}
\]
stating the expected formula.

\section{Proofs of Section~\ref{sec:theory-parametric-kernel}}
\label{app:parametric-kernel}

\subsection{Proof of Proposition~\ref{prop:regularity-finite}}
\label{app:proof-regularity-finite}

\begin{itemize}
\item \textit{Condition~\ref{ass:regularity1}.}\\
Since $\theta \mapsto \NormOperator{\KernelMatrixParam[k]{\theta}}_{2}$ is continuous and $\ParamSpace$ is compact, it is uniformly bounded and, since
\[
  \NormOperator{\cdot}_{2,\text{res}} \leq \NormOperator{\cdot}_{2},
\]
the mapping $\theta \mapsto \NormOperator{\KernelMatrixParam[k]{\theta}}_{2,\text{res}}$ is also uniformly bounded.
Moreover, as $\theta \mapsto \KernelMatrixParam[k]{\theta}$ is of class $\mathcal{C}^{2}$, it is of class $\mathcal{C}^{1}$.
Hence, since $\ParamSpace$ is convex, the mean value inequality yields
\[
  \NormOperator{\KernelMatrixParam[k]{\theta}-\KernelMatrixTrue[k]}_{2,\text{res}}
  \leq \NormOperator{\KernelMatrixParam[k]{\theta}-\KernelMatrixTrue[k]}_{2}
  \leq [\KernelMatrix[k]]_{\text{Lip}} \NormTwo{\theta-\ParamTrue},
\]
where $[\KernelMatrix[k]]_{\text{Lip}} = \sup_{t \in \ParamSpace} \NormTwo{\Gradient \KernelMatrixParam[k]{t}} < +\infty$, since $\theta \mapsto \Gradient \KernelMatrixParam[k]{\theta}$ is continuous and $\ParamSpace$ is compact.

\item \textit{Condition~\ref{ass:regularity2}.}\\
First, for any $x \in \StateSpace$,
\[
  \SumOver{y} \pi(y) \NormTwo{\Gradient \KernelParam[k]{\ParamTrue}{x}{y}} < +\infty
\]
as a finite sum of finite terms.
Then, we write
\[
  \NormTwo{\pi\left(\KernelMatrixParam[k]{\theta}-\KernelMatrixTrue[k]\right)}^{2}
  = \SumOver{x} \left(\SumOver{y} \pi(y)
    \Big[\KernelParam[k]{\theta}{x}{y}-\KernelParam[k]{\ParamTrue}{x}{y}\Big]\right)^{2}.
\]
Thanks to a second-order Taylor expansion, we have
\[
\begin{aligned}
  &\NormTwo{\pi\left(\KernelMatrixParam[k]{\theta}-\KernelMatrixTrue[k]\right)}^{2} \\
  &\qquad = \SumOver{x} \left(\SumOver{y} \pi(y)\Big[\Gradient \KernelParam[k]{\ParamTrue}{x}{y}\Transpose
     (\theta-\ParamTrue) + \BigO\!\left(\NormTwo{\theta-\ParamTrue}^{2}\right)\Big]\right)^{2} \\
  &\qquad = \SumOver{x} \left(\Gradient\left(\pi \KernelMatrixTrue[k]\right)\!\left(x\right)\Transpose
     (\theta-\ParamTrue) + \BigO\!\left(\NormTwo{\theta-\ParamTrue}^{2}\right)\right)^{2} \\
  &\qquad = \SumOver{x} \Gradient\left(\pi \KernelMatrixTrue[k]\right)\!\left(x\right)\Transpose
     (\theta-\ParamTrue)(\theta-\ParamTrue)\Transpose
     \Gradient\left(\pi \KernelMatrixTrue[k]\right)\!\left(x\right)
     + \BigO\!\left(\NormTwo{\theta-\ParamTrue}^{3}\right),
\end{aligned}
\]
which is equal to the expected formula.
\end{itemize}

\subsection{Proof of Theorem~\ref{thm:mse-expansion}}
\label{app:proof-mse-expansion}

For any $k \in \Naturals$, we decompose the estimation error by introducing the true transition kernel $\KernelMatrixTrue[k]$ and its plug-in estimator $\KernelMatrixPlugIn[k]$,
\[
\begin{aligned}
  \PiHatParam[n-m]{\ParamEstimator}{k} - \pi
  &= \Empirical[n-m] \KernelMatrixPlugIn[k] - \pi \\
  &= \Empirical[n-m] \KernelMatrixPlugIn[k] - \pi \KernelMatrixPlugIn[k]
   + \pi \KernelMatrixPlugIn[k] - \pi \KernelMatrixTrue[k].
\end{aligned}
\]
Taking the squared norm on both sides yields the expansion
\[
\begin{aligned}
  \NormTwo{\PiHatParam[n-m]{\ParamEstimator}{k} - \pi}^{2}
  &= \NormTwo{\left(\Empirical[n-m] - \pi\right)\KernelMatrixPlugIn[k]}^{2}
     + \NormTwo{\pi\left(\KernelMatrixPlugIn[k] - \KernelMatrixTrue[k]\right)}^{2} \\
  &\quad + 2 \left\langle \left(\Empirical[n-m] - \pi\right)\KernelMatrixPlugIn[k],
     \pi\left(\KernelMatrixPlugIn[k] - \KernelMatrixTrue[k]\right)\right\rangle.
\end{aligned}
\]
To evaluate the MSE, we first control the expectation of the cross-product term.
We condition on the first part of the trajectory $X_1,\dots,X_m$ and note that $\ParamEstimator$ is measurable with respect to the associated $\sigma$-algebra.
We obtain
\[
\begin{aligned}
  &\Expect\left[\left\langle \left(\Empirical[n-m] - \pi\right)\KernelMatrixPlugIn[k],
    \pi\left(\KernelMatrixPlugIn[k] - \KernelMatrixTrue[k]\right)\right\rangle\right] \\
  &\qquad = \Expect\left[\Expect\left[\left\langle
    \left(\Empirical[n-m] - \pi\right)\KernelMatrixPlugIn[k],
    \pi\left(\KernelMatrixPlugIn[k] - \KernelMatrixTrue[k]\right)\right\rangle
    \,\Big|\, X_1,\dots,X_m\right]\right] \\
  &\qquad = \Expect\left[\left\langle
    \Expect\left[\Empirical[n-m] - \pi \,\Big|\, X_1,\dots,X_m\right]\KernelMatrixPlugIn[k],
    \pi\left(\KernelMatrixPlugIn[k] - \KernelMatrixTrue[k]\right)\right\rangle\right].
\end{aligned}
\]
Invoking the Markov property, the conditional expectation simplifies to
\[
  \Expect\left[\Empirical[n-m] - \pi \mid X_1,\dots,X_m\right]
  = \Expect\left[\Empirical[n-m] - \pi \mid X_m\right].
\]
Then, applying the Cauchy--Schwarz inequality, we have
\[
\begin{aligned}
  &\left|\left\langle \Expect\left[\Empirical[n-m] - \pi \,\Big|\, X_m\right]\KernelMatrixPlugIn[k],
   \pi\left(\KernelMatrixPlugIn[k] - \KernelMatrixTrue[k]\right)\right\rangle \right|\\
  &\qquad \leq \NormTwo{\Expect\left[\Empirical[n-m] - \pi \mid X_m\right]\KernelMatrixPlugIn[k]}
   \times \NormTwo{\pi\left(\KernelMatrixPlugIn[k] - \KernelMatrixTrue[k]\right)}.
\end{aligned}
\]
Condition \ref{ass:regularity1} ensures that there exists $L > 0$ such that $\NormOperator{\KernelMatrixPlugIn[k]}_{2,\text{res}} \leq L$.
Hence, this upper bound becomes
\[
  L \, \NormTwo{\Expect\left[\Empirical[n-m] - \pi \mid X_m\right]}
  \times \NormTwo{\pi\left(\KernelMatrixPlugIn[k] - \KernelMatrixTrue[k]\right)}.
\]
By rewriting the conditional expectation term through the integrated conditional bias, we have
\[
  \NormTwo{\Expect\left[\Empirical[n-m] - \pi \mid X_m\right]}
  = \sqrt{\SumOver{x} \Bias^{2}\!\left(\Empirical[n-m] \mid X_m\right)}.
\]
Conditionally on $X_m$, the data $(X_i)_{\IndexBetween{i}{m+1}{n}}$ form a Markov chain with kernel $\KernelMatrixTrue$ and initial distribution $\KernelParam{\ParamTrue}{\cdot}{X_m}$.
Hence, Proposition~\ref{prop:bias-negligible} (with $k=0$) applies to $\Empirical[n-m]$ conditionally on $X_m$.
Since this result holds uniformly in the initial distribution (Remark~\ref{rem:uniformity-initial-distribution}), there exists $C > 0$, which depends neither on $X_m$ nor on $n-m$, such that the right-hand side above is bounded by $C/(n-m)$.
Therefore,
\[
  \left|\left\langle \Expect\left[\Empirical[n-m] - \pi \,\Big|\, X_m\right]\KernelMatrixPlugIn[k],
  \pi\left(\KernelMatrixPlugIn[k] - \KernelMatrixTrue[k]\right)\right\rangle \right|
  \leq L C \times \frac{\NormTwo{\pi\left(\KernelMatrixPlugIn[k] - \KernelMatrixTrue[k]\right)}}{n-m}.
\]
Taking the total expectation leads to the following bound for the first cross-term,
\[
  \Expect\left[\left\langle \left(\Empirical[n-m] - \pi\right)\KernelMatrixPlugIn[k],
  \pi\left(\KernelMatrixPlugIn[k] - \KernelMatrixTrue[k]\right)\right\rangle\right]
  = \BigO\!\left(\frac{\Expect\left[\NormTwo{\pi\left(\KernelMatrixPlugIn[k] - \KernelMatrixTrue[k]\right)}\right]}{n-m}\right).
\]
Under Condition~\ref{ass:regularity2}, we directly derive that
\[
\begin{aligned}
  \MSE\left(\PiHatParam[n-m]{\ParamEstimator}{k}\right)
  &= \Expect\left[\NormTwo{\left(\Empirical[n-m] - \pi\right)\KernelMatrixPlugIn[k]}^{2}\right] \\
  &\quad + \Trace\left(\Gradient\left(\pi \KernelMatrixTrue[k]\right)\Transpose
     \Expect\left[\left(\EstimationError\right)\left(\EstimationError\right)\Transpose\right]
     \Gradient\left(\pi \KernelMatrixTrue[k]\right)\right) \\
  &+ \BigO\!\left(\Expect\left[\NormTwo{\EstimationError}^{3}\right]\right)
   + \BigO\!\left(\frac{\Expect\left[\NormTwo{\EstimationError}\right]}{n-m}\right).
\end{aligned}
\]
We then refine the first term by expanding around the true transition kernel $\KernelMatrixTrue[k]$,
\[
  \left(\Empirical[n-m] - \pi\right)\KernelMatrixPlugIn[k]
  = \left(\Empirical[n-m] - \pi\right)\KernelMatrixTrue[k]
  + \left(\Empirical[n-m] - \pi\right)\left(\KernelMatrixPlugIn[k] - \KernelMatrixTrue[k]\right),
\]
which leads to the identity
\[
\begin{aligned}
  \NormTwo{\left(\Empirical[n-m] - \pi\right)\KernelMatrixPlugIn[k]}^{2}
  &= \NormTwo{\PiHatParam[n-m]{\ParamTrue}{k} - \pi}^{2}
   + \NormTwo{\left(\Empirical[n-m] - \pi\right)\left(\KernelMatrixPlugIn[k] - \KernelMatrixTrue[k]\right)}^{2} \\
  &+ 2 \left\langle \PiHatParam[n-m]{\ParamTrue}{k} - \pi,
     \left(\Empirical[n-m] - \pi\right)\left(\KernelMatrixPlugIn[k] - \KernelMatrixTrue[k]\right)\right\rangle.
\end{aligned}
\]
Invoking the Lipschitz continuity of the mapping $\theta \mapsto \KernelMatrixParam[k]{\theta}$ in Condition \ref{ass:regularity1}, the second term is bounded by
\[
\begin{aligned}
  \NormTwo{\left(\Empirical[n-m] - \pi\right)\left(\KernelMatrixPlugIn[k] - \KernelMatrixTrue[k]\right)}
  &\leq \NormTwo{\Empirical[n-m] - \pi}
        \times \NormOperator{\KernelMatrixPlugIn[k] - \KernelMatrixTrue[k]}_{2,\text{res}} \\
  &\leq [\KernelMatrix[k]]_{\text{Lip}} \times \NormTwo{\Empirical[n-m] - \pi} \times \NormTwo{\EstimationError}.
\end{aligned}
\]
Taking expectations, it directly follows that
\[
  \Expect\left[\NormTwo{\left(\Empirical[n-m] - \pi\right)\left(\KernelMatrixPlugIn[k] - \KernelMatrixTrue[k]\right)}^{2}\right]
  = \BigO\!\left(\frac{\Expect\left[\NormTwo{\EstimationError}^{2}\right]}{n-m}\right).
\]
For the remaining cross-product term, we apply the Cauchy--Schwarz inequality,
\[
\begin{aligned}
  &\Expect\left[\left|\left\langle \PiHatParam[n-m]{\ParamTrue}{k} - \pi,
   \left(\Empirical[n-m] - \pi\right)\left(\KernelMatrixPlugIn[k] - \KernelMatrixTrue[k]\right)\right\rangle\right|\right] \\
  &\qquad \leq \sqrt{\Expect\left[\NormTwo{\PiHatParam[n-m]{\ParamTrue}{k} - \pi}^{2}\right]
     \times \Expect\left[\NormTwo{\left(\Empirical[n-m] - \pi\right)\left(\KernelMatrixPlugIn[k] - \KernelMatrixTrue[k]\right)}^{2}\right]} \\
  &\qquad = \sqrt{\BigO\!\left(\frac{1}{n-m}\right)
     \times \BigO\!\left(\frac{\Expect\left[\NormTwo{\EstimationError}^{2}\right]}{n-m}\right)} \\
  &\qquad = \BigO\!\left(\frac{\Expect\left[\NormTwo{\EstimationError}^{2}\right]^{1/2}}{n-m}\right).
\end{aligned}
\]
Finally, gathering all pieces together, we arrive at the full expression for the MSE,
\[
\begin{aligned}
  &\MSE\left(\PiHatParam[n-m]{\ParamEstimator}{k}\right)
  = \MSE\left(\PiHatParam[n-m]{\ParamTrue}{k}\right) \\
  &\enspace + \Trace\left(\Gradient\left(\pi \KernelMatrixTrue[k]\right)\Transpose
     \Expect\left[\left(\EstimationError\right)\left(\EstimationError\right)\Transpose\right]
     \Gradient\left(\pi \KernelMatrixTrue[k]\right)\right) \\
  &\enspace + \BigO\!\left(\Expect\left[\NormTwo{\EstimationError}^{3}\right]\right)
   + \BigO\!\left(\frac{\Expect\left[\NormTwo{\EstimationError}^{2}\right]^{1/2}}{n-m}\right)
   + \BigO\!\left(\frac{\Expect\left[\NormTwo{\EstimationError}^{2}\right]}{n-m}\right).
\end{aligned}
\]

\subsection{Proof of Corollary~\ref{cor:data-partition1}}
\label{app:proof-data-partition1}

We assume that $m^{-1} = \LittleO(g(m))$.
We want to show that there exists no split $(m, n-m)$ such that $\PiHatParam[n-m]{\ParamEstimator}{k}$ is asymptotically a better estimator than $\Empirical$.
We proceed by analysing the possible asymptotic regimes for the split $m$ relative to $n$.
\begin{itemize}
  \item \textit{$n-m = \LittleO(n)$.}\\
  In this regime, $n/(n-m)$ goes to infinity with $n$.
  Hence, when $n$ goes to infinity, we obtain
    \[
      \frac{\frac{1}{n-m} \SumOver{x} \AsymptoticVariance[k]{x}}{\MSE\left(\Empirical\right)}
      \EquivalentTo \frac{\SumOver{x} \AsymptoticVariance[k]{x}}
      {\SumOver{x} \AsymptoticVariance[0]{x}} \times \frac{n}{n-m}
      \to +\infty.
    \]
    Since the remaining terms in the expansion of $\MSE(\PiHatParam[n-m]{\ParamEstimator}{k})$ are non-negative or asymptotically negligible, it directly follows that $\MSE(\PiHatParam[n-m]{\ParamEstimator}{k}) / \MSE(\Empirical)$ goes to infinity with $n$.

    \newpage

  \item \textit{$(n-m)^{-1} = \BigO(n^{-1})$.}\\
    In this regime, $n g(m)$ goes to infinity with $n$.
    Hence, when $n$ goes to infinity,
    \[
    \begin{aligned}
      \frac{\MSE\left(\PiHatParam[n-m]{\ParamEstimator}{k}\right)}{\MSE\left(\Empirical\right)}
      &\geq \frac{g(m) \Trace\left(\Gradient\left(\pi \KernelMatrixTrue[k]\right)\Transpose
         \CovParam \Gradient\left(\pi \KernelMatrixTrue[k]\right)\right)
         + \LittleO\big(g(m)\big)}
         {\frac{1}{n} \SumOver{x} \AsymptoticVariance[0]{x} + \LittleO\left(\frac{1}{n}\right)} \\
      &\EquivalentTo n g(m)
         \frac{\Trace\left(\Gradient\left(\pi \KernelMatrixTrue[k]\right)\Transpose
         \CovParam \Gradient\left(\pi \KernelMatrixTrue[k]\right)\right)}
         {\SumOver{x} \AsymptoticVariance[0]{x}}
       \to +\infty,
    \end{aligned}
    \]
    since $\Trace(\Gradient(\pi \KernelMatrixTrue[k])\Transpose \CovParam \Gradient(\pi \KernelMatrixTrue[k])) \neq 0$ under the general assumptions on $\ParamEstimator$.
\end{itemize}
In all cases, the asymptotic MSE of $\PiHatParam[n-m]{\ParamEstimator}{k}$ is strictly greater than that of $\Empirical$.
Therefore, there exists no split such that $\PiHatParam[n-m]{\ParamEstimator}{k}$ is asymptotically a better estimator than $\Empirical$.

\subsection{Proof of Corollary~\ref{cor:data-partition2}}
\label{app:proof-data-partition2}

We assume that $g(m) = \LittleO(m^{-1})$, and we consider any split satisfying $m = \LittleO(n)$, $n = \LittleO(g(m)^{-1})$ and $n = \LittleO(m^{3/2})$.
We analyse each term of the expansion given in Theorem~\ref{thm:mse-expansion}.
First, we have
\[
  \MSE\left(\PiHatParam[n-m]{\ParamTrue}{k}\right)
  = \frac{1}{n-m} \SumOver{x} \AsymptoticVariance[k]{x}
    + \LittleO\left(\frac{1}{n-m}\right)
  = \frac{1}{n} \SumOver{x} \AsymptoticVariance[k]{x}
    + \LittleO\left(\frac{1}{n}\right).
\]
In addition,
\[
\begin{aligned}
  &\Trace\left(\Gradient\left(\pi \KernelMatrixTrue[k]\right)\Transpose
   \Expect\left[\left(\EstimationError\right)\left(\EstimationError\right)\Transpose\right]
   \Gradient\left(\pi \KernelMatrixTrue[k]\right)\right) \\
  &\qquad \EquivalentTo g(m) \Trace\left(\Gradient\left(\pi \KernelMatrixTrue[k]\right)\Transpose
   \CovParam \Gradient\left(\pi \KernelMatrixTrue[k]\right)\right)
   = \LittleO\left(\frac{1}{n}\right).
\end{aligned}
\]
Furthermore,
\[
\begin{aligned}
  \BigO\!\left(\frac{\Expect\left[\NormTwo{\EstimationError}^{2}\right]^{1/2}}{n-m}\right)
  &= \BigO\!\left(\frac{g(m)^{1/2}}{n}\right)
  = \LittleO\left(\frac{1}{n}\right), \\
  \BigO\!\left(\frac{\Expect\left[\NormTwo{\EstimationError}^{2}\right]}{n-m}\right)
  &= \BigO\!\left(\frac{g(m)}{n}\right)
  = \LittleO\left(\frac{1}{n}\right), \\
  \BigO\!\left(\Expect\left[\NormTwo{\EstimationError}^{3}\right]\right)
  &= \BigO\left(m^{-3/2}\right)
  = \LittleO\left(\frac{1}{n}\right).
\end{aligned}
\]
Combining these five estimates with the expansion of Theorem~\ref{thm:mse-expansion}, we obtain
\[
  \MSE\left(\PiHatParam[n-m]{\ParamEstimator}{k}\right)
  = \frac{1}{n} \SumOver{x} \AsymptoticVariance[k]{x} + \LittleO\left(\frac{1}{n}\right),
\]
and thus $\AsymptoticMSE(\PiHatParam[n-m]{\ParamEstimator}{k}) = \AsymptoticMSE(\PiHatParam{\ParamTrue}{k})$, which proves the result.

\subsection{Proof of Corollary~\ref{cor:data-partition3}}
\label{app:proof-data-partition3}

We assume that $g(m) \EquivalentTo m^{-1}$ as $m$ goes to infinity.
Thus,
\[
\begin{aligned}
  &\Trace\left(\Gradient\left(\pi \KernelMatrixTrue[k]\right)\Transpose
   \Expect\left[\left(\EstimationError\right)\left(\EstimationError\right)\Transpose\right]
   \Gradient\left(\pi \KernelMatrixTrue[k]\right)\right) \\
  &\qquad \EquivalentTo \frac{1}{m} \,
   \Trace\left(\Gradient\left(\pi \KernelMatrixTrue[k]\right)\Transpose
   \CovParam \Gradient\left(\pi \KernelMatrixTrue[k]\right)\right),
\end{aligned}
\]
since $\Trace(\Gradient(\pi \KernelMatrixTrue[k])\Transpose \CovParam \Gradient(\pi \KernelMatrixTrue[k])) \neq 0$ under the assumptions of the corollary.
We therefore define
\[
  c^{2} = \Trace\left(\Gradient\left(\pi \KernelMatrixTrue[k]\right)\Transpose
  \CovParam \Gradient\left(\pi \KernelMatrixTrue[k]\right)\right),
\]
so that $c^{2} / \SumOver{x} \AsymptoticVariance[0]{x} = \RatioParameter$, in accordance with Corollary~\ref{cor:data-partition3}.

Then, let $\SplitFraction \in \, ]0,1[$ and consider a split such that $m \EquivalentTo \SplitFraction n$, so that $n-m \EquivalentTo (1-\SplitFraction) n$.
In this regime, since $\Expect[\NormTwo{\EstimationError}^{2}] \EquivalentTo g(m) \Trace(\CovParam) = \BigO(m^{-1})$ and $\Expect[\NormTwo{\EstimationError}^{3}] = \BigO(m^{-3/2})$, the last three terms of the expansion of Theorem~\ref{thm:mse-expansion} satisfy
\[
\begin{aligned}
  &\BigO\!\left(\Expect\left[\NormTwo{\EstimationError}^{3}\right]\right)
  + \BigO\!\left(\frac{\Expect\left[\NormTwo{\EstimationError}^{2}\right]^{1/2}}{n-m}\right)
  + \BigO\!\left(\frac{\Expect\left[\NormTwo{\EstimationError}^{2}\right]}{n-m}\right) \\
  &\qquad = \BigO\left(n^{-3/2}\right)
  = \LittleO\left(\frac{1}{n}\right).
\end{aligned}
\]
Moreover, the data $(X_i)_{\IndexBetween{i}{m+1}{n}}$ form a stationary Markov chain, so that, by~\eqref{eq:mseinfty:stationaryMC},
\[
\begin{aligned}
  \MSE\left(\PiHatParam[n-m]{\ParamTrue}{k}\right)
  &= \frac{1}{n-m} \SumOver{x} \AsymptoticVariance[k]{x} + \LittleO\left(\frac{1}{n}\right), \\
  \MSE\left(\Empirical\right)
  &= \frac{1}{n} \SumOver{x} \AsymptoticVariance[0]{x} + \LittleO\left(\frac{1}{n}\right).
\end{aligned}
\]
Dividing the expansion of Theorem~\ref{thm:mse-expansion} by $\MSE(\Empirical)$, we obtain
\[
\begin{aligned}
  \frac{\MSE\left(\PiHatParam[n-m]{\ParamEstimator}{k}\right)}{\MSE\left(\Empirical\right)}
  &= \frac{\MSE\left(\PiHatParam[n-m]{\ParamTrue}{k}\right)}{\MSE\left(\Empirical\right)} \\
  &\quad + \frac{\Trace\left(\Gradient\left(\pi \KernelMatrixTrue[k]\right)\Transpose
     \Expect\left[\left(\EstimationError\right)\left(\EstimationError\right)\Transpose\right]
     \Gradient\left(\pi \KernelMatrixTrue[k]\right)\right)}{\MSE\left(\Empirical\right)}
     + \LittleO(1) \\
  &= \frac{\frac{1}{n(1-\SplitFraction)} \SumOver{x} \AsymptoticVariance[k]{x}}
     {\frac{1}{n} \SumOver{x} \AsymptoticVariance[0]{x}}
   + \frac{\frac{c^{2}}{\SplitFraction n}}
     {\frac{1}{n} \SumOver{x} \AsymptoticVariance[0]{x}} + \LittleO(1) \\
  &= \frac{\RatioLookAhead}{1-\SplitFraction}
   + \frac{\RatioParameter}{\SplitFraction} + \LittleO(1).
\end{aligned}
\]
It directly follows that
\[
  \frac{\AsymptoticMSE\left(\PiHatParam[n-m]{\ParamEstimator}{k}\right)}
  {\AsymptoticMSE\left(\Empirical\right)}
  = \frac{\RatioLookAhead}{1-\SplitFraction} + \frac{\RatioParameter}{\SplitFraction}.
\]
Hence, for $\SplitFraction \in \, ]0,1[$, we define $h(\SplitFraction) = \RatioLookAhead/(1-\SplitFraction) + \RatioParameter/\SplitFraction$.
The ratio $\RatioParameter$ is positive, since $\CovParam$ is positive semidefinite and the trace in its numerator is non-zero by assumption.
The ratio $\RatioLookAhead$ is positive according to Remark~\ref{rem:mse:zero}.
The function $h$ is then twice differentiable on $]0,1[$, with
\[
  h'(\SplitFraction) = \frac{\RatioLookAhead}{(1-\SplitFraction)^{2}} - \frac{\RatioParameter}{\SplitFraction^{2}}
  \qquad \text{and} \qquad
  h''(\SplitFraction) = \frac{2\RatioLookAhead}{(1-\SplitFraction)^{3}} + \frac{2\RatioParameter}{\SplitFraction^{3}} > 0.
\]
$h$ is thus strictly convex on $]0,1[$, and its derivative vanishes if and only if $\SplitFraction/(1-\SplitFraction) = \sqrt{\RatioParameter}/\sqrt{\RatioLookAhead}$.
Consequently, $h$ reaches its minimum at the single point
\[
  \SplitFractionOptimal = \frac{\sqrt{\RatioParameter}}
  {\sqrt{\RatioLookAhead}+\sqrt{\RatioParameter}} \in \, ]0,1[,
\]
and
\[
  h\left(\SplitFractionOptimal\right)
  = \frac{\sqrt{\RatioLookAhead}+\sqrt{\RatioParameter}}{\sqrt{\RatioLookAhead}} \RatioLookAhead
  + \frac{\sqrt{\RatioLookAhead}+\sqrt{\RatioParameter}}{\sqrt{\RatioParameter}} \RatioParameter
  = \left(\sqrt{\RatioLookAhead}+\sqrt{\RatioParameter}\right)^{2}.
\]
Therefore, for any split of the form $m \EquivalentTo \SplitFractionOptimal n$, the asymptotic MSE of $\PiHatParam[n-m]{\ParamEstimator}{k}$ is strictly smaller than that of $\Empirical$ if and only if $h(\SplitFractionOptimal) < 1$, that is, if and only if $\sqrt{\RatioLookAhead} + \sqrt{\RatioParameter} < 1$, which proves the result.

We finally determine the set of split fractions $\SplitFraction$ that lead to an improvement, as explained in Remark~\ref{rem:additional-knowledge-corollary}.
Assume that $\sqrt{\RatioLookAhead} + \sqrt{\RatioParameter} < 1$.
For $\SplitFraction \in \, ]0,1[$, multiplying by $\SplitFraction(1-\SplitFraction) > 0$, $h(\SplitFraction) < 1$ if and only if
\[
  q(\SplitFraction) = \SplitFraction^{2}
    + \SplitFraction\left(\RatioLookAhead-\RatioParameter-1\right)
    + \RatioParameter < 0.
\]
The polynomial $q$ satisfies $q(0) = \RatioParameter > 0$, $q(1) = \RatioLookAhead > 0$ and $q(\SplitFractionOptimal) = \SplitFractionOptimal(1-\SplitFractionOptimal)(h(\SplitFractionOptimal)-1) < 0$.
Hence, its discriminant $\Delta = (\RatioLookAhead-\RatioParameter-1)^{2}-4\RatioParameter$ is positive, and its two roots
\begin{equation}\label{eq:def:interval:alpha}
  \psi^{\pm}(\RatioLookAhead,\RatioParameter)
  = \frac{1-\RatioLookAhead+\RatioParameter \pm \sqrt{\Delta}}{2}
\end{equation}
both lie in $]0,1[$.
Therefore, any split of the form $m \EquivalentTo \SplitFraction n$ with $\SplitFraction \in \, ]\psi^{-},\psi^{+}[$ ensures that $\PiHatParam[n-m]{\ParamEstimator}{k}$ has a strictly smaller asymptotic MSE than $\Empirical$, while no other $\SplitFraction \in \, ]0,1[$ does.

\section{Proofs of Section~\ref{sec:birth-death}}
\label{app:birth-death}

Throughout this appendix, $\KernelMatrix = \KernelMatrixParam{\theta,\varepsilon}$ denotes the transition kernel of the birth-and-death process of Section~\ref{sec:birth-death}, and $\KernelMatrixParam{\theta,0}$ the kernel obtained for $\varepsilon = 0$.

\subsection{Proof of Proposition~\ref{prop:birth-death-law}}
\label{app:proof-birth-death-law}

It is immediate that $\KernelMatrix$ is irreducible and aperiodic.
It is also reversible, as the transition kernel of a birth-and-death process.
Moreover, by definition,
\[
  \KernelMatrix = (1-\varepsilon) \KernelMatrixParam{\theta,0} + \varepsilon \Identity,
\]
hence the invariant distribution of $\KernelMatrix$ is the invariant distribution of $\KernelMatrixParam{\theta,0}$, denoted by $\pi$.
We then have
\[
\begin{aligned}
  \KernelParam{\theta,0}{1}{0} \pi(0) = \KernelParam{\theta,0}{0}{1} \pi(1)
  &\Leftrightarrow \pi(0) = (1-\theta) \pi(1) \\
  &\Leftrightarrow \pi(1) = (1+\rho) \pi(0).
\end{aligned}
\]
Then, for $\IndexBetween{x}{1}{L-2}$,
\[
\begin{aligned}
  \KernelParam{\theta,0}{x+1}{x} \pi(x) = \KernelParam{\theta,0}{x}{x+1} \pi(x+1)
  &\Leftrightarrow \theta \pi(x) = (1-\theta) \pi(x+1) \\
  &\Leftrightarrow \pi(x+1) = \frac{\theta}{1-\theta} \pi(x) \\
  &\Leftrightarrow \pi(x+1) = \rho \pi(x),
\end{aligned}
\]
hence, by induction, we obtain the result for $\IndexBetween{x}{1}{L-1}$.
Finally,
\[
\begin{aligned}
  \KernelParam{\theta,0}{L}{L-1} \pi(L-1) = \KernelParam{\theta,0}{L-1}{L} \pi(L)
  &\Leftrightarrow \theta \pi(L-1) = \pi(L) \\
  &\Leftrightarrow \pi(L) = \frac{\rho}{1+\rho} \pi(0) (1+\rho) \rho^{L-2} \\
  &\Leftrightarrow \pi(L) = \pi(0) \rho^{L-1}.
\end{aligned}
\]
We then obtain $\pi(0)$ by normalisation, since $\SumFromTo{x}{0}{L} \pi(x) = 1$.

\subsection{Proof of Lemma~\ref{prop:birth-death-variance}}
\label{app:proof-birth-death-variance}

In the proof of Theorem~\ref{thm:monotone-amse}, we have established equation~\eqref{eq:variancef:from:clt}, namely
\[
  \AsymptoticVariance[0]{f}
  = 2 \ScalarProduct{\bar{f}}{\FundamentalMatrix \bar{f}}
  - \ScalarProduct{\bar{f}}{\bar{f}}.
\]
First, remark that $\ScalarProduct{\bar{f}}{\bar{f}}$ is simply $\pi(\bar{f}^{2})$.
Hence, we only need to analyse the term $\ScalarProduct{\bar{f}}{\FundamentalMatrix \bar{f}}$.
Let $g = \FundamentalMatrix \bar{f}$, so that
\[
  \ScalarProduct{\bar{f}}{\FundamentalMatrix \bar{f}}
  = \ScalarProduct{(\Identity - \KernelMatrix)g}{g}.
\]
We have
\[
\begin{aligned}
  \ScalarProduct{(\Identity - \KernelMatrix)g}{g}
  &= \SumFromTo{i}{0}{L} \pi(i) \left[(\Identity - \KernelMatrix)g\right](i)\, g(i) \\
  &= \SumFromTo{i}{0}{L} \pi(i)
     \left[g(i) - \SumFromTo{j}{0}{L} \Kernel{j}{i} g(j)\right] g(i) \\
  &= \SumFromTo{i}{0}{L} \pi(i) g(i)^{2}
     - \sum_{i,j = 0}^{L} \pi(i) \Kernel{j}{i} g(j) g(i).
\end{aligned}
\]
However,
\[
\begin{aligned}
  &\sum_{i,j = 0}^{L} \pi(i) \Kernel{j}{i} \left[g(i)-g(j)\right]^{2} \\
  &\qquad = \sum_{i,j = 0}^{L} \pi(i) \Kernel{j}{i} g(i)^{2}
     - 2 \sum_{i,j = 0}^{L} \pi(i) \Kernel{j}{i} g(i) g(j)
     + \sum_{i,j = 0}^{L} \pi(i) \Kernel{j}{i} g(j)^{2} \\
  &\qquad = \sum_{i,j = 0}^{L} \pi(j) \Kernel{i}{j} g(i)^{2}
     - 2 \sum_{i,j = 0}^{L} \pi(i) \Kernel{j}{i} g(i) g(j)
     + \SumFromTo{j}{0}{L} \pi(j) g(j)^{2} \\
  &\qquad = \SumFromTo{i}{0}{L} \pi(i) g(i)^{2}
     - 2 \sum_{i,j = 0}^{L} \pi(i) \Kernel{j}{i} g(i) g(j)
     + \SumFromTo{j}{0}{L} \pi(j) g(j)^{2} \\
  &\qquad = 2 \ScalarProduct{(\Identity - \KernelMatrix)g}{g},
\end{aligned}
\]
thanks to the detailed balance equation $\pi(i)\Kernel{j}{i} = \pi(j)\Kernel{i}{j}$ of reversible Markov chains.
Hence, we can rewrite
\[
  \ScalarProduct{(\Identity - \KernelMatrix)g}{g}
  = \frac{1}{2} \sum_{i,j=0}^{L} \pi(i) \Kernel{j}{i} \left[g(i)-g(j)\right]^{2}.
\]
Then, since $\KernelMatrix$ is the transition kernel of a birth-and-death process, this expression simplifies to
\[
\begin{aligned}
  \ScalarProduct{(\Identity - \KernelMatrix)g}{g}
  &= \frac{1}{2} \Bigg[\SumFromTo{j}{1}{L} \pi(j-1) \Kernel{j}{j-1}
     \left[g(j-1)-g(j)\right]^{2} \\
  &\hspace{1.2cm} + \SumFromTo{j}{0}{L-1} \pi(j+1) \Kernel{j}{j+1}
     \left[g(j+1)-g(j)\right]^{2}\Bigg] \\
  &= \SumFromTo{j}{0}{L-1} \pi(j) \Kernel{j+1}{j} \left[g(j+1)-g(j)\right]^{2}.
\end{aligned}
\]
Now, to complete the proof, we need to solve for $g$.
To this end, we use the fact that, by definition,
\[
  (\Identity - \KernelMatrix)g = \bar{f},
\]
and that, for $\IndexBetween{i}{1}{L-1}$,
\[
\begin{aligned}
  \left[(\Identity - \KernelMatrix)g\right](i)
  &= g(i) - \left[\Kernel{i}{i}g(i) + \Kernel{i-1}{i}g(i-1)
     + \Kernel{i+1}{i}g(i+1)\right] \\
  &= \left[\Kernel{i-1}{i} + \Kernel{i+1}{i}\right]g(i)
     - \Kernel{i-1}{i}g(i-1) - \Kernel{i+1}{i}g(i+1) \\
  &= \Kernel{i-1}{i} \left[g(i)-g(i-1)\right]
     + \Kernel{i+1}{i} \left[g(i)-g(i+1)\right].
\end{aligned}
\]
Multiplying by $\pi(i)$ and summing from $i = 1$ to $i = j$, for $\IndexBetween{j}{1}{L-1}$, this yields
\[
\begin{aligned}
  &\SumFromTo{i}{1}{j} \pi(i)\left[(\Identity - \KernelMatrix)g\right](i) \\
  &\qquad = \SumFromTo{i}{1}{j} \pi(i)\Big(\Kernel{i-1}{i} \left[g(i)-g(i-1)\right]
     + \Kernel{i+1}{i} \left[g(i)-g(i+1)\right]\Big) \\
  &\qquad = \SumFromTo{i}{1}{j} \pi(i-1) \Kernel{i}{i-1} \left[g(i)-g(i-1)\right]
     + \SumFromTo{i}{1}{j} \pi(i) \Kernel{i+1}{i} \left[g(i)-g(i+1)\right] \\
  &\qquad = \SumFromTo{i}{0}{j-1} \pi(i) \Kernel{i+1}{i} \left[g(i+1)-g(i)\right]
     + \SumFromTo{i}{1}{j} \pi(i) \Kernel{i+1}{i} \left[g(i)-g(i+1)\right] \\
  &\qquad = \pi(j) \Kernel{j+1}{j} \left[g(j)-g(j+1)\right]
     + \pi(0) \Kernel{1}{0}\left[g(1)-g(0)\right].
\end{aligned}
\]
On the boundary $i = 0$, we have
\[
\begin{aligned}
  \left[(\Identity - \KernelMatrix)g\right](0)
  &= g(0) - \left(\Kernel{0}{0}g(0) + \Kernel{1}{0}g(1)\right) \\
  &= \left(1-\Kernel{0}{0}\right)g(0) - \Kernel{1}{0}g(1) \\
  &= \Kernel{1}{0} \left(g(0)-g(1)\right),
\end{aligned}
\]
thus,
\[
  \pi(j)\Kernel{j+1}{j} \left[g(j)-g(j+1)\right]
  = \SumFromTo{i}{0}{j} \pi(i) \left[(\Identity - \KernelMatrix)g\right](i)
  = \SumFromTo{i}{0}{j} \pi(i) \bar{f}(i),
\]
from which we deduce that
\[
  g(j)-g(j+1) = \frac{1}{\pi(j)\Kernel{j+1}{j}} \SumFromTo{i}{0}{j} \pi(i)\bar{f}(i).
\]
We finally obtain
\[
\begin{aligned}
  \ScalarProduct{\bar{f}}{\FundamentalMatrix \bar{f}}
  &= \SumFromTo{j}{0}{L-1} \pi(j)\Kernel{j+1}{j}
     \left(\frac{1}{\pi(j)\Kernel{j+1}{j}} \SumFromTo{i}{0}{j} \pi(i)\bar{f}(i)\right)^{2} \\
  &= \SumFromTo{j}{0}{L-1} \frac{1}{\pi(j)\Kernel{j+1}{j}}
     \left(\SumFromTo{i}{0}{j} \pi(i)\bar{f}(i)\right)^{2}.
\end{aligned}
\]

\subsection{Proof of Proposition~\ref{prop:birth-death-MSE-pi0}}
\label{app:proof-birth-death-MSE-pi0}

We know that
\[
\begin{aligned}
  \TotalVariance{0}
  &= \SumFromTo{x}{0}{L} \AsymptoticVariance[0]{x} \\
  &= \SumFromTo{x}{0}{L} \Bigg(\frac{2}{1-\varepsilon} \SumFromTo{j}{0}{L-1}
     \frac{1}{\pi(j)\KernelParam{\theta,0}{j+1}{j}}
     \left(\SumFromTo{i}{0}{j} \pi(i) \Big(\Indicator{x=i}-\pi(x)\Big)\right)^{2}
     - \pi(x)+\pi(x)^{2}\Bigg) \\
  &= \frac{2}{1-\varepsilon} \SumFromTo{j}{0}{L-1}
     \frac{1}{\pi(j)\KernelParam{\theta,0}{j+1}{j}}
     \SumFromTo{x}{0}{L}
     \left(\SumFromTo{i}{0}{j} \pi(i) \Big(\Indicator{x=i}-\pi(x)\Big)\right)^{2}
     - 1 + \SumFromTo{x}{0}{L} \pi(x)^{2} \\
  &= \frac{2}{1-\varepsilon} \SumFromTo{j}{0}{L-1}
     \frac{1}{\pi(j)\KernelParam{\theta,0}{j+1}{j}} \SumFromTo{x}{0}{L} S_x(j)^{2}
     - 1 + \SumFromTo{x}{0}{L} \pi(x)^{2},
\end{aligned}
\]
where, for $\IndexBetween{x}{0}{L}$ and $\IndexBetween{j}{0}{L-1}$,
\[
  S_x(j) = \SumFromTo{i}{0}{j} \pi(i) \Big(\Indicator{x=i}-\pi(x)\Big).
\]
However,
\[
  S_x(j) =
  \begin{cases}
    -\pi(x) \Pi(j) & \text{if } j < x, \\
    \pi(x)\big(1-\Pi(j)\big) & \text{if } j \geq x,
  \end{cases}
\]
with $\Pi(j) = \SumFromTo{i}{0}{j} \pi(i)$ the cumulative distribution function of $\pi$.
We then obtain
\[
\begin{aligned}
  \SumFromTo{x}{0}{L} S_x(j)^{2}
  &= \SumFromTo{x}{0}{j} \pi(x)^{2} \big(1-\Pi(j)\big)^{2}
     + \SumFromTo{x}{j+1}{L} \pi(x)^{2}\Pi(j)^{2} \\
  &= \SumFromTo{x}{0}{j} \pi(x)^{2} \big(1-\Pi(j)\big)^{2}
     + \SumFromTo{x}{j+1}{L} \pi(x)^{2} \big(1-\Pi(j)\big)^{2}
     - \big(1-2\Pi(j)\big)\SumFromTo{x}{j+1}{L} \pi(x)^{2} \\
  &= \big(1-\Pi(j)\big)^{2} \SumFromTo{x}{0}{L} \pi(x)^{2}
     - \big(1-2\Pi(j)\big)
     \left(\SumFromTo{x}{0}{L} \pi(x)^{2}-\SumFromTo{x}{0}{j} \pi(x)^{2}\right).
\end{aligned}
\]
For $\IndexBetween{j}{0}{L}$, we denote $\Pi^{2}(j) = \SumFromTo{x}{0}{j} \pi(x)^{2}$.
Thus, we can rewrite
\[
\begin{aligned}
  \TotalVariance{0}
  &= \frac{2}{1-\varepsilon} \SumFromTo{j}{0}{L-1}
     \frac{\big(1-\Pi(j)\big)^{2} \Pi^{2}(L)
     - \big(1-2\Pi(j)\big) \big(\Pi^{2}(L)-\Pi^{2}(j)\big)}
     {\pi(j)\KernelParam{\theta,0}{j+1}{j}}
     - 1 + \Pi^{2}(L) \\
  &= \frac{2}{1-\varepsilon} \SumFromTo{j}{0}{L-1}
     \frac{\big(1-\Pi(j)\big)^{2} \Pi^{2}(L)}{\pi(j)\KernelParam{\theta,0}{j+1}{j}} \\
  &\quad - \frac{2}{1-\varepsilon} \SumFromTo{j}{0}{L-1}
     \frac{\big(1-2\Pi(j)\big) \big(\Pi^{2}(L)-\Pi^{2}(j)\big)}
     {\pi(j)\KernelParam{\theta,0}{j+1}{j}}
     - 1 + \Pi^{2}(L).
\end{aligned}
\]
For $\IndexBetween{j}{0}{L-1}$, we have
\[
\begin{aligned}
  1-\Pi(j)
  &= 1-\left(\pi(0) + \pi(0)(1+\rho) \SumFromTo{y}{1}{j} \rho^{y-1}\right) \\
  &= 1-\pi(0) \left(1+(1+\rho)\frac{1-\rho^{j}}{1-\rho}\right) \\
  &= 1-\frac{1-\rho}{2(1-\rho^{L})}
     \left(\frac{2}{1-\rho}-\frac{1+\rho}{1-\rho}\rho^{j}\right) \\
  &= \frac{1}{1-\rho^{L}} \times \frac{1+\rho}{2}\rho^{j}
     - \frac{\rho^{L}}{1-\rho^{L}}.
\end{aligned}
\]
Therefore,
\[
\begin{aligned}
  \SumFromTo{j}{0}{L-1}
  \frac{\big(1-\Pi(j)\big)^{2}}{\pi(j)\KernelParam{\theta,0}{j+1}{j}}
  &= \frac{\big(1-\pi(0)\big)^{2}}{\pi(0)}
     + \SumFromTo{j}{1}{L-1} \frac{(1+\rho)^{2}}{4(1-\rho^{L})^{2}} \rho^{2j}
     \frac{1}{\pi(0)(1+\rho)\rho^{j-1} \theta} + E_L \\
  &= \frac{\big(1-\pi(0)\big)^{2}}{\pi(0)}
     + \frac{(1+\rho)^{2}}{4 \pi(0)(1-\rho^{L})^{2}} \SumFromTo{j}{1}{L-1} \rho^{j}
     + E_L \\
  &= \frac{\big(1-\pi(0)\big)^{2}}{\pi(0)}
     + \frac{(1+\rho)^{2}}{4 \pi(0)(1-\rho^{L})^{2}} \rho \frac{1-\rho^{L-1}}{1-\rho}
     + E_L,
\end{aligned}
\]
with
\[
  E_L = \SumFromTo{j}{1}{L-1}
  \frac{1}{\pi(j)\KernelParam{\theta,0}{j+1}{j}}
  \left[\left(\frac{\rho^{L}}{1-\rho^{L}}\right)^{2}
  - \frac{(1+\rho) \rho^{j} \rho^{L}}{(1-\rho^{L})^{2}}\right]
  = \LittleO_{L}(1).
\]
Since $\pi(0) = (1-\rho)/(2(1-\rho^{L})) = (1-\rho)/2 + \LittleO_{L}(1)$, we deduce that
\[
\begin{aligned}
  \SumFromTo{j}{0}{L-1}
  \frac{\big(1-\Pi(j)\big)^{2}}{\pi(j)\KernelParam{\theta,0}{j+1}{j}}
  &= \frac{(1+\rho)^{2}}{2(1-\rho)}
     + \frac{(1+\rho)^{2}}{2(1-\rho)} \times \frac{\rho}{1-\rho} + \LittleO_{L}(1) \\
  &= \frac{(1+\rho)^{2}}{2(1-\rho)^{2}} + \LittleO_{L}(1).
\end{aligned}
\]
On the other hand, we compute
\[
\begin{aligned}
  \Pi^{2}(L)
  &= \pi(0)^{2} + \pi(0)^{2}(1+\rho)^{2} \SumFromTo{x}{1}{L-1} \rho^{2(x-1)}
     + \pi(0)^{2}\rho^{2(L-1)} \\
  &= \pi(0)^{2} \left(1 + \rho^{2(L-1)}
     + (1+\rho)^{2} \frac{1-\rho^{2(L-1)}}{1-\rho^{2}} \right) \\
  &  = \frac{1-\rho}{2} \times \frac{1-\rho^{2L-1}}{\left(1-\rho^{L}\right)^{2}}.
\end{aligned}
\]
For $\IndexBetween{j}{0}{L-1}$, we have
\[
\begin{aligned}
  \Pi^{2}(L) - \Pi^{2}(j)
  &= \pi(0)^{2} (1+\rho)^{2} \SumFromTo{x}{j+1}{L-1} \rho^{2(x-1)}
     + \pi(0)^{2} \rho^{2(L-1)} \\
  &= \pi(0)^{2} \left( (1+\rho)^{2} \rho^{2j} \frac{1-\rho^{2(L-1-j)}}{1-\rho^{2}}
     + \rho^{2(L-1)} \right) \\
   &= \left(\frac{1-\rho}{2(1-\rho^{L})}\right)^{2}
     \frac{(1+\rho)\rho^{2j} - 2\rho^{2L-1}}{1-\rho} \\
  &= \frac{(1-\rho^{2})\rho^{2j} - 2(1-\rho)\rho^{2L-1}}{4(1-\rho^{L})^{2}}.
\end{aligned}
\]
Furthermore, using the expression of $1-\Pi(j)$, we have
\[
  1-2\Pi(j) = 2\big(1-\Pi(j)\big) - 1
  = \frac{(1+\rho)\rho^{j} - (1+\rho^{L})}{1-\rho^{L}}.
\]
We finally obtain
\[
\begin{aligned}
  &\SumFromTo{j}{0}{L-1}
   \frac{\big(1-2\Pi(j)\big) \big(\Pi^{2}(L)-\Pi^{2}(j)\big)}
   {\pi(j)\KernelParam{\theta,0}{j+1}{j}} \\
  &\qquad = \frac{1}{4\pi(0)(1-\rho^{L})^{3}} \SumFromTo{j}{0}{L-1} \frac{1}{\rho^{j}}
     \Big( (1+\rho)\rho^{j} - (1+\rho^{L}) \Big)
     \Big( (1-\rho^{2})\rho^{2j} - 2(1-\rho)\rho^{2L-1} \Big) \\
  &\qquad = \frac{1}{4\pi(0)(1-\rho^{L})^{3}} \SumFromTo{j}{0}{L-1}
     \bigg( (1+\rho)(1-\rho^{2})\rho^{2j} - 2(1-\rho^{2})\rho^{2L-1} \\
  &\qquad \hspace{3.5cm} - (1+\rho^{L})(1-\rho^{2})\rho^{j}
     + 2(1+\rho^{L})(1-\rho)\rho^{2L-1-j} \bigg) \\
  &\qquad = \frac{1}{4\pi(0)(1-\rho^{L})^{3}}
     \Big( (1+\rho)(1-\rho^{2L}) - 2L(1-\rho^{2})\rho^{2L-1} \\
  &\qquad \hspace{3.5cm} - (1+\rho)(1-\rho^{2L}) + 2\rho^{L}(1-\rho^{2L}) \Big) \\
  &\qquad = \frac{2\rho^{L}(1-\rho^{2L}) - 2L(1-\rho^{2})\rho^{2L-1}}
     {4\pi(0)(1-\rho^{L})^{3}}
   = \LittleO_{L}(1).
\end{aligned}
\]
Returning to our initial formula, we finally conclude that
\[
\begin{aligned}
  \TotalVariance{0}
  &= \frac{2}{1-\varepsilon} \times \frac{(1+\rho)^{2}}{2(1-\rho)^{2}}
     \times \frac{1-\rho}{2} - 1 + \frac{1-\rho}{2} + \LittleO_{L}(1) \\
  &= \frac{2}{1-\varepsilon} \times \frac{(1+\rho)^{2}}{4(1-\rho)}
     - \frac{1+\rho}{2} + \LittleO_{L}(1).
\end{aligned}
\]

\subsection{Proof of Proposition~\ref{prop:birth-death-MSE-pi1}}
\label{app:proof-birth-death-MSE-pi1}

We know that
\[
\begin{aligned}
  \SumFromTo{x}{0}{L} \AsymptoticVariance[1]{x}
  &= \SumFromTo{x}{0}{L} \Bigg(2 \SumFromTo{j}{0}{L-1}
     \frac{1}{\pi(j)\Kernel{j+1}{j}}
     \left( \SumFromTo{i}{0}{j} \pi(i) \Big(\Kernel{x}{i}-\pi(x)\Big) \right)^{2} \\
  &\hspace{2cm} - \SumFromTo{y}{0}{L}\pi(y)\Kernel{x}{y}^{2} + \pi(x)^{2}\Bigg) \\
  &= \frac{2}{1-\varepsilon} \SumFromTo{j}{0}{L-1}
     \frac{1}{\pi(j)\KernelParam{\theta,0}{j+1}{j}} \SumFromTo{x}{0}{L} S'_x(j)^{2} \\
  &\quad - \SumFromTo{y}{0}{L} \pi(y) \SumFromTo{x}{0}{L} \Kernel{x}{y}^{2}
     + \SumFromTo{x}{0}{L} \pi(x)^{2},
\end{aligned}
\]
where, for $\IndexBetween{x}{0}{L}$ and $\IndexBetween{j}{0}{L-1}$,
\[
  S'_x(j) = \SumFromTo{i}{0}{j} \pi(i) \Big(\Kernel{x}{i}-\pi(x)\Big).
\]
For $\IndexBetween{x}{1}{L-1}$, we have
\[
\begin{aligned}
  S'_x(j) &=
  \begin{cases}
    -\pi(x) \Pi(j) & \text{if } j < x-1, \\
    -\pi(x) \Pi(j) + \pi(x-1) \Kernel{x}{x-1} & \text{if } j = x-1, \\
    -\pi(x) \Pi(j) + \pi(x-1) \Kernel{x}{x-1} + \pi(x) \Kernel{x}{x} & \text{if } j = x, \\
    \pi(x) \big(1-\Pi(j)\big) & \text{if } j > x,
  \end{cases} \\
  &=
  \begin{cases}
    S_x(j) & \text{if } j < x-1, \\
    S_x(j)+\pi(x-1)\Kernel{x}{x-1} & \text{if } j = x-1, \\
    S_x(j)-\pi(x+1)\Kernel{x}{x+1} & \text{if } j = x, \\
    S_x(j) & \text{if } j > x,
  \end{cases}
\end{aligned}
\]
and, using the reversibility of the chain,
\[
  S'_x(j) =
  \begin{cases}
    S_x(j) & \text{if } j < x-1, \\
    S_x(j)+\pi(j)\Kernel{j+1}{j} & \text{if } j = x-1, \\
    S_x(j)-\pi(j)\Kernel{j+1}{j} & \text{if } j = x, \\
    S_x(j) & \text{if } j > x,
  \end{cases}
\]
which allows us to simply write
\[
  S'_x(j) = S_x(j) + \pi(j)\Kernel{j+1}{j} \Big(\Dirac{x}(j+1)-\Dirac{x}(j)\Big).
\]
Similarly, we have
\[
  S'_x(0) =
  \begin{cases}
    S_x(0)-\pi(0)\Kernel{1}{0} & \text{if } x = 0, \\
    S_x(0)+\pi(0)\Kernel{1}{0} & \text{if } x = 1, \\
    S_x(0) & \text{if } x > 1,
  \end{cases}
\]
which therefore allows us to conclude that, for all $\IndexBetween{j}{0}{L-1}$,
\[
  S'_x(j) = S_x(j) + \pi(j)\Kernel{j+1}{j} \Big(\Dirac{x}(j+1)-\Dirac{x}(j)\Big).
\]
Squaring and summing over $\IndexBetween{x}{0}{L}$, we obtain
\[
\begin{aligned}
  \SumFromTo{x}{0}{L} S'_x(j)^{2}
  &= \SumFromTo{x}{0}{L} S_x(j)^{2}
     + 2 \pi(j)\Kernel{j+1}{j} \SumFromTo{x}{0}{L} S_x(j)
     \Bigl(\Dirac{x}(j+1)-\Dirac{x}(j)\Bigr) \\
  &\quad + \pi(j)^{2} \Kernel{j+1}{j}^{2}
     \SumFromTo{x}{0}{L} \Bigl(\Dirac{x}(j+1)-\Dirac{x}(j)\Bigr)^{2} \\
  &= \SumFromTo{x}{0}{L} S_x(j)^{2}
     + 2\pi(j)\Kernel{j+1}{j} \Bigl(S_{j+1}(j)-S_j(j)\Bigr)
     + 2\pi(j)^{2} \Kernel{j+1}{j}^{2}.
\end{aligned}
\]
Plugging this result back into the overall expression for $\SumFromTo{x}{0}{L} \AsymptoticVariance[1]{x}$, we get
\[
\begin{aligned}
  \SumFromTo{x}{0}{L} \AsymptoticVariance[1]{x}
  &= \SumFromTo{x}{0}{L} \AsymptoticVariance[0]{x} + 1 - \Pi^{2}(L) \\
  &\quad + \frac{2}{1-\varepsilon} \SumFromTo{j}{0}{L-1}
     \frac{2 \pi(j)^{2} \Kernel{j+1}{j}^{2}
     + 2\pi(j)\Kernel{j+1}{j} \Big( S_{j+1}(j) - S_j(j) \Big)}
     {\pi(j)\KernelParam{\theta,0}{j+1}{j}} \\
  &\quad - \SumFromTo{y}{0}{L} \pi(y) \SumFromTo{x}{0}{L} \Kernel{x}{y}^{2}
     + \Pi^{2}(L),
\end{aligned}
\]
and, since $\Kernel{j+1}{j} = (1-\varepsilon) \KernelParam{\theta,0}{j+1}{j}$,
\[
\begin{aligned}
  \SumFromTo{x}{0}{L} \AsymptoticVariance[1]{x}
  &= \SumFromTo{x}{0}{L} \AsymptoticVariance[0]{x} + 1
     - \SumFromTo{y}{0}{L} \pi(y) \SumFromTo{x}{0}{L} \Kernel{x}{y}^{2} \\
  &\quad + 4(1-\varepsilon) \SumFromTo{j}{0}{L-1} \pi(j)\KernelParam{\theta,0}{j+1}{j}
     + 4 \SumFromTo{j}{0}{L-1} \big( S_{j+1}(j) - S_j(j) \big).
\end{aligned}
\]
We then evaluate
\[
\begin{aligned}
  \SumFromTo{j}{0}{L-1} \pi(j) \KernelParam{\theta,0}{j+1}{j}
  &= \pi(0) + \pi(0) (1+\rho) \SumFromTo{j}{1}{L-1} \rho^{j-1} \theta
   = \pi(0) \left(1+\rho\frac{1-\rho^{L-1}}{1-\rho}\right) \\
  &= \pi(0) \frac{1-\rho^{L}}{1-\rho}
   = \frac{1}{2}.
\end{aligned}
\]
In addition,
\[
\begin{aligned}
  \SumFromTo{j}{0}{L-1} \big(S_{j+1}(j)-S_j(j)\big)
  &= \SumFromTo{j}{0}{L-1} \Big[-\pi(j+1)\Pi(j)-\pi(j)\big(1-\Pi(j)\big)\Big] \\
  &= \SumFromTo{j}{0}{L-1} \Big(1-\Pi(j)\Big) \big(\pi(j+1)-\pi(j)\big)
     - \SumFromTo{j}{0}{L-1} \pi(j+1) \\
  &= \frac{1+\rho}{2} \SumFromTo{j}{0}{L-1} \rho^{j} \big(\pi(j+1)-\pi(j) \big)
     - 1 + \pi(0) + \LittleO_{L}(1) \\
  &= \frac{1+\rho}{2} \pi(0)\left(\rho + (1+\rho) \SumFromTo{j}{1}{L-1}
     \rho^{j} \Big(\rho^{j} - \rho^{j-1} \Big) \right) - 1 + \pi(0) + \LittleO_{L}(1) \\
  &= \frac{1+\rho}{2} \pi(0) \Big(\rho - (1+\rho) \frac{\rho}{1+\rho}\Big)
     - 1 + \pi(0) + \LittleO_{L}(1) \\
  &= - 1 + \pi(0) + \LittleO_{L}(1)
   = -\frac{1+\rho}{2} + \LittleO_{L}(1),
\end{aligned}
\]
and
\[
\begin{aligned}
  \SumFromTo{y}{0}{L} \pi(y) \SumFromTo{x}{0}{L} \Kernel{x}{y}^{2}
  &= \Big(\pi(0)+\pi(L)\Big) \Big[\varepsilon^{2}+(1-\varepsilon)^{2}\Big] \\
  &\quad + \SumFromTo{y}{1}{L-1} \pi(y)
     \Big[\varepsilon^{2}+(1-\varepsilon)^{2}\big(\theta^{2}+(1-\theta)^{2}\big)\Big] \\
  &= \pi(0) \Big[\varepsilon^{2}+(1-\varepsilon)^{2}\Big]
     + \SumFromTo{y}{1}{L-1} \pi(y)
     \left[\varepsilon^{2}+(1-\varepsilon)^{2} \frac{1+\rho^{2}}{(1+\rho)^{2}} \right]
     + \LittleO_{L}(1) \\
  &= \varepsilon^{2}+(1-\varepsilon)^{2} \frac{1+\rho^{2}}{(1+\rho)^{2}}
     + \pi(0) (1-\varepsilon)^{2} \frac{2\rho}{(1+\rho)^{2}} + \LittleO_{L}(1) \\
  &= \varepsilon^{2}+(1-\varepsilon)^{2}
     \left[\frac{1+\rho^{2}}{(1+\rho)^{2}}+\frac{(1-\rho)\rho}{(1+\rho)^{2}} \right]
     + \LittleO_{L}(1) \\
  &= \varepsilon^{2}+\frac{(1-\varepsilon)^{2}}{1+\rho} + \LittleO_{L}(1).
\end{aligned}
\]
Gathering these contributions with the expression of $\SumFromTo{x}{0}{L} \AsymptoticVariance[0]{x}$ obtained previously, we conclude that
\[
\begin{aligned}
  \SumFromTo{x}{0}{L} \AsymptoticVariance[1]{x}
  &= \frac{2}{1-\varepsilon} \times \frac{(1+\rho)^{2}}{4(1-\rho)} - \frac{1+\rho}{2} + 1
     - \left(\varepsilon^{2} + \frac{(1-\varepsilon)^{2}}{1+\rho}\right) \\
  &\quad + 2(1-\varepsilon) - 2(1+\rho) + \LittleO_{L}(1) \\
  &= \frac{2}{1-\varepsilon} \times \frac{(1+\rho)^{2}}{4(1-\rho)} - \frac{1+\rho}{2} + 1
     - \left(\varepsilon^{2} + \frac{(1-\varepsilon)^{2}}{1+\rho}\right)
     - 2(\varepsilon+\rho) + \LittleO_{L}(1).
\end{aligned}
\]

\subsection{Proof of Proposition~\ref{prop:birth-death-MSE-parameter}}
\label{app:proof-birth-death-MSE-parameter}

Let
\[
  N_n = \SumFromTo{i}{1}{n-1} \Indicator{X_i \notin \{0,L\},\ X_{i+1} \neq X_i},
\]
let $(\tau_j)_{\IndexBetween{j}{1}{N_n}}$ be the increasing sequence of times $i$ such that $X_{i} \notin \{0,L\}$ and $X_{i+1} \neq X_{i}$, and let $B_j = \Indicator{X_{\tau_j+1} = X_{\tau_j}+1}$, so that, whenever $\ParamEstimator[n]$ is defined,
\[
  \ParamEstimator[n] = \frac{1}{N_n} \SumFromTo{j}{1}{N_n} B_j.
\]
However, for $\IndexBetween{j}{1}{N_n}$,
\[
  \Prob\left(B_j = 1 \mid B_1, \ldots, B_{j-1}\right) = \theta,
\]
thus the random variables $(B_j)_{\IndexBetween{j}{1}{N_n}}$ are independent and identically distributed according to the Bernoulli distribution with parameter $\theta$.
In particular, $\Expect[B_j] = \theta$ and $\Variance(B_j) = \theta(1-\theta)$.
Since $N_n$ goes to infinity almost surely when $n$ goes to infinity, we can apply the central limit theorem for random sample sizes, which yields, when $n$ goes to infinity,
\[
  \sqrt{N_n} \left(\ParamEstimator[n]-\theta\right)
  \ConvergesInDistribution \Gaussian{0}{\theta(1-\theta)}.
\]
Then, since
\[
  \Expect\left[\Indicator{X_i \notin \{0,L\},\ X_{i+1} \neq X_i}\right]
  = (1-\varepsilon)\left(1-\pi(0)-\pi(L)\right)
  = (1-\varepsilon) \frac{1+\rho}{2} + \LittleO_{L}(1),
\]
we deduce from the ergodic theorem for Markov chains that, when $n$ goes to infinity,
\[
  \frac{N_n}{n} \ConvergesAlmostSurely (1-\varepsilon) \frac{1+\rho}{2} + \LittleO_{L}(1).
\]
Finally, since $\sqrt{n}(\ParamEstimator[n]-\theta)$ factors as
\[
  \sqrt{\frac{n}{N_n}} \times \sqrt{N_n} \left(\ParamEstimator[n]-\theta\right),
\]
Slutsky's lemma shows that, when $n$ goes to infinity,
\[
  \sqrt{n} \left(\ParamEstimator[n]-\theta\right)
  \ConvergesInDistribution \Gaussian{0}{s^{2}},
\]
with
\[
  s^{2}
  = \frac{1}{(1-\varepsilon) \frac{1+\rho}{2} + \LittleO_{L}(1)}
    \times \frac{\rho}{(1+\rho)^{2}}
  = \frac{2\rho}{(1-\varepsilon)(1+\rho)^{3}} + \LittleO_{L}(1).
\]

\subsection{Proof of Proposition~\ref{prop:birth-death-comparison}}
\label{app:proof-birth-death-comparison}

First, remark that Proposition~\ref{prop:birth-death-law} ensures that, for any $L \geq 2$ and $\varepsilon \in \, ]0,1[$, the family $(\KernelMatrixParam{\theta,\varepsilon})_{\theta \in \, ]0,1[}$ satisfies the assumptions introduced in Subsection~\ref{subsec:parametric-framework}.
Moreover, $\theta \mapsto \KernelMatrixParam{\theta,\varepsilon}$ is of class $\mathcal{C}^{2}$ and $\theta \mapsto \Gradient \KernelParam{\theta,\varepsilon}{x}{y}$ and $\theta \mapsto \Hessian \KernelParam{\theta,\varepsilon}{x}{y}$ are bounded on $]0,1[$ for all $x,y \in \StateSpace$.
Consequently, since the state space is finite, the family of kernels satisfies Conditions~\ref{ass:regularity1} and~\ref{ass:regularity2} according to Remark~\ref{rem:relaxed-parameter-space}.

By Proposition~\ref{prop:birth-death-MSE-parameter}, the estimator $\ParamEstimator[n]$ satisfies the general assumptions introduced in Subsection~\ref{subsec:parametric-data-partition} and, since it converges at the parametric rate, Corollary~\ref{cor:data-partition3} applies.

Propositions~\ref{prop:birth-death-MSE-pi0}, \ref{prop:birth-death-MSE-pi1} and~\ref{prop:birth-death-MSE-parameter} ensure that the quantities of interest converge when $\varepsilon$ goes to $0$ and then $L$ goes to infinity, their limits being given by
\[
\begin{aligned}
  \lim_{L \to +\infty} \lim_{\varepsilon \to 0}
  \SumFromTo{x}{0}{L} \AsymptoticVariance[0]{x}
  &= \frac{\rho(1+\rho)}{1-\rho}, \\
  \lim_{L \to +\infty} \lim_{\varepsilon \to 0}
  \SumFromTo{x}{0}{L} \AsymptoticVariance[1]{x}
  &= \frac{\rho^{2}(1+3\rho)}{(1+\rho)(1-\rho)}, \\
  \lim_{L \to +\infty} \lim_{\varepsilon \to 0} s^{2}
  &= \frac{2\rho}{(1+\rho)^{3}}.
\end{aligned}
\]
Now, since the parametrisation is one-dimensional,
\[
  \Trace\left(\Gradient\left(\pi \KernelMatrixTrue\right)\Transpose s^{2}
  \Gradient\left(\pi \KernelMatrixTrue\right)\right)
  = s^{2} \SumFromTo{x}{0}{L} \Sensitivity{}{x}^{2},
\]
where, for $\IndexBetween{x}{0}{L}$,
\[
  \Sensitivity{}{x} = \SumFromTo{y}{0}{L} \pi(y)\,
  \partial_{\theta} \KernelParam{\theta,\varepsilon}{x}{y}.
\]
In the limit when $\varepsilon$ goes to $0$, we have
\[
\begin{alignedat}{2}
  &\forall \IndexBetween{x}{0}{1}, &\quad& \Sensitivity{}{x} = -\pi(x+1), \\
  &\forall \IndexBetween{x}{2}{L-2}, && \Sensitivity{}{x} = \pi(x-1)-\pi(x+1), \\
  &\forall \IndexBetween{x}{L-1}{L}, && \Sensitivity{}{x} = \pi(x-1).
\end{alignedat}
\]
Hence,
\[
\begin{aligned}
  \SumFromTo{x}{0}{L} \Sensitivity{}{x}^{2}
  &= \pi(1)^{2}+\pi(2)^{2}+\SumFromTo{x}{2}{L-2} \big(\pi(x-1)-\pi(x+1)\big)^{2}
     + \LittleO_{L}(1) \\
  &= \pi(0)^{2} (1+\rho)^{2} \left[1+\rho^{2}
     + \SumFromTo{x}{2}{L-2} \big(\rho^{x-2}-\rho^{x}\big)^{2}\right] + \LittleO_{L}(1) \\
  &= \pi(0)^{2} (1+\rho)^{2} \left[1+\rho^{2}+1-\rho^{2}\right] + \LittleO_{L}(1) \\
  &= \frac{(1-\rho^{2})^{2}}{2} {}+{} \LittleO_{L}(1).
\end{aligned}
\]
In the limit when $\varepsilon$ goes to $0$ and then $L$ goes to infinity, this finally gives
\[
  \RatioLookAhead = \frac{\rho(1+3\rho)}{(1+\rho)^{2}}
  \TextAnd
  \RatioParameter = \frac{(1-\rho)^{3}}{(1+\rho)^{2}}.
\]
The result is then a direct consequence of the comparison $\sqrt{\RatioLookAhead}+\sqrt{\RatioParameter} < 1$.
Solving this inequality numerically over $\rho \in \, ]0,1[$ yields the critical threshold $\bar{\rho} \simeq 0.4081$, which corresponds to the threshold $\bar{\theta} \simeq 0.289$.

\end{document}